\newtheorem{theorem}{Theorem}[section]
\newtheorem{corollary}[theorem]{Corollary}
\newtheorem{lemma}[theorem]{Lemma}
\newtheorem{claim}[theorem]{Claim}
\newtheorem{proposition}[theorem]{Proposition}
\newtheorem*{thmC*}{Corollary C}
\newcounter{mtheorem}
\newtheorem{mtheorem}[mtheorem]{Theorem}
\newtheorem{mcor}[mtheorem]{Corollary}
\theoremstyle{remark}
\newtheorem{remark}{Remark}
\theoremstyle{definition}
\newtheorem{definition}{Definition}
\newtheorem{example}{Example}
\numberwithin{equation}{section}
\def \C {\mathbb C}
\def \Z {\mathbb Z}
\def \R {\mathbb R}
\def \D {\mathbb D}
\def \F {\mathcal D}
\def \M {\mathcal M}
\def \P {\mathbb P}
\def \CP {\mathbb{CP}}
\def \T {\mathbb T}
\def \p {\partial}
\def \bp {\bar{\partial}}
\def \O {\mathcal{O}}
\def \Cstar {\mathbb C^*}
\def \Cstarn {(\mathbb C^*)^n}
\def \Cstarsqr {(\mathbb C^*)^2}
\def \std {\text{std}}
\def \p {\partial}
\def \bp {\bar{\partial}}
\def \ddt {\frac{\partial}{\partial t}}
\def \Ric {\text{Ric}}
\def \Pot {\mathcal{P}}
\def \t {\mathfrak{t}}
\begin{document}
\title{Uniqueness of shrinking gradient K\"ahler-Ricci solitons on non-compact toric manifolds}
\date{\today}
\author{Charles Cifarelli\thanks{charles\_cifarelli@berkeley.edu} }

\maketitle
\begin{abstract}
We show that, up to biholomorphism, there is at most one complete $T^n$-invariant shrinking gradient K\"ahler-Ricci soliton on a non-compact toric manifold $M$. We also establish uniqueness without assuming $T^n$-invariance if the Ricci curvature is bounded and if the soliton vector field lies in the Lie algebra $\t$ of $T^n$. As an application, we show that, up to isometry, the unique complete shrinking gradient K\"ahler-Ricci soliton with bounded scalar curvature on $\C\P^{1} \times \C$ is the standard product metric associated to the Fubini-Study metric on $\C\P^{1}$ and the Euclidean metric on $\C$. 
\end{abstract}


\section{Introduction}

A \emph{Ricci soliton} $(M,g,X)$ is a Riemannian manifold $(M,g)$ together with a vector field $X$ satisfying 

\begin{equation} \label{rs}
	\Ric_g + \frac{1}{2}\mathcal{L}_Xg = \frac{\lambda}{2}g
\end{equation}
for $\lambda \in \R$. By a simultaneous rescaling of $X$ and $g$, we can always assume that a Ricci soliton is normalized so that $\lambda \in \{-1, 0, +1\}$. We will always assume that the metric $g$ is complete, which in turn forces the vector field $X$ to be complete \cite{Zha}. A Ricci soliton is said to be gradient if the vector field $X$ is the gradient of a smooth function $f$, usually called the \emph{soliton potential}. In this case the equation becomes

\begin{equation} \label{grs}
	\Ric_g + \nabla_g^2 f = \frac{\lambda}{2} g.
\end{equation}

 If $g$ is a K\"ahler metric on $M$ with K\"ahler form $\omega$, we say that $(M, \omega, X)$ is a \emph{K\"ahler-Ricci soliton} if $\omega$ satisfies the equation
 
\begin{equation} \label{krs}
	\Ric_\omega + \frac{1}{2} \mathcal{L}_X\omega = \lambda \omega,
\end{equation}
where $\Ric_\omega$ is the Ricci form and $\lambda \in \{-1, 0, +1\}$. The coefficients appearing in \eqref{krs} are chosen to be different from those in \eqref{rs}; this choice being more natural from the perspective of the K\"ahler-Ricci flow. Ricci solitons and K\"ahler-Ricci solitons are called \emph{expanding}, \emph{steady}, and \emph{shrinking}, respectively when $\lambda \in \{ -1, 0, +1\}$. In this paper we will only consider shrinking solitons and so we will always assume that $\lambda = 1$. As for Ricci solitons, we say that a shrinking K\"ahler-Ricci soliton is gradient if $X = \nabla_g f$, in which case \eqref{krs} takes the form

\begin{equation} \label{gkrs}
	 \Ric_\omega +  i \p \bp f = \omega.
\end{equation}

Ricci solitons are interesting both from the perspective of canonical metrics and of Ricci flow. On the one hand, they represent one direction in which one can generalize the concept of an Einstein manifold. On compact manifolds, shrinking solitons are known to exist in several situations where there are obstructions to the existence of Einstein metrics; see for example \cite{WZ}. By the maximum principle, there are no nontrivial expanding or steady solitons on compact manifolds. There are many examples on noncompact manifolds, however; see for example \cite{ConDer,ConDerSun,Fut} and the references therein. On the other hand, one can associate to a Ricci soliton a self-similar solution of the Ricci flow, and gradient shrinking Ricci solitons in particular provide models for finite-time Type I singularities along the flow \cite{EMT,Naber}. Even in complex dimension two, however, it is not known which shrinking Ricci solitons arise in this way. From this perspective, it is an important problem to classify shrinking gradient K\"ahler-Ricci solitons in order to better understand the singularity development along the K\"ahler-Ricci flow.

In this paper we study K\"ahler-Ricci solitons on non-compact complex manifolds $M$ under the additional assumption that $M$ is toric. For the purposes of this paper, a \emph{complex toric manifold} is a smooth $n$-dimensional complex manifold $(M,J)$ together with an effective holomorphic action of the complex torus $\Cstarn$. In such a setting there always exists an orbit $U \subset M$ of the $\Cstarn$-action which is open and dense in $M$. Moreover, we always assume that there are only finitely many points which are fixed by the $\Cstarn$-action. The $\Cstarn$-action of course determines the action of the real torus $T^n \subset \Cstarn$, and our main theorem is a uniqueness result for complete shrinking gradient K\"ahler-Ricci solitons which are invariant under this action.
\begin{mtheorem} \label{thm1-1}
Suppose that $(M,J)$ is a non-compact complex toric manifold and that the fixed point set of the $\Cstarn$-action is finite. Then, up to biholomorphism, there is at most one complete $T^n$-invariant shrinking gradient K\"ahler-Ricci soliton $(g, X)$ on $(M, J)$. 
\end{mtheorem}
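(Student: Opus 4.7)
Let $(g_1, X_1)$ and $(g_2, X_2)$ be two complete $T^n$-invariant shrinking gradient K\"ahler-Ricci solitons on $(M, J)$. The first step is to reduce the soliton vector fields to the complexified torus Lie algebra. Averaging each soliton potential $f_i$ over $T^n$ (which preserves $g_i$) yields a $T^n$-invariant potential, so I may assume $X_i = \nabla_{g_i} f_i$ itself commutes with $T^n$. Since $X_i$ is also real holomorphic and the $\Cstarn$-action has only isolated fixed points, the algebra of $T^n$-invariant real holomorphic vector fields coincides with $\t^{\C}$; hence $X_i \in \t^{\C}$.

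Next I would pass to the convex analytic side. Each soliton $\omega_i$ admits a moment map $\mu_i : M \to \t^*$ for the $T^n$-action, with image a convex polyhedron $P_i$ whose normal fan is the fan of $M$; in particular $P_i$ is determined by $(M, J)$ up to translation in $\t^*$. On the open orbit $U \cong \Cstarn$, $\omega_i$ is encoded by a strictly convex $T^n$-invariant K\"ahler potential $\phi_i$ on $\R^n$ in logarithmic coordinates, or dually by a symplectic potential $u_i$ on $\mathrm{int}(P_i)$ with Guillemin singular behavior along $\partial P_i$. The soliton equation \eqref{gkrs} then translates into a real Monge-Amp\`ere equation for $\phi_i$ in which $X_i$ appears as an affine term, equivalently into an Abreu-type equation for $u_i$.

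The most delicate step is the normalization. Biholomorphisms in the real part of $\Cstarn$ act on $\mu_i$ by translation of $P_i$ in $\t^*$ and on $\phi_i$ by addition of a linear function, so after composing one soliton with a suitable biholomorphism I may arrange $P_1 = P_2 =: P$. The soliton vector $X_i$ should then be canonically characterized by the pair $(M, P)$: it is expected to arise as the minimizer of a strictly convex weighted volume functional on a natural open cone in $\t$, the non-compact analogue of the Tian-Zhu characterization of the soliton vector on a toric Fano manifold. Strict convexity of this functional, ultimately forced by the Gaussian asymptotics of $e^{-f_i}$, would give $X_1 = X_2$.

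With a common polyhedron $P$ and a common soliton vector, $u_1$ and $u_2$ would satisfy the same Abreu-type equation with the same boundary conditions on $\partial P$. Uniqueness of the solution should follow by exhibiting a strictly convex Mabuchi/Ding-type functional on the space of admissible symplectic potentials whose Euler-Lagrange equation is precisely this soliton equation, yielding $u_1 = u_2$ and hence $g_1 = g_2$. The principal obstacle is the canonical variational characterization of the soliton vector field in the third step: for the possibly unbounded polyhedron $P$ one must control the asymptotic growth of $\phi_i$ sharply enough that the weighted volume integral converges and is coercive on the appropriate cone in $\t$, and verify that translations of the moment polyhedron are genuinely realized by biholomorphisms of $(M, J)$.
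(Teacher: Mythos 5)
Your overall architecture matches the paper's: reduce to $JX \in \t$, pass to the moment polyhedron and symplectic potential, pin down the soliton vector field by a weighted volume functional on the dual recession cone, and conclude by strict convexity of a Ding-type functional. However, there are two genuine gaps. First, your normalization step rests on a false premise: the biholomorphisms $e^{-b}\in\Cstarn$ with $b\in\t$ do \emph{not} translate the moment image $P$ --- they replace $\phi(\xi)$ by $\phi(\xi-b)$, which leaves $\nabla\phi(\t)$ unchanged and merely adds the linear function $\langle b,x\rangle$ to $u$ (Lemma \ref{lem2-18}). Translations of $P$ correspond instead to the additive ambiguity in the choice of moment map (adding a linear function of $\xi$ to $\phi$, which does not change $\omega$ at all), so they cannot be ``realized by biholomorphisms'' --- an obstacle you yourself flag as unresolved. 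The paper resolves this differently: the normalized Monge--Amp\`ere equation of Proposition \ref{prop2-12} singles out a unique potential $\phi$ for each soliton, and a local boundary analysis (Lemma \ref{lem4-5}) shows that the corresponding moment image must then equal the anticanonical polyhedron $P_{-K_M}=\{\langle\nu_i,x\rangle\ge -1\}$ for \emph{both} solitons, so no biholomorphism is needed at this stage. The biholomorphism enters only at the very end, absorbing the residual ambiguity $u_2=u_1+\langle b_a,x\rangle+c$ that the Ding functional (strictly convex only modulo affine functions) leaves behind; your bookkeeping, which spends the biholomorphism early and then concludes $u_1=u_2$ on the nose, does not close up correctly.

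Second, the convexity argument needs more than you supply. On an unbounded $P$ the difference $u_1-u_0$ of two symplectic potentials is not compactly supported, so one cannot directly differentiate the Ding functional along the linear path to see that it is minimized at the endpoints; the paper computes the first variation only for compactly supported perturbations (Lemma \ref{lem3-9}) and then needs the approximation argument of Lemma \ref{approximation-lemma} to transfer the minimality to the actual geodesic. In addition, the existence of the moment map (via $H^1(M)=0$ from \cite{Wylie}) and the polyhedrality of its image (via the properness of $f=\langle\mu,b_X\rangle$ from \cite{CaoZhou} combined with \cite{HNP,KarLer}) are genuine inputs in the non-compact setting that you assert without justification. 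All of this is fixable along the lines of the paper, but as written the proof is incomplete precisely at the points you identify as delicate.
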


As we will see, $T^n$-invariance implies that the holomorphic vector field $JX$ associated to the soliton vector field $X$ lies in the Lie algebra $\t$ of the real torus $T^{n}$. There is also a notion of a toric manifold coming purely from symplectic geometry. To distinguish this from the definition above, we say that a \emph{symplectic toric manifold} is an $n$-dimensional symplectic manifold $(M,\omega)$ together with an effective Hamiltonian action of the real torus $T^{n}$. As before, we will always assume in this paper that the fixed point set of the $T^{n}$-action is finite. We remark here that this assumption is non-trivial; see for example \cite[Example 6.9]{KarLer}.

Of course, the intersection of these ideas naturally lies in the realm of K\"ahler geometry. In particular, if $(M,J)$ is a complex toric manifold as above and $\omega$ is the K\"ahler form of a compatible K\"ahler metric $g$ on $M$ with respect to which the real $T^{n}$-action is Hamiltonian, then the symplectic manifold $(M,\omega)$ is naturally a symplectic toric manifold. When $(M,J,\omega)$ is a compact K\"ahler manifold, then the two definitions are equivalent in the following sense. Suppose that $(M, J, \omega)$ admits an effective Hamiltonian and \emph{holomorphic} action of the real $n$-dimensional torus $T^{n}$, so that $(M,\omega)$ in particular carries the structure of a symplectic toric manifold. Then this action can always be complexified to an action of the full complex torus $\Cstarn$, giving $(M,J)$ the structure of a complex toric manifold. This can be done essentially because any vector field on $M$ is complete. Of course in the non-compact setting this is no longer the case, and so it makes sense to ask if Theorem \ref{thm1-1} can be extended to the more general setting of symplectic toric manifolds. We prove this under the additional assumption that the Ricci curvature of $g$ is bounded, i.e. $\sup_{x \in M} |\Ric_g|_g(x) < \infty$.
\newline

\begin{mtheorem} \label{thm1-2}
Suppose that $(M,J)$ is a non-compact complex manifold with $\dim_\C M = n$, together with an effective holomorphic action of a real torus $T^{n}$ with Lie algebra $\t$ and finite fixed point set. Then, up to biholomorphism, there is at most one complete shrinking gradient K\"ahler-Ricci soliton $(g, X)$ on $(M, J)$ with $JX \in \t$ and with bounded Ricci curvature.
\end{mtheorem}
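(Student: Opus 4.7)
The plan is to reduce Theorem~B to Theorem~A by showing that, under the stated hypotheses, the effective holomorphic action of $T^n$ on $(M,J)$ complexifies to an effective holomorphic action of $\Cstarn$ which preserves $(g,X)$. Once this is in hand, $(M,J)$ is a complex toric manifold with a $T^n$-invariant shrinking gradient K\"ahler-Ricci soliton, and Theorem~A applies directly.

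The first task is to establish $T^n$-invariance of the soliton. Since $JX \in \t$ is a holomorphic Killing field---a general property of gradient K\"ahler-Ricci solitons---and $\t$ is abelian, every $\xi \in \t$ satisfies $[\xi, JX] = 0$; combined with the holomorphicity condition $\mathcal{L}_\xi J = 0$, this gives $[\xi, X] = 0$, so $X$ is $T^n$-invariant. To upgrade this to invariance of $g$, I would consider the one-parameter family of pulled-back metrics $g_t = \phi_t^* g$ where $\phi_t$ is the flow of $\xi \in \t$. Each $g_t$ is a complete shrinking gradient soliton on $(M,J)$ with the \emph{same} soliton vector field $X$ and with bounded Ricci curvature. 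Differentiating the soliton equation at $t=0$ produces a linear elliptic equation for $h := \mathcal{L}_\xi g$, and a maximum-principle argument for the drift Laplacian combined with the quadratic growth of $f$ forces $h = 0$, so that $\xi$ is Killing.

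The heart of the argument is then to show that $J\xi$ is complete for every $\xi \in \t$. The presence of $T^n$-fixed points guarantees a global Hamiltonian $\mu_\xi$ with $J\xi = \nabla_g \mu_\xi$; completeness of $J\xi$ is equivalent to forbidding finite-time escape of gradient trajectories of $\mu_\xi$. Since $[\xi, X] = 0$, $\mu_\xi$ satisfies a linear equation involving the drift Laplacian $\Delta_f$, and the soliton identity $R + |\nabla f|^2 = f + c_0$ together with the quadratic growth of $f$ (sharpened by the bounded Ricci assumption) yields at-most-linear growth of $|\mu_\xi|$ and the pointwise bounds on $|\xi|_g = |J\xi|_g$ needed for a Gr\"onwall-type estimate that rules out finite-time blow-up along the gradient flow.

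The commuting real vector fields $\{\xi_j, J\xi_j\}_{j=1}^n$ for a basis $\{\xi_j\}$ of $\t$ then integrate to an effective holomorphic $\Cstarn$-action on $(M,J)$ whose fixed-point set equals that of $T^n$ and is therefore finite. Combined with the $T^n$-invariance of $(g,X)$, Theorem~A yields the desired uniqueness. The main obstacle is the completeness of $J\xi$: on a general non-compact K\"ahler manifold the flow of $J\xi$ need not be complete (for instance on bounded symmetric domains), so the bounded Ricci hypothesis has to be converted into concrete pointwise control on the Hamiltonians $\mu_\xi$ and their gradients. It is here that the specifically soliton-theoretic machinery---Cao-Hamilton identities, drift-Laplacian estimates, and precise asymptotics of $f$---becomes indispensable.
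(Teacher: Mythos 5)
There is a genuine gap at the first step. You claim that the maximum principle applied to $h=\mathcal{L}_\xi g$ forces $h=0$, i.e.\ that $g$ itself must be $T^n$-invariant. This is false: the hypotheses of Theorem B only force $g$ to be invariant \emph{after pulling back by an automorphism}. A concrete counterexample is $M=\C^2$ with the standard $T^2$-action and $g=A^*g_{\mathrm{Euc}}$ for a non-unitary $A\in \mathrm{GL}(2,\C)$ with a nonzero off-diagonal entry: since $A$ preserves the Euler vector field, the soliton vector field of $g$ is still the radial field $X$ with $JX\in\t$, the metric is complete and flat, yet $g$ is visibly not $T^2$-invariant. The structural reason your linearization argument cannot work is that the family $\phi_t^*g$ consists of solitons with the same vector field $X$ precisely because $\mathcal{L}_\xi g$ already lies in the kernel of the linearized operator (it is the trivial deformation $\mathcal{L}_Y g$ for $Y=\xi\in\mathfrak{aut}^X$); the drift-Laplacian maximum principle can at best show that the deformation is trivial modulo $\mathrm{Aut}^X$, which is exactly where you started. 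The paper handles this point group-theoretically rather than by PDE: after complexifying, it invokes the structure theorem $\mathfrak{aut}^X=\mathfrak{g}^X\oplus J\mathfrak{g}^X$ of Conlon--Deruelle--Sun (which is where bounded Ricci curvature enters) to see that $\mathrm{Aut}^X$ is a finite-dimensional reductive Lie group containing $\Cstarn$, and then uses Iwasawa's conjugacy of maximal compact subgroups to produce an automorphism $\alpha$ with $\alpha^*g$ invariant under $T^n$. Without this conjugation step your reduction to Theorem A does not go through, since Theorem A requires $T^n$-invariance.

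The remainder of your outline is closer to the mark but also differs in emphasis from the paper. The completeness of $J\xi$ for $\xi\in\t$ is indeed the other key input, but the paper does not run a Gr\"onwall estimate on Hamiltonian potentials; it quotes \cite[Lemma 2.26]{ConDerSun} (bounded scalar curvature implies the zero set of $X$ is compact) and \cite[Lemma 2.34]{ConDerSun} (completeness of holomorphic vector fields commuting with $X$ on such a manifold), then integrates the commuting fields $\{\xi_j, J\xi_j\}$ to the $\Cstarn$-action exactly as you describe. Also note that the existence of global Hamiltonians comes from Wylie's theorem that a complete shrinking soliton has $H^1(M)=0$, not merely from the presence of $T^n$-fixed points. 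Once the complexification and the conjugated invariance are in place, the reduction to Theorem A is immediate, as in the paper.
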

Notice that in this case we do not need to assume that $g$ is $T^{n}$-invariant, only that the Ricci curvature is bounded and $JX \in \t$. In fact, we will see in Section 4 that any K\"ahler-Ricci soliton satifsying these hypotheses is isometric to a $T^{n}$-invariant one. When $M$ is compact, these results are special cases of the general uniqueness theorem of Tian-Zhu \cite{TZ1,TZ2}. The non-compact case is generally much more delicate. Typically, one needs to prescribe the asymptotics of the metric, for example by imposing a fixed model metric at infinity, in order to work in well-behaved function spaces. An important feature of this work is that we do not impose any assumptions on the specific behavior of the metric at infinity. Instead, a generalization of the setup of Berman-Berndtsson \cite{BB} allows us to work with the Ding functional on broadly defined $L^{1}$-type spaces; see Section 3 for details.  For a result of even greater generality in the special case when $M = \C$, see also \cite{WWang}.

As an application of Theorem \ref{thm1-2}, we prove a stronger uniqueness result for the special case of $M = \CP^1 \times \C$. 
\begin{mcor} \label{cor1-3}
	Up to isometry, the standard product of the Fubini-Study and Euclidean metrics is the unique complete shrinking gradient K\"ahler-Ricci soliton with bounded scalar curvature on $\CP^{1} \times \C$. 
\end{mcor}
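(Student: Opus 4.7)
The plan is to reduce Corollary~\ref{cor1-3} to Theorem~\ref{thm1-2} applied to $M = \CP^1 \times \C$ with its standard effective holomorphic $T^2$-action by coordinatewise rotation, whose fixed point set $\{0, \infty\} \times \{0\}$ is finite as required. The standard product metric $\omega_0 = \omega_{\text{FS}} + \omega_{\text{Eucl}}$ is a complete shrinking gradient K\"ahler-Ricci soliton whose soliton vector field $X_0$ satisfies $JX_0 \in \t$, with $\t$ the Lie algebra of the standard $T^2$, and has bounded curvature. So Theorem~\ref{thm1-2} will yield the conclusion once I verify its hypotheses for an arbitrary complete shrinking gradient K\"ahler-Ricci soliton $(g, X)$ on $\CP^1 \times \C$ with bounded scalar curvature---possibly after pullback by a biholomorphism.

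For the bounded-Ricci hypothesis, I would invoke curvature estimates in the spirit of Munteanu-Wang: on a shrinking gradient K\"ahler-Ricci soliton, a uniform bound on the scalar curvature upgrades to a uniform bound on the full curvature tensor, in particular on $\Ric_g$. To arrange $JX \in \t$, I first note that $X$ is complete, so $JX$ is a complete real holomorphic Killing field; since $f$ is proper (Cao-Zhou) and $JX(f) = g(X, JX) = 0$, the closure of the $JX$-flow in $\text{Iso}^0(M, g)$ has compact orbits and is itself a compact torus $T^k \subset \text{Aut}^0(M, J)$ with $k \geq 1$. I would then analyze compact subgroups of $\text{Aut}(\CP^1 \times \C)$---whose elements must preserve the projection to $\C$ and act there through a compact subgroup of $\text{Aff}(\C)$---to show that any such $T^k$ is contained in a holomorphic $T^2$-action with finite fixed point set, and that all such $T^2$-actions are conjugate by a biholomorphism of $\CP^1 \times \C$ to the standard one. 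Pulling $(g, X)$ back along this biholomorphism then places $JX$ in the standard $\t$.

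With both hypotheses in hand, Theorem~\ref{thm1-2} produces a biholomorphism of $\CP^1 \times \C$ intertwining $(g, X)$ with $(\omega_0, X_0)$; such a biholomorphism is automatically an isometry, yielding the corollary. The hard part will be the torus-containment step: extending the closure of the $JX$-flow to a full $T^2$-action with the required finite fixed point set, and then moving this torus onto the standard one through an honest biholomorphism of $\CP^1 \times \C$. This requires a careful structural analysis of the (infinite-dimensional) group $\text{Aut}(\CP^1 \times \C)$ and of the conjugacy classes of its maximal compact tori; the remaining ingredients---the Munteanu-Wang estimates, properness of $f$, and the invariance of $f$ under $JX$---are by now standard in the theory of shrinking gradient K\"ahler-Ricci solitons.
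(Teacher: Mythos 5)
Your top-level strategy coincides with the paper's: invoke the Munteanu--Wang estimate to upgrade bounded scalar curvature to bounded Ricci curvature in real dimension four, conjugate the soliton vector field so that $JX$ lies in the Lie algebra $\t$ of the standard torus, and then apply Theorem \ref{thm1-2}. The first and last steps are fine. The gap is the middle step, which is where essentially all of the content of the paper's proof lives (Proposition \ref{prop4-9} and Claims \ref{claim4-10}--\ref{claim4-18}): you assert that the closure $T^k$ of the $JX$-flow embeds in a holomorphic $T^2$-action with finite fixed point set and that all such $T^2$-actions are conjugate to the standard one, but you give no argument, and you explicitly defer "the hard part" to an unspecified structural analysis of $\mathrm{Aut}(\CP^1\times\C)$. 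That group is infinite-dimensional (its elements have the form $(\ell,z)\mapsto(A(z)\ell,az+b)$ with $A:\C\to\mathrm{PGL}(2,\C)$ holomorphic), so "conjugacy classes of maximal compact tori" is not an off-the-shelf fact; moreover, even complexifying the $T^2$-action and identifying it with the standard one requires the soliton structure (completeness of $JX_i$ via the compactness of the zero set of $X$, the Hamiltonian/Delzant machinery of Lemmas \ref{lem2-20} and \ref{lem2-22}).

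More seriously, the blanket claim that $T^k$ always sits inside a finite-fixed-point $T^2$ hides a genuine case distinction that the paper must confront. The zero set $M_0$ of $X$ is analyzed via the Morse--Bott function $f$ and Bryant's local linearization of $X^{1,0}$: either the index-zero component $M^{(0)}$ is an isolated point, in which case one must \emph{construct} a second commuting circle by decomposing $X^{1,0}$ into its $\CP^1$- and $\C$-components and performing a fiberwise M\"obius normalization (Claim \ref{claim4-14}); or $M^{(0)}$ is an entire fiber $L_0$, in which case $T^X$ is forced to be one-dimensional with fixed locus the non-finite set $L_0$ (Claims \ref{claim4-15}, \ref{4-16}), and one instead builds an explicit equivariant trivialization of the flow to place $JX$ in the standard $\t$ (Claim \ref{claim4-18}). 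None of this is routine, and your proposal does not engage with it; as written it reduces the corollary to an unproved claim that is roughly as hard as the corollary itself.
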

The point is that, if one has a complete shrinking gradient K\"ahler-Ricci soliton with bounded scalar curvature on $M$, then it suffices to assume that $JX$ lies in the Lie algebra of the standard torus acting on $\CP^{1} \times \C$, in which case Theorem \ref{thm1-2} applies directly. This is achieved in Section 4 by a Morse theoretic argument similar to the one implemented in \cite[Proposition 2.27]{ConDerSun}. 

Other well-known examples of complete shrinking gradient K\"ahler-Ricci solitons include those constructed by Feldman-Ilmanen-Knopf \cite{FIK} on the total spaces of the line bundles $\O(-k) \to \C\P^{n-1}$ for $0 < k < n$, which were recently shown to be the unique such metrics on these manifolds with bounded Ricci curvature in \cite[Theorem E]{ConDerSun}. In fact, there are recent examples of Futaki \cite{Fut} which generalize this construction on the total space of any root of the canonical bundle of a compact toric Fano manifold (see also \cite{FutWang,Yang} for the case where the soliton vector field generates an $S^1$-action). All of the aforementioned examples are toric in the sense that underlying manifold is always a complex toric manifold and the metric is invariant under the action of the corresponding real torus $T^{n}$. As before, we denote the Lie algebra of this fixed real torus by $\t$. As a direct consequence of Theorem \ref{thm1-2}, we have that these are the only examples of shrinking gradient K\"ahler-Ricci solitons on these manifolds with bounded Ricci curvature and with $JX \in \t$. 

\begin{mcor} \label{cor1-4}
	Let $N$ be an $(n-1)$-dimensional toric Fano manifold, $L \to N$ be a holomorphic line bundle such that $L^{p} = K_N$ with $0 < p < n$, and let $M$ denote the total space of $L$. There is a natural action of the real torus $T^{n}$ on $M$, and we denote the Lie algebra by $\t$. Then, up to biholomorphism, there is a unique complete shrinking gradient K\"ahler-Ricci soliton with bounded Ricci curvature and $JX \in \t$ on $M$, namely the one constructed by Futaki in \cite{Fut}.
\end{mcor}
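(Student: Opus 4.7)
The plan is to apply Theorem \ref{thm1-2} directly, producing existence from Futaki's explicit construction and obtaining uniqueness from the theorem itself. The only work is in verifying that the hypotheses match up correctly.

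First I would verify that $M = \mathrm{Tot}(L)$ is a non-compact complex manifold of complex dimension $n$ carrying an effective holomorphic $T^{n}$-action with finite fixed point set. Since $N$ is a compact toric Fano manifold of complex dimension $n-1$, it admits an effective holomorphic $T^{n-1}$-action with finitely many fixed points, and the canonical bundle $K_{N}$ is naturally $T^{n-1}$-equivariant. Because $L^{p} = K_{N}$, this equivariant structure lifts to $L$ (passing to a finite cover of $T^{n-1}$ if necessary, which does not affect the Lie algebra). Combining the lifted action with the standard $S^{1}$-action on the fibers by scalar multiplication produces an effective holomorphic $T^{n}$-action on $M$ with Lie algebra $\t$. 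The fixed set of this action consists exactly of the $T^{n-1}$-fixed points of the zero section $N \subset M$, and so it is finite.

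Second, Futaki's construction in \cite{Fut} produces a complete $T^{n}$-invariant shrinking gradient K\"ahler-Ricci soliton $(g_{\mathrm{Fut}}, X_{\mathrm{Fut}})$ on $M$ whose soliton vector field satisfies $JX_{\mathrm{Fut}} \in \t$. The metric is built from an explicit momentum profile and is asymptotic to a K\"ahler cone at infinity; in particular, its full curvature tensor is bounded, so certainly $|\Ric_{g_{\mathrm{Fut}}}|_{g_{\mathrm{Fut}}} < \infty$. Thus Futaki's soliton meets every hypothesis of Theorem \ref{thm1-2}.

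Finally, if $(g, X)$ is any complete shrinking gradient K\"ahler-Ricci soliton on $M$ with bounded Ricci curvature and $JX \in \t$, then Theorem \ref{thm1-2} applied to the two solitons $(g, X)$ and $(g_{\mathrm{Fut}}, X_{\mathrm{Fut}})$ yields that they coincide up to biholomorphism, giving the corollary. The step in the plan that most needs care is the equivariant lift of the $T^{n-1}$-action from $N$ to $\mathrm{Tot}(L)$ together with the check that the resulting $T^{n}$-action on $M$ still has finite fixed set; once this is in place, the corollary follows as a direct consequence of Theorem \ref{thm1-2}.
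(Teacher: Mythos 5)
Your overall strategy coincides with the paper's: reduce everything to Theorem \ref{thm1-2} and then check that Futaki's examples satisfy its hypotheses. The setup of the torus action and the identification $JX \in \t$ are handled the same way in the paper (the $\Cstarn$-action is the $(n-1)$-dimensional action on $N$ augmented by the fiberwise $\Cstar$-action, and $JX = r\frac{\p}{\p r} \in \t$ comes directly from the momentum construction). However, there is a genuine gap at the one step that carries the real content of the corollary. The introduction of the paper states explicitly that the proof of this corollary ``amounts to demonstrating that the examples constructed in \cite{Fut} indeed have bounded Ricci curvature,'' and you dispose of this by asserting that the metric ``is asymptotic to a K\"ahler cone at infinity; in particular, its full curvature tensor is bounded.'' That assertion is not justified as stated: being asymptotic to a cone at the level of the metric coefficients (i.e.\ in a $C^0$ sense) does not control the curvature, and asymptotic conicality in a $C^2$-weighted sense strong enough to yield curvature decay is precisely what needs to be verified. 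Verifying it requires an asymptotic analysis of the momentum profile, which is what the paper actually carries out.

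Concretely, the paper uses the explicit rational expression for $\varphi(\tau)$ from \cite[Claim 4.4]{Fut} to read off $\varphi = O(1+\tau)$, $\varphi' = O(1)$, $\varphi'' = O((1+\tau)^{-3})$ as $\tau \to \infty$, and then plugs these into the explicit formula for the Ricci form in the Calabi ansatz,
\begin{equation*}
	\Ric_{\omega_{KRS}} = \left( \kappa - \left(  \tfrac{(n-1)\varphi}{1 + \tau} + \varphi' \right)\right)\omega_T - \left(  \tfrac{(n-1)\varphi}{1 + \tau} + \varphi' \right)' dt \wedge d^c t,
\end{equation*}
to conclude that $\Ric_{\omega_{KRS}} = O(1)\,\omega_T + O((1+\tau)^{-2})\, dt \wedge d^c t$ while $\omega_{KRS} = O(1+\tau)\,\omega_T + O(1+\tau)\, dt \wedge d^c t$, so that $\|\Ric_{\omega_{KRS}}\|_{\omega_{KRS}}$ in fact decays. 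To repair your argument you would either need to reproduce this computation or cite a precise statement that Futaki's solitons are asymptotically conical with the decay needed to bound the Ricci tensor; as written, the claim ``asymptotic to a cone, hence bounded curvature'' begs the question. (Your identification of the equivariant lift of the $T^{n-1}$-action as the delicate step misplaces the difficulty: that part is routine, and the curvature bound is where the work lies.)
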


We also study the weighted volume functional $F$ on a complex toric manifold. This was introduced by Tian-Zhu \cite{TZ2} for compact manifolds, and is by definition a convex function on the space $\mathfrak{h}$ of all real holomorphic vector fields on $(M,J)$. As in \cite{TZ2}, the derivative of $F$ at a given holomorphic vector field can be viewed as a generalization of the Futaki invariant. The upshot is that if $(g,X)$ is a complete shrinking gradient K\"ahler-Ricci soliton on $M$, then $JX$ is necessarily the unique critical point of $F$. As a result, the vector field $X$ associated to a complete shrinking gradient K\"ahler-Ricci soliton on $(M,J)$ is unique. It was shown in \cite{ConDerSun} using the Duistermaat-Heckman theorem \cite{DH,DHadd,PW} that $F$ can be defined in the non-compact setting in the presence of a holomorphic $T^k$-action when the metric $g$ has bounded Ricci curvature. More precisely, there is an open cone $\Lambda \subset \t$, comprising those holomorphic vector fields which admit Hamiltonian potentials which are proper and bounded from below, on which $F$ is well-defined. Just as in \cite{PW}, we will see in Section 3 that, in the toric setting, there is a natural identification of $\Lambda$ with a certain open convex cone $C^* \subset \t$ determined by the $\Cstarn$-action on $(M,J)$. Furthermore, any soliton vector field $X$ with $JX \in \t$ necessarily has the property that $JX \in \Lambda$ and is the unique critical point of $F$, which in turn gives uniqueness among all holomorphic vector fields $Y$ with $JY \in \t$ \cite[Theorem D]{ConDerSun}.

We show that on a complex toric manifold, the weighted volume functional $F$ is \emph{proper} on $\Lambda$, and therefore that there exists a unique candidate holomorphic vector field $X$ with $JX \in \t$ that could be associated to a complete shrinking gradient K\"ahler-Ricci soliton. Here we make no assumptions on the curvature. Thus, we recover an analog of \cite[Theorem D]{ConDerSun} when the torus is full-dimensional, without having to assume a Ricci curvature bound; see Theorem \ref{thm4-7} below for the precise statement. 

 The main theorems here also give partial answers to some open questions raised in \cite[Section 7.2]{ConDerSun}. Namely, we obtain a positive answer to question 7 assuming that the torus is the real torus underlying an effective holomorphic and full-dimensional $\Cstarn$-action with finite fixed point set, and a positive answer to question 2 with the same assumption on the torus as well as the assumption that either $g$ is invariant or that $g$ has a Ricci curvature bound. We also show that any symplectic toric manifold with finite fixed point set admitting a compatible complete shrinking gradient K\"ahler-Ricci soliton is quasiprojective, which gives a positive answer to question 1 in the toric setting. Finally, we show that the weighted volume functional $F$ is proper on a complex toric manifold with finite fixed point set, which gives a positive answer to question 9 when the real torus is full-dimensional and admits a complexification. As we will see, this is always the case in the presence of an invariant solution to \eqref{gkrs}. 

Since the foundational work of Delzant \cite{Del} and Guillemin \cite{Guil} (which themselves relied on the earlier foundational work of Atiyah \cite{Atiyah} and Guillemin-Sternberg \cite{GS}), toric manifolds have played a key role in the study of special K\"ahler metrics on compact K\"ahler manifolds; see \cite{Ab1,DonStabTor,WZ} and many others. As a consequence of this setup, many aspects of the K\"ahler geometry of $T^n$-invariant metrics on $M$ reduce to questions about convex functions on a given polytope $P$ in $\R^n$. We show that under certain mild hypotheses, much of the structure from the compact setting carries over, replacing the bounded polytopes with potentially unbounded polyhedra. In the purely symplectic setting, there has been much work done in this direction, spanning many years; see \cite{Abook,HNP,KarLer,Ler,PW}. There has been somewhat less attention focused on the K\"ahler case, and our work draws significantly on the notable exceptions of \cite{AbSD,BGL,MSY,VC1}. There has also been recent progress in the K\"ahler setting on \emph{singular} toric varieties; see \cite{BGL} and of particular relevance to this paper \cite{BB}. 

The paper is organized as follows. In Section 2 we recall some of the basics of toric geometry from both the algebraic and symplectic perspectives. We show that the Abreu-Guillemin setup can be extended with the appropriate assumptions to non-compact manifolds. Much of this material seems to be fairly well-known in the symplectic setting, and we simply provide a rephrasing particularly suited for K\"ahler geometry. In particular, we give conditions under which the familiar Delzant classification holds in the non-compact setting. In Section 3 we study properties of some real Monge-Amp\`ere equations on unbounded convex domains in $\R^n$, and explain how these relate to the K\"ahler-Ricci soliton equation on toric manifolds. We introduce a Ding-type functional $\F$ on the appropriate space of symplectic potentials and use its convexity to determine uniqueness. Much of what appears here is drawn from \cite{BB} and \cite{Don1}. A result of Wylie \cite{Wylie} implies that any complete shrinking gradient K\"ahler-Ricci soliton admits a moment map. In Section 4, we use this to apply the results of the previous sections to complete the proofs of Theorem \ref{thm1-1} and Theorem \ref{thm1-2}.  We also include in Section 4 a proof of Corollary \ref{cor1-4}, which amounts to demonstrating that the examples constructed in \cite{Fut} indeed have bouned Ricci curvature. We conclude with an application of our work to the special case of $M = \CP^{1} \times \C$, and show that a complete shrinking gradient K\"ahler-Ricci soliton on $M$ is isometric to the standard product metric. This is the content of Corollary \ref{cor1-3}.

\subsection*{Acknowledgements}
I would like to thank Song Sun for suggesting this problem, as well as for his continued support and many useful discussions. I would like to thank Ronan Conlon for his interest, useful discussions, and detailed commentary on the preliminary versions of this article. I would also like to thank Vestislav Apostolov for his invaluable suggestions and Akito Futaki for his interest and comments. I am also grateful for the many conversations with Chris Eur in which I learned much of what I know about the algebraic geometry of toric varieties, and to the referee for their insightful suggestions. This work was partially supported by the National Science Foundation RTG grant DMS-1344991.

\section{K\"ahler geometry on non-compact toric manifolds}

\subsection{Algebraic preliminaries}

We begin by recalling some basics from algebraic toric geometry that we will use later on. The main reference here is \cite{CLS}. Fix an algebraic torus $\Cstarn$ and let $\t$ be the Lie algebra of the real torus $T^n \subset \Cstarn$. Fix an integer lattice $\Gamma \subset \t$ so that $\Cstarn \cong \t \oplus i \t / \Gamma$ acting only in the second factor. Let $\Gamma^*$ denote the corresponding dual lattice in $\t^*$. 

\begin{definition} \label{defi2-1}
 A \emph{toric variety} $M$ is an algebraic variety together with the effective algebraic action of the complex torus $\Cstarn$ with a dense orbit. More precisely, this means that the action $\Cstarn \times M \to M$ is a morphism of algebraic varieties, and there exists a point $p \in M$ such that the orbit $\Cstarn \cdot p \subset M$ is Zariski open and dense in $M$. 
\end{definition}

We emphasize that, contrary to the definitions presented in the introduction, a toric \emph{variety} $M$ is always assumed to be algebraic. As we will see, the fixed point set of the $\Cstarn$-action associated to a toric variety is necessarily finite. In particular, the underlying complex manifold of a smooth toric variety is always a complex toric manifold as defined in the introduction.

 The algebraic geometry of toric varieties has a rich interplay with combinatorics, which is integral to many of the constructions that follow. We begin by introducing the relevant combinatorial objects.

\begin{definition}\label{defi2-2}
	A \emph{polyhedron} is any finite intersection of affine half spaces $H_{\nu,a} =\{x \in \t^* \: | \:  \langle \nu, x \rangle \geq a \}$ with $\nu \in \t, a \in \R$. A \emph{polytope} is a bounded polyhedron.
\end{definition}

We will often not distinguish between a polyhedron $P$ and its interior, but where confusion may arise we will denote by $\overline{P}$ the closed object and $P$ the interior. The intersection of $P$ with the plane $\langle \nu, x \rangle = a$ is a polyhedron $F_\nu$ of one less dimension and is called a \emph{facet} of $P$. The intersections of any number of the $F_\nu$'s form the collection of \emph{faces} of $P$. 

\begin{definition} \label{defi2-3}
Let $P$ be a polyhedron given by the intersection of the half spaces $H_{\nu_i, a_i}$. We define the \emph{recession cone} (or asymptotic cone) $C$ of $P$ by 

\begin{equation*}
	C = \left\{ x \in \t^* \: | \: \langle \nu_i, x \rangle \geq 0 \right\}.
\end{equation*}
\end{definition}
\noindent Given any convex cone $C \subset \t$, the \emph{dual cone} $C^* \subset \mathfrak{t}$ is defined by

\begin{equation} \label{dualcone}
	C^* = \{\xi \in \mathfrak{t} \: | \: \langle \xi, x \rangle \geq 0 \text{ for all } x \in C\}.
\end{equation}
Note that (the interior of) $C^*$ is necessarily an open cone in $\mathfrak{t}$, even when $C$ is not full-dimensional.

\begin{definition} \label{defi2-4}
	Let $P$ be a polyhedron. If the vertices of $P$ lie in the dual lattice $\Gamma^* \subset \t^*$, then we say that $P$ is \emph{rational}.
\end{definition}

Rational polyhedra play an important role in the algebraic geometry of toric varieties, in that each such $P$ determines a unique quasiprojective toric variety $\M_P$. This procedure is constructive and can be understood via the introduction of a \emph{fan}. A \emph{rational polyhedral cone} $\sigma$ is by definition a convex subset of $\t$ of the form 

\begin{equation*}
	\sigma = \left\{ \sum \lambda_i \nu_i \: | \: \lambda_i \in \R_+ \right\},
\end{equation*}
where $\nu_1, \dots, \nu_k \in \Gamma$ is a fixed finite collection of lattice points. The recession cone $C$ of a rational polyhedron $P$ is always a rational polyhedral cone \cite[Chapter 7]{CLS}.

\begin{definition} \label{defi2-5}
A \emph{fan} $\Sigma$ in $\t$ is a finite set consisting of rational polyhedral cones $\sigma$ satisfying

	\begin{enumerate} 
		\item For every $\sigma \in \Sigma$, each face of $\sigma$ also lies in $\Sigma$.
		\item For every pair $\sigma_1, \sigma_2 \in \Sigma$, $\sigma_1 \cap \sigma_2$ is a face of each.
	\end{enumerate}	
\end{definition}
\noindent We will also assume that the support of $\Sigma$ is full-dimensional, that is to say, there exists at least one $n$-dimensional cone $\sigma \in \Sigma$. To every fan $\Sigma$ there is an associated toric variety $\M_\Sigma$. We will give a very brief summary of this construction below; for more details see \cite[Chapter 3]{CLS}. For us the main point is the following corollary of a result of Sumihiro \cite{Sumi}: 
 
 \begin{proposition}[{\cite[Corollary 3.1.8]{CLS}}]\label{prop2-6}
 	Let $M$ be a toric variety. Then there exists a fan $\Sigma$ such that $M \cong \M_\Sigma$. 
 \end{proposition}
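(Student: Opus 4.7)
The plan is to deduce the result from Sumihiro's theorem, which guarantees that any normal variety with an algebraic torus action admits a finite cover by torus-invariant affine open sets. Since a toric variety $M$ is normal (being smooth, or at worst having the normalization property built into the usual conventions) and carries an effective $\Cstarn$-action, Sumihiro's theorem provides a finite cover $\{U_\alpha\}$ of $M$ by $\Cstarn$-invariant affine open subsets. Each $U_\alpha$ inherits the dense-orbit property from $M$ and is therefore itself an affine toric variety.

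The next step is to invoke the classification of affine toric varieties: every affine toric variety for the torus $\Cstarn$ is isomorphic to $U_\sigma := \mathrm{Spec}\,\C[\sigma^\vee \cap \Gamma^*]$ for a uniquely determined strongly convex rational polyhedral cone $\sigma \subset \t$, where $\sigma^\vee \subset \t^*$ is the dual cone. Thus to each $U_\alpha$ we attach a cone $\sigma_\alpha \subset \t$. The collection $\Sigma$ is then defined to consist of all the $\sigma_\alpha$ together with all their faces; the task is to check that this collection satisfies the two axioms of a fan in Definition 2.5.

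The main obstacle, and the content of the proof, lies in verifying the gluing compatibility: one needs to show that for any two cones $\sigma_\alpha$ and $\sigma_\beta$, the intersection $\sigma_\alpha \cap \sigma_\beta$ is a common face of each. This is forced by the way the affine pieces glue inside $M$. Concretely, the open subset $U_\alpha \cap U_\beta$ is itself a $\Cstarn$-invariant affine open, hence corresponds to some rational polyhedral cone $\tau$, and the inclusions $U_\alpha \cap U_\beta \hookrightarrow U_\alpha$ and $\hookrightarrow U_\beta$ translate, via the classification, into identifications of $\tau$ with a face of both $\sigma_\alpha$ and $\sigma_\beta$. A short lattice-geometric argument (using separating hyperplanes in $\t^*$ coming from monomials that vanish on one affine chart but not the other) then identifies this common face with the set-theoretic intersection $\sigma_\alpha \cap \sigma_\beta$, establishing axiom (2); axiom (1) is automatic from the construction.

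Finally, one assembles the local isomorphisms $U_\alpha \cong U_{\sigma_\alpha}$ into a global isomorphism $M \cong \M_\Sigma$, where $\M_\Sigma$ is obtained by gluing the $U_{\sigma_\alpha}$ along the prescribed face inclusions. Equivariance of all the identifications under $\Cstarn$ ensures that the glued map is well defined and an isomorphism of toric varieties. The full-dimensionality assumption on the support of $\Sigma$ corresponds to the presence of at least one fixed point of $\Cstarn$ and matches our standing hypothesis on $M$, so no additional work is needed there.
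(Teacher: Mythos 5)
The paper gives no proof of this proposition at all: it is quoted verbatim from \cite[Corollary 3.1.8]{CLS} as a consequence of Sumihiro's theorem, which is exactly the route you take. Your sketch reproduces the standard argument from that reference correctly (the only implicit ingredients are that $M$ is normal and separated, which are covered by the paper's conventions since the varieties of interest are smooth), so there is nothing to add.
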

To construct $\M_\Sigma$ from $\Sigma$, one begins by taking each $n$-dimensional cone $\sigma \in \Sigma$ and constructing an affine toric variety $U_\sigma$. We define the dual cone $\sigma^*$ of $\sigma$ by \eqref{dualcone}:

	\begin{equation*}
		\sigma^* = \left\{x \in \t^* \: | \: \langle x, \xi \rangle \geq 0 \text{ for all } \xi \in \sigma \right\}.
	\end{equation*}
Let $S_\sigma$ be the semigroup of those lattice points which lie in $\sigma^*$ under addition. Then one defines the semigroup ring, as a set, as all finite sums of the form 

	\begin{equation*}
		\C[S_\sigma] = \left\{ \sum \lambda_s s \: | \: s \in S_\sigma \right\}.
	\end{equation*}
The ring structure is then defined on monomials by $\lambda_{s_1}s_1\cdot \lambda_{s_2}s_2  = (\lambda_{s_1}\lambda_{s_2})(s_1+ s_2)$ and extended in the natural way. The basic example is $\sigma = \R^n_+$, where $\C[S_\sigma]$ is naturally isomorphic to $\C[z_1, \dots, z_n]$. Then the affine variety $U_\sigma$ is defined to be $\text{Spec}(\C[S_\sigma])$. This is automatically endowed with a $\Cstarn$-action with an open dense orbit. This construction of course can be implemented on the lower-dimensional cones $\tau \in \Sigma$. If $\sigma_1 \cap \sigma_2 = \tau$, then there is a natural way to map $U_\tau$ into $U_{\sigma_1}$ and $U_{\sigma_2}$ isomorphically. Thus one constructs $\M_\Sigma$ by declaring the collection of all $U_\sigma$ to be an open affine cover with transition data determined by $U_\tau$. An important property of this construction is the Orbit-Cone correspondence. 

 \begin{proposition}[Orbit-Cone correspondence,{ \cite[Theorem 3.2.6]{CLS}}] \label{prop2-7} Let $\Sigma$ be a fan and $\M_\Sigma$ be the associated toric variety. The $k$-dimensional cones $\sigma \in \Sigma$ are in natural one-to-one correspondence with the $(n-k)$-dimensional orbits $O_\sigma$ of the $\Cstarn$-action on $\M_\Sigma$. Moreover, given a $k$-dimensional cone $\sigma \in \Sigma$ and a corresponding orbit $O_\sigma \subset \M_\Sigma$, we have that $\sigma$ lies as an open subset of the Lie algebra $\t_\sigma$ of the $k$-dimensional real subtorus $T_\sigma \subset T^n$ that stabilizes the points on $O_\sigma$.
 \end{proposition}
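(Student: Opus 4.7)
The plan is to reduce the statement to the affine case, analyze that case via semigroup homomorphisms, and then glue the result back together over the fan. Concretely, recall that $\M_\Sigma$ is covered by the affine pieces $U_\sigma = \mathrm{Spec}\,\C[S_\sigma]$ as $\sigma$ ranges over the top-dimensional cones of $\Sigma$, and that the $\Cstarn$-action on $\M_\Sigma$ restricts to the natural $\Cstarn$-action on each $U_\sigma$. Since any $\Cstarn$-orbit of $\M_\Sigma$ meets at least one $U_\sigma$ and is determined by its intersection with that piece, it suffices to describe the orbit structure of each $U_\sigma$ in terms of the faces of $\sigma$ and then to check that the descriptions are compatible under the gluing.

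For the affine analysis, I would use the standard identification of closed points of $U_\sigma$ with semigroup homomorphisms $\gamma : S_\sigma \to (\C,\cdot)$, where $\C$ is regarded as a multiplicative semigroup with absorbing element $0$. The key observation is that for such a $\gamma$, the preimage $\gamma^{-1}(\Cstar)$ is a face subsemigroup of $S_\sigma$, necessarily of the form $\sigma^* \cap \tau^\perp \cap \Gamma^*$ for a unique face $\tau \preceq \sigma$. I would then introduce the distinguished point $p_\tau \in U_\sigma$ given by $\gamma_\tau(m) = 1$ if $m \in \tau^\perp$ and $\gamma_\tau(m) = 0$ otherwise, and show that every point $\gamma$ with associated face $\tau$ lies in the $\Cstarn$-orbit of $p_\tau$. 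This follows because, once the vanishing pattern is fixed, the remaining non-vanishing values of $\gamma$ on a basis of $\tau^\perp \cap \Gamma^*$ can be matched arbitrarily using the $\Cstarn$-action, while directions in $\Gamma^*/(\tau^\perp \cap \Gamma^*)$ act trivially on $p_\tau$ because they force the corresponding characters to $0$. This produces a bijection between faces $\tau$ of $\sigma$ and $\Cstarn$-orbits $O_\tau$ in $U_\sigma$, together with the description $O_\tau \cong \mathrm{Hom}(\tau^\perp \cap \Gamma^*, \Cstar)$.

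From this local picture the stabilizer calculation is immediate: the stabilizer of $p_\tau$ is the subgroup of $\Cstarn = \mathrm{Hom}(\Gamma^*,\Cstar)$ consisting of characters that are trivial on $\tau^\perp \cap \Gamma^*$, which is precisely the complex subtorus $T_{\sigma,\C}$ dual to the lattice $\Gamma/(\tau \cap \Gamma)^{\mathrm{sat}}$ intersected with $\mathrm{span}(\tau)$; its real form $T_\sigma \subset T^n$ has Lie algebra $\t_\sigma = \mathrm{span}_\R(\tau)$, and $\tau$ sits as an open cone in $\t_\sigma$ by construction. Counting dimensions, $\dim_\R \t_\sigma = k$ when $\dim \tau = k$, so $\dim_\C O_\tau = n - k$, matching the claim.

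Finally, I would verify that this orbit description is independent of the affine chart. If $\tau = \sigma_1 \cap \sigma_2$ for two maximal cones, the gluing identifies $U_\tau$ with an equivariant open subset of both $U_{\sigma_1}$ and $U_{\sigma_2}$, and one checks that the distinguished point $p_\tau$ defined in either chart has the same image under the gluing because its defining vanishing pattern depends only on $\tau$, not on the ambient $\sigma$. This globalizes the bijection to an assignment $\sigma \mapsto O_\sigma$ on all of $\Sigma$. The main technical obstacle is really the classification of semigroup homomorphisms $S_\sigma \to \C$ via faces — this is where one needs to use that $S_\sigma$ is a finitely generated saturated submonoid of $\Gamma^*$ and that faces of $\sigma$ are in bijection with faces of $\sigma^*$ via the $\perp$ operation — but once that combinatorial dictionary is in place, the remainder of the argument is essentially bookkeeping.
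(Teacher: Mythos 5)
Your argument is correct and is essentially the standard proof of the Orbit--Cone correspondence from the cited reference \cite[Theorem 3.2.6]{CLS}; the paper itself gives no proof, only the citation, so there is nothing to compare beyond noting that your route (points of $U_\sigma$ as semigroup homomorphisms $S_\sigma \to \C$, the face dictionary for $\gamma^{-1}(\Cstar)$, the distinguished points $p_\tau$, the stabilizer computation, and the chart-independence check) is exactly the one used there. The only cosmetic slip is that the affine cover should be indexed by the \emph{maximal} cones of $\Sigma$ rather than the top-dimensional ones, since a fan may contain maximal cones of dimension less than $n$; this does not affect the argument.
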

In particular, the fixed point set of the $\Cstarn$-action is in natural bijection with the full-dimensional cones in $\Sigma$, and is therefore always finite. At the other extreme, each ray $\sigma \in \Sigma$ determines a unique torus-invariant divisor $D_\sigma$. As a consequence, a torus-invariant Weil divisor $D$ on $\M_\Sigma$ naturally determines a polyhedron $P_D \subset \t^*$ as follows. We can decompose $D$ uniquely as $D = \sum_{i = 1}^{N} a_i D_{\sigma_i}$, where $\sigma_i \in \Sigma$, $i = 1, \dots, N$ is the collection of rays. By assumption, there exists a unique minimal $\nu_i \in \sigma_i \cap \Gamma$. Then set 
 
 \begin{equation} \label{polyofdivisor}
 	P_D = \left\{ x \in \t^* \: | \: \langle \nu_i, x \rangle \geq - a_i  \text{ for all } i = 1, \dots, N \right\}.
 \end{equation}
The importance of polyhedra for our purposes lies in the fact that this procedure is partially reversible. That is, given a suitable polyhedron $P$, one can determine a unique toric variety $\M_P$ through its \emph{normal fan} $\Sigma_P$. To form $\Sigma_P$, one starts with a vertex $v \in P$ and considers those facets $F$ containing $v$. This determines a cone $\sigma_v$ spanned by the inner normals $\nu_F$ corresponding to each such $F$. Then there is a unique fan $\Sigma_P$ which consists of the collection of $\sigma_v$ along with all of each of their faces. Finally, $\M_P$ is defined to be the toric variety associated to $\Sigma_P$. As we will see, the variety $\M_P$ comes naturally equipped with a divisor $D$ whose corresponding polyhedron is precisely $P$. Moreover, 

\begin{proposition}[{\cite[Theorem 7.1.10]{CLS}}] \label{prop2-8}
	Let $P$ be a full-dimensional rational polyhedron in $\t^*$. Then the variety $\M_P$ constructed above is quasiprojective.
\end{proposition}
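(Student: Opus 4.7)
The approach is to realize $\M_P$ as a $\Cstarn$-equivariant open subvariety of a projective toric variety, from which quasiprojectivity follows by definition. The strategy is to truncate the polyhedron $P$ to a polytope $Q_N$ in a way that preserves the relevant portion of the normal fan.

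Since $P$ is a full-dimensional rational polyhedron with at least one vertex, its recession cone $C$ is pointed, so the dual cone $C^{*}$ is full-dimensional and one can choose a primitive lattice vector $\xi_0 \in \Gamma \cap \text{int}(C^{*})$. For $N \in \N$ I define
\begin{equation*}
Q_N := P \cap \{x \in \t^{*} : \langle \xi_0, x\rangle \leq N\}.
\end{equation*}
Strict positivity of $\langle \xi_0, \cdot \rangle$ on $C \setminus \{0\}$ forces $Q_N$ to be bounded, and rationality of $P$ and $\xi_0$ guarantees that $Q_N$ is a rational polytope. For $N$ sufficiently large, every vertex of $P$ lies strictly below the truncation hyperplane and therefore remains a vertex of $Q_N$.

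Next I would compare the normal fans $\Sigma_P$ and $\Sigma_{Q_N}$ and show that $\Sigma_P$ is a subfan of $\Sigma_{Q_N}$. For $\eta \in C^{*}$ the functional $\langle \eta, \cdot\rangle$ is bounded below on $P$ and attains its minimum at some vertex $v$ of $P$; since $P$ has only finitely many vertices, for $N$ larger than $\max_v \langle \xi_0, v\rangle$ the same vertex $v$ continues to minimize the functional on $Q_N$, so the normal cone at $v$ is unchanged. For $\eta \notin C^{*}$, by contrast, $\langle \eta, \cdot\rangle$ is unbounded below on $P$, so its minimum on $Q_N$ must be achieved on the new truncation facet at a vertex not present in $P$. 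Consequently the normal cones of $\Sigma_{Q_N}$ attached to the original vertices of $P$ coincide with the cones of $\Sigma_P$, while the cones attached to new vertices lie outside $\text{int}(C^{*})$; in particular $\Sigma_P$ is a subfan of $\Sigma_{Q_N}$.

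The subfan inclusion then induces a canonical $\Cstarn$-equivariant open embedding $\M_P \hookrightarrow \M_{Q_N}$, and $\M_{Q_N}$ is projective since $Q_N$ is a rational polytope: the support function of $Q_N$ is strictly convex on the complete fan $\Sigma_{Q_N}$ and defines, after clearing denominators, a very ample torus-invariant divisor whose global sections embed $\M_{Q_N}$ into a projective space. Hence $\M_P$ is an open subvariety of a projective variety, and therefore quasiprojective. The most delicate point is the uniform choice of $N$ in the second step---one needs a single $N$ that works for every $\eta \in C^{*}$ simultaneously---but this is exactly where finiteness of the vertex set of $P$ enters.
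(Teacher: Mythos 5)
Your argument is correct. Note that the paper gives no proof of this proposition at all --- it is imported verbatim from \cite{CLS}, Theorem 7.1.10 --- so the comparison is really with that reference, where quasiprojectivity is deduced by exhibiting $\M_P$ as projective \emph{over} the affine toric variety attached to the recession cone, via the cone over the polyhedron in $\t^*\times\R$ and a Proj construction. Your route is more elementary and purely combinatorial: truncate $P$ by the rational hyperplane $\langle \xi_0,\cdot\rangle = N$ with $\xi_0$ a lattice point interior to $C^*$ and $N$ beyond every vertex of $P$, observe that the resulting $Q_N$ is a full-dimensional rational polytope, and show $\Sigma_P$ is a subfan of $\Sigma_{Q_N}$, so that $\M_P$ embeds equivariantly as an open subvariety of the projective variety $\M_{Q_N}$. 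The key steps all check out: $Q_N$ is bounded because its recession cone is $C\cap\{\langle\xi_0,\cdot\rangle\le 0\}=\{0\}$; for each original vertex $v$ one gets the exact equality $\sigma_v^{Q_N}=\sigma_v^{P}$ of normal cones, since any $\eta\in C^*$ attains $\min_{Q_N}=\min_{P}$ at a vertex of $P$ while any $\eta\notin C^*$ is minimized on $Q_N$ only along the truncation facet (a short ray argument using a direction $y\in C$ with $\langle\eta,y\rangle<0$); and the facts that a subfan inclusion induces an open immersion and that the normal fan of a full-dimensional rational polytope yields a projective toric variety are standard. Two minor caveats: full-dimensionality and rationality alone do not guarantee that $P$ has a vertex (a slab has none), so the pointedness of $C$ is really inherited from the paper's standing assumption that the fan contains an $n$-dimensional cone, which is exactly what makes the construction of $\M_P$ from vertices applicable; and when $P$ is already bounded the truncation is eventually vacuous, which your argument accommodates without change. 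What your approach buys is a self-contained, fan-level proof that also makes explicit the geometric meaning of the compactification $\M_{Q_N}$; what the \cite{CLS} approach buys is the sharper statement that $\M_P$ is projective over an affine base, which is more information than quasiprojectivity alone.
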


\subsection{Complex coordinates}

Let $M$ be a complex manifold together with an effective holomorphic $\Cstarn$-action. Such an action always has an open and dense orbit. Indeed, let $T^n \subset \Cstarn$ be the real torus with Lie algebra $\t$. Choose a basis $(X_1, \dots, X_n)$ for $\t$. Then each $X_i$ is a holomorphic vector field on $M$, and thus vanishes along an analytic subvariety. In particular, there is a fixed analytic subvariety $V \subset M$ such that on $U = M - V$, none of the vector fields $X_i$ vanish. Clearly $X_i$ and $JX_i$ are complete and commute, and so the vector fields $(X_1, JX_1, \dots, X_n, JX_n)$ can be integrated to determine an isomorphism $U \cong \Cstarn$. Throughout the remainder of the paper we will make heavy use of this natural coordinate system, which we usually just denote by $\Cstarn \subset M$. In particular, we fix once and for all such a basis $(X_1, \dots, X_n)$ for $\t$. This induces a background coordinate system $(\xi^1, \dots, \xi^n)$ on $\t$. We use the natural inner product on $\t$ to identify $\t \cong \t^*$ and thus can also identify $\t^* \cong \R^n$. For clarity, we will denote the induced coordinates on $\t^*$ by $(x^1, \dots, x^n)$. Let $(z_1, \dots, z_n)$ be the natural coordinates on $\Cstarn$ as an open subset of $\C^n$. There is a natural diffeomorphism $\text{Log}: \Cstarn \to \t \times T^n$, which provides a one-to-one correspondence between $T^n$-invariant smooth functions on $\Cstarn$ and smooth functions on $\t$. Explicitly, $\text{Log}(z_1, \dots, z_n) = (\log(r_1), \dots, \log(r_n), \theta_1, \dots, \theta_n)$, where $z_j = r_j e^{i \theta_j}$. Given a function $H(\xi)$ on $\t$, we can extend $H$ trivially to $\t \times T^n$ and pull back by Log to obtain a $T^n$-invariant function on $\Cstarn$. Clearly, any $T^n$-invariant function on $\Cstarn$ can be written in this form.

\begin{definition} \label{defi2-9}
	Let $\omega$ be a $T^{n}$-invariant K\"ahler metric on $M$. We say that the $T^{n}$-action is \emph{Hamiltonian} with respect to the $\omega$ if there exists a \emph{moment map} $\mu$. This by definition is a smooth function $\mu: M \to \mathfrak{t}^*$ satisfying

\begin{equation*} 
	d\langle \mu, v \rangle = - i_v\omega,
\end{equation*} 
for each $v \in \mathfrak{t}$ where $i_v$ denotes the interior product and $\langle \cdot \, ,  \cdot \rangle$ is the dual pairing. 
\end{definition}
The K\"ahler metrics on the complex torus $\Cstarn$ itself with respect to which the standard $T^{n}$-action is Hamiltonian have a natural characterization due to Guillemin.

\begin{proposition}[{\cite[Theorem 4.1]{Guil}}]\label{prop2-10} 
	Let $\omega$ be any $T^n$-invariant K\"ahler form on $\Cstarn$. Then the action is Hamiltonian with respect to $\omega$ if and only if there exists a $T^n$-invariant potential $\phi$ such that $\omega =2 i\p\bp \phi$. 
\end{proposition}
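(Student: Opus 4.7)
The plan is to prove both directions using the coordinates provided by the $\text{Log}$ map, in which $\Cstarn \cong \t \times T^n$ and $T^n$-invariant functions pull back to smooth functions of $\xi$ alone; the basis vector fields act as $X_j = \p/\p\theta^j$, and one checks that $JX_j = \pm\p/\p\xi^j$. For the forward direction, assume $\omega = 2i\p\bp\phi$ with $\phi$ a $T^n$-invariant potential. Since $X_j$ is real holomorphic and $X_j(\phi) = 0$, the Cartan formula gives
\[
i_{X_j}\omega = i_{X_j}d(d^c\phi) = -d(d^c\phi(X_j)) = -d(d\phi(JX_j)),
\]
so $\mu_j := -d\phi(JX_j)$, which reduces in $\xi$-coordinates to a first derivative of $\phi$, defines a moment map.

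For the converse, suppose the action is Hamiltonian with moment map $\mu$. Averaging $\mu$ over $T^n$ preserves the defining equation $d\mu_j = -i_{X_j}\omega$ because $\omega$ and the $X_j$ are $T^n$-invariant, so I may assume each $\mu_j$ is $T^n$-invariant and hence descends to a smooth function $\bar\mu_j(\xi)$ on $\t$. The crux of the argument is to show that $\bar\mu: \t \to \t^*$ has symmetric Jacobian. From $d\bar\mu_j = -i_{X_j}\omega$ and $JX_k = \pm\p/\p\xi^k$ one obtains $\omega(X_j, JX_k) = \mp\p\bar\mu_j/\p\xi^k$, while the symmetry of the K\"ahler metric $g(U, V) = \omega(U, JV)$ forces $\omega(X_j, JX_k) = \omega(X_k, JX_j)$; together these give $\p\bar\mu_j/\p\xi^k = \p\bar\mu_k/\p\xi^j$. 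Since $\t \cong \R^n$ is simply connected, Poincar\'e's lemma then produces a smooth function $\phi: \t \to \R$ with $\p\phi/\p\xi^j = \bar\mu_j$.

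To finish, set $\omega' := 2i\p\bp\phi$ and observe that, by the forward direction applied to $\phi$, the form $\omega'$ admits $\bar\mu$ as its moment map as well. Hence $\eta := \omega - \omega'$ is a $T^n$-invariant closed real $(1,1)$-form satisfying $i_{X_j}\eta = 0$ for every $j$. In the $(\xi, \theta)$ basis this condition kills all $d\xi^j\wedge d\theta^k$ and $d\theta^j\wedge d\theta^k$ components of $\eta$, leaving only terms $\eta_{jk}(\xi)\, d\xi^j\wedge d\xi^k$; but such a form can only be of type $(1,1)$ on $\Cstarn$ if it vanishes identically, since its $(2,0)$-part is proportional to $\sum\eta_{jk}(\xi)\, dz^j\wedge dz^k/(z^j z^k)$ with no matching terms to cancel. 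Thus $\omega = 2i\p\bp\phi$. The main subtlety, as I see it, is this last vanishing step: it is precisely the interplay between the Hamiltonian condition and the $(1,1)$-type of $\omega$ that rules out the extra closed two-forms on $\Cstarn$ coming from its nontrivial first cohomology, and ensures that the construction of $\phi$ really captures the whole K\"ahler form.
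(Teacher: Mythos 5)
Your proof is correct. Note that the paper itself gives no argument for this proposition --- it is quoted directly from Guillemin --- so there is no in-paper proof to compare against; your argument is essentially the standard one underlying Guillemin's theorem. All the key steps check out: averaging the moment map over $T^n$ preserves the defining relation $d\mu_j = -i_{X_j}\omega$ because $\omega$ and the generators are invariant; the identity $\omega(X_j, JX_k) = g(X_j,X_k)$ correctly converts symmetry of the metric into symmetry of the Jacobian of $\bar\mu$; and the final step, ruling out a residual closed invariant $2$-form, is exactly where the $(1,1)$ hypothesis must enter, since $H^1(\Cstarn)\neq 0$ means the Hamiltonian condition alone cannot determine $\omega$ from $d\bar\mu$. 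Two cosmetic remarks. First, when you "apply the forward direction to $\phi$" you do not yet know $2i\p\bp\phi$ is positive, so strictly you are using only the formal computation $i_{X_j}(2i\p\bp\phi) = -d(\p\phi/\p\xi^j)$, which indeed needs no positivity; it would be cleaner to say so. Second, the argument can be compressed: writing a $T^n$-invariant $2$-form as $\tfrac12 a_{jk}d\xi^j\wedge d\xi^k + b_{jk}d\xi^j\wedge d\theta^k + \tfrac12 c_{jk}d\theta^j\wedge d\theta^k$, the $(1,1)$ condition on $\omega$ by itself forces $a=0$ and $b$ symmetric, while the Hamiltonian condition forces $c=0$ and $b_{kj}=\p\bar\mu_j/\p\xi^k$; this yields the symmetry of the Jacobian and disposes of the residual form in one stroke, without the separate $\eta$ argument at the end.
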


Suppose that $(M, J, \omega)$ admits an effective and holomorphic $\Cstarn$-action and that $\omega$ is the K\"ahler form of a $T^n$-invariant compatible K\"ahler metric. In this context, Proposition \ref{prop2-10} implies that if the $T^{n}$-action on $M$ is Hamiltonian with respect to $\omega$, then restriction of $\omega$ to the dense orbit is $\p\bp$-exact. As before, let $(z_1, \dots, z_n)$ denote the standard coordinates on $\Cstarn$. Choose any branch of $\log$ and write $w = \log(z)$. Then clearly $w = \xi + i \theta$ (or, more precisely, there is a corresponding lift of $\theta$ to the universal cover with respect to which the equality holds), and so if $\phi$ is $T^n$-invariant and $\omega = 2i \p \bp \phi$, we have that

\begin{equation*}
	\omega = 2i\frac{\p^2 \phi}{ \p w^i \p\bar{w}^j} dw_i \wedge d\bar{w}_j = \frac{\p^2 \phi}{ \p \xi^i \p\xi^j} d\xi^i \wedge d\theta^j.
\end{equation*}
In this setting, the metric $g$ corresponding to $\omega$ is given on $\t \times T^n$ by 

\begin{equation*}
	g = \phi_{ij}(\xi)d\xi^i d\xi^j + \phi_{ij}(\xi)d\theta^i d\theta^j.
\end{equation*}
The moment map $\mu$ as a map $\mu: \t \times T^n \to \t^*$ is defined by the relation 
	
	\begin{equation*}
		\langle \mu(\xi, \theta), b \rangle = \langle \nabla \phi(\xi), b \rangle
	\end{equation*}
for all $b \in \t$, and where $\nabla \phi$ is the Euclidean gradient of $\phi$. Since the Hessian of $\phi$ is positive-definite, it follows that $\phi$ is strictly convex on $\t$. In particular, $\nabla \phi$ is a diffeomorphism onto its image. Using the identifications mentioned at the beginning of this section, we view $\nabla \phi$ as a map from $\t$ into an open subset of $\t^*$.

\subsection{Setup of the equation}

Suppose now that $(g,X)$ is a shrinking gradient K\"ahler-Ricci soliton on a complex toric manifold $M$ and that $g$ is $T^n$-invariant. Restricting to the dense orbit, we see that $g$ is determined by a convex function $\phi$ on $\t$. We wish therefore to write equation \eqref{gkrs} as an equation for $\phi$. From \eqref{gkrs}, we can assume by averaging that the soliton potential $f$, and therefore the vector field $X$, must also be $T^n$-invariant. Writing $f = f(\xi, \theta)$ in the real coordinate system $(\xi, \theta)$ above, it follows that $f$ is independent of $\theta$. Therefore we have that 

	\begin{equation}\label{eqn2-3}
		X = \nabla_g f =  \phi^{ij} \frac{\p f}{\p \xi^i} \frac{\p}{\p \xi^j}.
	\end{equation}
In fact, the coefficients $\phi^{ij}\frac{\p f}{\p \xi^i}$ must be constant. Indeed, let $w = \log(z)$ as above, where $z$ is the standard coordinate on $\Cstarn$, so that $w = \xi + i \theta$. In these coordinates we can write 

\begin{align*} 
	X^{1,0} = \phi^{ij} \frac{\p f}{\p \xi^i}\frac{\p}{\p w_j},
\end{align*}
where the coefficients $ \phi^{ij} \frac{\p f}{\p \xi^i}$ depend only on the real part $\xi$ of $w$. Since $X$ is holomorphic, it follows that 

\begin{align*}
	\frac{\p}{\p \xi^k} \left( \phi^{ij} \frac{\p f}{\p \xi^i}\right)= 2 \frac{\p}{\p \bar{w}_k} \left( \phi^{ij} \frac{\p f}{\p \xi^i}\right) = 0.
\end{align*}
In particular, it follows that $JX \in \t$. We will denote the coefficients $\phi^{ij}\frac{\p f }{\p \xi^j} = b_X^i$, so that $JX = b_X^i \frac{\p}{\p \theta^i}$ is determined by the constant $b_X \in \t$.

 \begin{lemma}  \label{lem2-11}
 	Suppose that $\omega$ is a $T^n$-invariant K\"ahler metric on $M$ and that the $T^n$-action is Hamiltonian with respect to $\omega$, so that there exists a K\"ahler potential $\phi$ for $\omega$ on the dense orbit $\Cstarn \subset M$. If $Y$ is any real holomorphic vector field such that $JY \in \mathfrak{t}$, let $\theta_Y \in C^\infty(M)$ be the Hamiltonian potential $\theta_Y = \mu(JY)$ corresponding to $JY$. Then $\theta_Y$ also satisfies $\mathcal{L}_Y\omega = 2i \p \bp \theta_Y$. Moreover, up to a constant, the restriction of $\theta_Y$ to the dense orbit is given by $\theta_Y(\xi,\theta) = Y(\phi)$.  
 \end{lemma}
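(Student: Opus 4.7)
The plan is to verify both assertions on the dense orbit $(\mathbb{C}^*)^n \subset M$ using the explicit K\"ahler potential $\phi$, and then extend the first assertion to all of $M$ by continuity. The key observation, which makes everything computational, is that the hypothesis $JY \in \mathfrak{t}$ forces $Y$ to take a very simple form in the real coordinates $(\xi,\theta)$ coming from the Log chart.

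First I would extract the coordinate expression for $Y$. Since $JY \in \mathfrak{t}$, in the basis $(\partial_{\theta^1},\ldots,\partial_{\theta^n})$ we have $JY = b^j\partial_{\theta^j}$ with $b^j = b^j_Y$ real \emph{constants}. In the local holomorphic coordinates $w_j = \xi^j + i\theta^j$ on $\mathbb{C}^* \subset M$, the complex structure satisfies $J\partial_{\theta^j} = -\partial_{\xi^j}$, and applying $J$ to the relation $JY = b^j\partial_{\theta^j}$ and using $J^2 = -\mathrm{Id}$ gives $Y = b^j \partial_{\xi^j}$ on the dense orbit. In particular $Y^{1,0} = b^j \partial_{w_j}$ has constant coefficients.

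Next I would address the Hamiltonian identity on the dense orbit. With $\omega = \phi_{ij}(\xi)\,d\xi^i \wedge d\theta^j$ and $JY = b^j\partial_{\theta^j}$, direct contraction gives
\begin{equation*}
	-i_{JY}\omega = b^k \phi_{ik}(\xi)\,d\xi^i = d\bigl(b^k \phi_k(\xi)\bigr) = d(Y\phi),
\end{equation*}
where I used $Y\phi = b^j \partial_{\xi^j}\phi = b^j \phi_j$. Comparing with the defining relation $d\theta_Y = -i_{JY}\omega$, we conclude that $\theta_Y = Y\phi + c$ on the dense orbit for some constant $c$, which is the second assertion. (Equivalently, one can match this against the formula $\langle \mu(\xi,\theta),b\rangle = \langle \nabla\phi(\xi),b\rangle$ from the previous subsection.)

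Finally I would establish $\mathcal{L}_Y \omega = 2i\partial\bar\partial \theta_Y$. Since $Y$ is a real holomorphic vector field, $\mathcal{L}_Y$ commutes with both $\partial$ and $\bar\partial$ in the sense that $\mathcal{L}_Y(\partial\bar\partial\phi) = \partial\bar\partial(\mathcal{L}_Y\phi) = \partial\bar\partial(Y\phi)$; hence on the dense orbit
\begin{equation*}
	\mathcal{L}_Y \omega = 2i\,\mathcal{L}_Y(\partial\bar\partial \phi) = 2i\partial\bar\partial(Y\phi) = 2i\partial\bar\partial \theta_Y,
\end{equation*}
the additive constant relating $Y\phi$ and $\theta_Y$ being annihilated by $\partial\bar\partial$. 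Both sides of this identity are smooth global forms on $M$ that agree on an open dense set, so the identity holds on all of $M$. There is no real obstacle here; the only point that needs care is the uniform-constants argument showing that $JY \in \mathfrak{t}$ produces the clean form $Y = b^j \partial_{\xi^j}$, which is what allows the contraction $-i_{JY}\omega$ to be rewritten as an exact differential $d(Y\phi)$ directly.
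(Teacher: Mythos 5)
Your proof is correct, but it is organized differently from the paper's. The paper proves the identity $\mathcal{L}_Y\omega = 2i\p\bp\theta_Y$ globally and intrinsically in one line: by Cartan's formula $\mathcal{L}_Y\omega = d\,i_Y\omega$ (since $d\omega = 0$), and $i_Y\omega = -J i_{JY}\omega = d^c\mu(JY)$ by compatibility of $J$ and $\omega$ together with the moment map relation, whence $\mathcal{L}_Y\omega = dd^c\theta_Y = 2i\p\bp\theta_Y$ on all of $M$ with no density argument needed; the formula $\theta_Y = Y(\phi)$ up to a constant is then read off from $\omega = 2i\p\bp\phi$ on the orbit. You instead work entirely in the Log coordinates on the dense orbit: you first pin down $Y = b^j\p_{\xi^j}$ with constant $b^j$ from the hypothesis $JY\in\t$, compute the contraction $-i_{JY}\omega = d(Y\phi)$ explicitly to get $\theta_Y = Y\phi + c$, and then obtain the Lie derivative identity from the fact that $\mathcal{L}_Y$ commutes with $\p$ and $\bp$ for a real holomorphic $Y$, extending from the orbit to $M$ by continuity of both sides on the open dense set. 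Both routes are valid. The paper's argument buys brevity and avoids the extension step; yours buys explicitness --- in particular you actually derive the second assertion rather than asserting it follows "immediately," and the observation that $JY\in\t$ forces constant coefficients is exactly the point the paper's surrounding discussion relies on. The only stylistic caution is that your step $\mathcal{L}_Y(\p\bp\phi) = \p\bp(Y\phi)$ uses that $\mathcal{L}_Y$ preserves bidegree, which holds precisely because $\mathcal{L}_Y J = 0$; you state this, and it is fine.
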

 
 \begin{proof}
 	By Cartan's formula it suffices to show that
	 
	\begin{equation*}
		i_Y\omega = -Ji_{JY}\omega = -J d \mu(JY) =  d^c \mu(JY),
	\end{equation*}
which proves the first statement. The second statement follows immediately from the fact that the restriction of $\omega$ to the dense orbit is given by $2i \p\bp \phi$. 	
 \end{proof}
 
On the dense orbit then, the term $\mathcal{L}_X\omega$ in \eqref{gkrs} is given by 

 \begin{equation*}
 	 \mathcal{L}_X\omega = 2i\p\bp X(\phi).
 \end{equation*}
Hence, up to a constant, the soliton potential $f$ is given in real logarithmic coordinates on the dense orbit by

\begin{equation} \label{eqn2-4}
	 f = X(\phi) = b_X^j \frac{\p \phi }{\p \xi^j}.
\end{equation}
 Since the Ricci form of $\omega$ is given by 
 
 	\begin{equation*}
		\Ric_\omega = -i \p \bp \log \det(\phi_{ij}),
	\end{equation*}
we can succinctly rewrite \eqref{gkrs} in terms of $\phi$ alone.

  \begin{proposition}\label{prop2-12}  
  	Suppose that $M$ is a complex toric manifold and $(\omega,X)$ is a shrinking gradient K\"ahler-Ricci soliton. If the $T^n$-action is Hamiltonian with respect to $\omega$, then $\omega$ has a K\"ahler potential $\phi$ on the dense orbit, which can be viewed via the identification $\t \times T^n \cong \Cstarn$ as a convex function on $\R^n$. Then there exists a unique affine function $a(\xi)$ on $\R^n$ such that $\phi_a = \phi - a$ satisfies the real Monge-Amp\`ere equation

\begin{equation} \label{eqn2-5}
	\det (\phi_a)_{ij} = e^{-2\phi_a + \langle b_{X}, \nabla\phi_a\rangle}.
\end{equation}

\end{proposition}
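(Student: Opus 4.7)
The plan is to substitute the known identities into the gradient K\"ahler-Ricci soliton equation \eqref{gkrs} restricted to the dense orbit $\Cstarn$. Using $\Ric_\omega = -i\p\bp \log \det \phi_{ij}$, $\omega = 2i\p\bp \phi$, and the expression $f = b_X^j \phi_j$ from \eqref{eqn2-4} (valid up to an additive constant, which drops out after applying $i\p\bp$), the soliton equation becomes
\[
i\p\bp\bigl( -\log \det \phi_{ij} + b_X^j \phi_j - 2\phi \bigr) = 0
\]
on $\Cstarn$. In other words, the $T^n$-invariant function $u := -\log \det \phi_{ij} + \langle b_X, \nabla \phi \rangle - 2\phi$ is pluriharmonic on the dense orbit.

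Next, I would observe that since $u$ is $T^n$-invariant and hence depends only on $\xi$, the identity $u_{w_i \bar w_j} = \tfrac14 u_{\xi^i \xi^j}$ forces every entry of the real Hessian of $u$ in the $\xi$-variables to vanish. Hence $u(\xi) = c_0 + \langle c, \xi \rangle$ for uniquely determined constants $c_0 \in \R$ and $c \in \t^* \cong \R^n$. Taking the ansatz $a(\xi) = a_0 + \langle \alpha, \xi \rangle$ with $\alpha \in \t^*$, the affine-ness of $a$ gives $(\phi_a)_{ij} = \phi_{ij}$ and $\nabla \phi_a = \nabla \phi - \alpha$. A direct substitution then shows that
\[
-\log \det (\phi_a)_{ij} + \langle b_X, \nabla \phi_a \rangle - 2 \phi_a = u(\xi) - \langle b_X, \alpha \rangle + 2 a_0 + 2 \langle \alpha, \xi \rangle,
\]
and requiring this to vanish identically yields the linear system $\alpha = -c/2$ and $2a_0 = \langle b_X, \alpha \rangle - c_0$, which determines $a$ uniquely. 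Rearranging the resulting identity and exponentiating produces precisely \eqref{eqn2-5}.

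The argument is essentially bookkeeping; the main subtleties are keeping track of the fact that the soliton potential $f$ on the dense orbit is defined only up to an additive constant (harmlessly absorbed into $a_0$), and noting that the class of K\"ahler potentials for $\omega$ is closed under subtracting an affine function of $\xi$, which is automatic since any such function is pluriharmonic on $\Cstarn$ and so does not alter $\omega$. I do not expect any genuine analytic obstacle; the only input beyond direct computation is the elementary observation that a $T^n$-invariant pluriharmonic function on $\Cstarn$ is necessarily affine in $\xi$.
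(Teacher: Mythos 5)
Your proposal is correct and follows essentially the same route as the paper's proof: both restrict \eqref{gkrs} to the dense orbit, observe that the $T^n$-invariant combination $2\phi + \log\det(\phi_{ij}) - \langle b_X, \nabla\phi\rangle$ is pluriharmonic and hence affine in $\xi$, and then absorb that affine function into the potential. The only difference is organizational: you solve a small linear system for the coefficients of $a$ (which also makes uniqueness immediate), whereas the paper subtracts half the affine function and then adjusts by a constant.
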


\begin{proof}
	In light of the above discussion, the soliton equation \eqref{krs}
	
	\begin{equation*}
		\omega - \Ric_\omega - \frac{1}{2}\mathcal{L}_X\omega = 0 
	\end{equation*}
	can be rewritten as
	
	\begin{align*}
		0 &= i \p \bp \left( 2\phi + \log\det(\phi_{ij}) - X(\phi) \right) \\
		  &= 2\frac{\p^2 }{\p\xi^i\p\xi^j} \left( 2 \phi + \log\det(\phi_{ij}) - \langle b_X , \nabla\phi \rangle \right) d\xi^i \wedge d\theta^j,
	\end{align*}
	and so the function $ 2 \phi + \log\det(\phi_{ij}) - \langle b_X , \nabla\phi \rangle$ on $\R^n$ has vanishing Hessian, and is therefore equal to an affine function $a(\xi)$. Define 
	
	\begin{equation*}
		\phi_{a}(\xi) = \phi(\xi) - \frac{1}{2} a(\xi)
	\end{equation*}
and let $c$ be the constant $c = \frac{1}{2} \langle b_X, \nabla a \rangle$. Then it is clear that 
	
	\begin{equation*}
		2 \phi_{a} + \log\det(\phi_{a,ij}) - \langle b_X , \nabla\phi_{a} \rangle = c.
	\end{equation*}
Thus, by modifying $a$ by the addition of a constant, we have that $\phi_a$ satisfies \eqref{eqn2-5}.\\
\end{proof}

As we have seen, the metric $g$ depends only on the Hessian of $\phi$. Part of the content of Proposition \ref{prop2-12} therefore is a normalization for the potential $\phi$, and we will make use of this later on.

\subsection{Polyhedra and symplectic coordinates}

\begin{definition} \label{defi2-13}
	Let $P$ be a full-dimensional polyhedron in $\t^*$. Then $P$ is called \emph{Delzant} if, for each vertex $v \in P$, there are exactly $n$ edges $e_i$ stemming from $p$ which can be written $e_i = v + \lambda_i \varepsilon_i$ for $\lambda_i \in \R$ and $(\varepsilon_i)$ a $\Z$-basis of $\Gamma^*$. 
\end{definition}

This says that each vertex of a Delzant polyhedron, when translated to the origin, can be made to look locally like standard $\R^n_+$ via an element of $\text{GL}(n,\Z)$. It follows from the definition that there is a well-defined normal fan $\Sigma$ associated to any Delzant polyhedron $P$. One only needs to check that the relevant cones are rational polyhedral cones. This can be shown by induction, for example, since any face of a Delzant polyhedron must itself be Delzant. Therefore, given any Delzant polyhedron $P$, there is an associated toric variety $\M_P = \M_{\Sigma}$. The condition on the vertices of $P$ is precisely what is required to ensure that $\M_P$ is smooth; see \cite[Theorem 3.1.19]{CLS} and the preceding statements there.

In Section 2.1, we encountered a purely algebraic construction which produced a toric variety, and therefore a complex toric manifold, $\M_P$ from the data of a Delzant polyhedron. We now introduce a \emph{different} construction, this time coming from symplectic geometry, which will produce a symplectic toric manifold from the data of $P$. The idea is to construct a complex symplectic manifold $(M_P, \omega_P, J_P)$ as a K\"ahler quotient of $\C^N$ by a subgroup $G_\C$ of the standard torus $(\C^*)^N$. The next proposition is standard for compact symplectic toric manifolds, and in the more general setting of potentially singular and non-compact varieties it is essentially proved in \cite[Lemma 2.1]{BGL}, and earlier in \cite[Chapter VI, Proposition 3.1.1]{Abook}. We could not find the precise statement that we use in the literature, and so we briefly outline the proof below. 

\begin{proposition} \label{prop2-14}
	Let $P$ be a Delzant polyhedron in $\t$ with $N$ facets. Then there exists a K\"ahler manifold $(M_P, \omega_P, J_P)$ with an effective $J_P$-holomorphic $\Cstarn$-action on $M_P$ associated to $P$, obtained as a K\"ahler quotient of $\C^N$ by a complex subgroup $G \subset (\C^*)^N$ acting in the usual way. The $T^n$-action is Hamiltonian with respect to $\omega_P$, and the moment map $\mu_P: M_P \to \t^*$ has image $\overline{P}$. If $P$ is rational, then $\omega_P$ is the curvature form of a hermitian metric on an equivariant line bundle $L_P \to M_P$ determined by $P$.
\end{proposition}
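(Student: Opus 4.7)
The plan is to carry out the Delzant construction, adapted to the non-compact setting. First, encode $P$ combinatorially by writing $P = \{x \in \t^* : \langle \nu_i, x \rangle \geq -a_i, \ i = 1, \ldots, N\}$, where each $\nu_i \in \Gamma$ is the primitive inward-pointing normal to the $i$-th facet. The Delzant hypothesis guarantees that the linear map $\beta : \R^N \to \t$ sending $e_i \mapsto \nu_i$ restricts to a surjection $\Z^N \twoheadrightarrow \Gamma$, because at each vertex of $P$ some $n$ of the $\nu_i$ already form a $\Z$-basis of $\Gamma$. Letting $\mathfrak{g} := \ker\beta$ with lattice $\mathfrak{g} \cap \Z^N$, exponentiation yields a short exact sequence of real tori $1 \to K \to T^N \to T^n \to 1$ with complexification $1 \to G \to (\C^*)^N \to \Cstarn \to 1$, where $K \subset G$ is the maximal compact subgroup.

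Next, I would perform the K\"ahler reduction. The standard torus $T^N$ acts on $(\C^N, \omega_\std)$ in Hamiltonian fashion with the usual squared-modulus moment map, and pulling this back under the dual of the inclusion $\iota : \mathfrak{g} \hookrightarrow \R^N$ gives a moment map for the $K$-action; shifting by $-a = -(a_1, \ldots, a_N)$, set
\begin{equation*}
    \mu_K(z) \;=\; \iota^*\!\bigl(\tfrac{1}{2}|z_1|^2 - a_1,\ \ldots,\ \tfrac{1}{2}|z_N|^2 - a_N\bigr), \qquad M_P := \mu_K^{-1}(0)/K.
\end{equation*}
The Delzant condition, checked facet-by-facet, implies that $\mu_K^{-1}(0)$ is a smooth submanifold on which $K$ acts freely, so the reduced K\"ahler structure $(M_P, \omega_P, J_P)$ is smooth. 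By construction, the residual $T^n = T^N/K$ action on $M_P$ is Hamiltonian with moment map $\mu_P$, and near each vertex $v$ of $P$ the local model reduces to $(\C^n, \omega_\std)$ mapping onto the corresponding corner of $\overline{P}$ via the Delzant change of basis determined by the adjacent normals; unboundedness of $P$ is accommodated because along each unbounded facet the corresponding coordinate of $\C^n$ is itself unbounded. Globalizing these local models shows $\mu_P(M_P) = \overline{P}$.

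To promote $T^n$ to $\Cstarn$, I would invoke the standard GIT/Kempf--Ness identification $M_P \cong U /\!\!/ G$, where $U \subset \C^N$ is the semistable locus determined by the combinatorial stability condition associated to $a$ (equivalently, the complement of the coordinate subspaces indexed by primitive collections of $\Sigma_P$). The residual action of $(\C^*)^N / G \cong \Cstarn$ descends to an effective $J_P$-holomorphic action on $M_P$ compatible with $\omega_P$. When $P$ is rational, $a$ can be rescaled to lie in $\Z^N$ and thereby defines a character $\chi_a : G \to \C^*$; the associated bundle $L_P := U \times_G \C_{\chi_a}$ is then an equivariant holomorphic line bundle over $M_P$, and the standard flat hermitian metric on $\C_{\chi_a}$ together with the quadratic K\"ahler potential on $\C^N$ induce a hermitian metric on $L_P$ whose curvature is $\omega_P$ up to normalization.

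The main obstacle is the non-compactness of both $\mu_K^{-1}(0)$ and the resulting quotient: unlike in the classical bounded-polytope case, neither properness of the $K$-action nor surjectivity of $\mu_P$ onto $\overline{P}$ is automatic from compactness arguments. Both are handled by the vertex-by-vertex reduction to the standard local model on $\R^n_+$, which the Delzant condition is precisely designed to make available; this same local model produces the non-compact directions needed to realize the full recession cone of $P$ inside the moment image. Apart from this, the argument is a transcription of the classical Delzant procedure, together with the Kempf--Ness identification to promote the symplectic quotient to a GIT quotient carrying the desired holomorphic $\Cstarn$-action.
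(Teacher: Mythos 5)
Your proposal is correct and follows essentially the same route as the paper: the paper's proof is precisely the K\"ahler reduction of $\C^N$ by the subtorus $K\subset T^N$ determined by the normal data of $P$, identified with the quotient of the (semi)stable locus by $G$, together with the character construction of $L_P$ in the rational case — the only difference being that the paper outsources the details of this construction to \cite[Lemma 2.1]{BGL} and deduces $\omega_P\in 2\pi c_1(L_P)$ from the explicit Guillemin formula, whereas you execute the Delzant/Kempf--Ness argument directly. (One cosmetic point: when $P$ is rational the $a_i$ are automatically integers because each facet contains a lattice vertex and $\nu_i\in\Gamma$, so no rescaling is needed to define $\chi_a$.)
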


\begin{proof}
	This is a direct consequence of \cite[Lemma 2.1]{BGL}. In particular there is a complex subgroup $G \subset (\Cstar)^N$, a corresponding maximal compact subgroup $K \subset G$, and a moment map $\mu_K$ for the $K$-action on $\C^N$. Then $M_P$ is defined as the symplectic quotient $Z/K$, where $Z \subset \C^N$ is the preimage of a particular regular value of $\mu_K$. Denote the quotient map by $\pi: Z \to M_P$. The symplectic form $\omega_P$ is induced by the symplectic quotient by restricting the standard Euclidean symplectic form $\omega_E$ to $Z$. The complex structure $J_P$ on $M_P$ is determined via the usual K\"ahler quotient construction. In particular, there is a closed analytic subset $V$ in $\C^N$ where $G$ acts freely, and we can equivalently define $M_P = (\C^N - V)/G$. 
	
	Now, if the vertices of $P$ lie on the integer lattice, then the group $G$ is algebraic and the construction of $M_P$ in \cite{BGL} becomes a GIT quotient (see for example \cite[Chapter 14]{CLS} for details on this point). In particular, $P$ determines a character $\chi_P: G \to \Cstar$ which gives rise to an action of $G$ on the trivial line bundle $\mathcal{O} \to \C^N$, and the quotient of $\mathcal{O}$ by $G$ is a well-defined line bundle $L_P$ on $M_P$ \cite[Theorem 14.2.13]{CLS}.  The fact that $\omega_P \in 2\pi c_1(L_P)$ follows directly from the explicit Guillemin formula \cite[Theorem 5.1]{BGL} for $\omega_P$, \cite[Theorem 14.2.13]{CLS}, and the following proposition, which we state separately below for emphasis. 
\end{proof}

In particular, given the data of a \emph{rational} polyhedron $P$, we have two constructions, each associating to $P$ a toric geometric object in the appropriate category. These turn out, after making the relevant identifications, to be equivalent. Let $P$ be a rational polyhedron and $\M_P$ be the toric variety constructed in Section 2.1. 

\begin{proposition} \label{prop2-15}
	The complex manifold $(M_P, J_P)$ is equivariantly biholomorphic to $\M_P$. 
\end{proposition}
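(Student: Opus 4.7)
The plan is to identify both $(M_P, J_P)$ and $\M_P$ with the same Cox-type quotient $(\C^N \setminus V)/G$ of $\C^N$ by an algebraic subgroup $G \subset (\Cstar)^N$ that is completely determined by the combinatorial data of $P$. Since the group $G$ from Proposition \ref{prop2-14} is algebraic when $P$ is rational, the symplectic quotient $M_P$ already carries the structure of a geometric complex quotient, and the task reduces to matching this with the Cox construction of the toric variety $\M_P$ associated to the normal fan $\Sigma_P$.

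First I would fix the combinatorial setup. Let $\nu_1, \dots, \nu_N \in \Gamma$ be the primitive inner normals to the facets of $P$; these are simultaneously the primitive generators of the rays of $\Sigma_P$. The surjective homomorphism $(\Cstar)^N \to \Cstarn$ induced by $e_i \mapsto \nu_i$ has kernel equal on the one hand to the group $G$ appearing in Proposition \ref{prop2-14}, and on the other hand to the group used in the Cox quotient construction of $\M_{\Sigma_P}$ (see \cite[Chapter 5]{CLS}). The induced residual $\Cstarn = (\Cstar)^N/G$-actions on the quotient therefore agree, which will give the equivariance automatically once the underlying biholomorphism is established.

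Next I would identify the exceptional loci on each side. The Cox construction realizes $\M_P$ as the geometric quotient $(\C^N \setminus Z(\Sigma_P))/G$, where the irrelevant locus $Z(\Sigma_P)$ is cut out by the ideal generated by those subsets of rays that do not span a cone of $\Sigma_P$. On the symplectic side, $M_P = (\C^N \setminus V)/G$, where $V$ is the analytic subset of points whose $G$-orbit misses the regular level set of $\mu_K$ at the level prescribed by $P$. The proposition reduces to showing $V = Z(\Sigma_P)$. This proceeds vertex by vertex: each vertex $v$ of $P$ corresponds to a maximal cone $\sigma_v \in \Sigma_P$ spanned by the inner normals to the facets containing $v$, and a direct computation using the explicit moment map for the standard $T^N$-action on $\C^N$ shows that a $G$-orbit meets the moment level set precisely when the coordinates $\{z_i : \nu_i \in \sigma_v\}$ are all nonzero for some vertex $v$. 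This is exactly the defining condition for the complement of $Z(\Sigma_P)$.

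The main obstacle is executing this identification cleanly in the non-compact setting, where $\Sigma_P$ acquires additional rays from the recession cone of $P$ beyond those coming from bounded facets, and where one must check that no stability is lost ``at infinity'' in ways that are invisible in the Delzant compact case. Once this matching is in hand, the Delzant hypothesis guarantees that $G$ acts freely on the common open set $\C^N \setminus V$, so the analytic quotient defining $M_P$ and the algebraic geometric quotient defining $\M_P$ coincide as complex manifolds, and equivariance follows from the common identification of the residual torus $\Cstarn = (\Cstar)^N/G$.
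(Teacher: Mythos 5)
Your approach is essentially the one the paper takes: its proof is a two-line citation combining the quotient description $M_P = (\C^N \setminus V)/G$ from \cite[Lemma 2.1]{BGL} with the main theorem of \cite{Cox}, which is precisely your plan of matching the unstable locus $V$ with the irrelevant locus $Z(\Sigma_P)$ and identifying the residual $\Cstarn = (\Cstar)^N/G$-actions. One correction to your key stability condition: a $G$-orbit meets the moment level set precisely when, for some vertex $v$, the coordinates $\{z_i : \nu_i \notin \sigma_v\}$ are all nonzero (the rays \emph{not} in $\sigma_v$, not those in it); as literally written your condition would, for instance, exclude the torus-fixed points of $\CP^2$ from the semistable locus. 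With that index reversed, $V = Z(\Sigma_P)$ is exactly Cox's description of the exceptional set and the rest of your argument goes through.
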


We omit the proof here, but this is essentially proven in \cite[Lemma 2.1]{BGL} (c.f. \cite[Chapter VI, Proposition 3.2.1]{Abook}). From the description of $M_P$ given there, one simply applies the main theorem in \cite{Cox} to deduce the proposition. 

In sum, given the data of a \emph{rational} polyhedron $P$, we have two constructions, each associating to $P$ a toric geometric object in the appropriate category, and these constructions are compatible up to an appropriate identification. For the remainder of this section we work with a given Delzant polyhedron $P$ and denote $M = \M_P \cong M_P$. In particular, we have a canonical K\"ahler metric $\omega_P$ on $M$. The induced $T^n$-action is Hamiltonian by construction, so that by Proposition \ref{prop2-10} there is a K\"ahler potential $\omega_P = 2 i \p \bp \phi_P$ on the dense orbit.

We move on to consider an arbitrary K\"ahler metric $\omega$ on $M$ with respect to which the $T^n$-action is Hamiltonian, not necessarily equal to $\omega_P$. We impose the additional assumption that the corresponding moment map $\mu$ also has image equal to $\overline{P}$. Recall from Proposition \ref{prop2-10} that there then exists a potential $\phi$ on the dense orbit $\Cstarn \subset M$. We introduce logarithmic coordinates $(\xi^j, \theta^j)$ as in the previous section so that the moment map $\mu$ is determined by the diffeomorphism $\nabla \phi: \t \to P$. We can then use the moment map to introduce a change of coordinates $\nabla \phi = x$, and thereby view $\Cstarn \cong  P \times T^n$. In these coordinates the K\"ahler form $\omega$ is standard, i.e.

\begin{equation*} 
	\omega = dx^j \wedge d\theta^j.
\end{equation*} 
So the moment map $\mu = \nabla \phi$ induces a natural choice of Darboux coordinates, and for this reason $(x^j, \theta^j)$ are typically referred to as \emph{symplectic coordinates} on $M$. This is only a real coordinate system, and hence the coefficients of the K\"ahler form do not determine those of the corresponding Riemannian metric. One can still determine the metric $g$ by introducing a smooth function $u$ on $P$ which is related to $\phi$ by the \emph{Legendre transform}:

\begin{equation} \label{eqn2-6}
	\phi(\xi) + u(x) =  \langle \xi, x \rangle.
\end{equation}
Then the metric $g$ is given by

\begin{equation} \label{eqn2-7}
	g = u_{ ij}(x)dx^i dx^j + u^{ij}(x)d\theta_i d\theta_j.
\end{equation} 
Thus the metric structure is determined by the Hessian of the function $u$, and so by analogy with the complex case this function is sometimes called the \emph{symplectic potential} for $g$. Although we will not use this here, it is worth noting that it is more natural to view the function $u$ as determining the complex structure $J$, from which the formula \eqref{eqn2-7} for the metric is a consequence. The Legendre transform will be used heavily in the remainder of the paper, and so for convenience we collect some basic properties here. For references focusing on aspects most closely related to the situation here; see for example \cite{BB,Don1,Guil}. 

\begin{lemma} \label{lem2-16}
	Let $V$ be a real vector space and $\phi$ be a smooth and strictly convex function on a convex domain $\Omega' \subset V$. Then there is a unique function $L(\phi) = u$ defined on $\Omega = \nabla\phi(\Omega') \subset V^*$ by \eqref{eqn2-6}:
	
	\begin{equation*} 
		\phi(\xi) + u(x) = \langle \xi, x \rangle
        \end{equation*}
 for $x = \nabla \phi (\xi)$. The function $u$ is smooth and strictly convex on $\Omega$. Moreover, $L$ has the following properties:

\begin{enumerate}
	\item $L(L(\phi)) = \phi $,
	\item $\nabla \phi: \Omega' \to \Omega$ and $\nabla u: \Omega \to \Omega'$ are inverse to each other,
	\item $\phi_{ij}(\nabla u(x)) = u^{ij}(x) $,
	\item $L((1-t)\phi + t \phi') \leq (1-t)L(\phi) + t L(\phi')$,
	\item $L(\phi)(x) = \sup_{\xi \in \Omega'}\{\langle x, \xi \rangle - \phi(\xi)  \}$.
\end{enumerate}
	
\end{lemma}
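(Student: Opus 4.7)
The plan is to begin by establishing that $\nabla\phi : \Omega' \to V^*$ is a diffeomorphism onto $\Omega = \nabla\phi(\Omega')$. Strict convexity plus smoothness of $\phi$ gives that the Hessian $(\phi_{ij})$ is positive definite everywhere on $\Omega'$, so by the inverse function theorem $\nabla\phi$ is a local diffeomorphism. Injectivity follows from the strict monotonicity of the gradient of a strictly convex function (given $\xi\ne\xi'$ in $\Omega'$, the function $t\mapsto\phi((1-t)\xi+t\xi')$ is strictly convex, hence has strictly increasing derivative, which forces $\nabla\phi(\xi)\ne\nabla\phi(\xi')$). Combined with the local diffeomorphism property, $\nabla\phi:\Omega'\to\Omega$ is a global diffeomorphism, and $\Omega$ is open and convex as the image of a convex set under a gradient of a convex function (or, more directly, using the variational characterization below). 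This lets us define $u(x) = \langle\xi(x),x\rangle - \phi(\xi(x))$ unambiguously and smoothly, where $\xi(x) = (\nabla\phi)^{-1}(x)$.

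Next I would verify properties (2) and (3) by direct differentiation. Differentiating $u(x) = \langle\xi(x),x\rangle - \phi(\xi(x))$ and using the defining relation $\partial\phi/\partial\xi^j = x^j$ causes the terms involving $\partial\xi^j/\partial x^i$ to cancel, leaving $\partial u/\partial x^i = \xi^i$. Hence $\nabla u = (\nabla\phi)^{-1}$, which is property (2). Differentiating once more gives $u^{ij}(x) = \partial\xi^i/\partial x^j$, which is exactly the inverse of the matrix $(\phi_{ij})$ at $\xi = \nabla u(x)$, proving (3) and in particular that $u$ is strictly convex. Property (1) is then immediate: applying $L$ to $u$ and using $\nabla u = (\nabla\phi)^{-1}$ gives back $\phi$ from the definition.

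For the variational formula (5), I would fix $x\in\Omega$ and study $g_x(\xi) = \langle x,\xi\rangle - \phi(\xi)$ on $\Omega'$. Strict concavity of $g_x$ (since $\phi$ is strictly convex) makes any critical point a unique global maximum on $\Omega'$, and the critical equation $\nabla\phi(\xi) = x$ is solved precisely by $\xi = \nabla u(x)\in\Omega'$. Evaluating $g_x$ at this point reproduces $u(x)$ by definition, giving (5). The only point requiring a little care is that the sup is attained in $\Omega'$: this is exactly the hypothesis $x\in\Omega = \nabla\phi(\Omega')$, which guarantees existence of the critical point.

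Finally, property (4) drops out immediately from (5): write
\begin{equation*}
L((1-t)\phi+t\phi')(x) = \sup_{\xi}\bigl\{(1-t)(\langle x,\xi\rangle-\phi(\xi)) + t(\langle x,\xi\rangle - \phi'(\xi))\bigr\},
\end{equation*}
and bound the supremum of a sum by the sum of suprema, yielding $(1-t)L(\phi)(x) + tL(\phi')(x)$. The only mildly delicate point in the whole argument is handling the domain in (5) correctly, since $\Omega'$ need not be all of $V$; but because $\Omega$ is defined precisely as the image of $\nabla\phi$, the critical point always lies in $\Omega'$ for $x\in\Omega$, so no further analysis of boundary behavior is needed.
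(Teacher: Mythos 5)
The paper never actually proves Lemma \ref{lem2-16}: it records these facts as standard properties of the Legendre transform and points the reader to \cite{BB,Don1,Guil}. Your argument is the standard one (diffeomorphism property of $\nabla\phi$, envelope-theorem differentiation for (1)--(3), the variational formula for (5), and sup-of-a-sum $\leq$ sum-of-sups for (4)), and it is essentially correct. One sentence is false as a general claim, however: smoothness plus strict convexity does \emph{not} imply that the Hessian $(\phi_{ij})$ is positive definite everywhere. The function $\phi(\xi)=\xi^4$ on $\R$ is smooth and strictly convex but has vanishing second derivative at the origin, and its Legendre transform is a constant multiple of $|x|^{4/3}$, which is only $C^1$ at $0$; so conclusion (3) and the smoothness of $u$ genuinely require positive definiteness of the Hessian as a \emph{hypothesis}, not as a consequence of strict convexity. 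In the context of the paper this is harmless, since every $\phi$ to which the lemma is applied is a K\"ahler potential whose real Hessian is positive definite by the K\"ahler condition (this is exactly how strict convexity of $\phi$ is deduced in Section 2.2), so ``strictly convex'' is implicitly being used in this strong sense; but you should state the positive-definiteness assumption rather than derive it. A second, minor point: the convexity of $\Omega=\nabla\phi(\Omega')$, which you assert in passing, is not automatic for images of convex sets under gradient maps and deserves a word (it follows because $\Omega$ is open and sits between the interior of the effective domain of the conjugate and that domain itself), though none of (1)--(5) actually depends on it.
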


The third item can be understood to mean that the Euclidean Hessians $\nabla^2\phi$ and $\nabla^2 u$ are inverse to each other, under the appropriate change of coordinates. In most situations, we will use the shorthand $\phi_u = L(u)$. One application that will be used throughout the paper is the following. The last item is often taken as the definition of the Legendre transform (the so called Legendre-Fenchel transform), as it can be used to define $L(\phi)$ for $\phi$ merely continuous. Henceforth we will take $(v)$ as the definition of $L(\phi)$ in any case where $\phi$ is not necessarily $C^1$.

\begin{lemma}[{c.f. \cite[Lemma 2.6]{BB}}] \label{propernesslemma}
	Let $\phi$ be any strictly convex function on an open convex domain $\Omega' \subset \R^n$. Let $u$ be its Legendre transform defined on $\Omega$. If $0 \in \Omega$, then there exists a $C  >0$ such that 
		
	\begin{equation} \label{properness}
		\phi(\xi) \geq C^{-1}|\xi| - C.
	\end{equation}
In particular, $\phi$ is proper.  Moreover, we can estimate $C$ the following way. Let $\varepsilon >0$ be sufficiently small so that $B_\varepsilon(0) \subset \Omega$, then 
	\begin{equation}\label{propernesssharp}
			\phi(\xi) \geq \varepsilon |\xi| - \sup_{B_{\varepsilon}(0)} L(\phi).
	\end{equation}
\end{lemma}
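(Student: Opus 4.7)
The plan is to exploit property (v) of the Legendre transform from Lemma \ref{lem2-16}, which gives the variational characterization
\[
\phi(\xi) = \sup_{x \in \Omega}\bigl\{\langle x, \xi\rangle - u(x)\bigr\},
\]
valid once we recall that $L(L(\phi)) = \phi$ by property (i). This immediately provides the lower bound
\[
\phi(\xi) \geq \langle x, \xi\rangle - u(x) \qquad \text{for every } x \in \Omega,
\]
so the whole lemma reduces to making a clever choice of $x$.

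Since $0 \in \Omega$, I would choose $\varepsilon > 0$ with $\overline{B_\varepsilon(0)} \subset \Omega$ (possible since $\Omega$ is open). For a fixed $\xi \in \R^n$ with $\xi \neq 0$, set $x = \varepsilon\,\xi/|\xi| \in \overline{B_\varepsilon(0)}$. Then $\langle x, \xi\rangle = \varepsilon|\xi|$ and $u(x) \leq \sup_{B_\varepsilon(0)} u = \sup_{B_\varepsilon(0)} L(\phi)$, so
\[
\phi(\xi) \geq \varepsilon|\xi| - \sup_{B_\varepsilon(0)} L(\phi),
\]
which is exactly \eqref{propernesssharp}. The case $\xi = 0$ is immediate from continuity (or from taking $x = 0$). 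The first inequality \eqref{properness} and the properness of $\phi$ then follow by absorbing both $\varepsilon^{-1}$ and $\sup_{B_\varepsilon(0)} L(\phi)$ into a single constant $C$.

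There is essentially no obstacle here: the only subtlety is ensuring that property (v) applies in our setting, i.e.\ that $\phi = L(u)$ holds as a pointwise identity on $\Omega'$. This is guaranteed by the involutive property (i) of the Legendre transform in Lemma \ref{lem2-16}, which was proved under the smoothness and strict convexity hypotheses we have assumed. The finiteness of $\sup_{B_\varepsilon(0)} L(\phi)$ is clear since $u$ is continuous on the open set $\Omega$ and $\overline{B_\varepsilon(0)}$ is a compact subset of $\Omega$. Therefore the proof amounts to a one-line application of the Legendre duality plus the correct choice of test point.
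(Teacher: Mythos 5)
Your proof is correct and follows exactly the route the paper intends: the paper states that \eqref{propernesssharp} is an immediate corollary of Lemma \ref{lem2-16}, and your argument simply writes out that corollary by applying property (v) to $u$ together with the involutivity (i) and testing against $x = \varepsilon\,\xi/|\xi|$. The only cosmetic point is that your test point lies on $\partial B_\varepsilon(0)$ while the bound is stated with $\sup_{B_\varepsilon(0)}L(\phi)$, but continuity of $u$ makes the sup over the open and closed balls agree, so nothing is lost.
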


The estimate \eqref{propernesssharp} is an immediate corollary of Lemma \ref{lem2-16}. For smooth functions one can see \eqref{properness} directly; since $0$ is in the domain of $u$, there is some $\xi$ such that $\nabla \phi_u(\xi) = 0$. Then $\phi_u$ is a strictly convex function with a minimum, and hence must grow at least linearly.  However in what follows we will need to make use of \eqref{propernesssharp} even for smooth functions. Although the notation is suggestive of the situation where $\phi$ is the K\"ahler potential of a toric metric, it is worth noting, and will be used later on, that this is completely symmetric in $\phi$ and $u$. That is to say, if $0$ lies in the domain $\Omega'$ of $\phi$, it follows that $u$ must also satisfy \eqref{properness} (with respect to the coordinate $x$ in $\Omega$). Indeed the entirety of Lemma \ref{propernesslemma} is entirely symmetric in $u$ and $\phi$.

   We collect some further elementary properties of the behavior of convex functions under the Legendre transform, all consequences of the properties laid out in Lemma \ref{lem2-16}. As we will see, these in turn give rise to interesting geometric consequences when interpreted in the context of K\"ahler geometry on complex toric manifolds.

\begin{lemma} \label{lem2-18}
	Let $\phi$ be a strictly convex function on $\t$ and $u = L(\phi)$ be its Legendre transform. Let $\Omega$ denote the image of the gradient $\nabla \phi: \t \to \t^*$.
		\begin{enumerate}
			\item For $B \in \text{GL}(n, \Z)$, set $\phi_B(\xi) = \phi \left( B \xi \right)$. Then $L(\phi_B)(x) = u((B^{T})^{-1} x)$, and the image of $\nabla \phi_B: \t \to \t^*$ is equal to $B^T(\Omega)$. 
			\item For $b_1 \in \t$, set $\phi_{b_1}(\xi) = \phi(\xi - b_1)$. Then $L(\phi_{b_1})(x) = u(x) + \langle b_1, x \rangle$. Clearly, the image of $\nabla \phi_{b_1}$ is also equal to $\Omega$.
			\item Symmetrically, for $b_2 \in \t^*$, set $\phi^{b_2}(\xi) = \phi(\xi) + \langle b_2 , \xi \rangle$. Then $L(\phi^{b_2})(x) = u(x - b_2)$ and the image of $\nabla \phi^{b_2}$ is equal to $\Omega - b_2$. 
		\end{enumerate}
\end{lemma}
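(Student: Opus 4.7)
All three assertions in the lemma are elementary consequences of the Legendre-Fenchel formula (item (v) of Lemma \ref{lem2-16}) combined with direct differentiation, so the plan is simply to record the relevant change of variable in each case and then read off the image of the gradient map.

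For (1), I would start from the variational definition $L(\phi_B)(x) = \sup_{\xi \in \t}\{\langle x, \xi\rangle - \phi(B\xi)\}$ and substitute $\eta = B\xi$. Because $B \in \text{GL}(n, \Z)$ is a linear bijection of $\t$ and we are taking a supremum rather than integrating, no Jacobian factor appears. Using the transpose relation $\langle x, B^{-1}\eta\rangle = \langle (B^T)^{-1}x, \eta\rangle$, the supremum collapses to $u((B^T)^{-1}x)$. A direct computation then gives $\nabla \phi_B(\xi) = B^T (\nabla \phi)(B\xi)$, which ranges over $B^T(\Omega)$ as $\xi$ ranges over $\t$.

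For (2), the substitution $\eta = \xi - b_1$ in the supremum peels off the extra term $\langle x, b_1\rangle$ and reduces the remainder to the definition of $u(x)$; the image is unchanged because $\nabla\phi_{b_1}(\xi) = (\nabla\phi)(\xi - b_1)$ still ranges over $\Omega$ as $\xi$ ranges over $\t$. Part (3) requires no substitution at all: collecting linear terms in $\xi$ yields
\begin{equation*}
L(\phi^{b_2})(x) = \sup_{\xi} \{\langle x - b_2, \xi\rangle - \phi(\xi)\} = u(x - b_2),
\end{equation*}
and $\nabla \phi^{b_2}(\xi) = \nabla\phi(\xi) + b_2$ exhibits the image as the corresponding translate of $\Omega$.

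There is no real obstacle here; all three parts are essentially routine. The only point I would want to flag is that throughout I am taking item (v) as the working definition of the Legendre transform, rather than the smooth identity \eqref{eqn2-6}. Since $\phi$ is smooth and strictly convex on $\t$, items (i)--(iv) of Lemma \ref{lem2-16} guarantee that this agrees with the smooth definition on the image of $\nabla\phi$, and in particular that the open set on which $L(\phi_B)$, $L(\phi_{b_1})$, $L(\phi^{b_2})$ are finite and smooth is exactly the image of the corresponding gradient map; this is what makes the final sentence of each part of the lemma automatic once the Legendre transform has been identified.
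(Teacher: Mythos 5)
Your proposal is correct and is exactly the argument the paper intends: the paper gives no explicit proof of Lemma \ref{lem2-18}, stating only that all three items are ``consequences of the properties laid out in Lemma \ref{lem2-16},'' and your substitutions in the Legendre--Fenchel supremum together with the chain rule are the routine verification being elided. One small point worth flagging: in part (3) your computation $\nabla\phi^{b_2}(\xi) = \nabla\phi(\xi) + b_2$ gives image $\Omega + b_2$, which is the version consistent with $L(\phi^{b_2})(x) = u(x-b_2)$ being finite precisely for $x \in \Omega + b_2$; the ``$\Omega - b_2$'' in the statement is a sign typo in the paper (harmless downstream, where only translation-invariance is used), so your ``corresponding translate'' should be made explicit as $\Omega + b_2$ rather than matching the stated sign.
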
 

Let $M$ be a complex toric manifold together with a K\"ahler metric $\omega$ with respect to which the real $T^{n}$ action is Hamiltonian, and let $\phi$ be a strictly convex function on the dense orbit $\Cstarn \subset M$ such that $\omega = 2 i \p \bp \phi$. Let $\mu: M \to \t^*$ denote the corresponding moment map, normalized so that $\langle \mu, b \rangle = \langle \nabla \phi, b \rangle$ on the dense orbit as in Section 2.2, and suppose that the image of $\mu$ is equal to a Delzant polyhedron $P$. Recall also from Section 2.2 that we fix a basis $X_1, \dots, X_n$ for $\t$. Then the action of $\text{GL}(n, \Z)$ on $\phi$ corresponds simply to changing this basis by an automorphism of $\Cstarn$. This will be useful to simplify calculations later on, since by the Delzant condition we can use this to assume that $P$ locally coincides with a translate of the positive orthant $\R^{n}_+$ near any vertex. The action of $\t$ on $\phi$ given in $(ii)$ corresponds to composing the $\Cstarn$-action on $M$ with an element of the form $e^{-b_1} \in \Cstarn$. Notice that this is is always induced from the global automorphism $e^{-b_1}: M \to M$ of $M$. The $\t^*$-action of $(iii)$ is most naturally viewed as a modification of the moment map $\mu$ by the action of $\t^*$ on itself by translation. 

 Recall from Proposition \ref{prop2-12} that we are interested in the case where $\phi$ is a solution to \eqref{eqn2-5} on $\t$. Since $\phi$ uniquely determines and is uniquely determined by its Legendre transform $u = L(\phi)$, we can once again make use of the properties laid out in Lemma \ref{lem2-16} to rewrite \eqref{eqn2-5} as a real Monge-Amp\`ere equation for the convex function $u$, defined on the interior of the image of the moment map. We assume as above that this image is equal to a Delzant polyhedron $P$.

  \begin{proposition}\label{prop2-19} 
  Suppose that $M$ is a complex toric manifold and $(\omega,X)$ is a shrinking gradient K\"ahler-Ricci soliton on $M$. Suppose that the $T^n$-action is Hamiltonian with respect to $\omega$, so that, by Proposition \ref{prop2-10}, $\omega$ admits K\"ahler potential determined by a strictly convex function $\phi$ on $\t$ satisfying \eqref{eqn2-5}. Let $u = L(\phi)$ be the Legendre transform, which we assume is defined on the Delzant polyhedron $P$. Then $u$ satisfies the real Monge-Amp\`ere equation

 \begin{equation} \label{eqn2-9}
 	2\left( u_i x^i  -  u(x)\right) - \log\det(u_{ij})  = \langle b_X, x \rangle .
 \end{equation}
 
 \end{proposition}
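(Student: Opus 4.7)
The plan is a direct calculation translating equation \eqref{eqn2-5} from complex coordinates on $\t$ to symplectic coordinates on $P$, using the formal properties of the Legendre transform collected in Lemma \ref{lem2-16}. The only ingredients needed are the pointwise identities relating $\phi$ and $u = L(\phi)$ under the change of variables $x = \nabla\phi(\xi)$, $\xi = \nabla u(x)$; there is no real obstacle beyond bookkeeping.

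First I would rewrite the three pieces of \eqref{eqn2-5} in terms of $u$. Taking logarithms, \eqref{eqn2-5} becomes
\begin{equation*}
\log\det(\phi_{ij})(\xi) \;=\; -2\phi(\xi) + \langle b_X,\nabla\phi(\xi)\rangle.
\end{equation*}
By property $(iii)$ of Lemma \ref{lem2-16}, under the substitution $\xi = \nabla u(x)$ the Hessian matrices satisfy $\phi_{ij}(\nabla u(x)) = u^{ij}(x)$, so $\det(\phi_{ij}) = 1/\det(u_{ij})$, and hence $\log\det(\phi_{ij})(\xi) = -\log\det(u_{ij})(x)$. From the defining relation $\phi(\xi) + u(x) = \langle \xi,x\rangle$ evaluated with $\xi = \nabla u(x)$, we obtain $\phi(\xi) = u_i(x)x^i - u(x)$. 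Finally, by property $(ii)$ of Lemma \ref{lem2-16}, $\nabla\phi(\xi) = x$, so $\langle b_X, \nabla\phi(\xi)\rangle = \langle b_X, x\rangle$.

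Substituting these three identities into the logged form of \eqref{eqn2-5} yields
\begin{equation*}
-\log\det(u_{ij})(x) \;=\; -2\bigl(u_i(x)x^i - u(x)\bigr) + \langle b_X, x\rangle,
\end{equation*}
which after rearrangement is precisely \eqref{eqn2-9}. Since $\nabla\phi\colon \t \to P$ is a diffeomorphism onto the interior of $P$ by strict convexity of $\phi$, the identity holds on all of the interior of $P$, which is the domain on which $u$ is smooth and strictly convex. This completes the proof.
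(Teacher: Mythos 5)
Your proposal is correct and follows exactly the route the paper intends: the paper states Proposition \ref{prop2-19} without a written proof, merely remarking that one rewrites \eqref{eqn2-5} via the Legendre transform properties of Lemma \ref{lem2-16}, and your calculation (using items $(ii)$ and $(iii)$ together with the defining relation $\phi(\xi)+u(x)=\langle\xi,x\rangle$) is precisely the omitted computation. No issues.
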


We return now to the canonical metric $\omega_P$ defined in Proposition \ref{prop2-14}. We have just seen that there is a corresponding symplectic potential $u_P$ on $P$. The main result of \cite{Guil} is an explicit formula for $u_P$, only in terms of the data of $P$, in the case that $\overline{P}$ (and therefore $M$) is compact. This has been generalized in \cite[Theorem 5.2]{BGL} to (essentially) arbitrary polyhedra, the Delzant case included. Let $F_i$, $i = 1, \dots, d$ denote the $(n-1)$-dimensional facets of $P$ with inward-pointing normal vector $\nu_i \in \Gamma$, normalized so that $\nu_i$ is the minimal generator of $\sigma_i = \R_+ \cdot \nu_i$ in $\Gamma$. Let $\ell_i(x) = \langle \nu_i, x \rangle$, so that $\overline{P}$ is defined by the system of inequalities $\ell_i(x) \geq - a_i$, $i = 1, \dots, N$, $a_i\in \R$. Then from \cite{BGL} we have the following explicit formula for $u_P$:

\begin{equation} \label{eqn2-10}
	u_P(x) = \frac{1}{2} \sum_{i=1}^d (\ell_i(x) + a_i) \log\left( \ell_i(x) + a_i \right).
\end{equation}

\subsection{Equivalences}

Thus far, we have shown that associated to any Delzant polyhedron $P$ there is a toric K\"ahler manifold $(M_P, J_P, \omega_P)$. We begin this subsection by giving conditions under which we can extend the Delzant classification to the non-compact setting. In brief, we would like to understand the answers to the following questions. First, given a toric K\"ahler manifold $(M,J,\omega)$, under what conditions is the image of the moment map equal to a Delzant polyhedron $P$? Second, given a toric K\"ahler manifold $(M,J,\omega)$ whose moment image is equal to a Delzant polyhedron $P$, under what conditions can we say that $(M,J) \cong (M_P, J_P)$ and $(M, \omega) \cong (M_P, \omega_P)$?

To a large extent these questions have already been studied, and much of what appears below is simply a collection of existing results, rephrased in order to better suit the current setup. The answer to the first question and part of the second comes from the work of \cite{KarLer,PW}. 

\begin{lemma} \label{lem2-20}
	 Let $(M,\omega)$ be any symplectic toric manifold with finite fixed point set. Suppose that there exists $b \in \mathfrak{t}$ such that the function $\langle \mu, b \rangle :M \to \R$ is proper and bounded from below. Then the image of the moment map $\mu$ is a Delzant polyhedron $P$, and moreover $(M,\omega)$ is equivariantly symplectomorphic to $(M_P, \omega_P)$.
\end{lemma}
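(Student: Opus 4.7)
The plan is to prove the lemma in three steps: first establishing that $\mu(M)$ is a closed convex polyhedron, then verifying the Delzant condition, and finally constructing a global equivariant symplectomorphism with $(M_P,\omega_P)$.

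For the first step, I would invoke a non-compact convexity theorem in the style of Karshon-Lerman \cite{KarLer} (see also \cite{PW,HNP}). The existence of $b\in\t$ with $\langle\mu,b\rangle:M\to\R$ proper and bounded below is precisely the hypothesis needed to conclude that $\mu(M)$ is a closed, locally polyhedral convex subset of $\t^*$. The vertices of $\mu(M)$ are images of $T^n$-fixed points, so the finite fixed-point assumption forces finitely many vertices. The properness of $\langle\mu,b\rangle$ rules out the accumulation of facets at infinity (each facet must recede in a direction of nonnegative pairing with $b$, and only finitely many primitive normals can do so while still bounding a locally polyhedral set with finitely many vertices). Hence $\mu(M)=\overline{P}$ is a genuine polyhedron in the sense of Definition \ref{defi2-2}.

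For the second step, at each fixed point $p\in M$ the isotropy is all of $T^n$, so the equivariant Darboux theorem produces a $T^n$-equivariant symplectomorphism of a neighborhood of $p$ onto a neighborhood of $0\in(\C^n,\omega_{\std})$ with $T^n$ acting linearly via weights $\nu_1,\dots,\nu_n\in\Gamma^*$. Effectiveness of the $T^n$-action on $M$ together with smoothness force $(\nu_1,\dots,\nu_n)$ to form a $\Z$-basis of $\Gamma^*$. These weights are precisely the edge generators of $\overline{P}$ emanating from $\mu(p)$, which establishes the Delzant condition at every vertex.

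For the third step, over the interior $P\subset\overline{P}$ the moment map makes $\mu^{-1}(P)$ a principal $T^n$-bundle, necessarily trivial because $P$ is convex and hence contractible. Choosing symplectic coordinates as in Section 2.4 identifies $(\mu^{-1}(P),\omega)$ with $(P\times T^n, dx^i\wedge d\theta^i)$, and the same identification applies to $M_P$, giving a canonical equivariant symplectomorphism over the open dense orbit. Near each fixed point the Delzant local model from step two exactly matches the corresponding local chart of $M_P$ coming from the symplectic quotient construction of Proposition \ref{prop2-14}, and similarly along each face stratum. An equivariant Moser argument, carried out face by face as in Delzant's original construction \cite{Del}, patches these compatible local identifications into a global $T^n$-equivariant symplectomorphism $(M,\omega)\to(M_P,\omega_P)$.

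The main obstacle is the first step. In the non-compact setting, convexity of the moment image can fail without some form of control at infinity, and even when convexity holds the image need not have finitely many facets. The proper, bounded-below component $\langle\mu,b\rangle$ is the essential ingredient allowing the non-compact convexity machinery to apply and forcing $\mu(M)$ to be a true polyhedron. Once this is in place, steps two and three proceed by locally reducing to the standard Delzant model, so the classical arguments carry over with only cosmetic modification.
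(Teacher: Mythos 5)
Your proposal is correct and follows essentially the same route as the paper, which proves this lemma purely by citation: polyhedrality of the moment image from the non-compact convexity results of Harada--Nakayama--Prato et al.\ (the paper cites \cite[Theorem 4.1]{HNP} and \cite[Proposition 1.4]{PW}), unimodularity from \cite[Proposition 1.1]{KarLer}, and the equivariant symplectomorphism from the Karshon--Lerman classification \cite[Theorem 1.3]{KarLer}. Your three steps unpack exactly these three inputs, and you correctly identify the first (polyhedrality at infinity) as the step carrying the real content.
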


\begin{proof}
	Since the fixed point set of the $T^{n}$-action is finite, it follows from \cite[Theorem 4.1]{HNP} (c.f. \cite[Proposition 1.4]{PW} and the preceeding remarks) that the existence of such a $b \in \t$ is sufficient to show that the image of the moment map $\mu$ is a polyhedral set in $\t^{*}$. This means by definition that $\mu(M)$ is equal to the intersection of finitely many half spaces. It then follows immediately from \cite[Proposition 1.1]{KarLer} that $P$ is a Delzant (unimodular) polyhedron. Finally, \cite[Theorem 1.3, c.f. Theorem 6.7]{KarLer} furnishes the desired equivariant symplectomorphism.
\end{proof}

Given a general symplectic toric manifold $(M,\omega)$ satisfying the conditions of Lemma \ref{lem2-20}, let $P$ be the corresponding polyhedron in $\t^{*}$. Suppose that there is a compatible complex structure $J$ such that $T^{n}$ acts holomorphically. When $M$ is compact, $J$ is determined up to biholomorphism by $P$. This follows in part since we can always use $J$ to complexify the $T^{n}$-action to an action of the full $\Cstarn$. In general, the issue is more subtle. The following example illustrates the problem.\footnote{We thank Vestislav Apostolov for providing this example.} 

\begin{example}\label{eg2-21}
	Let $(\D, \omega)$ denote the Poincar\'e model of the hyperbolic metric on the unit disc in $\C$. The standard $S^{1}$-action on $\C$ restricts to an action on $\D$, but clearly this does not admit a complexified action of $\Cstar$ on $\D$. The symplectic form $\omega$ is $S^{1}$-invariant and, with an appropriate normalization, the moment map $\mu: \D \to \R$ has image equal to the unbounded closed interval $P = [0, \infty)$. Thus, the image is the Delzant polyhedron $P$, but $\D \not\cong M_P \cong \C$. 
\end{example}

However, if we assume \emph{a priori} that there exists a complexified action, then it does indeed follow that the complex structure must be biholomorphic to the standard one $J_P$ on $M_P$. Let $(M,J)$ be a complex toric manifold, so that there exists an effective holomorphic $\Cstarn$-action. Suppose that $\omega$ is the K\"ahler form of a compatible K\"ahler metric such that the $T^{n}$-action is Hamiltonian. 

\begin{lemma}[c.f. {\cite[Proposition A.1]{Ab2}}]\label{lem2-22}
	Let $(M,J, \omega)$ be as above, and assume that the image of the moment map is equal to a Delzant polyhedron $P$. Then $M$ is equivariantly biholomorphic to $M_P$. In particular, $(M,J)$ is quasiprojective. 
\end{lemma}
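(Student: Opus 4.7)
The plan is to combine the algebraic classification of complex toric manifolds by fans with the symplectic classification from Lemma~\ref{lem2-20}, by showing that the fan naturally associated to $(M,J)$ coincides with the normal fan $\Sigma_P$ of $P$. By Proposition~\ref{prop2-6}, there is a fan $\Sigma$ in $\t$ such that $(M,J)$ is $\Cstarn$-equivariantly biholomorphic to the algebraic toric variety $\M_\Sigma$. On the other hand, the hypotheses of Lemma~\ref{lem2-20} are satisfied in our setting: the $T^{n}$-fixed points are finite by the standing assumption on $M$, and since a Delzant polyhedron has at least one vertex, its recession cone is pointed, so any $b$ lying in the interior of the dual of this recession cone yields a function $\langle \mu,b\rangle$ on $M$ that is proper and bounded below. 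Lemma~\ref{lem2-20} then produces a $T^{n}$-equivariant symplectomorphism $(M,\omega)\to(M_P,\omega_P)$, and by Proposition~\ref{prop2-15} one has $M_P \cong_{\Cstarn} \M_{\Sigma_P}$. It therefore suffices to show $\Sigma=\Sigma_P$.

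To verify this equality of fans, I would compare the two descriptions locally at a $T^{n}$-fixed point. By the Orbit-Cone correspondence (Proposition~\ref{prop2-7}), the top-dimensional cones of $\Sigma$ are in bijection with the $\Cstarn$-fixed points of $M$, and for any such fixed point $p$ the corresponding cone $\sigma\in\Sigma$ is recovered as follows: on the affine chart $U_\sigma = \mathrm{Spec}(\C[S_\sigma])$ the $T^{n}$-action fixes $p$ and acts linearly with weights equal to the primitive edge generators of the dual cone $\sigma^{\ast}$. On the symplectic side, the equivariant Darboux theorem for Hamiltonian torus actions at fixed points produces local equivariant symplectic coordinates near $p$ in which the $T^{n}$-action is linear, and the weights of this linearization are precisely the primitive inward edge vectors $\varepsilon_1,\dots,\varepsilon_n\in \Gamma^{\ast}$ of $P$ emanating from the vertex $v=\mu(p)$; the Delzant condition guarantees they form a $\Z$-basis of $\Gamma^{\ast}$. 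Since the isotropy weights at $p$ are intrinsic to the $T^{n}$-action, the two descriptions must agree, giving $\sigma^{\ast}=\mathrm{cone}(\varepsilon_1,\dots,\varepsilon_n)$, so that $\sigma$ coincides with the normal cone of $P$ at $v$, which is by construction the top-dimensional cone of $\Sigma_P$ at $v$.

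Since any fan is determined by its top-dimensional cones together with their faces, this matching yields $\Sigma=\Sigma_P$, and consequently $M \cong_{\Cstarn} \M_\Sigma = \M_{\Sigma_P}\cong_{\Cstarn} M_P$, which is the desired equivariant biholomorphism. The quasiprojectivity of $(M,J)$ then follows immediately from Proposition~\ref{prop2-8}, since a Delzant polyhedron is in particular rational. The main obstacle is the identification, at each $T^{n}$-fixed point, of the algebraic isotropy weights arising from the affine chart $U_\sigma$ with the symplectic isotropy weights arising from the equivariant local normal form at $v$; this is standard, but requires careful bookkeeping of sign conventions, the duality between $\t$ and $\t^{\ast}$, and the normalization of primitive generators in $\Gamma$ and $\Gamma^{\ast}$.
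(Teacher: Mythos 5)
There is a genuine gap, and it sits at the very first step: you invoke Proposition~\ref{prop2-6} (Sumihiro) to produce a fan $\Sigma$ with $(M,J)\cong \M_\Sigma$. But Proposition~\ref{prop2-6} applies to toric \emph{varieties} in the sense of Definition~\ref{defi2-1}, i.e.\ to algebraic varieties carrying an \emph{algebraic} $\Cstarn$-action. The $(M,J)$ in Lemma~\ref{lem2-22} is only a complex manifold with a holomorphic $\Cstarn$-action; the content of the lemma is precisely that, under the Hamiltonian hypothesis and the Delzant assumption on $\mu(M)$, this complex structure is the algebraic one (whence quasiprojectivity). Assuming at the outset that $M$ is an algebraic toric variety therefore begs the question. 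The paper is explicit that the implication ``complex toric manifold $\Rightarrow$ toric variety'' is exactly what is at stake here, and Example~\ref{eg2-21} (the hyperbolic disc) illustrates how badly the conclusion can fail once one drops the standing hypotheses. Your local analysis at the fixed points is also not enough on its own: the equivariant Darboux theorem controls $\omega$ but not $J$, so matching isotropy weights does not by itself produce an equivariant \emph{biholomorphism} of neighborhoods, and in any case one would still need to patch the local identifications into a global map. (A further, more minor, point: a fan is not in general determined by its top-dimensional cones, so even granting everything else, matching the maximal cones at the fixed points would not immediately give $\Sigma=\Sigma_P$.)

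The paper's actual argument avoids all of this by working directly with the holomorphic structure: for each face $F_i$ of $P$ it applies the holomorphic slice theorem of \cite{Sj} to the orbit through a point of $\mu^{-1}(F_i)$, obtaining a $\Cstarn$-invariant open set $U_i$ equivariantly biholomorphic to $(\Cstar)^k\times\C^{n-k}$; it then observes that, by equivariance, the transition maps between these charts are forced by the inclusions of the stabilizer subalgebras, hence by the normal fan $\Sigma_P$ alone. Running the same construction on $M_P$ and comparing chart by chart yields the global equivariant biholomorphism $M\to M_P$. If you want to keep the flavor of your approach, the fix is to replace the appeal to Sumihiro with such a holomorphic linearization/slice argument and to address the patching explicitly; the identification of isotropy weights with the edge vectors of $P$ at a vertex is then a useful consistency check but not the engine of the proof.
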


\begin{proof} 
	As usual, choose a point $p$ in the interior of the dense orbit $\Cstarn \subset M$, and further choose points $x_i$ in the interior of each $k$-dimensional face $F_i$ of $P$. By \cite[Theorem 4.1, part (v)]{HNP} (c.f. \cite{GS,Prato}), each point $q \in \mu^{-1}(F_i)$ is stabilized by a common torus $T^{n-k}_{F_i} \subset T^{n}$ with Lie algebra $\t_i$, and moreover $F_i$ lies as an open subset of the dual $k$-plane $\t_{F_i}^{\perp} \subset \t^{*}$. By the holomorphic slice theorem \cite[Theorem 1.24]{Sj}, there exists a $\Cstarn$-invariant open neighborhood $U_i \subset M$ of the orbit $\Cstarn \cdot p_i \subset M$ and an equivariant biholomorphism $\Phi_i: U_i \to (\Cstar)^{k} \times \C^{n-k}$, with the standard $\Cstarn$-action such that $\Phi_i(x_i) = (1,0)$ and $\Phi_i(\mu^{-1}(F_i) \cap U_i) = (\Cstar)^{k} \times \{0\}$. We see that the stabilizer $T^{n-k}_{F_i}$ acts in the coordinates induced by $\Phi_i$ by the standard action on $\C^{n-k}$.  Note that the equivariance of $\Phi_i$ ensures that entire dense orbit lies in $U_i$, and hence we can modify the map $\Phi_i$ by the $\Cstarn$-action to ensure that $\Phi_i(p) = (1, \dots, 1)$.  In this way, we produce an equivariant holomorphic coordinate covering of $M$ by running through each $F_i$. Suppose now that $F_1, F_2$ are two $k$-dimensional faces that which lie on the boundary of a higher-dimensional face $E$ of $P$, and let $\Phi_{F_1}: U_{F_1} \to  (\Cstar)^{k} \times \C^{n-k} , \Phi_{F_2}: U_{F_2} \to  (\Cstar)^{k} \times \C^{n-k}, \Phi_E: U_{E} \to  (\Cstar)^{l} \times \C^{n-l}$ denote the corresponding maps as above. By equivariance, the transition map $\Phi_{F_2} \circ \Phi_{F_1}^{-1}$ is uniquely determined by the inclusions of $ (\Cstar)^{l} \times \C^{n-l} \subset (\Cstar)^{k} \times \C^{n-k}$ given by $\Phi_E$, as $E$ varies across all faces containing $F_1$ and $F_2$. These in turn are determined uniquely by the inclusions of the stabilizer algebra $\t_E \subset \t_{F_1}, \t_{F_2}$. As we have seen, the stabilizer algebras $\t_E, \t_{F_1}, \t_{F_2}$ comprise the normal directions to the faces $E, F_1, F_2$ in $\t^*$, respectively. In particular, the transition data of this covering is determined uniquely by the normal fan $\Sigma_P$ of $P$. Now let $(W_i, \Psi_i)$ be a cover of $M_P$ constructed in the same way. For each face $F_i$ of $P$, we have maps $\Psi_i^{-1} \circ \Phi_i: U_i \to W_i$. Since the transition data for each covering is uniquely determined by $\Sigma_P$, we see that these local maps patch together to form a well-defined biholomorphism $M \to M_P$.
\end{proof}

We have thus far met several inequivalent definitions of what it means for a non-compact manifold to be ``toric.'' To avoid confusion, we introduce the following definition, which lies at the intersection of all of the previously introduced notions. 

\begin{definition} \label{def2-23}
	We say that $(M,J,\omega)$, together with a given $\Cstarn$-action is \emph{algebraic-K\"ahler toric} (AK-toric) if the following conditions are met: 
	\begin{enumerate}
		\item The $\Cstarn$-action is effective and holomorphic with respect to $J$.
		\item The symplectic form $\omega$ is the K\"ahler form of a compatible, $T^{n}$-invariant K\"ahler metric on $M$.
		\item The $T^{n}$-action is Hamiltonian with respect to $\omega$, and the moment map $\mu: M \to \t^{*}$ has image equal to a Delzant polyhedron $P$. 
	\end{enumerate}
\end{definition}

Such an $M$ is always equivariantly biholomorphic to the algebraic toric variety $\M_P$ by Lemma \ref{lem2-22} and Proposition \ref{prop2-15}. When $(M,\omega)$ is a compact toric manifold, the polytope $P$ is determined up to translation in $\t^{*}$ by the cohomology class $[\omega]$ \cite{Ab1,Ab2,Guil}. We show this is true in the case that there is an action of the full $\Cstarn$. 
 
\begin{proposition} \label{prop2-24}
	If $(M, J, \omega)$ be AK-toric, then the moment polyhedron $P$ is determined up to translation by the cohomology class $[\omega]$. 
\end{proposition}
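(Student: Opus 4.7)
The plan is to show that $P$ is determined up to translation by $[\omega]$ by combining two observations: (a) the recession cone of $P$ depends only on the complex toric manifold $(M,J)$ and not on the choice of AK-toric K\"ahler form $\omega$, and (b) the vertices of $P$ depend on the class $[\omega]$ only up to a single common translation in $\t^{*}$.

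For (a), the recession cone of $P$ is cut out by the inward-pointing facet normals, which are the primitive ray generators of the normal fan $\Sigma_{P}$. By Proposition \ref{prop2-15} and Lemma \ref{lem2-22}, $\Sigma_{P}$ coincides with the intrinsic fan $\Sigma$ associated to $(M,J)$ as a complex toric variety, and $\Sigma$ depends only on $(M,J)$. Hence the recession cone is the same for every AK-toric $\omega$ on $(M,J)$.

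For (b), let $\omega_{1},\omega_{2}$ be AK-toric K\"ahler forms on $(M,J)$ with $[\omega_{1}]=[\omega_{2}]$ in $H^{2}(M,\R)$, and let $\mu_{1},\mu_{2}$ be their moment maps with polyhedra $P_{1},P_{2}$. Writing $\omega_{2}-\omega_{1}=d\alpha$ and averaging $\alpha$ over the compact torus $T^{n}$, we may take $\alpha$ to be $T^{n}$-invariant. For any $v\in\t$ with associated fundamental vector field (also denoted $v$), the moment map relation $d\langle\mu_{j},v\rangle=-i_{v}\omega_{j}$ combined with Cartan's formula $\mathcal{L}_{v}\alpha=d\,i_{v}\alpha+i_{v}d\alpha=0$ yields
\begin{equation*}
d\langle\mu_{2}-\mu_{1},v\rangle=-i_{v}d\alpha=d(i_{v}\alpha),
\end{equation*}
so $\langle\mu_{2}-\mu_{1},v\rangle-i_{v}\alpha$ is a constant on the connected manifold $M$. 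The assignment $v\mapsto c_{v}$ sending $v$ to this constant is linear, defining an element $c\in\t^{*}$. Evaluating at any $T^{n}$-fixed point $p$, the fundamental vector field $v$ vanishes, so $i_{v}\alpha|_{p}=0$ and hence $\mu_{2}(p)-\mu_{1}(p)=c$ for \emph{every} fixed point $p$.

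By the Orbit-Cone correspondence (Proposition \ref{prop2-7}) applied via Lemma \ref{lem2-22}, the vertices of $P_{j}$ are precisely the images of the $T^{n}$-fixed points under $\mu_{j}$. The previous paragraph shows that these vertex sets differ by the single translation $c$. Combined with (a) and the Minkowski--Weyl decomposition $P_{j}=\mathrm{conv}(\text{vertices})+C$, we conclude $P_{2}=P_{1}+c$. The key computation is (b); the only mild subtlety is the averaging of $\alpha$ over $T^{n}$, which is routine since $T^{n}$ is compact. Everything else reduces to the Orbit-Cone correspondence and standard convex geometry.
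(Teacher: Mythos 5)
Your proof is correct, but it takes a genuinely different route from the paper's. The paper argues through the compact toric divisors: it restricts $\omega$ to a compact invariant divisor $D_1$, invokes the classical Delzant--Guillemin theory to write $[\omega|_{D_1}]=\sum\alpha_i[\Delta_i]$ and thereby pin down the corresponding facet $F_1$ up to translation, and then reassembles the vertex set of $P$ by running over all compact divisors (with a separate case when no compact divisor exists, forcing $M\cong\C^n$). You instead work directly with the moment map: writing $\omega_2-\omega_1=d\alpha$ with $\alpha$ averaged to be $T^n$-invariant, Cartan's formula gives that $\langle\mu_2-\mu_1,v\rangle-i_v\alpha$ is constant, and evaluating at the fixed points (where the fundamental vector fields vanish) shows all vertices shift by one common $c\in\t^*$; combined with the fan-theoretic rigidity of the recession cone and the Minkowski--Weyl decomposition of a pointed polyhedron, this gives $P_2=P_1+c$. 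Your argument is more self-contained and uniform -- it avoids the case analysis on compact divisors and exhibits the translation vector explicitly -- while the paper's divisor-theoretic computation is not wasted effort there, since essentially the same bookkeeping of the coefficients $a_i$ against divisor classes is reused immediately afterwards to prove Corollary \ref{cor2-25} ($L\cong\mathcal{O}(D_\omega)$). The only point you pass over silently is that $P$ has at least one vertex (so that $P=\mathrm{conv}(\text{vertices})+C$ applies); this is guaranteed by the standing assumption that the fan has a full-dimensional cone, equivalently that the fixed point set is nonempty, and the paper's proof makes the same implicit assumption.
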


\begin{proof}
	The polyhedron $P$ determines a torus-invariant divisor $D_\omega$ on $(M,J)$ as follows. Since $(M,J)$ is biholomorphic to $(M_P, J_P)$, we use this biholomorphism and assume without loss of generality that $(M,J,\omega) = (M_P, J_P, \omega)$ with $\omega$ not necessarily equal to $\omega_P$. Recall that $(M_P, J_P)$ naturally carries the structure of the algebraic toric variety $\M_P$. Thus, we can identify the normal fan $\Sigma$ of $P$ with the fan corresponding to $\M_P$. Let $\nu_i$ be the minimal generator in $\Gamma$ of the ray $\sigma_i \in \Sigma$ corresponding to the direction normal to each facet $F_i$ of $P$. Then each $F_i$ of $P$ has the local defining equation $\ell_i(x) + a_i = 0$, where $\ell_i(x) = \langle \nu_i, x \rangle$ for some $a_i \in \R$. Recall that $\sigma_i$ defines via the Orbit-Cone correspondence an irreducible Weil divisor $D_i$. The divisor $D_\omega$ is then given by 
	
	\begin{equation*} 
		D_\omega = \sum a_i D_i.
	\end{equation*}
We can assume without loss of generality that the irreducible component $D_1$ of $D_{\omega}$ is compact. If there is no such $D_1$, then it follows that there is a $b \in \R^n$ and $A \in \text{GL}(n, \Z)$ such that the affine transformation $Ax + b$ takes $P$ to the positive orthant $\R^n_+$, and so $M \cong \C^n$. Note that the entire construction behaves well with respect to restriction, so that $D_1 = \M_{F_{1}}$. Since $P$ is Delzant, so is $F_1$, and so it follows that $D_1$ is a nonsingular projective variety. If we restrict $\omega$ to $D_1$, we obtain a moment map for the $T^{n-1}$-action $\mu_{1}: D_1 \to \mathfrak{t}_1$, where $\mathfrak{t}_1 \subset \mathfrak{t}$ is the orthogonal complement of the stabilizer algebra of $D_1$. Then the image of $\mu_1$ is the face $F_1$ of $P$ corresponding to $D_1$. After potentially acting by an element of GL$(n, \Z)$, we can assume that $\langle \nu_1, x \rangle = x_1$, so that $\t_1$ can be identified with the subspace $x_1 = 0$. Inside of $\t_1$, $F_1$ is then defined by $\langle \eta_i, (x_2, \dots, x_n) \rangle \geq - \alpha_i$ for some $\eta_i$ in the lattice and $\alpha_i \in \R$. Thus, the Delzant polytope $F_1$ determines a divisor $\Delta = \sum \alpha_i \Delta_i$ on $D_1$, where $\Delta_i$ are the torus-invariant divisors on $D_1$ corresponding to $\eta_i$ through the Orbit-Cone correspondence. 

Since $(D_1, \omega|_{D_1})$ is itself a compact symplectic toric manifold, we can now appeal to the well-established theory in the compact setting \cite{Atiyah,GS,Del,Guil}. Specifically, we have that the cohomology class of the symplectic form $\omega|_{D_1}$ is given by \cite{Del,Guil}
	 
	 \begin{equation*} 
	 	 [\omega|_{D_1}] = \sum \alpha_i [\Delta_i].
	 \end{equation*}  
The coefficients $\alpha_i$, by definition, fix the defining equations of $F_1$ inside $\t_1$. Thus, we see that the facet $F_1$ is uniquely determined by $[\omega]$ up to translation in $\t_1$. By the Orbit-Cone correspondence, the subspace $\t_1$ on which $F_1$ lies is uniquely determined by the fixed fan $\Sigma$, up to translation in its normal direction. We see then that the set of vertices $\{v_1, \dots, v_k\}$ of $F_1$, which is the image under $\mu$ of the set of fixed points $T^n$-action that lie in $\mu^{-1}(F_1)$, is determined uniquely up to a translation in $\t^*$ by $[\omega]$. Now each vertex of $P$ lies on at least one compact facet, again unless $M \cong \C^{n}$ and $P = \R^{n}_+$. Hence, we can repeat this process for each compact torus-invariant divisor to see that the set of \emph{all} vertices $\{v_1, \dots, v_K \}$ of $P$  is determined up to translation in $\t^{*}$ by $[\omega]$. It is clear then that the same is true of $P$. 
\end{proof}

\begin{corollary} \label{cor2-25}
Let $M$ be AK-toric with polyhedron $P = \{ x \in \t^* \: | \: \langle \nu_i, x \rangle \geq - a_i \text{ for all } i = 1, \dots, N \}$, and suppose that $\omega$ is the curvature of an equivariant hermitian holomorphic line bundle $(L, h)$. Then $L \cong \mathcal{O}(D_\omega)$ is the line bundle associated to the divisor $D_\omega  = \sum a_i D_i$.
\end{corollary}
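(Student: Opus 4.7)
The plan is a two-step matching: first reduce to the algebraic toric variety $\M_P$ and realize $L$ as a torus-invariant divisor class, then match the coefficients by comparing moment polyhedra via Proposition \ref{prop2-24}. By Lemma \ref{lem2-22} and Proposition \ref{prop2-15}, the AK-toric data $(M,J)$ is equivariantly biholomorphic to $\M_P$, so I would assume $M = \M_P$ throughout. Since $\M_P$ is smooth, standard toric geometry \cite[Chapter 4]{CLS} gives that every equivariant holomorphic line bundle is of the form $\O(D')$ for a unique torus-invariant Weil divisor $D' = \sum_{i=1}^N b_i D_i$, with $D_i$ corresponding to the ray through $\nu_i$. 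Hence $L \cong \O(\sum b_i D_i)$ for some integers $b_i$, and the task reduces to showing that $\sum b_i D_i$ and $\sum a_i D_i$ are linearly equivalent.

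To pin down the $b_i$, I would apply Proposition \ref{prop2-14} to the rational polyhedron $P' = \{ x \in \t^* : \langle \nu_i, x\rangle \geq -b_i\}$, which by \eqref{polyofdivisor} is precisely the polyhedron associated to the divisor $\sum b_i D_i$. This produces an AK-toric structure on $(M, J)$ with moment image $P'$ and a K\"ahler form $\omega_{P'}$ that is the curvature of some hermitian metric on $\O(\sum b_i D_i) \cong L$. In particular $[\omega_{P'}] = 2\pi c_1(L) = [\omega]$. Applying Proposition \ref{prop2-24} separately to $\omega$ and to $\omega_{P'}$ then forces the two moment polyhedra to agree with this common cohomology class up to translation in $\t^*$, so $P' = P + c$ for some $c \in \t^*$, giving $b_i - a_i = \langle \nu_i, c\rangle$ for every $i$.

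The final step is to recognize that the difference $\sum (b_i - a_i) D_i$ is a principal divisor. Because $P$ is Delzant, at any vertex the $n$ inward normals form a $\Z$-basis of $\Gamma$ dual to the Delzant edge basis of $\Gamma^*$, so the integrality of each difference $b_i - a_i = \langle \nu_i, c\rangle$ forces $c \in \Gamma^*$. The character $\chi^c$ is then a well-defined rational function on $\M_P$, and by the standard description of principal toric divisors \cite[Proposition 4.1.2]{CLS}, one has $\text{div}(\chi^c) = \sum \langle \nu_i, c\rangle D_i = \sum(b_i - a_i) D_i$. Therefore $\O(\sum b_i D_i) \cong \O(\sum a_i D_i) = \O(D_\omega)$ as holomorphic line bundles on $M$. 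The main technical subtlety I anticipate is the integrality of the translation $c$, which is precisely where the Delzant (smoothness) hypothesis is invoked; without it one would only obtain a rational lifting of $c$ and the identification would break down.
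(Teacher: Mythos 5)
Your overall strategy --- write $L \cong \O(\sum b_i D_i)$, build a reference K\"ahler form out of the divisor polyhedron of $L$, and compare moment polyhedra via Proposition \ref{prop2-24} --- is genuinely different from the paper's proof, which instead normalizes the coefficient of $L$ along a compact invariant divisor $D_1$ to zero and matches the remaining coefficients by restricting a section to $D_1$ and invoking the compact Delzant theory there. As written, however, your version has a gap at its central step. When you ``apply Proposition \ref{prop2-14} to the rational polyhedron $P' = \{ x \in \t^* : \langle \nu_i, x\rangle \geq -b_i\}$'' you are implicitly assuming two things: that $P'$ is a Delzant polyhedron, and that its normal fan coincides with the fan $\Sigma$ of $M$, so that $M_{P'}$ is equivariantly biholomorphic to $M$ and $\omega_{P'}$ is a K\"ahler form on $M$ itself lying in $2\pi c_1(L)$. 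Neither is automatic: for a general torus-invariant divisor the polyhedron \eqref{polyofdivisor} can be empty, lower-dimensional, or have a normal fan different from $\Sigma$ (already on $\CP^1$ the divisor $-D_0$ has empty polyhedron), so some positivity input is needed. On a compact manifold one would quote Kodaira to say that a positive-curvature metric makes $L$ ample and hence makes $P'$ compatible with $\Sigma$; in the non-compact setting no such implication is established in the paper, and the assertion that $P'$ is a (lattice) translate of $P$ is precisely the content of the corollary. The argument is therefore circular at this point: you use the compatibility of $P'$ with $\Sigma$ to construct the very form whose cohomology class is supposed to prove that compatibility.

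A secondary issue is the integrality step at the end. You deduce $c \in \Gamma^*$ from ``the integrality of each difference $b_i - a_i$,'' but the $a_i$ are a priori only real numbers --- Definition \ref{defi2-13} constrains the edge directions of a Delzant polyhedron, not its vertices --- so $b_i - a_i \in \Z$ is not given and must itself come out of the argument. This is exactly what the paper's proof supplies: restricting to a compact invariant divisor $D_1$, the section $s_1$ of $L$ with $\beta_1 = 0$ has integer vanishing orders along the $\Delta_i = D_1 \cap D_i$, and these are identified with the coefficients $\alpha_i$ cut out by the facet $F_1$, which pins the $a_i$ to integers up to a lattice translation. If you want to salvage your approach, the missing ingredient is a direct identification of the vertices of $P$ with the weights of the $T^n$-action on the fibers of $L$ over the fixed points (which would simultaneously give integrality and the normal-fan compatibility), but that is an additional argument, not a consequence of the propositions you cite.
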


\begin{proof}
	Recall that an AK-toric manifold with polyhedron $P$ is biholomorphic to the toric variety $\M_P$. Let $\Sigma$ be the normal fan of $P$ so that $\M_P = \M_\Sigma$. Since $M$ is smooth we have by \cite[Proposition 4.2.6]{CLS} that $L \cong \mathcal{O}(D)$ for some torus-invariant divisor $D = \sum \beta_i D_i$ with $\beta_i \in \Z$. We let $P_D$ denote the polyhedron associated to $D$ given by \eqref{polyofdivisor}, i.e. 
	\begin{equation*}
		P_D = \left\{ x \in \t^{*} \: | \: \langle x , \nu_i \rangle \geq - \beta_i, \text{ for all } i = 1, \dots, N \right\},
	\end{equation*}
where $\nu_1, \dots, \nu_N$ are the minimal generators of the rays $\sigma_i \in \Sigma$. If $D$ and $D'$ are any two torus-invariant divisors on $M$ with integer coefficients, we define an equivalence relation by declaring that $D \sim D'$ if and only if there exists some $\nu \in \Gamma^*$ such that $P_{D'} = P_D + \nu$, where $P_D$ and $P_{D'}$ are the polyhedra defined in \eqref{polyofdivisor}. By \cite[Theorem 4.1.3]{CLS}, $D \sim D'$ if and only if $\mathcal{O}(D) \cong \mathcal{O}(D')$. Suppose that $D_1$ is a compact torus-invariant Weil divisor in $M$. As before, such a $D_1$ must exist unless $M \cong \C^{n}$ and $P = \R^{n}_+$. Perhaps by modifying $D$ by the equivalence relation, we can assume that the coefficient $\beta_1$ corresponding to $D_1$ is zero. In other words, there is a section $s_1$ of $L$ which does not vanish identically on $D_1$. Let $F_1 \subset \overline{P}$ be the facet corresponding to $D_1$. As before, the Delzant polyhedron $F_1$ determines a unique torus-invariant Weil divisor $\Delta = \sum \alpha_i \Delta_i$ on $D_1$. The restriction of $s_1$ to $D_1$ is a section of $L|_{D_1}$ which vanishes along $\Delta_i = D_1 \cap D_i$ to order $\alpha_i$. In particular, we see that the coefficients $\alpha_i$ of $\Delta_i$ are equal to those $\beta_i$ such that $D_i \cap D_1 \neq \emptyset$. Recall that $D_\omega = \sum a_i D_i$. We claim that $D_\omega \sim D$. As before, we can act by GL$(n, \Z)$ so that $\langle \nu_1, x \rangle = x_1$. Write $P_1 = P_\omega + \nu_1$ so that the face $F_1 + \nu_1$ corresponding to $D_1$ now lies on the hyperplane $x_1 = 0$, and in general $P_1$ is defined by $\langle x, \nu_i \rangle \geq \langle \nu_1, \nu_i \rangle - a_i = -\tilde{a}_i$. Then it is straightforward to compute that the coefficients $\alpha_i$ are equal to those $\tilde{a}_i$ such that $D_i \cap D_1 \neq \emptyset$. Running across all compact divisors of $M$, we see that the coefficients $a_i$ in the defining equations for $P_\omega$ are uniquely determined by $\beta_i$ up to equivalence. In particular, $D_\omega \sim D$. 
\end{proof}

\begin{proposition}\label{prop2-26}
	Let $(M,J,\omega)$ be AK-toric with moment polyhedron $P$. Then  $\omega$ admits a strictly convex symplectic potential $u$ on $P$, unique up to the addition of an affine function on $P$. Moreover, the function $u$ takes a special form. Recall that $M \cong M_P$, so that $P$ in particular determines a K\"ahler form $\omega_P$ on $M$ with symplectic potential $u_P$ defined by \eqref{eqn2-10}. Then there exists a function $v \in C^\infty(\overline{P})$ such that 
	
	\begin{equation} \label{eqn2-11}
		u = u_P + v.
	\end{equation}
	
\end{proposition}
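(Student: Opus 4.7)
The plan is to construct $u$ as the Legendre transform of the $T^n$-invariant K\"ahler potential of $\omega$ on the dense orbit, and then to analyze its boundary behavior via local complex charts near each face of $\overline P$, extracting the logarithmic singularities from an inverse function theorem argument.

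By Proposition \ref{prop2-10}, there is a strictly convex $T^n$-invariant potential $\phi$ for $\omega$ on the dense orbit $\Cstarn \subset M$. After absorbing a linear function of $\xi$ into $\phi$ (which does not affect $\omega$), we may arrange that $\nabla\phi$ equals the moment map $\mu$ on the dense orbit and is therefore a diffeomorphism onto the interior of $\overline P$. Setting $u = L(\phi)$, Lemma \ref{lem2-16} directly yields that $u$ is smooth and strictly convex on $P$. Uniqueness up to affine functions follows from the identity $g = u_{ij}dx^i dx^j + u^{ij}d\theta^i d\theta^j$: the metric depends on $u$ only through its Hessian, so $u$ is determined up to the addition of an affine function on $P$.

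The main task is to prove that $v := u - u_P$ extends smoothly to $\overline P$. It suffices to work locally in a neighborhood of an arbitrary boundary point $x_0 \in \overline P$ lying in the relative interior of some codimension-$k$ face $F$. By the Delzant condition together with Proposition \ref{prop2-7}, after composing the $\Cstarn$-action with an element of $\text{GL}(n,\Z)$ and translating $\overline P$, we may assume that $F$ is locally cut out by $x_1 = \cdots = x_k = 0$ and that the stabilizer of $p := \mu^{-1}(x_0)$ is the standard $T^k \subset T^n$ in the first $k$ factors. The holomorphic slice theorem, as in the proof of Lemma \ref{lem2-22}, then produces equivariant complex coordinates $(z_1, \ldots, z_n)$ on a neighborhood of $p$ in which the $T^n$-action is the standard action on $\C^k \times (\Cstar)^{n-k}$ and $p$ corresponds to $(0, \ldots, 0, 1, \ldots, 1)$. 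In these coordinates $\omega$ admits a smooth $T^n$-invariant local K\"ahler potential $\psi(s_1, \ldots, s_n)$, with $s_j = |z_j|^2$, defined on a neighborhood of $s_0 := (0, \ldots, 0, 1, \ldots, 1)$ in $[0,\infty)^k \times \R^{n-k}_{>0}$. On the dense orbit, $\phi(\xi) = \psi(e^{2\xi_1}, \ldots, e^{2\xi_n})$ up to a constant, and the moment map reads $x_j = 2 s_j \psi_{s_j}(s)$.

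Strict convexity of $\omega$ at $p$ forces $\psi_{s_j}(s_0) > 0$ for $j \leq k$ and makes the Jacobian of $s \mapsto x$ invertible at $s_0$. By the inverse function theorem, $s_j(x) = x_j h_j(x)$ for $j \leq k$ with $h_j$ smooth and positive on a neighborhood of $x_0$, while $s_j(x)$ remains smooth and positive for $j > k$. Computing the Legendre transform directly,
\[
u(x) = \tfrac{1}{2}\sum_{j=1}^k x_j \log x_j + \bigg(\tfrac{1}{2}\sum_{j=1}^k x_j \log h_j(x) + \tfrac{1}{2}\sum_{j=k+1}^n x_j \log s_j(x) - \psi(s(x))\bigg),
\]
where the parenthesized remainder is smooth in a neighborhood of $x_0$ in $\overline P$. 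By the Guillemin formula \eqref{eqn2-10}, the singular part of $u_P$ near $x_0$ is precisely $\tfrac{1}{2}\sum_{j=1}^k x_j \log x_j$, since the remaining terms $\ell_i(x)+a_i$ do not vanish at $x_0$. Hence $u - u_P$ extends smoothly across the boundary near $x_0$, and letting $x_0$ range over $\overline P$ yields $v \in C^\infty(\overline P)$. The main obstacle in this argument is precisely the matching of logarithmic singularities between $u$ and $u_P$, which is resolved by the reduction via the slice theorem and the Delzant condition to a direct Legendre computation on the model $\C^k \times (\Cstar)^{n-k}$.
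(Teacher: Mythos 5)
Your proof is correct, but it takes a genuinely different route from the paper's. The paper establishes the boundary form \eqref{eqn2-11} by invoking \cite[Proposition 1]{ACGT} (following Abreu and Donaldson): one works in a local \emph{symplectic} slice around a point of $\mu^{-1}(y)$ for $y\in\p P$, shows that smooth extension of the complex structure $J_u$ across $\mu^{-1}(F)$ is equivalent to smooth extension of the Hessian $(u-u_P)_{ij}$, and then integrates to conclude that $u-u_P$ itself is smooth up to the boundary. You instead use the \emph{holomorphic} slice theorem to put the action in standard form on $\C^k\times(\Cstar)^{n-k}$ near $\mu^{-1}(F)$ and compute the Legendre transform explicitly, extracting the singular part $\tfrac12\sum_{j\le k}x_j\log x_j$ from the inverse function theorem applied to $x_j=2s_j\psi_{s_j}(s)$ (whose Jacobian at $s_0$ is indeed invertible by positivity of the complex Hessian of $\psi$ at $p$, which is block diagonal there). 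This is essentially the Guillemin--Abreu computation run from the complex side; it buys you smoothness of $u-u_P$ itself in one step and is fully self-contained, at the cost of genuinely using the complexified $\Cstarn$-action, whereas the symplectic-slice route cited in the paper also covers the purely symplectic situation of Remark \ref{proper-moment-map-business}. Two minor points to tighten: the identity $\phi=\psi\circ\exp$ holds a priori only up to a $T^n$-invariant pluriharmonic function, i.e.\ an affine function of $\xi$ rather than a constant; its linear part is eliminated by matching the moment maps (the components $j\le k$ vanish at $p$ on both sides after your translation of $\overline P$, and the components $j>k$ can be absorbed into $\psi$ since $s_j>0$ there). Also, since Definition \ref{defi2-13} only constrains vertices, one should note that every face of a pointed polyhedron contains a vertex, so the Delzant condition does supply the unimodularity of the normals to the facets containing $F$ that your $\text{GL}(n,\Z)$ normalization requires.
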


\begin{proof}
	By Proposition \ref{prop2-10}, the restriction of $\omega$ to the dense orbit $\Cstarn \subset M$ is determined by a strictly convex function $\phi$ on $\t$. The moment map $\mu: M \to \t^{*}$ is then determined by the Euclidean gradient $\nabla \phi$ on $\t$. Thus, there is a symplectic potential $u = L(\phi)$ defined by the Legendre transform \eqref{eqn2-6}. That $u$ satisfies the boundary condition \eqref{eqn2-11} follows from \cite[Proposition 1]{ACGT}. Indeed, the key point is that the Hessian $(u_{ij})$ of $u$ determines a natural complex structure $J_u$ on the dense open subset $\mu^{-1}(P) \subset M$.  In the compact setting, it was proved by Abreu \cite{Ab3} using the global symplectic slice theorem that the boundary conditions \eqref{eqn2-11} are equivalent to the fact that the complex structure $J_u$ extends to all of $M$.  Passing via the Legendre transform to complex coordinates on $\Cstarn \subset M$, we see that the extension of $J_u$ to all of $M$ is then equivalent to the extension of the symplectic form $\omega = 2i\p\bp \phi$ to $M$. 
	
	These arguments were then improved by Apostolov-Calderbank-Gauduchon-T{\o}nnesen-Friedman \cite{ACGT} and independently by Donaldson \cite{Donaldson-Abreu} who derived the boundary conditions \eqref{eqn2-11} from purely local considerations.  Indeed, the proof of \cite[Proposition 1]{ACGT} proceeds by showing that if $F$ is any $k$-dimensional face of $P$, then for each point $y \in F$, the Hessian $(u - u_P)_{ij}$ extends smoothly in a neighborhood of $y$.  This is achieved by choosing an arbitrary point $q \in \mu^{-1}(y)$ and a local symplectic slice (for the $T^n$-action), and then using the Taylor expansion for the metric around the point $q$ to prove that the complex structure $J_u$ defined by $u$ extending smoothly to $q$ is equivalent to the boundary condition \eqref{eqn2-11} at $y \in F$, which applies verbatim in our setting. Translating back to the complex picture via the Legendre transform we see that this in turn is equivalent to the condition that the K\"ahler metric $\omega = 2i\p\bp \phi$ extends to the subvariety $V_F$ corresponding to the face $F$ given by the Orbit-Cone correspondence (Proposition \ref{prop2-7}),  using the fact that $V_F$ is naturally identified with $\mu^{-1}(F)$ by Lemma \ref{lem2-22}.  As in the compact case, since the boundary of $P$ is piecewise linear and since $u-u_P$ is smooth on the interior, this implies that $u-u_P$ itself extends smoothly to a neighborhood of each point $y \in F$.  This completes the proof noting that $F$ and $y$ are arbitrary. 
\end{proof}

\begin{remark}\label{proper-moment-map-business}
It should be noted, although it is not needed for our purposes, that this also holds under somewhat more general conditions.  In particular let $(M,\omega)$ be a $2n$-dimensional symplectic toric manifold together with a compatible complex structure $J$, making $(M,J, \omega)$ into a K\"ahler manifold (recall that this means that $(M,\omega)$ admits a Hamiltonian $T^n$-action, but not necessarily a corresponding $\Cstarn$-action).  Suppose that the moment map $\mu: M \to \t^*$ is proper, and as usual denote by $\overline{P}$ the image $\mu(M) \subset \t^*$. Then one can still define a symplectic potential $u$ by considering the complex structure $J_u$ associated to $u$ on $P$, but it is no longer evident a priori in this setting that the metric $g$ associated to $(M, J)$ can be written in the form \eqref{eqn2-7} since we do not have a corresponding K\"ahler potential $\phi$ furnished by Proposition \ref{prop2-10}.  However, using the properness of $\mu$, one can apply Lemma \ref{lem2-20} to show that there is a globally defined isometry between $g$ and a metric $g'$ which is defined on the interior of $P$ by \eqref{eqn2-7}, and then correspondingly deduce the boundary conditions \eqref{eqn2-11} from \cite[Proposition 1]{ACGT} as above.  This was the approach of the recent work of Sena-Dias in \cite[Section 3]{SD} to prove a uniqueness result for scalar-flat metrics on non-compact toric 4-manifolds which are not necessarily complex toric.
\end{remark}

\section{Convexity properties}

\subsection{The weighted volume functional}

Let $(N, \omega)$ be a Fano manifold with a given K\"ahler metric $\omega \in 2\pi c_1(N)$, and let $\mathfrak{h}$ be the space of all holomorphic vector fields on $N$. Given $v \in \mathfrak{h}$, let $\theta_v$ be a Hamiltonian potential for $Jv$ with respect to the $T^k$-action generated by the flow of $Jv$, which exists because in the compact manifolds with $c_1 > 0$ always satisfy $H^1(N)=0$. Then set $F(v)$ as 

\begin{equation*} 
	F(v) = \int_N e^{-\theta_v} \omega^n.
\end{equation*}
In order for this to be well-defined of course one must normalize $\theta_v$. With an appropriate choice, it turns out that $F(v)$ is independent of choice of the metric $\omega$ in its cohomology class \cite{TZ2}. The modified Futaki invariant of \cite{TZ2} is then defined as the derivative $F_X:\mathfrak{h} \to \C$ of $F$ at a given holomorphic vector field $X$. Then $F_X$ is independent of the choice of reference metric, and in \cite{TZ2} it is shown that $F_X$ must therefore vanish identically if $X$ is the vector field corresponding to a K\"ahler-Ricci soliton on $N$. A necessary condition therefore for $X$ to occur as the vector field of a shrinking gradient K\"ahler-Ricci soliton on $N$ is that $F_X \equiv 0$.

 It is shown in \cite{ConDerSun} that  these ideas can be generalized to the non-compact setting in the presence of a complete shrinking gradient K\"ahler-Ricci soliton with bounded Ricci curvature. As in \cite{ConDerSun}, we refer to $F$ as the \emph{weighted volume functional}. Suppose that a real torus $T^{k}$ acts on $M$ holomorphically and effectively with Lie algebra $\t$, and that the soliton vector field $X$ satisfies $JX \in \t$. By the Duistermaat-Heckman theorem \cite{DH,DHadd,PW}, there is an open cone $\Lambda \subset \t \subset \mathfrak{h}$ where the weighted volume functional $F$, and thereby the Futaki invariant, can be defined. Moreover, the domain $\Lambda$ can be naturally identified with the dual asymptotic cone of $\mu(M) \subset \t^*$ (see \cite[Definition A.2, Definition A.6]{PW}). Just as in \cite{PW}, we will see that $\Lambda$ is in natural bijection with the space of Hamiltonian potentials which are proper and bounded below on $M$. In this setting, the soliton vector field $X$ has the property that $JX \in \Lambda$ and is the unique critical point of $F$ \cite[Lemma 5.17]{ConDerSun}. This is analogous to the volume minimization principle of \cite{MSY2} for the Reeb vector field of a Sasaki-Einstein metric.
 
 We show that on an AK-toric manifold $M$ with moment polyhedron $P$, the weighted volume functional $F$ is proper, convex, and bounded from below. It is clear from the definitions that the asymptotic cone of $P$ is equal to its recession cone $C$. Thus, there is a natural identification of the domain $\Lambda$ of $F$ with the dual recession cone $C^* \subset \t$. Fix a Delzant polyhedron $P$ and let $M \cong M_P$. Throughout this section we make the extra assumption that $P$ contains the point zero in its interior. This of course can always be achieved by a translation, which corresponds to a modification of the moment map by a constant; see Lemma \ref{lem2-18}. Suppose that there exists an AK-toric metric $\omega$ on $M$ with $P$ as its moment polyhedron. Then there is a potential $\phi$ for $\omega$ on the dense orbit. For any $v \in \t$, we know from Lemma \ref{lem2-11} that there is a fixed $b_v \in \R^n$ such that the restriction of the Hamiltonian potential $\theta_v$ to the dense orbit is determined by the function $\langle b_v, \nabla \phi \rangle$ on $\R^n$. Then passing to symplectic coordinates via the Legendre transform \eqref{eqn2-6}, we then see that $\theta_v$ is determined by the linear function $\langle b_v, x \rangle$ on $P$. The next proposition can be interpreted as the existence and uniqueness of a vector field in $\t$ with vanishing Futaki invariant.

\begin{proposition} \label{prop3-1}
	Let $P \subset \t^*$ be a Delzant polyhedron containing zero in its interior. Then there exists a unique linear function $\ell_P(x)$ determined by $P$ such that 
	
	\begin{equation}\label{eqn3-1}
		\int_P \ell(x) e^{-\ell_P(x)}dx = 0
	\end{equation}
for any linear function $\ell$ on $P$. 
\end{proposition}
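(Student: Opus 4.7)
The plan is to realize $\ell_P$ as the gradient of a strictly convex proper functional on the dual recession cone of $P$, following the weighted-volume-functional philosophy outlined above. Let $C \subset \t^*$ denote the recession cone of $P$ and $C^* \subset \t$ its dual cone, and define
\begin{equation*}
F(b) := \int_P e^{-\langle b, x\rangle}\, dx
\end{equation*}
for $b$ in the interior of $C^*$. Writing $P = P_0 + C$ with $P_0$ a bounded polytope and integrating first over $C$, one checks that $F$ is well defined, smooth, and strictly convex on $\operatorname{int}(C^*)$, with
\begin{equation*}
\nabla F(b) = -\int_P x\, e^{-\langle b, x\rangle}\, dx, \qquad \operatorname{Hess} F(b) = \int_P x \otimes x\, e^{-\langle b, x\rangle}\, dx.
\end{equation*}
Since $0 \in \operatorname{int}(P)$, $P$ is full-dimensional and the Hessian is strictly positive definite. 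So any critical point $b_P \in \operatorname{int}(C^*)$ of $F$ will yield a linear function $\ell_P(x) := \langle b_P, x\rangle$ satisfying $\int_P x_i\, e^{-\ell_P(x)}\, dx = 0$ for each coordinate, which is precisely \eqref{eqn3-1} for every linear $\ell$. Uniqueness will then follow immediately from strict convexity.

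For existence I would reduce the problem to showing that $F$ is proper on $\operatorname{int}(C^*)$ and then invoke a standard argument from convex analysis to extract the unique interior minimizer. Properness splits into two limits. First, as $b \to b_0 \in \partial C^*$ from inside, duality produces a nonzero $x_0 \in C$ with $\langle b_0, x_0\rangle = 0$; for any fixed $p \in P$ the entire ray $p + \R_+ x_0$ lies in $P$, and estimating the contribution to $F(b)$ from a thin tube around this ray shows it behaves like $c/\langle b, x_0\rangle$, which blows up in the limit (a clean way to package this is via Fatou's lemma applied to the integrand along an exhaustion of the ray). Second, as $|b| \to \infty$ with $b \in \operatorname{int}(C^*)$, write $b = |b|\hat b$ and use the hypothesis $0 \in \operatorname{int}(P)$: pick $\delta > 0$ with $B_\delta(0) \subset P$ and note that the slab
\begin{equation*}
S_{\hat b} := \bigl\{ x \in B_\delta(0) : \langle \hat b, x\rangle \leq -\delta/2 \bigr\}
\end{equation*}
has Euclidean volume bounded below uniformly in the unit vector $\hat b$. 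On $S_{\hat b}$ one has $e^{-\langle b, x\rangle} \geq e^{|b|\delta/2}$, so $F(b) \to \infty$.

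The hard part will be the boundary behavior on $\partial C^*$, where one has to check that along \emph{every} approach the nondecaying-ray contribution dominates, and in particular that the lower bound from the tube estimate is uniform as $b_0$ varies over $\partial C^*$. The second limit is essentially elementary once $0 \in \operatorname{int}(P)$ has been exploited. Once properness is in hand, strict convexity ensures that $F$ attains its infimum at a unique interior critical point $b_P$, and setting $\ell_P(x) = \langle b_P, x\rangle$ completes the proof.
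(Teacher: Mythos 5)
Your proposal is correct and follows essentially the same route as the paper: the same functional $F(b)=\int_P e^{-\langle b,x\rangle}\,dx$ on the interior of the dual recession cone $C^*$, with uniqueness from strict convexity and existence from properness checked in the two regimes $|b|\to\infty$ and $b\to\partial C^*$. The only differences are cosmetic --- the paper's $|b|\to\infty$ estimate uses a ball inside $-C^*\cap P$ rather than your slab in $B_\delta(0)$, and its boundary estimate uses tubes of growing finite length $\lambda_j$ rather than your infinite-ray tube with the explicit $1/\langle b,x_0\rangle$ blow-up.
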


\begin{proof}
Of course here $\t^*$ can be any real vector space, although our only application is when $\t^*$ is the dual Lie algebra of a real torus $T^{n}$. Let $C \subset \t^*$ be the recession cone of $P$. It follows immediately from the definition that the interior of $C^*$ is characterized by those $b \in \mathfrak{t}$ such that the linear function $\langle b, x \rangle$ on $P$ is positive outside of a compact set. Indeed, for each $b \in \t$, set 

	 \begin{equation*}
	 	H_{b} = \{ x \in \R^n \: | \: \langle b, x \rangle \leq 0 \},
	\end{equation*}
	and

	 \begin{equation*}
	 	Q_{b} = H_{b} \cap \overline{P}.
	\end{equation*}
We see from the definition (Definition \ref{defi2-3}) that  an element $y \in \t^*$ lies in $C$ if and only if $x + \lambda y \in P$ for all $x \in P$, $\lambda \geq 0$. Thus $Q_b$ is compact if and only if for each $x \in Q_b$,  and for each $y \in C$, there exists a $\lambda > 0$ such that $\langle x + \lambda y, b \rangle = 0$.  Since $\langle x, b \rangle \leq 0$, it follows that $Q_b$ is compact if and only if $b \in C^*$. Thus $ e^{-\langle b , x \rangle}$ is integrable on $P$, and so there is a well-defined function $F: C^* \to \R$ given by 

\begin{equation*} 
	F(b) = \int_P e^{-\langle b , x \rangle} dx.
\end{equation*}
Then 

\begin{equation*} 
	\frac{\p}{\p b^j} F = - \left(\int_P x^j e^{-\langle b , x \rangle} dx\right).
\end{equation*}
Moreover, the critical points of $F$ are precisely solutions $\ell_P$ to \eqref{eqn3-1}. The function $F$ is convex which immediately gives uniqueness. To show existence, it suffices to show that $F$ is proper. That is, given a sequence $b_j$ in the interior of $C^*$ such that either $|b_j| \to \infty$ or the sequence $\{b_j\}$ approaches a point on the boundary, we need to show that $F(b_j) \to \infty$. Consider the former case first. Using the natural inner product on $\t$, we can view the dual recession cone $C^{*}$ as sitting inside of $\t^*$. Since $0 \in P$, the intersection $Q  = - C^{*} \cap P$ has positive measure in $\R^{n}$. Now suppose that $\{b_j\}$ is any sequence in $C^{*}$ such that $|b_j| \to \infty$. Let $y \in Q$ be a fixed point in the interior and choose $\varepsilon$ sufficiently small so that $B_\varepsilon(y) \subset Q$ has strictly positive Euclidean distance to the boundary $\partial Q$. In particular, we then have that $ \inf_{v \in S^{n-1} \cap \overline{C}^*} \langle v, -y \rangle > 0$. We choose $\varepsilon$ sufficiently small so that $\delta =  \inf_{v \in S^{n-1} \cap \overline{C}^*} \langle v, -y \rangle  - \varepsilon > 0 $. For any $x \in B_{\varepsilon}(y)$, write $x = y + r w$ for $r \in [0, \varepsilon)$ and $w \in S^{n-1}$. Then we have, for any $(b, x) \in C^* \times B_{\varepsilon}(y)$, 

\begin{equation*}
	 -\langle b, x \rangle  \geq \langle b , -y  \rangle - r|b| |v| \geq \left( \left\langle \frac{b}{|b|}, -y \right \rangle - \varepsilon \right) |b| \geq \delta |b|.
\end{equation*}
Therefore, we see immediately that

\begin{equation*}
	F(b_j) = \int_P e^{-\langle b_j , x \rangle} dx \geq \int_{ B_{\varepsilon}(y)} e^{-\langle b_j , x \rangle} dx  \geq \int_{ B_{\varepsilon}(y)} e^{ \delta |b_j|} dx.
\end{equation*}
Since $|b_j| \to \infty$, we have then that $F(b_j) \to \infty$. 

Consider now the latter case. The key point is that $\partial C^*$ is defined by those $\bar{b} \in \R^n$ such that there exists at least one $\bar{c} \in \overline{C}$ with $\langle \bar{b}, \bar{c} \rangle = 0$. Choose $\bar{b} \in \p C^*$. The result essentially follows from the fact that the polyhedron $Q_{\bar{b}}$ defined above is unbounded. More explicitly, if $\bar{c}$ is a point with $\langle \bar{b}, \bar{c} \rangle = 0$, then for any $x_0 \in Q_{\bar{b}}$ we have that $x_0 + \lambda c \in Q_{\bar{b}} $ for any $\lambda \geq 0$. If we then fix a small $(n-1)$-disc $D_\varepsilon(x_0) \subset Q_{\bar{b}}$ perpendicular to $c$, consider the tubes $T_\lambda = \{ x + r c \: | \: x \in D_\varepsilon(x_0), r \in (0, \lambda) \} \subset Q_{\bar{b}} $. Take a sequence of points $b_i \to \bar{b}$ with $b_j$ in the interior of $C^*$, and define $Q_{b_j}$ and $H_{b_j}$ as above. Recall that each $Q_{b_j}$ is bounded. Choosing $\varepsilon$ small enough, and perhaps after removing finitely many terms from $\{b_j\}$, we can assume that $D_{\varepsilon} (x_0)$ is contained in $Q_{b_1}$. Let $\lambda_j$ be the largest positive number such that $T_{\lambda_j} \subset Q_{b_j}$. Since $Q_{b_j} \to Q_{\bar{b}}$, we see that $\lambda_j \to \infty$. Then we have 

\begin{equation*}
	F(b_j) = \int_P e^{-\langle b_j , x \rangle} dx \geq \int_{T_{\lambda_j}}e^{-\langle b_j , x \rangle} dx = \lambda_j \int_{D_\varepsilon(x_0)} e^{-\langle b_j, y \rangle } dy,
\end{equation*}
where $y$ are the coordinates on $D_\varepsilon(x_0)$. Clearly $F(b_j) \to \infty$.
\end{proof}

\begin{corollary} \label{cor3-2}
	Let $P \subset \R^{n}$ be a Delzant polyhedron, $M = M_P$, and suppose that $\omega$ is a $T^n$-invariant K\"ahler metric with $P$ as its moment polyhedron. Let $v$ be the holomorphic vector field on $M$ determined by $b_v \in \mathfrak{t}$ and $\theta_v$ be a Hamiltonian potential for $Jv$. Then 
	
	\begin{equation*} 
		\int_M e^{-\theta_v} \omega^n < \infty
	\end{equation*}
if and only if $b_v$ lies in the dual recession cone $C^*$. 
	
\end{corollary}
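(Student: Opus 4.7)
The plan is to reduce the integral $\int_M e^{-\theta_v}\omega^n$ to an integral over the moment polyhedron $P$ via symplectic coordinates, at which point the desired equivalence follows essentially from the arguments already in the proof of Proposition \ref{prop3-1}. Concretely, on the open dense subset $\mu^{-1}(P) \subset M$ the moment map together with the angular coordinates $(\theta^1, \dots, \theta^n)$ furnishes a diffeomorphism $\mu^{-1}(P) \cong P \times T^n$ in which $\omega$ becomes the standard symplectic form $\omega = dx^j \wedge d\theta^j$, so $\omega^n = n!\, dx \wedge d\theta$. By Lemma \ref{lem2-11}, the restriction of $\theta_v$ to this open set is given by the affine function $\langle b_v, x \rangle + c$ for some constant $c \in \R$. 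Since the complement $M \setminus \mu^{-1}(P)$ is a union of lower-dimensional $T^n$-invariant subvarieties and therefore has Lebesgue measure zero, we obtain
\begin{equation*}
\int_M e^{-\theta_v}\omega^n \;=\; n!\,(2\pi)^n e^{-c}\int_P e^{-\langle b_v, x\rangle}\, dx,
\end{equation*}
and it suffices to decide when the right-hand integral is finite.

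For the direction $b_v \in C^*$, this is exactly the opening of the proof of Proposition \ref{prop3-1}: the set $Q_{b_v} = \overline{P} \cap \{x : \langle b_v, x\rangle \leq 0\}$ is compact precisely when $b_v \in C^*$, on which the integrand is bounded; and on $P \setminus Q_{b_v}$ the linear function $\langle b_v, x\rangle$ is non-negative and grows linearly, giving exponential decay of the integrand. For the converse, I would argue contrapositively in two cases mirroring the tube construction in the proof of Proposition \ref{prop3-1}. If $b_v \in \partial C^*$ there exists $\bar c \in \overline C$ with $\langle b_v, \bar c\rangle = 0$; pick any interior point $x_0 \in P$ and a small $(n-1)$-disc $D_\varepsilon(x_0) \subset P$ transverse to $\bar c$, so that the tube $T_\lambda = \{y + r\bar c : y \in D_\varepsilon(x_0),\, r \in (0,\lambda)\}$ lies entirely in $P$ for all $\lambda > 0$; on this tube $\langle b_v, y + r \bar c \rangle = \langle b_v, y\rangle$ is uniformly bounded, and the Lebesgue measure of $T_\lambda$ tends to infinity, forcing the integral to diverge. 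If instead $b_v \notin \overline{C^*}$, there exists $c_0 \in C$ with $\langle b_v, c_0\rangle < 0$, and the same tube construction shows the integrand grows exponentially along the $c_0$-direction, so the integral again diverges.

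The only non-trivial step is the reduction to the integral over $P$; once that is in hand, the rest is a verbatim reuse of the argument already carried out in Proposition \ref{prop3-1}. The main subtlety is justifying that the complement of the open dense orbit contributes zero to the integral, which follows from Lemma \ref{lem2-22} together with the Orbit–Cone correspondence (Proposition \ref{prop2-7}): the preimage $\mu^{-1}(\partial P)$ is a finite union of proper $T^n$-invariant complex submanifolds of $M$, each of real codimension at least two, hence of Lebesgue measure zero.
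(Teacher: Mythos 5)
Your proof is correct and follows essentially the same route as the paper: pass to symplectic coordinates on the dense orbit so that the integral becomes $(2\pi)^n\int_P e^{-\langle b_v,x\rangle}\,dx$ (up to harmless constants), and then invoke the compactness-of-$Q_{b_v}$ and tube arguments already set up in the proof of Proposition \ref{prop3-1}. You merely make explicit some details the paper leaves implicit (the measure-zero complement of the dense orbit and the two divergence cases in the converse), which is fine.
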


\begin{proof}
	We work on the dense orbit in symplectic coordinates $\Cstarn \cong P \times T^n$. We have seen in Section 2.4 that in these coordinates $\omega$ is given simply by $\omega = \sum dx^i \wedge d\theta^i$ so that the integral above becomes 
	
	\begin{equation*} 
		 \int_{\Cstarn}e^{-\theta_v} \omega^n = \int_{P\times T^n} e^{-\langle b_v, x \rangle} dx d\theta = (2\pi)^n \int_{P} e^{-\langle b_v, x \rangle} dx.
	\end{equation*}
As we have seen, this is finite precisely when $b_v \in C^*$. 
\end{proof}

As a consequence, we recover the result of \cite{PW} that domain the $\Lambda$ of the weighted volume functional $F$ can be identified with the dual asymptotic cone $C^*$.

\subsection{The soliton equation}

 Let $P$ be a Delzant polyhedron containing zero in its interior and $M \cong M_P$. Suppose that there is a complete $T^{n}$-invariant shrinking gradient K\"ahler-Ricci soliton $\omega$ with $P$ as its moment polyhedron, and whose soliton vector field $X$ satisfies $JX \in \t$. From Proposition \ref{prop2-19}, we know that there is a corresponding symplectic potential $u \in C^\infty(P)$ which satisfies
 
 \begin{equation*}
 	2\left( u_i x^i  -  u(x) \right) - \log\det(u_{ij})  = \langle b_X, x \rangle,
 \end{equation*}
where the linear function $\langle b_X, x \rangle$ on $P$ corresponds via the Legendre transform to the Hamiltonian potential $\theta_X = \mu(JX)$ for $JX$. We adopt the following simplification of notation from \cite{Don1}. For a given $u \in C^\infty(P)$, set
 
 \begin{equation} \label{rhodef}
	\rho_u =  2\left( u_i x^i  -  u(x) \right) - \log\det(u_{ij})  
\end{equation}
so that the soliton equation can once again be rewritten as

\begin{equation} \label{rhoequalsbsol}
	\rho_u = \langle b_X, x \rangle .
\end{equation}
The function $e^{-\rho}$ is natural to study in the context of integration over $P$. In particular,

\begin{corollary} \label{cor3-3}
	Let $P$ be a Delzant polyhedron containing zero in its interior. For any smooth and convex function $u$ on $P$, we have that 
	
	\begin{equation*}
		\int_P e^{-\rho_u }dx < \infty.
	\end{equation*}
	
\end{corollary}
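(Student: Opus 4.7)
The strategy is to recognize $e^{-\rho_u(x)}dx$ as the pullback under Legendre duality of a Gaussian-type measure on $\t$, and then invoke the linear lower bound of Lemma \ref{propernesslemma}.

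First I would note that the term $\log\det(u_{ij})$ appearing in the definition \eqref{rhodef} of $\rho_u$ forces the Hessian $(u_{ij})$ to be positive-definite, so that $u$ is strictly convex on $P$. Let $\phi = L(u)$ be its Legendre transform, which by Lemma \ref{lem2-16} is a smooth strictly convex function on the open convex set $\Omega' = \nabla u(P) \subset \t$, and which satisfies $L(\phi) = u$. Using items (ii) and (iii) of Lemma \ref{lem2-16} together with the defining relation \eqref{eqn2-6}, one has the identities
\begin{equation*}
u_i(x)x^i - u(x) = \phi(\xi), \qquad \det(u_{ij})(x) \cdot \det(\phi_{ij})(\xi) = 1 \qquad \text{where } \xi = \nabla u(x),
\end{equation*}
so that substitution into \eqref{rhodef} gives
\begin{equation*}
\rho_u(x) = 2\phi(\xi) + \log\det(\phi_{ij})(\xi).
\end{equation*}

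Next I would change variables via the diffeomorphism $x = \nabla\phi(\xi) : \Omega' \to P$, which has Jacobian $\det(\phi_{ij})(\xi)$. The determinant factor coming from the change of variables cancels the one hidden in $\rho_u$, yielding
\begin{equation*}
\int_P e^{-\rho_u(x)}\,dx = \int_{\Omega'} e^{-2\phi(\xi) - \log\det(\phi_{ij})(\xi)} \det(\phi_{ij})(\xi)\,d\xi = \int_{\Omega'} e^{-2\phi(\xi)}\,d\xi.
\end{equation*}

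Finally I would apply Lemma \ref{propernesslemma} directly to $\phi$ on $\Omega'$: in the notation of that lemma, $u = L(\phi)$ is defined on $P$, and by hypothesis $0 \in P$, so there exist constants $\varepsilon, C > 0$ such that $\phi(\xi) \geq \varepsilon|\xi| - C$ for all $\xi \in \Omega'$. Extending the integrand by zero to $\R^n\setminus\Omega'$, this gives
\begin{equation*}
\int_{\Omega'} e^{-2\phi(\xi)}\,d\xi \leq e^{2C}\int_{\R^n} e^{-2\varepsilon|\xi|}\,d\xi < \infty,
\end{equation*}
which completes the proof. There is no real technical obstacle here: the hypothesis that $P$ contains the origin is precisely what is required to invoke Lemma \ref{propernesslemma} and produce the exponential decay of $e^{-2\phi}$ on $\t$, while the algebraic identities transforming $\rho_u$ into $2\phi + \log\det(\phi_{ij})$ are formal consequences of the Legendre transform.
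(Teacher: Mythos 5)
Your proposal is correct and follows essentially the same route as the paper: pass to the Legendre transform $\phi = L(u)$, change variables via $x = \nabla\phi(\xi)$ so that $\int_P e^{-\rho_u}dx = \int_{\Omega'} e^{-2\phi}d\xi$, and then invoke Lemma \ref{propernesslemma} (using $0 \in P$) to get the linear lower bound on $\phi$ and hence integrability. The only difference is that you spell out the intermediate Legendre identities more explicitly, which the paper leaves implicit.
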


\begin{proof} 
	 To prove the corollary, we let $\phi_u(\xi) = L(u)$ be the Legendre transform and apply the change of coordinates $x = \nabla \phi_u(\xi)$, where $\xi$ denotes coordinates on the domain $\Omega \subset \R^n$ of $\phi_u$. Then from Lemma \ref{lem2-16} we have 
	
	\begin{equation*}
		\det(u_{ij}) dx = d \xi,
	\end{equation*}
and 
	
	\begin{equation*}
		u - \langle \nabla u, x \rangle = - \phi_u(\xi).
	\end{equation*}
Therefore 

	\begin{equation*}
		\int_P e^{-\rho_u} dx = \int_{\Omega } e^{-2\phi_u} d\xi.
	\end{equation*}
Then from Lemma \ref{propernesslemma} we know that $e^{-2\phi_u}$ is integrable on $\Omega$. 
\end{proof}

\begin{remark} \label{rmk3-4}
	We emphasize at this stage the statement of Lemma \ref{propernesslemma}; simply by asserting that zero lies in the \emph{domain} of $u$, it follows automatically that the Legendre transform $\phi_u$ of $u$ is proper.
\end{remark}

\begin{corollary} \label{cor3-5}
	Let $P$ be a Delzant polyhedron containing zero in its interior, and suppose that there exists a solution $u \in C^\infty(P)$ to \eqref{rhoequalsbsol}. Then the element $b_X \in \t$ determining $JX$ lies in $C^*$.
\end{corollary}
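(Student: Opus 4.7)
The plan is to combine Corollary \ref{cor3-3} with the equation $\rho_u = \langle b_X, x \rangle$ to convert the statement into one about integrability of $e^{-\langle b_X, x \rangle}$ on $P$, and then invoke the combinatorial characterization of $C^*$ that was essentially established in the proof of Proposition \ref{prop3-1}.

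First I would apply Corollary \ref{cor3-3} directly: since $u \in C^\infty(P)$ is smooth and convex (as the Legendre transform of the strictly convex potential $\phi_u$ on a domain in $\R^n$), we obtain
\begin{equation*}
\int_P e^{-\rho_u} dx < \infty.
\end{equation*}
Using the hypothesis $\rho_u = \langle b_X, x \rangle$ on $P$, this immediately rewrites as
\begin{equation*}
\int_P e^{-\langle b_X, x \rangle} dx < \infty.
\end{equation*}

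Next I would show that this integrability condition forces $b_X$ to lie in $C^*$. This is essentially the converse direction of Corollary \ref{cor3-2}, and the argument runs along the lines used in the proof of Proposition \ref{prop3-1}. Specifically, suppose for contradiction that $b_X \notin C^*$. Then by definition of the dual cone there exists $y \in \overline{C}\setminus\{0\}$ with $\langle b_X, y \rangle \leq 0$. Since $0 \in P$ and $y$ lies in the recession cone $\overline{C}$, the entire ray $\R_{\geq 0}\cdot y$ is contained in $\overline{P}$, and by the openness of $P$ at the origin together with the defining property of the recession cone, there is some $x_0 \in P$ and $\varepsilon > 0$ such that the tube $T = \{x_0 + \lambda y + z : \lambda \geq 0, \; z \in B_\varepsilon(0) \cap y^\perp\}$ lies in $P$. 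Along this tube the integrand satisfies $e^{-\langle b_X, x_0 + \lambda y + z \rangle} \geq c > 0$ uniformly in $\lambda$ (this uses $\langle b_X, y \rangle \leq 0$), so integration over $T$ gives $\int_T e^{-\langle b_X, x \rangle}\,dx = \infty$, contradicting the above. Hence $b_X \in C^*$ as claimed.

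The main point requiring some care, which I expect to be the only non-cosmetic step, is the boundary case $\langle b_X, y \rangle = 0$: one must produce a tube of genuinely positive transverse volume inside $P$. This is exactly the construction carried out in the proof of Proposition \ref{prop3-1} when handling sequences $b_j$ approaching $\partial C^*$, so one can simply lift that argument here. Everything else is a direct chain of substitutions from the equation \eqref{rhoequalsbsol} and Corollary \ref{cor3-3}.
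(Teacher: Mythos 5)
Your proposal is correct and follows essentially the same route as the paper: apply Corollary \ref{cor3-3}, substitute the equation $\rho_u = \langle b_X, x\rangle$ to get $\int_P e^{-\langle b_X, x\rangle}dx < \infty$, and conclude from the integrability characterization of $C^*$. The only difference is that the paper simply cites Corollary \ref{cor3-2} for the last step, whereas you inline a proof of the relevant (converse) direction via the tube construction already present in the proof of Proposition \ref{prop3-1}; that argument is sound.
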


\begin{proof}
	 Since $P$ contains zero in its interior, we have by Corollary \ref{cor3-3} that 
	
	\begin{equation*}
		\int_P e^{-\rho_u} dx < \infty.
	\end{equation*}
Since $u$ satisfies \eqref{rhoequalsbsol}, we have 

	\begin{equation*}
		\int_P e^{-\langle b_X, x \rangle}dx < \infty.
	\end{equation*}
Since the restriction of the Hamiltonian potential $\theta_X$ for $JX$ to $P \times T^n$ is given by $\theta_X|_{P \times T^n} = \langle b_X, x\rangle$, it follows from Corollary \ref{cor3-2} that $b_X \in C^*$. 
\end{proof}

\begin{lemma} \label{lem3-6}
	Let $P$ be a Delzant polyhedron containing zero in its interior, and suppose that there exists a solution $u \in C^\infty(P)$ to \eqref{rhoequalsbsol}. Then the linear function $\langle b_X, x \rangle$ on $P$ satisfies 
	
	\begin{equation*}
		\int_P \ell(x) e^{-\langle b_X, x \rangle}dx = 0
	\end{equation*}
for any linear function $\ell(x)$ on $P$.
	
\end{lemma}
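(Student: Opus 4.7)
The plan is to reduce the statement to showing that $\int_P x^j e^{-\langle b_X, x\rangle}\,dx = 0$ for each coordinate $j=1,\ldots,n$; since any linear function $\ell(x)$ in question vanishes at the origin (if $\ell$ were to have a nonzero constant term, the integral could not vanish because $e^{-\langle b_X, x\rangle}$ is strictly positive and integrable by Corollary~\ref{cor3-3} and Corollary~\ref{cor3-5}), it must be of the form $\ell(x) = \sum c_j x^j$ and the claim follows by linearity. In other words, one shows that $b_X$ is the critical point $\ell_P$ furnished by Proposition~\ref{prop3-1}.

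The main calculation is to pass to ``K\"ahler potential coordinates'' via the Legendre transform. Let $\phi_u = L(u)$; by Lemma~\ref{lem2-16} and the fact that $\nabla u : P \to \mathbb{R}^n$ is a diffeomorphism, $\phi_u$ is a smooth strictly convex function defined on all of $\mathbb{R}^n$, and the change of variables $x = \nabla \phi_u(\xi)$ converts the integral as in the proof of Corollary~\ref{cor3-3}:
\begin{equation*}
    \int_P x^j e^{-\langle b_X, x\rangle}\,dx \;=\; \int_P x^j e^{-\rho_u}\,dx \;=\; \int_{\mathbb{R}^n} \frac{\partial \phi_u}{\partial \xi^j}(\xi)\, e^{-2\phi_u(\xi)}\,d\xi \;=\; -\frac{1}{2}\int_{\mathbb{R}^n} \frac{\partial}{\partial \xi^j}\!\left(e^{-2\phi_u(\xi)}\right) d\xi,
\end{equation*}
using that $x^j = \partial_j \phi_u$ on the dense orbit. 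It then remains to show that this last integral vanishes.

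For this I would apply the divergence theorem on a ball $B_R \subset \mathbb{R}^n$ and let $R \to \infty$: the boundary contribution is $\int_{\partial B_R} e^{-2\phi_u}\,\nu^j\, dS$, which is bounded in absolute value by $|\partial B_R|\cdot \sup_{\partial B_R} e^{-2\phi_u}$. By Lemma~\ref{propernesslemma} (which applies because $0 \in P$ by hypothesis), there exist constants $\varepsilon, C > 0$ with $\phi_u(\xi) \geq \varepsilon|\xi| - C$, so this boundary term is dominated by a constant times $R^{n-1} e^{-2\varepsilon R}$ and vanishes in the limit.

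The main technical obstacle is justifying that all the integrals appearing are absolutely convergent, since $P$ may be unbounded. For $\int_P x^j e^{-\langle b_X, x\rangle}\,dx$ itself this reduces to showing $b_X$ lies in the \emph{interior} of $C^*$ (not merely in $C^*$ as in Corollary~\ref{cor3-5}); this is seen by noting that if $b_X$ were on $\partial C^*$, then $\langle b_X, \cdot\rangle$ would be constant along a ray in the recession cone of $P$, contradicting the finiteness established in Corollary~\ref{cor3-3}. For $b_X \in \mathrm{int}(C^*)$ one has the estimate $\langle b_X, x\rangle \geq \delta|x|$ outside a compact subset of $P$, which yields absolute integrability of $|x|^N e^{-\langle b_X, x\rangle}$ for every $N$, and hence of everything needed in the change of variables and the divergence theorem above. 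Combining these ingredients completes the argument.
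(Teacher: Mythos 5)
Your proof is correct and follows essentially the same route as the paper: reduce to the coordinate functions $x^j$, pass to the Legendre transform via the change of variables $x = \nabla\phi_u(\xi)$ so that $\int_P x^j e^{-\rho_u}\,dx = -\tfrac{1}{2}\int_{\R^n}\bigl(e^{-2\phi_u}\bigr)_j\,d\xi$, and kill the boundary term in the integration by parts using the linear lower bound on $\phi_u$ from Lemma \ref{propernesslemma}. The only difference is that you spell out the absolute-convergence and boundary-term estimates (including the observation that $b_X$ must lie in the interior of $C^*$) that the paper's proof leaves implicit.
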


\begin{proof}
First, we claim that any function $u \in C^\infty(P)$ which is the Legendre transform of a smooth convex function $\phi$ on $\R^n$ satisfies 
	
	\begin{equation*}
		\int_P \ell(x) e^{-\rho_{u}}dx = 0 
	\end{equation*}
for any linear function $\ell(x)$ on $P$. Pick any coordinate $x^j$ and compute 

	\begin{equation*} 
		\int_P x^j e^{-\rho_{u}}dx = \int_{\R^n} \phi_j e^{-2\phi}d\xi = - \frac{1}{2} \int_{\R^n} \left( e^{-2\phi}\right)_j d\xi.
	\end{equation*}
By Lemma \ref{propernesslemma}, we know that $e^{-\phi}$ decays at least exponentially in $|x|$. Thus, integration by parts yields that the term on the right-hand side is zero. Then if $u$ satisfies $\rho_{u} = \langle b_X, x \rangle$, it follows that

	\begin{align*}
		 \int_P x^j e^{-\langle b_X, x \rangle } dx = - \frac{1}{2} \int_{\R^n} \left( e^{-2\phi}\right)_j d\xi = 0
	\end{align*}
for each $j$. 
\end{proof}

Therefore, the linear function $\langle b_X, x \rangle$ on $P$ must be equal to the unique linear function $\ell_P$ determined by Proposition \ref{prop3-1}. We will henceforth denote 

\begin{equation*}
	\langle b_X, x \rangle = \ell_P(x)
\end{equation*}
since whenever both sides exist, they must coincide.

\subsection{Real Monge-Amp\`ere equations on unbounded convex domains}

In this section we study the analytic properties of some real Monge-Amp\`ere equations of the same form as \eqref{rhoequalsbsol}. More precisely, we will consider equations of the form

\begin{equation} \label{rhoequalsA}
	\rho_u = A,
\end{equation}
where now the right-hand side $A(x) \in C^\infty(\overline{P})$ can be \emph{any} smooth function satisfying some fixed hypotheses which we will discuss below. When $P$ is bounded, this is also the approach taken in \cite{BB} and \cite{Don1}. Let $P$ be a Delzant polyhedron defined by the system of inequalities $\ell_i(x) + a_i \geq 0$, and suppose that $P$ contains zero in its interior.  Define $u_P$ as in \eqref{eqn2-10} by 

	\begin{equation*}
		u_P(x) =\frac{1}{2} \sum_{i=1}^d (\ell_i(x) + a_i) \log\left( \ell_i(x) + a_i \right),
	\end{equation*}
recalling that $u_P$ is the symplectic potential of the canonical K\"ahler metric $\omega_P$ on $M_P$. Let $A(x) \in C^\infty(\overline{P})$. We will say that $A$ is \emph{admissible} if each of the following conditions hold:

 \begin{enumerate}
 	\item  $V_A = \int_P e^{-A}dx < \infty$,
	\item $\int_P \ell(x) e^{-A}dx = 0 \text{ for any linear function } \ell$,
	\item $\int_P u_{P} e^{-A} dx < \infty$.
 \end{enumerate}
For an admissible function $A$. In analogy with Proposition \ref{prop2-26}, we set $\mathcal{E}_A^{1,\infty}$ to be the set
\begin{equation*}
 \mathcal{E}_A^{1,\infty} = \left\{ u = u_P + v  \, \bigg|\,  \int_P u e^{-A(x)}dx < \infty \, , \, (u)_{ij} > 0\,,\, v \in C^\infty(\overline{P}) \right\},
\end{equation*}
and similarly 
\begin{equation*}
 \mathcal{E}_A^{1,0} = \left\{ u = u_P + v  \, \bigg|\,  \int_P |u| e^{-A(x)}dx < \infty \, , \,  u \textnormal{ is convex} \,,\, v \in C^0(\overline{P}) \right\}.
\end{equation*}
The space $\Pot$ of \emph{symplectic potentials} is then
\begin{equation*} 
	\Pot = \left\{ u \in \mathcal{E}_A^{1,\infty}  \, \bigg|\,  \nabla u: P \to \R^n \textnormal{ is surjective} \right\}.
\end{equation*}
In fact we have $ \Pot \subset \mathcal{E}_A^{1,\infty}  \subset \mathcal{E}_A^{1,0} $.  The first inclusion is clear, and to see the second we proceed as follows.  If $ u \in \mathcal{E}_A^{1,\infty} $, we can modify by a linear function to ensure that $\nabla u(0) = 0$, and since $A$ is admissible this does not affect the value of $\int_P u e^{-A} dx$.  By Lemma \ref{propernesslemma} we can add a constant to $u$ to ensure that $u \geq 0$, and again the admissibility of $A$ ensures that this only affects the value of $\int_P u e^{-A} dx$ by a the addition of a constant. Hence we see that $\int_P |u| e^{-A}dx < \infty$ for any $u \in \mathcal{E}_A^{1,\infty} $.  The space $\Pot$ can be naturally viewed as a convex subset $C^\infty(\overline{P})$.

\begin{lemma}\label{surjective-gradient}
	Suppose that $u_0, u_1 \in \Pot$ and set $u_t = tu_1 + (1-t)u_0$. Then $\nabla u_t:P \to \R^n$ is surjective for all $t \in [0,1]$. 
\end{lemma}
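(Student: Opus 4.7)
The plan is to show that $\nabla u_t(P) \subset \R^n$ is both open and closed, hence all of $\R^n$ by connectedness. Openness is immediate: the Hessian $(u_t)_{ij} = t(u_1)_{ij} + (1-t)(u_0)_{ij}$ is a convex combination of positive-definite matrices, hence positive-definite, so $u_t$ is strictly convex and $\nabla u_t: P \to \R^n$ is an injective local diffeomorphism onto an open set. The endpoints $t \in \{0,1\}$ are trivial, so assume $t \in (0,1)$. Equivalently, by item $(v)$ of Lemma \ref{lem2-16}, I will show that for each $\xi \in \R^n$ the supremum
\[
L(u_t)(\xi) \;=\; \sup_{x \in P}\bigl\{\langle \xi, x\rangle - u_t(x)\bigr\}
\]
is attained at an interior point $x_* \in P$, where necessarily $\nabla u_t(x_*) = \xi$.

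First I would check that $L(u_t)$ is finite everywhere on $\R^n$. Since $u_0, u_1 \in \Pot$ have surjective gradient, item $(ii)$ of Lemma \ref{lem2-16} identifies the domains of $\phi_{u_0}, \phi_{u_1}$ with $\R^n$, so both Legendre transforms are smooth and finite there. The convexity inequality $(iv)$ then gives $L(u_t) \leq t\phi_{u_1} + (1-t)\phi_{u_0} < \infty$ on $\R^n$. Next I would extract super-linear growth of $u_0$ and $u_1$: from the sup formula $u_i(x) = \sup_{\xi}\{\langle \xi, x\rangle - \phi_{u_i}(\xi)\}$, taking $\xi = Rx/|x|$ for arbitrary $R > 0$ gives $u_i(x) \geq R|x| - \sup_{|\xi| \leq R}\phi_{u_i}(\xi)$, so $u_i(x)/|x| \to \infty$ as $|x| \to \infty$ in $P$.

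Now let $\{x_n\} \subset P$ be a maximising sequence for $L(u_t)(\xi)$. Splitting
\[
\langle \xi, x_n\rangle - u_t(x_n) \;=\; t\bigl(\langle \xi, x_n\rangle - u_1(x_n)\bigr) + (1-t)\bigl(\langle \xi, x_n\rangle - u_0(x_n)\bigr),
\]
each summand is bounded above by $\phi_{u_i}(\xi) < \infty$; since the convex combination converges and $t, 1-t > 0$, each summand is individually bounded below as well. Hence $u_i(x_n) \leq \langle \xi, x_n\rangle + C \leq |\xi||x_n| + C$, and the super-linearity of $u_i$ forces $\{x_n\}$ to remain bounded. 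A subsequence therefore converges to some $x_\infty \in \overline{P}$, which realises the supremum by continuity of $u_t$ on $\overline{P}$ (provided by Proposition \ref{prop2-26}).

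Finally I would rule out $x_\infty \in \partial P$ using the explicit form of $u_P$. By Proposition \ref{prop2-26}, write $u_t = u_P + v_t$ with $v_t = tv_1 + (1-t)v_0 \in C^\infty(\overline{P})$; the Guillemin formula \eqref{eqn2-10} gives $\nabla u_P = \tfrac{1}{2}\sum_i \nu_i [\log(\ell_i + a_i) + 1]$, so the inward normal derivative $\langle \nabla u_P, \nu\rangle$ diverges to $-\infty$ at every boundary point (picking a strict inward direction $\nu$ at lower-dimensional faces makes $\langle \nu_i, \nu\rangle > 0$ for every facet $F_i$ containing the point). If $x_\infty$ were on a facet, then
\[
\frac{d}{ds}\bigl[\langle \xi, x_\infty + s\nu\rangle - u_t(x_\infty + s\nu)\bigr]\bigg|_{s = 0^+} \;=\; +\infty,
\]
contradicting the maximum property. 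Therefore $x_\infty$ lies in the interior and $\nabla u_t(x_\infty) = \xi$. The main obstacle is the last step: one must confirm that the logarithmic blow-up of $\langle \nabla u_P, \nu\rangle$ persists at boundary points of $\overline{P}$ where several facets meet simultaneously, and that the boundedness of $\nabla v_t$ on $\overline{P}$ allows this divergence to be transferred to $u_t$. The other delicate point is the ``balancing'' trick in the third paragraph, which turns the finiteness of each $\phi_{u_i}(\xi)$ into two-sided boundedness of each summand along the maximising sequence — this is what makes the argument go through despite $P$ possibly being unbounded.
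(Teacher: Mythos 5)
Your proof is correct, but it takes a genuinely different route from the paper's. The paper argues by contradiction directly at the level of gradients: it normalizes so that $\nabla u_0(0) = \nabla u_1(0) = 0$, reduces to the one-dimensional statement along radial segments, and shows that the radial derivative $\left|\frac{\p u_t}{\p r}(x_i)\right| \to \infty$ along any sequence $x_i$ accumulating on $\p P$ or escaping to infinity, which is incompatible with $\nabla u_t(x_i)$ converging to a boundary point of the image. Your argument instead passes through the Legendre transform: you establish finiteness of $L(u_t)$ on all of $\R^n$ via the convexity property (item $(iv)$ of Lemma \ref{lem2-16}), force a maximising sequence to stay bounded via the ``balancing'' of the two summands against the finite values $\phi_{u_0}(\xi), \phi_{u_1}(\xi)$, and then exclude boundary maxima using the Guillemin boundary behaviour $u_t = u_P + v_t$ from Proposition \ref{prop2-26} and the logarithmic blow-up of $\langle \nabla u_P, \nu\rangle$ along inward directions (which, as you note, does persist at points where several facets meet, since every facet through such a point contributes a term $\langle \nu_i, \nu\rangle\log(s\langle\nu_i,\nu\rangle) \to -\infty$ with $\langle\nu_i,\nu\rangle > 0$, while $\nabla v_t$ stays bounded). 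The paper's proof is shorter and uses only the one-dimensional behaviour of $u_0$ and $u_1$ along lines, handling the two failure modes (boundary accumulation and escape to infinity) in a single stroke; yours is longer but more self-contained and explicit, trading the slightly terse radial-derivative estimate for the concrete boundary asymptotics \eqref{eqn2-11} and a compactness argument, and it produces the preimage of each $\xi$ constructively as the maximiser of $x \mapsto \langle\xi,x\rangle - u_t(x)$.
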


\begin{proof}
We first observe that this is true when $n = 1$.  Indeed, in this case $P$ can be taken to be an interval $(a, b)$ for $a < 0$ and $b \in (0, \infty]$.  Then by convexity $\frac{\p u_t}{\p x}: (a, b) \to \R$ will be surjective if and only if $\lim_{x \to b} \frac{\p u_t}{\p x} = \infty$ and $\lim_{x \to a} \frac{\p u_t}{\p x} = -\infty$. This is a property that $u_t$ clearly inherits from $u_0$ and $u_1$. 

 In general, we suppose for the sake of contradiction that there is some time $t$ such that $\nabla u_t(P) = \Omega \subsetneq \R^n$. Choose $\xi^* \in \p\Omega$ and a sequence $x_i \in P$ such that $\nabla u_t(x_i) \to \xi^*$.  By potentially adding a linear function, we assume without loss of generality that $\nabla u_0(0) = \nabla u_1(0) = 0$.  By passing to a subsequence then we can assume that either $x_i$ accumulate in $\partial P$, $|x_i| \to \infty$, or indeed both.  In either case, it follows from the choice of normalization together with the one-dimensional case that the radial derivative $\frac{\p u_t}{\p r}$ satisfies $\left| \frac{\p u_t}{\p r}(x_i)\right| \to \infty $, and hence $|\nabla u_t|(x_i) \to \infty$. This is a contradiction with the assumption that $\nabla u_t(x_i) \to \xi^*$. 
\end{proof}

Lastly we define $\Pot_0 \subset \Pot$ to be the space of \emph{normalized} symplectic potentials; these will be those $u \in \Pot$ such that 

\begin{equation} \label{eqn3-5}
	\int_P u e^{-A(x)}dx = 0.
\end{equation}
Clearly for any $u \in \Pot$, we can find a constant $c$ such that $u + c \in \Pot_0$.

 \begin{definition} \label{defi3-7}
 	Given any $u_0, u_1\in \Pot$, we say that the linear path $u_t = (1-t)u_0 + t u_1$ joining $u_0$ and $u_1$ is a \emph{geodesic}. 
 \end{definition}
 
We will see that, as a consequence of an elementary local argument,  geodesics in this sense have the property that their Legendre transforms define geodesics in the space of K\"ahler metrics on $M \cong M_P$ in the usual sense. The interpretation is that if $u_0, u_1 \in C^\infty(P)$ are the Legendre transforms of two K\"ahler potentials $\phi_0, \phi_1$ on $M$, then the path $\phi_t = L(u_t)$ solves the pointwise equation 
 
 \begin{equation}\label{geodesic-kahler}
 	\ddot\phi_t - \frac{1}{2} \left | \nabla_{\omega_t}\dot \phi_t \right|^2_{\omega_{t}} = 0 ,
 \end{equation}
and can thus be considered a geodesic in the space of K\"ahler metrics in the sense of \cite{DonSym}. This is a simple exercise in the basic properties of the Legendre transform. We will only make use of a small piece of the computation, but for completeness we include the proof below.
 
 \begin{lemma} \label{lem3-8}
 	Let $u_t$ be any path in $\Pot$ and $\phi_t = L(u_t)$. Then the time derivatives satisfy
	
	\begin{equation}\label{eqn3-6}
		\dot u_t = - \dot \phi_t.
	\end{equation}
Consequently,  if $\ddot{u}_t = 0$ then $\phi_t$ satisfies \eqref{geodesic-kahler}. 
 \end{lemma}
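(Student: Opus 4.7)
The proof is a direct calculation from the defining identity of the Legendre transform together with the basic properties listed in Lemma \ref{lem2-16}. Writing $x_t(\xi) = \nabla_\xi \phi_t(\xi)$, the defining relation is
\begin{equation*}
\phi_t(\xi) + u_t(x_t(\xi)) = \langle \xi, x_t(\xi) \rangle,
\end{equation*}
and by Lemma \ref{lem2-16}(ii) the map $x_t$ is a diffeomorphism with inverse $\nabla u_t$, so that $\xi = \nabla u_t(x_t(\xi))$ at corresponding points.

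For \eqref{eqn3-6}, I would differentiate the defining identity in $t$ at fixed $\xi$. The chain rule produces
\begin{equation*}
\dot\phi_t(\xi) + \dot u_t(x_t) + \langle \nabla u_t(x_t), \dot x_t \rangle = \langle \xi, \dot x_t \rangle,
\end{equation*}
and since $\nabla u_t(x_t) = \xi$, the terms involving $\dot x_t$ cancel, leaving exactly $\dot\phi_t(\xi) = -\dot u_t(x_t(\xi))$.

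For the geodesic consequence, I would differentiate once more at fixed $\xi$ to obtain $\ddot\phi_t(\xi) + \ddot u_t(x_t) + (\dot u_t)_j(x_t) \, \dot x_t^j = 0$. Under the assumption $\ddot u_t \equiv 0$, this reduces to $\ddot\phi_t = -(\dot u_t)_j \dot x_t^j$. To convert the right-hand side into an intrinsic quantity, differentiate $\xi = \nabla u_t(x_t(\xi))$ in $t$ at fixed $\xi$; this yields $\dot x_t^k = -u_t^{kj}(\dot u_t)_j$, where $u_t^{kj}$ denotes the inverse Hessian of $u_t$. Substituting gives
\begin{equation*}
\ddot\phi_t = u_t^{jk}(\dot u_t)_j (\dot u_t)_k.
\end{equation*}
Finally, Lemma \ref{lem2-16}(iii) identifies $u_t^{jk}(x) = (\phi_t)_{jk}(\xi)$ at corresponding points, so that the right-hand side, after chasing \eqref{eqn3-6} through this identity, is precisely $|\nabla_{\omega_t} \dot\phi_t|^2_{\omega_t}$, up to the conventional factor coming from the normalization $\omega = 2i\p\bp\phi$. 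This recovers \eqref{geodesic-kahler}.

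No serious obstacle arises; both steps are elementary chain-rule manipulations, and the only care required is bookkeeping, namely tracking which variable is held fixed at each differentiation and correctly matching normalization conventions when passing between the complex and symplectic pictures via Lemma \ref{lem2-16}(iii).
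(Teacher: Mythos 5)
Your proof is correct and is essentially the paper's argument: both proofs differentiate the Legendre identity in $t$ and let the cross-terms cancel because $\nabla\phi_t$ and $\nabla u_t$ are inverse maps, then use $\phi_{ij}(\nabla u(x)) = u^{ij}(x)$ to identify the resulting quadratic term with $\tfrac{1}{2}|\nabla_{\omega_t}\dot\phi_t|^2_{\omega_t}$. The only (immaterial) difference is that you differentiate at fixed $\xi$ while the paper works at fixed $x$, so you solve for $\ddot\phi_t$ directly rather than expressing $\ddot u_t$ in terms of $\ddot\phi_t$.
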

 
 \begin{proof}
 We have
 	\begin{align*}
		\dot u_t = \ddt u_t(x) & = \ddt \left(\big\langle \nabla u_t, x \big\rangle - \phi_t(\nabla u_t) \right) \\
				& =  \bigg\langle \ddt \nabla u_t, x \bigg\rangle - \dot \phi(\nabla u_t) - \bigg\langle \nabla \phi_t, \ddt \nabla u_t \bigg\rangle \\
				& = - \dot \phi ,
	\end{align*}
	which is the first statement. For the second,  note that it follows from Lemma \ref{lem2-16} that 

\begin{align*} 
	 \frac{\partial \dot\phi_t}{\p \xi^j} = - u_t^{ij} \frac{\p \dot u_t}{\p x^i}.
 \end{align*}
 Now compute

\begin{align*}
	\ddot u_t  &= - \ddt \dot \phi_t(\nabla u_t) = - \ddot \phi_t(\nabla u_t) - \sum_m \frac{\p \dot \phi_t }{\p \xi^m}\frac{\p \dot u_t }{\p x^m} \\
			&=  - \ddot \phi_t(\nabla u_t)  + u_t^{lm} \frac{\p \dot u_t }{\p x^l}\frac{\p \dot u_t }{\p x^m},
\end{align*}
so that 

\begin{align*}
	\ddot \phi_t - \frac{1}{2}| \nabla_{\omega_t} \dot \phi_t |_{t}^2 &= \ddot \phi_t - \phi_t^{ij} \frac{ \p \dot \phi_t }{\p \xi^i } \frac{ \p \dot \phi_t }{\p \xi^j } \\
	         &= - \ddot u_t + u_t^{lm} \frac{\p \dot u_t }{\p x^l}\frac{\p \dot u_t }{\p x^m} - (u_t)_{ij}u_t^{il}u_t^{mj}\frac{\p \dot u_t }{\p x^l}\frac{\p \dot u_t }{\p x^m} \\
	         &=  - \ddot u_t  = 0 .
\end{align*}
 \end{proof}
 
 \begin{remark}\label{remark-variation-measurezero}
 While the proof of \eqref{geodesic-kahler} requires two spacial derivatives of $u$ and $\phi$, the proof of the simpler equality \eqref{eqn3-6} works at any point where $u$ and $\phi$ are $C^1$. Since any convex function is $C^1$ outside of a set of measure zero, it follows that \eqref{eqn3-6} actually holds almost everywhere (in the sense that $\dot{u}_t(x) = - \dot{\phi}_t(\nabla u_t(x))$) for any $u \in \mathcal{E}^{1,0}_A$, a fact that we will make use of later on. 
 \end{remark}
 
We introduce a Ding-type functional $\F$ defined on $\mathcal{E}^{1,0}_A$ whose critical points, at least formally, are solutions to \eqref{rhoequalsA}. Define $\F_1$ on $\mathcal{E}^{1,0}_A$ by setting

\begin{equation} \label{eqn3-7}
	\F_1(u) = \int_{\R^n} e^{-2\phi_u}d\xi,
\end{equation}
where $\phi_u = L(u)$. This is well-defined on $\mathcal{E}^{1,0}_A$ by Lemma \ref{propernesslemma}, since the domain $P$ of $u$ contains zero by assumption. In particular, we can extend $e^{-2\phi_u}$ continuously by zero outside of the domain of $\phi_u$ to make sense of the integral \eqref{eqn3-7} over all of $\R^n$.
\begin{remark}\label{ding1-2-remark}
	Whenever $u \in  \mathcal{E}^{1,0}_A$ is $C^2$ in the interior of $P$, in particular when $u \in \mathcal{E}^{1,\infty}_A$, it follows that 
	\begin{equation}\label{ding1-2}
		\F_1(u) = \int_P e^{-\rho_u}dx.
	\end{equation} 
\end{remark}
The Ding functional $\F$ on $\mathcal{E}^{1,0}_A$ is then defined to be

\begin{equation} \label{eqn3-8}
	\F (u) = \frac{1}{V_A}\int_P u e^{-A} dx - \frac{1}{2}\log{\F_1(u)}.
\end{equation}

\begin{lemma} \label{lem3-9}
	Suppose that $u \in \Pot$ satisfies \eqref{rhoequalsA} and that $w \in  C^0_0(\R^n)$ is a continuous and compactly supported (as a function on $\R^n$), such that $u_t = u + tw \in \mathcal{E}^{1,0}_A$ for sufficiently small $t$. Then the first variation of $\F_1$ at $u$ in the direction $w$ is given by
	
	\begin{equation*}
		\delta_{u}\F_1(w) = 2 \int_P w e^{-A(x)} dx,
	\end{equation*}
and consequently
	\begin{equation*}
		\left. \ddt \right|_{t=0} \F(u+tw) = 0. 
	\end{equation*}
\end{lemma}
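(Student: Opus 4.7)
The plan is to express $\F_1(u_t) = \int_{\R^n} e^{-2\phi_{u_t}}\,d\xi$ via \eqref{eqn3-7}, differentiate under the integral at $t=0$, and then change variables back to $P$ via the Legendre transform. The soliton equation $\rho_u = A$ is what will convert the resulting integral over $\R^n$ into one over $P$ weighted by $e^{-A(x)}\,dx$; the second assertion will then follow from a one-line computation using that $\F_1(u) = V_A$ on the nose.

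First I would justify differentiation under the integral sign. The supremum representation $\phi_{u+tw}(\xi) = \sup_x\{\langle x,\xi\rangle - u(x) - tw(x)\}$ together with the boundedness of $w$ gives the pointwise estimate $|\phi_{u_t}(\xi) - \phi_u(\xi)| \leq |t|\sup|w|$. Consequently, for $|t| \leq t_0$ small,
\[
e^{-2\phi_{u_t}(\xi)} \;\leq\; e^{2 t_0 \sup|w|}\, e^{-2\phi_u(\xi)},
\]
and the right-hand side is integrable on $\R^n$ by Lemma \ref{propernesslemma} applied to $\phi_u$, since $u \in \Pot$ places $0$ in the interior of the image $P$ of $\nabla\phi_u$. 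This supplies the dominating function for the dominated convergence theorem.

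For the pointwise time derivative of $\phi_{u_t}(\xi)$ at $t=0$, I would invoke the almost-everywhere form of Lemma \ref{lem3-8} recorded in Remark \ref{remark-variation-measurezero}: at almost every $\xi \in \R^n$, $\phi_{u_t}(\xi)$ is differentiable in $t$ at $t=0$ with value $-w(\nabla\phi_u(\xi))$. Dominated convergence then yields
\[
\delta_u \F_1(w) \;=\; -2 \int_{\R^n} \partial_t \phi_{u_t}(\xi)\big|_{t=0}\, e^{-2\phi_u(\xi)}\,d\xi \;=\; 2 \int_{\R^n} w(\nabla\phi_u(\xi))\, e^{-2\phi_u(\xi)}\,d\xi.
\]
The change of variables $x = \nabla\phi_u(\xi)$ is a diffeomorphism $\R^n \to P$ since $u \in \Pot$; Lemma \ref{lem2-16} gives $d\xi = \det(u_{ij})\,dx$, while the soliton equation rewritten as $2\phi_u(\xi) = A(x) + \log\det(u_{ij}(x))$ gives $e^{-2\phi_u}\,d\xi = e^{-A(x)}\,dx$. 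Combining these produces the first identity.

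For the second claim I differentiate \eqref{eqn3-8} to obtain
\[
\ddt\F(u_t)\bigg|_{t=0} \;=\; \frac{1}{V_A}\int_P w\, e^{-A}\,dx \;-\; \frac{1}{2}\,\frac{\delta_u \F_1(w)}{\F_1(u)},
\]
and observe that, since $u \in \Pot \subset \mathcal{E}^{1,\infty}_A$ is $C^\infty$ in the interior of $P$, Remark \ref{ding1-2-remark} combined with $\rho_u = A$ yields $\F_1(u) = \int_P e^{-\rho_u}\,dx = V_A$. The two contributions then cancel. The main delicate point of the argument is the almost-everywhere time differentiability of $\phi_{u_t}$ when $w$ is merely continuous; this is handled cleanly by Remark \ref{remark-variation-measurezero}, which reduces matters to the standard fact that a convex function fails to be $C^1$ only on a set of measure zero, thereby avoiding any more subtle envelope-theorem considerations.
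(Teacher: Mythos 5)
Your argument is correct and follows essentially the same route as the paper: differentiate $\F_1(u_t)=\int_{\R^n}e^{-2\phi_{u_t}}\,d\xi$ under the integral sign using a dominating function supplied by Lemma \ref{propernesslemma}, identify the pointwise $t$-derivative via the almost-everywhere identity $\dot\phi_t=-\dot u_t$ of Remark \ref{remark-variation-measurezero}, change variables back to $P$, and invoke $\rho_u=A$ both for the first identity and for the cancellation $\F_1(u)=V_A$ in the second. The one point to tighten is that dominated convergence must be applied to the difference quotients of $e^{-2\phi_{u_t}}$ rather than to $e^{-2\phi_{u_t}}$ itself, but your Lipschitz estimate $|\phi_{u_t}-\phi_u|\le |t|\sup|w|$ (which the paper obtains less directly) yields this at once from $|e^{-2a}-e^{-2b}|\le 2|a-b|\max(e^{-2a},e^{-2b})$.
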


\begin{proof}
	 Let $\phi_t = L(u_t)$.  Since $w$ is compactly supported,  $\nabla u: P \to \R^n$ is surjective, and the domain of the Legendre transform is convex, it follows that the domain of $\phi_t$ is the whole of $\R^n$, and moreover that $\phi_t = \phi_0 = L(u)$ outside of a fixed compact set independent of $t$.  Moreover,  by Lemma \ref{propernesslemma} we have that 
\begin{equation*}
	 \phi_t(\xi) \geq \varepsilon |\xi| - \sup_{B_{\varepsilon}(0)}| u_t| \geq \varepsilon |\xi| - C,
\end{equation*}
for $\varepsilon, C > 0$ independent of $t$.  By \eqref{eqn3-6}, we know that there is a set $E \subset P$ of measure zero and a compact subset $K_t \subset \R^n$ (which does not necessarily have zero measure) such that $\nabla u_t(P \backslash E) = \R^n \backslash K_t$ and $\sup_{\R^n\backslash K_t}|\dot{\phi}_t| = \sup_{P}|w| < \infty$.  This tells us two things. First, since $u_0 = u$ is smooth, we see that $K_0 = \nabla u(E) \subset \R^n$ has measure zero.  Moreover,  as we have seen the family $K_t$ is contained in a fixed ball $B \subset \R^n$ independent of $t$, so that in fact $\sup_{\R^n}|\dot{\phi}_t| \leq C(\sup_{P}|w|  + 1)$. Thus 
	\begin{equation*}
	\begin{split}
    |\dot{\phi}_t| e^{-\phi_t}  \leq C(\sup_{P}|w|  + 1) e^{-\phi_t}  \leq C(\sup_{P}|w|  + 1)e^{-\varepsilon|\xi| + C}  \in L^1(\R^n).  
	\end{split}
	\end{equation*}
Hence, by the mean value theorem and the dominated convergence theorem, it follows that 
	\begin{equation*} 
		\left.\ddt \F_1(u_t)\right|_{t=0} = -2 \int_{\R^n} \dot{\phi}_0  e^{-2\phi_0} d\xi = 2\int_P w e^{-A} dx,
	\end{equation*}
using \eqref{eqn3-6}, \eqref{rhoequalsA} and that $K_0$ has measure zero.  So

	\begin{align*}
		\left.\ddt \F_1(u_t)\right|_{t=0} &= \frac{1}{V_A}\int_P w e^{-A} dx - \frac{\delta_u \F_1(w)}{2\F_1(u)} \\
					& =  \frac{1}{V_A}\int_P w e^{-A} dx - \frac{1}{\int_{\R^n} e^{-2\phi_0}d\xi} \int_P w e^{-A} dx =  0,
	\end{align*}
	since $\int_{\R^n} e^{-2\phi_0}d\xi = \int_{P} e^{-\rho_u}dx =  \int_{P} e^{-A}dx$ by \eqref{rhoequalsA}. 
\end{proof}

\begin{proposition}[{c.f. \cite[Proposition 2.15]{BB}}] \label{prop3-10}
	The Ding functional $\F$ is convex on $\mathcal{E}^{1,0}_A$. It is invariant under the action of $\R^{n} \times \R$ given by addition of affine-linear functions, and it is strictly convex modulo this action. In particular, suppose that $u_0, u_1 \in \Pot_0$. Then if $\F(t u_1 + (1-t) u_0) = t\F(u_1) + (1-t)\F(u_0)$, there exists a linear function $\ell(x)$ on $P$ such that $u_1 = u_0 + \ell$. 
\end{proposition}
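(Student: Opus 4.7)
My plan is to decompose $\F = \F_a + \F_b$, where $\F_a(u) = \frac{1}{V_A}\int_P u\, e^{-A}dx$ is linear in $u$ and $\F_b(u) = -\tfrac{1}{2}\log \F_1(u)$ is the logarithmic term, and then to establish each of the three claims (convexity, affine invariance, rigidity) separately.  Since $\F_a$ is linear along any linear interpolation $u_t = (1-t)u_0 + tu_1$, it contributes nothing to either the strict convexity or the equality case; the analytic content therefore lies entirely in $\F_b$ and reduces to the rigidity of Prékopa's inequality.

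For convexity, the crucial observation I would establish is that if $u_0, u_1 \in \mathcal{E}^{1,0}_A$ and $\phi_t := L(u_t)$, then the function $(t,\xi) \mapsto \phi_t(\xi)$ is jointly convex on $[0,1] \times \R^n$.  This should be immediate from the Legendre--Fenchel representation
\begin{equation*}
\phi_t(\xi) = \sup_{x \in P}\Bigl(\langle x,\xi\rangle - (1-t)u_0(x) - tu_1(x)\Bigr),
\end{equation*}
since the bracketed quantity is affine in $(t,\xi)$ for each fixed $x$ and the pointwise supremum of a family of affine functions is convex.  Consequently $e^{-2\phi_t(\xi)}$ is jointly log-concave in $(t,\xi)$, and Prékopa's theorem applied to the $\xi$-marginal would yield that $\F_1(u_t) = \int_{\R^n} e^{-2\phi_t(\xi)}d\xi$ is log-concave in $t$.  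Adding back the linear term $\F_a$ then gives the convexity of $\F$.

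The affine invariance is a short computation that I would handle separately.  For $\ell(x) = \langle b, x\rangle + c$, the sup representation (or Lemma \ref{lem2-18}(ii)) gives $\phi_{u+\ell}(\xi) = \phi_u(\xi - b) - c$, so translation invariance of Lebesgue measure implies $\F_1(u+\ell) = e^{2c}\F_1(u)$ and hence $\F_b(u+\ell) = \F_b(u) - c$.  Combined with admissibility property (ii), which gives $\int_P \langle b, x\rangle e^{-A}dx = 0$, one has $\F_a(u+\ell) = \F_a(u) + c$, and the two contributions cancel to yield $\F(u+\ell) = \F(u)$.

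Finally, for the rigidity statement, equality in the convexity of $\F$ along $u_t$ is (since $\F_a$ is linear in $u$) equivalent to equality in Prékopa for $\log\F_1(u_t)$.  The rigidity case of Prékopa's theorem, due to Dubuc, would then force $\phi_1(\xi) = \phi_0(\xi - v) + c$ for some $v \in \R^n$ and $c \in \R$.  Inverting via Lemma \ref{lem2-18}(ii) translates this to $u_1 - u_0 = \langle v, x\rangle - c$ being affine, and the hypothesis $u_0, u_1 \in \Pot_0$ together with admissibility property (ii) yields $cV_A = 0$, so $c=0$ and $u_1 - u_0$ is genuinely linear.  The main obstacle I foresee is the careful invocation of the Prékopa equality case in this non-compact setting, which is the technical heart of the argument; this is exactly the step where the approach would closely parallel \cite[Proposition 2.15]{BB} and require checking that the relevant integrability justifications extend to unbounded $P$ via the properness established in Lemma \ref{propernesslemma}.
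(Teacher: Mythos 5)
Your overall strategy is the same as the paper's: split $\F$ into the affine term and $-\tfrac12\log\F_1$, get log-concavity of $t\mapsto\F_1(u_t)$ from Pr\'ekopa--Leindler, and settle the equality case via Dubuc's rigidity theorem plus the normalization in $\Pot_0$. Your convexity step is, if anything, slightly more careful than the paper's: deriving joint convexity of $(t,\xi)\mapsto\phi_t(\xi)$ from the Legendre--Fenchel sup representation is exactly the hypothesis Pr\'ekopa's theorem needs, whereas the paper only records the diagonal inequality $\phi_t(\xi)\le(1-t)\phi_0(\xi)+t\phi_1(\xi)$. The invariance computation and the final normalization argument ($cV_A=0$) also match the paper.

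There is, however, one concrete gap in your rigidity step. Dubuc's equality case does not force $\phi_1(\xi)=\phi_0(\xi-v)+c$; it only forces
\begin{equation*}
\phi_1(\xi)=\phi_0(m\xi+a)-n\log(m)-\log\left(\frac{\int_{\R^n}e^{-2\phi_1}\,d\xi}{\int_{\R^n}e^{-2\phi_0}\,d\xi}\right)
\end{equation*}
for some dilation factor $m>0$ in addition to the translation. A nontrivial dilation would give $u_1(x)=u_0(m^{-1}x)+\cdots$, which is \emph{not} of the form $u_0+\ell$, so you must rule out $m\neq1$. The paper does this using the defining boundary condition of $\Pot$: since $L(\phi_0(m\xi))=u_0(m^{-1}x)$, the function $u_0(m^{-1}x)-u_P(x)$ lies in $C^\infty(\overline{P})$ only when $m=1$ (the coefficients of the $(\ell_i+a_i)\log(\ell_i+a_i)$ terms get rescaled otherwise). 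You have the hypothesis $u_0,u_1\in\Pot_0$ available, so the fix is immediate, but as written your argument skips the step where the scaling is excluded.
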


\begin{proof}
	If $u_0, u_1 \in \mathcal{E}^{1,0}_A$ satisfy $u_1 = u_0 + \ell(x) + a$ with $a \in \R$ and $\ell$ any linear function,  then by Lemma \ref{lem2-16} we see that $\int_{\R^n}e^{-\phi_1} d\xi = e^{2a} \int_{\R^n} e^{-\phi_0}d\xi $.  Therefore we see directly from the definition \eqref{eqn3-8} that the fact that $\F$ is invariant is equivalent to the statement that $\int_P \ell(x) e^{-A}dx = 0$ for any linear function $\ell$ on $P$, which $A$ satisfies by definition. We prove convexity directly, and show that 
	
	\begin{equation*}
		\F(u_t) \leq t\F(u_1) + (1-t) \F(u_0),
	\end{equation*}
where $u_t =tu_1 + (1-t)u_0$ for any $u_0, u_1\in \mathcal{E}^{1,0}_A$. Set $\phi_t = L(u_t)$. First notice that the functional $u \mapsto \int_P u e^{-A} dx$ is clearly affine on $\mathcal{E}^{1,0}_A$. Therefore it suffices to show that the function
	 
	\begin{equation*}
		t \mapsto -\log \int_{\R^n} e^{-2\phi_t} d\xi
	\end{equation*}
is convex in $t$. This follows from the fact that the Legendre transform is itself a convex mapping, i.e.
	
	\begin{equation} \label{eqn3-9}
		\phi_t(\xi) \leq t\phi_1(\xi) + (1-t) \phi_0(\xi),
	\end{equation}
which is the fourth item in Lemma \ref{lem2-16}.  It then follows immediately from the Pr\'{e}kopa-Leindler inequality \cite{Dub} that this is convex in $t$. This says precisely that any family $\phi_t$ of convex functions satisfying \eqref{eqn3-9} has the property that the function of one variable $\int_{\R^n}e^{-2\phi_t}d\xi$ is log-concave (i.e.  $ t \mapsto - \log \int_{\R^n} e^{-2\phi_t} d\xi$ is convex). The strict convexity follows from the equality case of the Pr\'{e}kopa-Leindler inequality, which was also studied in \cite{Dub}.  If the function $ \int_{\R^n}e^{-2\phi_t}d\xi$ is affine in $t$, then by \cite[Theorem 12]{Dub} there exists $m \in \R$ and $a \in \R^n$ such that 
	
	\begin{equation*} 
		\phi_1(\xi) = \phi_0(m\xi + a) - n\log(m) - \log\left( \frac{\int_{\R^n}e^{-2\phi_1} d\xi }{\int_{\R^n}e^{-2\phi_0} d\xi} \right).
	\end{equation*}
Firstly, we see that $m$ must be equal to 1 since $u_0, u_1 \in \Pot$. Indeed $L(\phi_0(m\xi)) = u_0(m^{-1}x)$. If $u_0 \in \Pot$, then $u_0(m^{-1}x) - u_P(x) \in C^\infty(\overline{P})$ if and only if $m = 1$. Then we have that $\phi_1(\xi) = \phi_0(\xi + a) - C$ for some $C$. Again passing to the Legendre transform, we have that 
	
	\begin{equation*}
		u_1(x) = L(\phi_1(\xi)) = L(\phi_0(\xi + a) - C) = u_0(x) + \ell_a(x) + C.
	\end{equation*}
Finally, the normalization condition \eqref{eqn3-5} implies that in fact $C = 0$. 
\end{proof}

To prove that solutions to \eqref{rhoequalsA} in $\Pot_0$ are unique, we would like to make use of this strict convexity.  To do this, we need to ensure that, if $u_0, u_1 \in \Pot_0$ are two solutions, the Ding functional $\F$ is minimized along the geodesic $u_t = tu_1 + (1-t)u_0$ at the endpoints $t = 0,1$. This would be clear from Lemma \ref{lem3-9} if the variation $v = u_1 - u_0$ were compactly supported, but there is no reason a priori why this should be the case.  To this end, we have 

\begin{lemma}\label{approximation-lemma}
 Suppose that $u \in \Pot_0$ and $v \in C^\infty(\overline{P})$ is such that $u_v := u +v \in \Pot_0$.  Then there exists a sequence of compactly supported functions $w_i \in C^0_0(\R^n)$ such that $U_i := u+ w_i \in \mathcal{E}^{1,0}_A$ and that 
 \begin{equation*}
 	\F(U_i) \to \F(u_v)
 \end{equation*}
 as $i \to \infty$. 
\end{lemma}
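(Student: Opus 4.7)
The plan is to proceed via Legendre duality.  Set $\phi := L(u)$ and $\phi_v := L(u_v)$; since $u, u_v \in \Pot$, the gradient maps $\nabla u, \nabla u_v : P \to \R^n$ are surjective by definition, so $\phi$ and $\phi_v$ are smooth strictly convex functions on all of $\R^n$ with $\nabla\phi, \nabla\phi_v : \R^n \to P$ smooth diffeomorphisms.  I will construct a sequence of convex functions $\phi_i$ on $\R^n$ that (a) agree with $\phi$ outside some ball $B_{R_i'}$, and (b) converge pointwise to $\phi_v$ as $i \to \infty$.  Then $U_i := L(\phi_i)$ will satisfy $U_i = u$ outside the compact set $K_i := \nabla\phi(\overline{B_{R_i'}}) \subset P^\circ$, so $w_i := U_i - u$ is continuous with compact support in the interior of $P$, hence extends by zero to an element of $C_0^0(\R^n)$.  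Convexity of $U_i$ follows from convexity of $\phi_i$, the regularity $U_i - u_P \in C^0(\overline{P})$ follows from writing $U_i - u_P = (u - u_P) + w_i$, and integrability against $e^{-A}$ is then automatic, giving $U_i \in \mathcal{E}^{1,0}_A$.

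For the construction of $\phi_i$, a prototype is the max
\begin{equation*}
\phi_i(\xi) := \max\bigl\{\phi(\xi),\;\phi_v(\xi) - \psi_i(\xi)\bigr\},
\end{equation*}
where $\psi_i$ is a convex correction such as $\psi_i(\xi) = \alpha_i(|\xi|^2 - R_i^2)_+$, with $\alpha_i, R_i, R_i'$ calibrated using the linear growth estimate of Lemma \ref{propernesslemma} so that $\phi_v - \psi_i$ remains convex, $\phi_v - \psi_i \leq \phi$ outside $B_{R_i'}$, and $\phi_v - \psi_i \geq \phi$ on $B_{R_i}$.  These properties force $\phi_i = \phi$ on $\R^n \setminus B_{R_i'}$, $\phi_i = \phi_v - \psi_i$ on $B_{R_i}$, and therefore $\phi_i \to \phi_v$ pointwise as $\psi_i \to 0$ locally uniformly.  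On the $u$ side, the standard identity that the Legendre transform of the max of convex functions equals the original Legendre transform on the region where one argument dominates then gives $U_i = u$ on $P \setminus K_i$.  The construction may need refinement in the regime where $\phi_v \leq \phi$ on $B_{R_i}$ (i.e.\ where $v \geq 0$), in which case one instead builds $\phi_i$ as a convex envelope interpolating between $\phi_v$ on $B_{R_i}$ and $\phi$ outside $B_{R_i'}$; both constructions exploit the smoothness $v \in C^\infty(\overline P)$ to control $\phi_v - \phi$ quantitatively on compact sets via the Legendre-transform identities of Lemma \ref{lem2-16}.

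To verify $\F(U_i) \to \F(u_v)$, I split $\F$ as in \eqref{eqn3-8}.  For the linear term, pointwise convergence $U_i \to u_v$ on $P$ (which follows from $\phi_i \to \phi_v$ pointwise and the standard continuity of the Legendre transform under pointwise convergence of convex functions) combined with a uniform integrable domination of the form $|U_i| \leq |u| + |u_v| + C$ and the integrability of $|u|$ and $|u_v|$ against $e^{-A}$ from the definition of $\mathcal{E}^{1,0}_A$ yields $\int_P U_i e^{-A}\,dx \to \int_P u_v e^{-A}\,dx$ by dominated convergence.  For the $\F_1$ term, the definition \eqref{eqn3-7} gives $\F_1(U_i) = \int_{\R^n} e^{-2\phi_i}\,d\xi$; the pointwise convergence $\phi_i \to \phi_v$ together with the uniform lower bound $\phi_i \geq \varepsilon|\xi| - C$ from Lemma \ref{propernesslemma} (applied via any fixed small ball $B_\varepsilon(0) \subset P$, using $0 \in P$) provides the integrable dominant $e^{-2\varepsilon|\xi|+2C}$, so $\F_1(U_i) \to \F_1(u_v)$ by dominated convergence.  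The main obstacle is the construction of $\phi_i$: arranging simultaneously that $\phi_i$ is convex, equals $\phi$ outside a compact set, and converges pointwise to $\phi_v$ is delicate because the sign and growth of $\phi_v - \phi$ are not a priori controlled at infinity, especially when $v$ is unbounded on the possibly unbounded polyhedron $\overline{P}$.  The smoothness $v \in C^\infty(\overline P)$ is crucial to obtain the quantitative growth comparisons of $\phi_v$ and $\phi$ required to calibrate the correction $\psi_i$ (or its convex-envelope analog) so that the construction succeeds globally on $\R^n$.
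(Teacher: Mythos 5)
Your high-level strategy (glue $u$ and $u_v$ into a convex function that equals $u_v$ on a large compact set and $u$ outside a slightly larger one, then pass to the limit in $\F$ by dominated convergence using the uniform lower bound of Lemma \ref{propernesslemma}) matches the paper's, and your treatment of the two convergence statements is essentially the argument the paper gives. But the heart of the lemma is the construction of the glued function, and that is precisely where your proposal has a genuine gap --- one you yourself flag as ``the main obstacle'' without resolving. Concretely, the prototype $\phi_i = \max\{\phi,\,\phi_v - \psi_i\}$ with $\psi_i(\xi) = \alpha_i(|\xi|^2 - R_i^2)_+$ fails on two counts. First, convexity: $\phi_v - \psi_i$ is a difference of convex functions, and since $\nabla\phi_v(\R^n) = P$ is a proper subset of $\R^n$, the Hessian $\nabla^2\phi_v = (u_v)^{ij}\circ\nabla\phi_v$ degenerates as $\xi \to \infty$ in the directions where $P$ is bounded (where $u_v \sim u_P$ has blowing-up Hessian); hence $\phi_v - \psi_i$ cannot be convex far out for any fixed $\alpha_i > 0$, and the max of a convex function with a non-convex one need not be convex, so $U_i = L(\phi_i)$ is not well defined as an element of $\mathcal{E}^{1,0}_A$. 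Second, pointwise convergence: on $B_{R_i}$ one has $\psi_i = 0$, so $\phi_i = \max\{\phi, \phi_v\}$ there, which equals $\phi$ rather than $\phi_v$ wherever $\phi_v < \phi$; since the set $\{\phi_v < \phi\}$ is fixed independently of $i$, the sequence converges to $\max\{\phi,\phi_v\}$ and not to $\phi_v$. The proposed fix (``a convex envelope interpolating between $\phi_v$ on $B_{R_i}$ and $\phi$ outside $B_{R_i'}$'') is exactly the nontrivial content of the lemma and is not carried out; there is also a tension you cannot calibrate away, since $\psi_i$ must dominate $\phi_v - \phi$ at infinity (which can be unbounded, e.g.\ when $v$ is an unbounded linear function on an unbounded $P$) while simultaneously being absorbable into the degenerating convexity of $\phi_v$.

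The paper avoids this by performing the gluing on the polytope side, where the geometry cooperates: it exhausts $\overline{P}$ by cylinders $\Omega_k$ over the recession cone, extends $u_v$ outward from $\partial\Omega_{k_1}$ linearly in the radial variable with slope $\alpha_1 > \sup_{\partial\Omega_{k_1}}\partial u_v/\partial r$, extends $u$ inward from $\partial\Omega_{k_2}$ linearly with slope $\beta_1 < \inf_{\partial\Omega_{k_2}}\partial u/\partial r$, and takes the max of the two resulting convex functions. The superlinear growth of $u$ (a consequence of Lemma \ref{surjective-gradient}) guarantees that for $k_2$ large the crossing locus is trapped in $\Omega_{k_2}\setminus\Omega_{k_1}$, so the max is convex, equals $u_v$ on $\Omega_{k_1}$, equals $u$ outside $\Omega_{k_2}$, and is bounded above by $\max\{u, u_v\}$ --- the last bound being what drives the dominated-convergence step for the linear term. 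If you want to keep your Legendre-dual framing, you would essentially have to transport this construction through $L$, at which point you are reproving the paper's lemma; as written, your proof is incomplete at its central step.
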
 

\begin{proof}
Let $\t^*_C \subset \R^n$ be the linear subspace spanned by recession cone $C$ of $P$.  We can see from the definition of $C$ (Definition \ref{defi2-3}) that there exists some point $q \in \R^n$,  not necessarily unique,  such that the translate $C - q$ coincides with the intersection $P_C$ of $P$ with $\t^*_C$. for each $k \geq 0$ set $B_{k}^C$ to be the cylinder 
 	\begin{equation*}
 		B_k^C = \{ x \in \R^n \: | \: || x - q ||_{\t_C} < k \}, 
 	\end{equation*}
 where $||  \cdot ||_{\t_C}$ denotes the norm of the induced inner product on $\t_C$.  As a shorthand we will denote $r(x) = || x - q ||_{\t_C}$. Note that, if we set $\Omega_k = \overline{P} \cap B_k^C$,  then any point in $\Omega_k$ can be joined to $\Omega_{\tilde{k}}$ by a line emanating from $q$ for any $k, \tilde{k}$ sufficiently large.  Now, $u_v$ is proper, so we can choose a $k_1 \geq 0$ sufficiently large such that the set $\Omega_{k_1}$ contains the unique critical point of $u_v$.  Let $\alpha_1 = \sup_{\p\Omega_{k_1}} \frac{\p u_v}{\p r} + 1$,  noting that this is finite by the choice of $\Omega_{k_1}$. Indeed, by construction we have that $\frac{\p}{\p r}$ is tangent to any face of $P$, and hence the corresponding quantity $ \sup_{\p\Omega_{k_1}} \frac{\p u_P}{\p r}$ for $u_P$ is finite.  Set $\tilde{u}_{v,1}$ to be continuous convex function on $\overline{P}$ defined by setting $\tilde{u}_{v,1} = u_v$ on $\Omega_{k_1}$ and extending continuously linearly with slope $\alpha_1$, i.e.  
 \begin{equation*}
 	\tilde{u}_{v,1}(y) = \left\{  \begin{array}{cl}  u_v(y) & y \in \Omega_{k_1} \\ u_v\left( \pi_{k_1}(y) \right) + \alpha_1 r(y - \pi_{k_1}(y)) &  y \in \overline{P} \backslash \Omega_{k_1} \end{array} \right.  ,
 \end{equation*}
  where $\pi_{k_1}(y) = y - (1 -  \frac{k_1}{r(y)}) (y - q) $ is the linear projection onto $\p\Omega_{k_1}$ relative to the base point $q$. Since $u$ grows faster than linearly in $|x|$ by Lemma \ref{surjective-gradient},  we can choose $k_2$ sufficiently large such that $u \geq \tilde{u}_{v,1} + 1$ on $\overline{P} \backslash \Omega_{k_2}$, $\inf_{\p \Omega_{k_2}} \frac{\p u}{\p r} \geq \alpha_1 + 1$.  We then choose $\beta_1 = \inf_{\p \Omega_{k_2}}  - 1$ and set $\tilde{u}_{1}$ to be 
   \begin{equation*}
 	\tilde{u}_{1}(y) = \left\{  \begin{array}{cl}  u\left(\pi_{k_2}(y) \right) - \beta_1 r(y - \pi_{k_2}(y)) & y \in \Omega_{k_2} \\ u(y)&  y \in \overline{P} \backslash \Omega_{k_2} \end{array} \right.  .
 \end{equation*}
  By construction,  $\tilde{u}_{1}(y) \geq \tilde{u}_{v,1}(y)$ on $\p \Omega_{k_2}$.  As a consequence of the tangent plane property of convexity, the properness of $u$,  together with the monotonicity of $\frac{\p u}{\p r}$, we see that the norm $|y|$ (equivalently $||y||_C$) of any point satisfying $ u\left(\pi_{k_2}(y) \right) - \beta_1 r(y - \pi_{k_2}(y)) = u_v\left( \pi_{k_1}(y) \right) + \alpha_1 r(y - \pi_{k_1}(y))$ can be made to strictly increase by sufficiently increasing the value of $k_2$. 
  Hence after perhaps making an even larger choice for $k_2$ we can ensure that the set of points $y$ such that $\tilde{u}_{1}(y) =  \tilde{u}_{v,1}(y)$ lies inside (the closure of) of $\Omega_{k_2} \backslash \Omega_{k_1}$. 
  Thus,  if we set $U_1 = \textnormal{max}\{\tilde{u}_{1} ,  \tilde{u}_{v,1}\}$, then $U_1$ is convex and 
 \begin{equation*}
 	U_1(x) = \left\{\begin{array}{cl} u_v(x) & x \in \Omega_{k_1} \\ u(x) & x \in \overline{P} \backslash \Omega_{k_2}  \end{array} \right.  .
 \end{equation*}
In particular,  if we set $w_1 = U_1 - u$, we see that $w_1 \in C^0_0(\R^n)$ has support in $\Omega_{k_2}$. Continuing in this way, we produce a sequence of functions $w_i \in C^0_0(\R^n)$ together with a sequence of compact convex sets $\Omega_{k_i}$ such that $U_i = u + w_i$ is convex, $w_i = v$ on $\Omega_i$ and $w_i = 0$ on $\overline{P}\backslash \Omega_{i+1}$.  Moreover, it follows from the construction that in fact $U_i \leq \textnormal{max}\{u, u_v\}$ everywhere.  
 	
 	Now since $U_i = u$ outside of a compact set, we see that $\int_P |U_i| e^{-A} dx < \infty$, and consequently $U_i \in \mathcal{E}^{1,0}_A$. In order to deduce that $\lim_{i \to \infty} \F(U_i) = \F(u_v)$, we first argue that $\lim_{i \to \infty} \int_P U_i e^{-A}dx = 0$.  For any $\varepsilon > 0$, let $i_0$ be sufficiently large such that 
 	\begin{equation*}
 	\left|\int_{\Omega_{i}} u_v  e^{-A} dx \right| + \left|\int_{P \backslash \Omega_{i}} u_v  e^{-A} dx \right| +\left|\int_{P \backslash \Omega_{i}} u e^{-A} dx \right|< \varepsilon, 
 	\end{equation*}
 	for all $i \geq i_0$. Clearly we can increase $i_0$ if necessary to ensure that $U_i, u, u_v \geq 0$ on $P \backslash \Omega_i$ for all $i \geq i_0$.  Hence for $i \geq i_0$ we have
 	\begin{equation*}
 	\begin{split}
 		\left|\int_P U_{i} e^{-A} dx  \right| &\leq \left|\int_{\Omega_i} u_v  e^{-A} dx \right| + \left| \int_{\Omega_{i+1}\backslash \Omega_i} U_i e^{-A} dx  \right| + \left| \int_{P\backslash \Omega_{i+1}} u e^{-A}dx \right|  \\
 		& \leq \varepsilon + \left|  \int_{\Omega_{i+1}\backslash \Omega_i}U_i e^{-A} dx \right| = \varepsilon +   \int_{\Omega_{i+1}\backslash \Omega_i}U_i e^{-A} dx  \\
 			& \leq \varepsilon +  \int_{P \backslash \Omega_i} \max\{u, u_v\} e^{-A} dx =  \varepsilon +  \int_{(P \backslash \Omega_i) \cap \{u \leq u_v\} } \!\!\!\!\!\!\!\!\!\!\!\!\!\!\!\!\!u_v e^{-A} dx +  \int_{(P \backslash \Omega_i) \cap \{u \geq u_v\} } \!\!\!\!\!\!\!\!\!\!\!\!\!\!\!\!\!u e^{-A} dx \\
 			 & \leq  \varepsilon +  \int_{P \backslash \Omega_i} u e^{-A} dx  +  \int_{P \backslash \Omega_i} u_v e^{-A} dx \leq 2 \varepsilon.
 	\end{split}
 	\end{equation*}
	Next, we claim that $ \lim_{i \to \infty}\int_{\R^n} e^{-2\phi_i} d\xi = \int_{\R^n} e^{-2\phi_v} d\xi$.  Once again fix some $\varepsilon > 0$, and set $\phi_i = L(U_i)$, $\phi = L(u), \phi_v = L(u_v)$.  By Lemma \ref{propernesslemma}, we have that 
	\begin{equation*}
		\phi_i(\xi) \geq \delta |\xi| - \sup_{B_{\delta}(0)} |U_i| = \delta |\xi| - \sup_{B_{\delta}(0)} u_v \geq \delta |\xi| - C, 
	\end{equation*}
 for some fixed $\delta > 0$ sufficiently small, and uniformly for all $i$ sufficiently large.  Since $\phi_v$ is proper, perhaps after modifying  $C$ we can ensure that $\phi_v \geq \delta |\xi| - C$ for the same choice of $\delta$ and $C$. Next choose $R > 0$ sufficiently large such that $e^{2C}\int_{\R^n \backslash B_{R}(0)}e^{-2\delta |\xi|} d\xi < \varepsilon$, and then $i_0$ sufficiently large that $B_R(0) \subset \nabla u_v(\Omega_{i_0})$, which we can achieve by Lemma \ref{surjective-gradient}. Then since $u_i = u_v$ on $\Omega_{i}$,  it follows that $\nabla u_i = \nabla u_v$ on the interior of $\Omega_i$ and hence $\phi_i = \phi_v$ on $\nabla u_v (\Omega_i)$.  Thus
 \begin{equation*}
 \begin{split}
 	\left| \int_{\R^n} e^{-2\phi_i} d\xi - \int_{\R^n} e^{-2\phi_v} d\xi \right| &\leq \left| \int_{\nabla u_v(\Omega_i)} \left( e^{-2\phi_i}  - e^{-2\phi_v}\right) d\xi \right| +  \int_{\R^n \backslash \nabla u_v(\Omega_i)} \left(e^{-2\phi_i}  + e^{-2\phi_v} \right) d\xi  \\
 		& \leq \int_{\R^n \backslash B_R(0)} \left(e^{-2\phi_i}  + e^{-2\phi_v} \right) d\xi \leq 2e^{2C}\int_{\R^n \backslash B_{R}(0)}e^{-2\delta |\xi|} d\xi  \leq 2\varepsilon ,
 \end{split}
 \end{equation*}
 for all $i \geq i_0$. Thus $\int_{\R^n} e^{-2\phi_i} d\xi \to \int_{\R^n} e^{-2\phi_v} d\xi$ as desired, and finally we conclude that $\F(u_i) \to \F(u_v)$. 
\end{proof}

 \begin{theorem} \label{thm3-11}
 	Let $P$ be a polyhedron containing zero in its interior, and suppose that $A \in C^\infty(\overline{P})$ is admissible. Then up to the action of the linear functions, there is at most one solution $u$ to \eqref{rhoequalsA} in $\Pot_0$. 
 \end{theorem}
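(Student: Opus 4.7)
The plan is to exploit the convexity of $\F$ on $\mathcal{E}^{1,0}_A$ together with the critical-point property at solutions of \eqref{rhoequalsA}, using Lemma \ref{approximation-lemma} to handle the non-compact support of the variation $v = u_1 - u_0$. Suppose $u_0, u_1 \in \Pot_0$ both solve $\rho_u = A$, and consider the geodesic $u_t = (1-t)u_0 + tu_1$. Lemma \ref{surjective-gradient} together with linearity of the normalization shows $u_t \in \Pot_0$ for every $t \in [0,1]$, and the difference $v = u_1 - u_0 \in C^\infty(\overline{P})$ since both $u_0 - u_P$ and $u_1 - u_P$ are smooth up to the boundary.

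The first step is to show that $\F(u_0) = \F(u_1)$. Apply Lemma \ref{approximation-lemma} at $u_0$ with variation $v$ to obtain compactly supported $w_i \in C^0_0(\R^n)$ with $u_0 + w_i \in \mathcal{E}^{1,0}_A$ and $\F(u_0 + w_i) \to \F(u_1)$. Because $u_0 + sw_i = (1-s)u_0 + s(u_0+w_i)$ is a convex combination of elements of $\mathcal{E}^{1,0}_A$, the function $s \mapsto \F(u_0 + sw_i)$ is convex on $[0,1]$ by Proposition \ref{prop3-10}, and its derivative at $s = 0$ vanishes by Lemma \ref{lem3-9}. A convex function with vanishing derivative at $0$ is nondecreasing, so $\F(u_0 + w_i) \geq \F(u_0)$, and letting $i \to \infty$ yields $\F(u_1) \geq \F(u_0)$. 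Swapping the roles of $u_0$ and $u_1$ gives equality.

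The second step is to upgrade this to $\F(u_{t_0}) = \F(u_0)$ for every $t_0 \in (0,1)$. Since $u_{t_0} \in \Pot_0$ and $u_{t_0} - u_0 = t_0 v \in C^\infty(\overline{P})$, the approximation lemma applies at $u_0$ with variation $t_0 v$ to produce compactly supported $w_i$ with $\F(u_0 + w_i) \to \F(u_{t_0})$. The same monotonicity argument yields $\F(u_{t_0}) \geq \F(u_0)$, while convexity along the path gives the reverse inequality $\F(u_{t_0}) \leq (1-t_0)\F(u_0) + t_0 \F(u_1) = \F(u_0)$.

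At this point $\F$ is affine along the geodesic $u_t$ joining $u_0$ and $u_1$ in $\Pot_0$, so the strict convexity statement in Proposition \ref{prop3-10} produces a linear function $\ell$ on $P$ with $u_1 = u_0 + \ell$, as required. The subtlety in this argument, and the reason the approximation lemma is essential, is that Lemma \ref{lem3-9} supplies the critical-point identity only against compactly supported variations; without Lemma \ref{approximation-lemma} there is no way to transmit the vanishing first variation along the non-compactly-supported difference $u_1 - u_0$ into the global inequalities needed to identify the two endpoints of the geodesic.
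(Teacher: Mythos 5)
Your proposal is correct and follows essentially the same route as the paper's proof: approximate the non-compactly-supported variation by the compactly supported $w_i$ of Lemma \ref{approximation-lemma}, use Lemma \ref{lem3-9} together with the convexity of $\F$ from Proposition \ref{prop3-10} to deduce $\F(u_t) \geq \F(u_0)$ (and symmetrically $\geq \F(u_1)$), and then invoke strict convexity modulo affine functions to conclude. The only cosmetic difference is that you first establish $\F(u_0)=\F(u_1)$ and then handle interior $t_0$, whereas the paper fixes an arbitrary $t\in(0,1)$ from the outset; the underlying mechanism is identical.
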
 
 
 \begin{proof}
 	Suppose that we have two solutions $u_0, u_1 \in \Pot_0$, and let $u_t = tu_1 + (1-t)u_0$, $v = u_1 - u_0$.  Fix any $t \in (0, 1)$. By Lemma \ref{approximation-lemma}, there exists a sequence of compactly supported functions $w_i$ such that $U_i = u_0 + w_i \in \mathcal{E}^{1,0}_A$ and $\F(U_i) \to \F(u_t)$. By Lemma \ref{lem3-9} and Proposition \ref{prop3-10},  moreover, we know that $\F(U_i) \geq \F(u_0)$, and therefore by passing to the limit we see that $\F(u_t) \geq \F(u_0)$.  Of course this is completely symmetric in $u_0$ and $u_1$ and independent of the choice of $t$,  and hence it follows that $\F(u_t)$ is minimized at $t = 0$ and $t = 1$.  Now let $\mathcal{H}$ denote the space of equivalence classes $[u]$ in $\Pot_0$ under the action of $\R^n$ by the addition of linear functions. By Proposition \ref{prop3-10}, $\F$ descends to a strictly convex functional on $\mathcal{H}$, and we have just seen that the convex function of one variable
	\begin{equation*}
		t \mapsto \F\left([u_t]\right)
	\end{equation*}
is minimized at both $t = 0$ and $t = 1$, and hence is constant. Since $\F$ is strictly convex, it follows that $[u_0] = [u_1]$.
 \end{proof}

\section{Proofs of the main theorems}

\subsection{Preliminaries}

Let $(M,J)$ be a complex manifold with a fixed effective and holomorphic action of the real torus $T^{n}$ with finite fixed point set. Suppose that $\omega$ is the K\"ahler form of a complete shrinking gradient K\"ahler-Ricci soliton $(g,X)$ on $M$ with $JX \in \t$. By \cite[Theorem 1.1]{Wylie} it follows that any manifold which admits a complete shrinking Ricci soliton must satisfy $H^1(M) = 0$. It is an immediate consequence that the $T^n$ action is Hamiltonian with respect to the K\"ahler form $\omega$ of $g$. Indeed, let $X_1, \dots, X_n$ be any basis for $\t$, and $\theta_j \in C^\infty(M)$ satisfy $-i_{X_j}\omega = d \theta_j$. Then one defines a moment map explicitly by the formula $\mu(x) = \left(\theta_1, \dots, \theta_n \right)$. There is of course an ambiguity in the choice of each $\theta_j$ of the addition of a constant. Put together, this corresponds to a translation of the image $\mu(M) \subset \t^*$. We begin by showing that if we assume that the Ricci curvature of $g$ is bounded, we can fit this situation into the general framework of the previous sections.

\begin{lemma}\label{lem4-1}
	Let $(M,J, \omega)$ be as above, and suppose that $g$ has bounded Ricci curvature and that $JX \in \t$. Then there exists a complexification of the $T^{n}$-action, i.e. an action of $\Cstarn$ whose underlying real torus corresponds to the original $T^{n}$-action. Furthermore, there exists an automorphism $\alpha$ of $(M,J)$ such that $\alpha^{*}g$ is $T^{n}$-invariant.
\end{lemma}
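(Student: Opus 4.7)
The plan is to first promote the $T^n$-action to a $\Cstarn$-action on $(M,J)$, and then use the additional complex directions to move $g$ by an automorphism to a $T^n$-invariant metric.  The argument rests on the algebraic observation that the soliton vector field $X$ commutes with all of $\t$: since $JX \in \t$ and $\t$ is abelian, $[JX, X_i] = 0$ for every $X_i \in \t$; since $X_i$ is a real holomorphic vector field, we have $[X, X_i] = -J[JX, X_i] = 0$.  Hence the flow of $X$ commutes with the $T^n$-action, and in particular for each $t \in T^n$ the pulled-back metric $t^*g$ is again a shrinking gradient K\"ahler-Ricci soliton with the \emph{same} vector field $X$.

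For the complexification, the key point is to show that for a basis $X_1,\dots,X_n$ of $\t$, each $JX_i$ is a complete real holomorphic vector field.  The vector $JX$ itself is complete since it lies in $\t$, but completeness of $JX_i$ for arbitrary $X_i$ is not automatic.  This is precisely where the bounded Ricci curvature hypothesis is used: combining the gradient and growth estimates of Cao--Hamilton and Haslhofer--M\"uller for the soliton potential $f$ (equivalently, the arguments underlying [ConDerSun, Theorem D]) with the completeness of $X = \nabla_g f$ yields polynomial growth of real holomorphic vector fields in the complexified Lie algebra $\t_\C = \t \oplus J\t$, which is enough to integrate them for all time.  Since $\t_\C$ is abelian and consists of complete commuting holomorphic vector fields, the exponential map gives an effective holomorphic $\Cstarn$-action on $(M,J)$ whose underlying real torus is the original $T^n$.

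Finally, to produce the automorphism $\alpha$, the plan is to exploit the $\R^n \cong \Cstarn/T^n$ worth of additional automorphisms generated by $J\t$.  For each $t \in T^n$, the metric $t^*g$ is K\"ahler with soliton vector field $X$; in particular, restricting to the dense orbit $\Cstarn \subset M$, each $t^*g$ has a K\"ahler potential $\phi_t$ on $\t$, and averaging over $T^n$ produces a candidate $T^n$-invariant potential $\bar\phi = \int_{T^n} \phi_t\, dt$.  One then applies the convexity machinery of Section 3: the Ding-type functional $\F$ is strictly convex modulo linear functions on the space of symplectic potentials, it is invariant under $T^n$, and it has $X$ as its critical vector field; combining the compactness of the $T^n$-orbit with the openness of the $\R^n$-directions coming from $J\t$, one finds a unique critical symplectic potential within the $\Cstarn$-orbit of $g$, and this critical potential is $T^n$-invariant by uniqueness.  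The element of $\Cstarn$ moving the symplectic potential of $g$ to the critical one defines the required automorphism $\alpha$, and $\alpha^*g$ is $T^n$-invariant.

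The main obstacle is the completeness of the vector fields in $J\t$; without the bounded Ricci curvature hypothesis one has no control on the growth of holomorphic vector fields and the complexification step would fail (compare Example \ref{eg2-21}).  Once this hurdle is cleared, the averaging step reduces to a standard critical-point argument for the convex functional $\F$ already constructed in Section 3.
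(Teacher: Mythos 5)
Your first step (showing $[X,X_i]=0$ for all $X_i\in\t$, hence $\t\subset\aut^X$) matches the paper, and your instinct that the bounded Ricci curvature is what makes the vector fields $JX_i$ complete is correct in spirit, although your justification via ``polynomial growth'' estimates is vague; the paper's actual mechanism is that bounded scalar curvature forces the zero set of $X$ to be compact \cite[Lemma 2.26]{ConDerSun}, and then \cite[Lemma 2.34]{ConDerSun} gives completeness of every holomorphic vector field commuting with $X$, which is what allows the complexification.

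The second half of your argument has a genuine gap: you look for the automorphism $\alpha$ inside $\Cstarn$ itself (``the element of $\Cstarn$ moving the symplectic potential of $g$ to the critical one''), but no element of the torus can ever work. Since $\Cstarn$ is abelian, for $\sigma\in\Cstarn$ and $t\in T^n$ one has $t^*(\sigma^*g)=(\sigma t)^*g=(t\sigma)^*g=\sigma^*(t^*g)$, so $\sigma^*g$ is $T^n$-invariant if and only if $g$ already was. Relatedly, the averaging step is not available to you: Proposition \ref{prop2-10} produces a potential $\phi$ on $\t$ only for $T^n$-\emph{invariant} metrics, so a non-invariant $t^*g$ has no potential $\phi_t$ on $\t$ to average, and in any case the soliton equation is non-linear in $\phi$, so an averaged potential would not remain a solution; the convexity machinery of Section 3 is likewise set up only for symplectic potentials of invariant metrics, which makes the proposed critical-point argument circular. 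The correct move, which is the paper's, is to leave the torus: one shows $\Cstarn\subset \text{Aut}^X$, so $T^n$ sits inside some maximal compact subgroup $G$ of the reductive group $\text{Aut}^X$ (using Theorem \ref{thm4-2}), and by Iwasawa's theorem $G$ is conjugate by some $\alpha\in\text{Aut}^X$ to a maximal compact subgroup contained in the isometry group $G^X$ of $g$; this $\alpha$ is genuinely non-toral in general, and it is $\alpha^*g$ that is $T^n$-invariant.
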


To prove this, we make use of the general structure theory for holomorphic vector fields on manifolds admitting K\"ahler-Ricci solitons from \cite{ConDerSun}. Let $\mathfrak{aut}^X$ be the space of holomorphic vector fields commuting with the soliton vector field $X$ and $\mathfrak{g}^X$ be those real holomorphic killing fields commuting with $X$. 

\begin{theorem}[{\cite[Theorem 5.1]{ConDerSun}}] \label{thm4-2} Let $(M,J,g,X)$ be a complete shrinking gradient K\"ahler-Ricci soliton with bounded Ricci curvature such that $JX \in \t$. Then 

	\begin{equation}
		\mathfrak{aut}^X = \mathfrak{g}^X \oplus J\mathfrak{g}^X
	\end{equation}
Furthermore, $\mathfrak{aut}^X$ and $\mathfrak{g}^X$ are the Lie algebras of finite-dimensional Lie groups $Aut^X$ and $G^X$ corresponding to holomorphic automorphisms and holomorphic isometries commuting with the flow of $X$. 

\end{theorem}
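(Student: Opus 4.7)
First I would construct the complexification by exponentiating a complex abelian subalgebra of $\aut^X$. One checks that $\t \subset \aut^X$: for any $Y \in \t$, since $JX \in \t$ and $\t$ is abelian, $[Y, JX] = 0$, and because $Y$ is real holomorphic (being a generator of a holomorphic action), $[Y, JX] = J[Y,X]$, so $[Y,X] = 0$. Theorem \ref{thm4-2} then also yields $J\t \subset \aut^X$. To see that $\t + J\t$ is direct, I would linearize the $T^n$-action at one of the finitely many fixed points $p \in M$: by Bochner's linearization theorem together with the effectiveness of the action, the isotropy representation $\t \hookrightarrow \mathfrak{gl}(T_pM, \R)$ is faithful and takes values in $\mathfrak{u}(T_pM)$ for a suitable $T^n$-invariant hermitian structure on $T_pM$. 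The corresponding linearization of $JY$ for $Y \in \t$ lies in $i\mathfrak{u}(T_pM) \subset \mathfrak{gl}(T_pM, \C)$, and since $\mathfrak{u}(T_pM) \cap i\mathfrak{u}(T_pM) = 0$, it follows that $\t \cap J\t = 0$. Hence $\mathfrak{h} := \t \oplus J\t \subset \aut^X$ is an abelian complex Lie subalgebra of real dimension $2n$. Exponentiating inside the finite-dimensional Lie group $Aut^X$ produces a connected abelian complex Lie subgroup $H \subset Aut^X$ of the form $\C^n/\Lambda$ for some discrete subgroup $\Lambda$. Because $H$ contains the compact torus $T^n = \exp(\t)$, and because any compact subgroup of $H$ acting on $M$ with $p$ as a fixed point has rank at most $n$ (again by faithfulness of the isotropy at $p$ and $\mathrm{rank}(U(n)) = n$), the maximal compact subgroup of $H$ is exactly $T^n$. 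Therefore $H \cong T^n \times \R^n \cong \Cstarn$ as complex Lie groups, and the induced holomorphic $H$-action on $M$ provides the desired complexification.

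For the second assertion, I would produce $\alpha$ by conjugating $T^n$ into a maximal compact subgroup of $Aut^X$. The key observation is that $G^X$ is compact: for a complete shrinking gradient Ricci soliton the potential $f$ is proper, bounded below, and attains its minimum on a compact set, and any $\phi \in G^X$ satisfies $\phi^* f = f + c_\phi$ for some constant (since $\phi$ is an isometry commuting with $X = \nabla f$). Comparing minima forces $c_\phi = 0$, so $\phi$ preserves the compact minimum set of $f$, and hence $G^X$ embeds into a compact isotropy subgroup. Combined with the Cartan-type decomposition $\aut^X = \mathfrak{g}^X \oplus J\mathfrak{g}^X$ of Theorem \ref{thm4-2}---in which the bracket relations $[\mathfrak{g}^X, J\mathfrak{g}^X] \subset J\mathfrak{g}^X$ and $[J\mathfrak{g}^X, J\mathfrak{g}^X] \subset \mathfrak{g}^X$ follow from the integrability of $J$ and the fact that $J$-translates of real holomorphic fields are again real holomorphic---standard Lie theory then implies that $G^X$ is a maximal compact subgroup of $Aut^X$. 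Since $T^n \subset Aut^X$ is compact, it is conjugate into $G^X$: there exists $\alpha \in Aut^X$ with $\alpha T^n \alpha^{-1} \subset G^X \subset \mathrm{Isom}(g)$. Then for each $t \in T^n$, $t^*(\alpha^* g) = (\alpha \circ t)^* g = ((\alpha t \alpha^{-1}) \circ \alpha)^* g = \alpha^*\bigl((\alpha t \alpha^{-1})^* g\bigr) = \alpha^* g$, so $\alpha^* g$ is $T^n$-invariant.

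The most delicate step is establishing the compactness of $G^X$; this ultimately rests on the well-known properness of the soliton potential for a complete shrinking gradient Ricci soliton, a fact used throughout \cite{ConDerSun}. The remaining ingredients---the Cartan-type structure of $\aut^X$ furnished by Theorem \ref{thm4-2}, the conjugacy of maximal compact subgroups in a Lie group with finitely many components, and Bochner's linearization of compact group actions at a fixed point---are standard.
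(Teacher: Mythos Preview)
Your proposal does not address the stated Theorem~\ref{thm4-2} at all. That theorem is a structural result about the Lie algebras $\aut^X$ and $\mathfrak g^X$, quoted verbatim from \cite[Theorem~5.1]{ConDerSun}; the present paper offers no proof of it and simply uses it as a black box. What you have written is instead a proof of Lemma~\ref{lem4-1}, the statement immediately preceding Theorem~\ref{thm4-2} in the paper: existence of a complexification of the $T^n$-action and of an automorphism $\alpha$ making $\alpha^*g$ torus-invariant. Indeed, you invoke Theorem~\ref{thm4-2} twice as an input (to get $J\t\subset\aut^X$ and to obtain the Cartan-type decomposition), so your argument cannot be a proof of it.

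If your intention was to prove Lemma~\ref{lem4-1}, then your outline is broadly correct and close in spirit to the paper's argument, with one notable difference in the first step. The paper obtains the complexification directly by showing that each $JX_i$ is a \emph{complete} vector field on $M$: bounded scalar curvature forces the zero set of $X$ to be compact \cite[Lemma~2.26]{ConDerSun}, and then \cite[Lemma~2.34]{ConDerSun} gives completeness of every element of $\aut^X$, so the flows of $(X_i,JX_i)$ integrate to a global $\Cstarn$-action. Your route---exponentiating $\t\oplus J\t$ inside the abstract Lie group $Aut^X$ and then arguing that the resulting connected abelian group $H$ is isomorphic to $\Cstarn$---is more circuitous and leaves a gap: you must still check that the abstract group $H$ actually \emph{acts} on $M$, i.e.\ that the relevant vector fields are complete, which is precisely the content the paper extracts from \cite{ConDerSun}. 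Your linearization argument to rule out extra compact directions in $H$ is correct but unnecessary once completeness is known. For the second assertion, your argument and the paper's coincide: $T^n$ sits in some maximal compact subgroup of the reductive group $Aut^X$, and conjugacy of maximal compacts (Iwasawa) produces $\alpha$. The paper does not spell out the compactness of $G^X$ here (it is cited later from \cite[Lemma~5.12]{ConDerSun}), whereas you sketch it via properness of $f$; either way this step is standard.
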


\begin{proof}[of Lemma 4.1]
Let $(X_1, \dots, X_n)$ be a basis for $\t$. Since $JX \in \t$, it is clear that $[X, X_i] = [X, JX_i] = 0$ for any $i$. In particular, $\t \subset \mathfrak{aut}^{X}$. Since the scalar curvature of $g$ is bounded by assumption, we have by \cite[Lemma 2.26]{ConDerSun} that the zero set of $X$ is compact. Therefore by \cite[Lemma 2.34]{ConDerSun}, it follows that for each $i$, $X_i$ and $JX_i$ are complete. In particular, the flow of $(X_i, JX_i)$ determines a unique effective and holomorphic action of $\Cstar$. Thus we can complexify the $T^{n}$ action, and moreover the corresponding $\Cstarn$-action satisfies $\t_\C = \t \oplus J\t \subset \mathfrak{aut}^{X}$. Since then $X$ and $JX$ lie in $\mathfrak{aut}^{X}$, we have that the $\Cstarn$-action on $M$ embeds $\Cstarn \subset Aut^X$, and so the real torus $T^n \subset \Cstarn$ lies in some maximal compact subgroup $G$ of $Aut^X$. Since any two maximal compact subgroups of a reductive group are conjugate by Iwasawa's theorem \cite{Iwa}, it follows such that there exists an automorphism $\alpha$ such that the group $G$, and therefore $T^n$, preserves the metric $\alpha^*g$. 
\end{proof}

Thus, for the remainder of this section, we assume that $(M,J)$ admits an effective holomorphic $\Cstarn$-action with finite fixed point set, and $\omega$ is the K\"ahler form of a complete $T^{n}$-invariant shrinking gradient K\"ahler-Ricci soliton $(g,X)$. In particular, if there is an element $b \in \t$ such that $\langle \mu , b \rangle$ is proper and bounded from below, then $M$ is AK-toric by Lemma \ref{lem2-20}. We have by Proposition \ref{prop2-10} that there exists a potential $\phi$ for $\omega$ on the dense orbit which can be viewed as a smooth strictly convex function on $\R^n$. We note also that $\omega$ is the curvature form of the $T^n$-invariant hermitian metric $h_{X} = e^{-f}(\omega^n)^{-1}$ on $-K_M$. From \eqref{eqn2-4} we know that the soliton potential $f$ is given by 

	\begin{equation*}
		f = \langle \nabla\phi, b_X \rangle = \langle \mu, b_X \rangle.
	\end{equation*}
We have the following from \cite{CaoZhou}.

\begin{proposition}[{\cite[Theorem 1.1]{CaoZhou}}]\label{prop4-3}
	Let $(M, g, f)$ be any non-compact complete shrinking gradient Ricci soliton. The soliton potential $f$ grows quadratically with respect to the distance function $d_g$ defined by $g$, so there is a constant $c_f$ such that 
		
			\begin{equation*}
				\frac{1}{4}(d_p - c_f)^2 \leq f \leq \frac{1}{4}(d_p + c_f)^2.
			\end{equation*}
		
\end{proposition}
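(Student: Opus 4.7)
My plan is to follow the strategy of Cao--Zhou, which hinges on two well-known soliton identities and the second variation of arc length. First, I would establish the standard differential identities satisfied by any complete gradient shrinker $(M,g,f)$ normalized so that $\operatorname{Ric} + \nabla^2 f = \tfrac{1}{2}g$. Taking a trace yields $R + \Delta f = \tfrac{n}{2}$, and combining this with the twice-contracted second Bianchi identity gives $\nabla(R + |\nabla f|^2 - f) = 0$. Hence, after adding a constant to $f$ (which does not affect $X = \nabla f$), we may normalize so that
\begin{equation*}
R + |\nabla f|^2 = f.
\end{equation*}
By the result of Chen (which I would cite), the scalar curvature $R$ of any complete shrinking gradient Ricci soliton is nonnegative, so $f \geq 0$ and $|\nabla f|^2 \leq f$.

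For the upper bound, the inequality $|\nabla f|^2 \leq f$ implies $\left|\nabla \sqrt{f + 1}\right| \leq \tfrac{1}{2}$ wherever $f$ is smooth. Integrating this inequality along a minimizing geodesic from the basepoint $p$ to an arbitrary point $x$, I would obtain $\sqrt{f(x) + 1} \leq \sqrt{f(p) + 1} + \tfrac{1}{2}d_p(x)$, which immediately gives $f(x) \leq \tfrac{1}{4}(d_p(x) + c_f)^2$ for an appropriate constant $c_f$ depending on $f(p)$.

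The lower bound is more delicate. Fix a minimizing unit-speed geodesic $\gamma\colon[0,s_0]\to M$ from $p$, with $s_0 = d_p(x)$ large. A standard second variation of arc length argument (comparing to variations of $\gamma$ by vector fields $E_i(s) \varphi(s)$ constructed from parallel orthonormal fields perpendicular to $\gamma'$, with $\varphi$ a suitable cutoff equal to $1$ away from the endpoints) produces a bound
\begin{equation*}
\int_0^{s_0} \operatorname{Ric}(\gamma',\gamma')\,ds \leq 2(n-1).
\end{equation*}
Now using the soliton equation pointwise, $\operatorname{Ric}(\gamma',\gamma') = \tfrac{1}{2} - \nabla^2 f(\gamma',\gamma')$, and since $\gamma$ is a geodesic, $\nabla^2 f(\gamma',\gamma') = \tfrac{d^2}{ds^2} f(\gamma(s))$. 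Integrating gives
\begin{equation*}
\frac{s_0}{2} - \dot f(s_0) + \dot f(0) \leq 2(n-1).
\end{equation*}
Therefore $\dot f(\gamma(s_0)) \geq \tfrac{s_0}{2} - C$, and integrating once more yields $f(x) \geq \tfrac{1}{4}s_0^2 - Cs_0 - C'$, which can be rewritten as $f(x) \geq \tfrac{1}{4}(d_p(x) - c_f)^2$ after enlarging $c_f$ if necessary.

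The main obstacle I expect is the lower bound step. One has to be careful that the second variation argument goes through uniformly in $x$, which requires either a cutoff construction along $\gamma$ or an index-form computation together with a bound on $|\nabla f|$ at the endpoints to absorb boundary terms. Once this integrated Ricci bound is in hand, everything else is a direct manipulation of the soliton identity and the ODE satisfied by $f\circ\gamma$.
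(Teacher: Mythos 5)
Your outline is a faithful reconstruction of the Cao--Zhou argument (normalization via the soliton identities, Chen's $R\ge 0$ to get $|\nabla f|^2\le f$ for the upper bound, and the second-variation estimate on $\int \mathrm{Ric}(\gamma',\gamma')\,ds$ for the lower bound), which is precisely the proof the paper relies on: the paper gives no argument of its own and simply cites \cite[Theorem 1.1]{CaoZhou}. The one delicate point you correctly flag --- absorbing the cutoff/boundary terms near the endpoints of $\gamma$ using the already-established bound on $|\nabla f|$ --- is handled in Cao--Zhou exactly as you describe, so the proposal is correct and essentially identical in approach.
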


Therefore $b_X \in \t$ is an element for which the map $\langle \mu , b_X \rangle: M \to \R$ is proper and bounded from below. Thus $\mu$ has image equal to a Delzant polyhedron $P$ by Lemma \ref{lem2-20}, and therefore $M$ is AK-toric. Let $\{D_i\}_{i=1, \dots, m}$ be the collection of prime, $\Cstarn$-invariant divisors in $M$. Since the anticanonical divisor $-K_M$ of a toric variety is always given by the simple formula \cite[Theorem 8.2.3]{CLS} 

\begin{equation*}
	-K_M \sim \sum_{i=1}^m D_i, 
\end{equation*}
we can apply Corollary \ref{cor2-25} to obtain:

\begin{lemma} \label{lem4-4}
	Let $(M,J)$ be a complex manifold with an effective holomorphic $\Cstarn$-action with finite fixed point set. Suppose that $\omega$ is the K\"ahler form of a complete $T^{n}$-invariant shrinking gradient K\"ahler-Ricci soliton $(g,X)$ on $M$. Then the moment map $\mu$ has image equal to a Delzant polyhedron $P$. In particular, $(M,J,\omega)$ is AK-toric and quasiprojective. Let $\{D_i\}$ be the prime, $\Cstarn$-invariant divisors in $M$, and let $\nu_i \in \Z^n \subset \t$ be minimal generators of the corresponding rays given by the Orbit-Cone correspondence. Then the image $P$ of $\mu$ is equal up to translation to the polyhedron 
	
	\begin{equation}\label{eqn4-2}
		P_{-K_M} = \left\{ x \in \t^* \: | \: \langle \nu_i, x \rangle \geq -1 \right\}
	\end{equation}
determined by the anticanonical bundle. 
\end{lemma}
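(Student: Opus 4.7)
The plan is to observe that the first two claims have essentially been assembled in the paragraphs immediately preceding the lemma, and that the third follows from a direct application of Corollary \ref{cor2-25}. First, I would invoke Proposition \ref{prop4-3} to note that $f = \langle \mu, b_X\rangle$ grows quadratically with respect to the distance function, hence is proper and bounded from below. This supplies the element $b = b_X \in \t$ needed to apply Lemma \ref{lem2-20}, which yields that $\mu(M)$ is a Delzant polyhedron $P \subset \t^*$. Combined with Proposition \ref{prop2-10}, which furnishes the K\"ahler potential $\phi$ for $\omega$ on the dense orbit, the triple $(M,J,\omega)$ satisfies all three conditions of Definition \ref{def2-23} and is therefore AK-toric; quasiprojectivity is then immediate from Lemma \ref{lem2-22}.

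For the third assertion, the key observation (already recorded in the preamble to Section 4) is that $\omega$ is the curvature form of the $T^n$-invariant hermitian metric $h_X = e^{-f}(\omega^n)^{-1}$ on $-K_M$. This is directly verified from the soliton equation \eqref{gkrs}:
\begin{equation*}
	-i\p\bp \log h_X \;=\; \Ric_\omega + i\p\bp f \;=\; \omega.
\end{equation*}
Writing $P = \{x \in \t^* \,:\, \langle \nu_i, x\rangle \geq -a_i\}$, with $\nu_i \in \Gamma$ the minimal generators of the rays corresponding to the prime invariant divisors $D_i$ via the Orbit-Cone correspondence (Proposition \ref{prop2-7}), Corollary \ref{cor2-25} then produces an equivariant isomorphism $-K_M \cong \O(D_\omega)$, where $D_\omega = \sum a_i D_i$.

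The plan is to close the argument by combining this with the standard toric identity $-K_M \sim \sum_{i=1}^m D_i$ cited above, which under \eqref{polyofdivisor} is associated to the polyhedron $P_{-K_M}$. The resulting linear equivalence $\sum a_i D_i \sim \sum D_i$, together with the bijection between linear equivalence classes of torus-invariant divisors and translation classes of their associated polyhedra \eqref{polyofdivisor} (which was the essential input in the proof of Corollary \ref{cor2-25}, derived from \cite[Theorem 4.1.3]{CLS}), then forces $P$ and $P_{-K_M}$ to coincide up to translation by an element of $\Gamma^*$. I do not anticipate any real obstacle: the lemma is essentially a synthesis of the machinery developed in Section 2 with the structural results recalled in the preamble to Section 4.
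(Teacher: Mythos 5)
Your proposal is correct and follows essentially the same route as the paper: the paper's own justification consists of the preamble to Section 4 (Wylie's theorem giving the moment map, $f = \langle \mu, b_X\rangle$ together with Proposition \ref{prop4-3} supplying the proper, bounded-below Hamiltonian needed for Lemma \ref{lem2-20}, and the observation that $\omega$ is the curvature of $e^{-f}(\omega^n)^{-1}$ on $-K_M$), followed by the identity $-K_M \sim \sum_i D_i$ and an application of Corollary \ref{cor2-25}. Your reconstruction matches this step for step, including the final comparison of $D_\omega = \sum a_i D_i$ with $\sum D_i$ via the linear-equivalence/translation correspondence of \eqref{polyofdivisor}.
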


In particular, the line bundle $L_P$ of Proposition \ref{prop2-14} is equal to $-K_M$. Clearly, zero lies in the interior the polyhedron $P_{-K_M}$ above whenever it is full-dimensional. For simplicity of notation, we will denote $P = P_{-K_M}$.

We emphasize that as yet the image of the moment map is fixed only up to translation in $\t^*$. Recall (Lemma \ref{lem2-18}) that the addition of a linear function to the K\"ahler potential $\phi = \phi(\xi)$ on the dense orbit corresponds to a translation of the image of the moment map. We claim that the normalization determined in Proposition \ref{prop2-12} fixes the moment image uniquely. Thus, it is the real Monge-Amp\`ere equation \eqref{eqn2-5} that fixes which translate of $P \subset \t^{*}$ appears. The argument is local, and is based on the observation of Donaldson \cite{Don1} that the choice of normalization for $\phi$ determines the behavior of K\"ahler-Ricci soliton equation \eqref{gkrs} in symplectic coordinates as $x \to \partial P$. 
 
 \begin{lemma} \label{lem4-5}
 	Let $(M,J,\omega)$ be AK-toric, and suppose that $\omega$ is the K\"ahler form of a complete shrinking gradient K\"ahler-Ricci soliton on $M$. Then, by Proposition \ref{prop2-12}, there exists a unique smooth convex function $\phi$ on $\R^n$ such that $\phi$ determines a K\"ahler potential for $\omega$ on the dense orbit via the identification $\Cstarn \cong \R^n \times T^n$ and satisfies the real Monge-Amp\`ere equation 
	
	\begin{equation*}
		\det\phi_{ij} = e^{-2\phi + \langle b_X, \nabla \phi \rangle}.
	\end{equation*}
Then the image of the moment map $\mu = \nabla \phi$ is precisely the translate of $P$ given in \eqref{eqn4-2}. In particular, zero lies in the interior of $P$.
	
 \end{lemma}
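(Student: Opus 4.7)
The plan is to extract which translate of $P_{-K_M}$ appears as the moment image by matching the boundary asymptotics of the real Monge-Amp\`ere equation \eqref{eqn2-5} in symplectic coordinates. By Lemma \ref{lem4-4} we may write $P' := \mu(M) = \{x \in \t^* \mid \ell_i(x) + a_i \geq 0, \ i = 1, \ldots, N\}$ with $\ell_i(x) = \langle \nu_i, x\rangle$ and $\nu_i$ the minimal ray generators of the fan of $M$. Since \eqref{eqn4-2} is precisely the case $a_i = 1$ for every $i$, it suffices to establish this identity; geometrically, adding a linear function to $\phi$ translates $P'$ by Lemma \ref{lem2-18}, and the normalization \eqref{eqn2-5} is exactly what pins down the translate.

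First I would pass to the Legendre transform $u = L(\phi)$. Proposition \ref{prop2-26} provides the decomposition $u = u_{P'} + v$ with $v \in C^\infty(\overline{P'})$ and $u_{P'}$ given by the explicit formula \eqref{eqn2-10}, while Proposition \ref{prop2-19} transforms \eqref{eqn2-5} into
\begin{equation*}
\rho_u(x) = \langle b_X, x\rangle \quad \text{on } P'.
\end{equation*}
The right-hand side extends smoothly across $\partial P'$, so $\rho_u$ must remain bounded near every facet $F_i$, and it is this boundedness that will fix each $a_i$.

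The core computation is then the boundary expansion of $\rho_u$ near a relative interior point of a facet $F_i$. From \eqref{eqn2-10} one reads off directly that
\begin{equation*}
2\bigl((u_{P'})_j\, x^j - u_{P'}\bigr) = \sum_j \ell_j(x) - \sum_j a_j \log\bigl(\ell_j(x) + a_j\bigr),
\end{equation*}
so only the term $-a_i \log(\ell_i + a_i)$ is singular as $x \to F_i$. Using the Delzant condition I would change coordinates near a point in the relative interior of $F_i$ so that $\ell_i + a_i$ becomes a single coordinate $y^1$; in such coordinates the Hessian of $u_{P'}$ takes the form $\tfrac{1}{2 y^1} E_{11} + B(y)$, where $E_{11}$ has a $1$ in the $(1,1)$-slot and zeros elsewhere, and $B$ is smooth with invertible lower-right $(n-1) \times (n-1)$ minor up to $y^1 = 0$. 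A direct expansion of the determinant then yields $\log \det (u_{P'})_{ij} = -\log(\ell_i + a_i) + O(1)$, and the smooth contributions from $v$ and from the other facets are absorbed into the $O(1)$ part. Combining these asymptotics gives
\begin{equation*}
\rho_u(x) = (1 - a_i)\log\bigl(\ell_i(x) + a_i\bigr) + O(1)
\end{equation*}
as $x$ approaches $F_i$, and the boundedness of $\rho_u$ forces $a_i = 1$ for each $i$. Hence $P' = P_{-K_M}$, and $0$ lies in the interior since $\ell_i(0) = 0 > -1$ for every $i$.

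The only genuinely technical point is the Hessian asymptotic for $u_{P'}$, which follows from the standard Abreu-Guillemin boundary analysis already invoked in Proposition \ref{prop2-26}; beyond that, the argument is just a matching of logarithmic singularities on the two sides of the Monge-Amp\`ere equation.
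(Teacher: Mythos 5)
Your proposal is correct and follows essentially the same route as the paper: both pass to the symplectic potential, use the decomposition $u = u_{P'} + v$ from Proposition \ref{prop2-26}, expand $\rho_u$ near a relative interior point of each facet to find the singular term $(1-a_i)\log(\ell_i + a_i)$, and conclude $a_i = 1$ from the smoothness (hence boundedness) of $\rho_u = \langle b_X, x\rangle$ across $\partial P'$. The only cosmetic difference is that the paper works one facet at a time in adapted coordinates with $\ell_1(x) = x_1$, whereas you write out the global expansion of $2(u_j x^j - u)$ first; the content is identical.
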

 
 \begin{proof}
  	We know from Lemma \ref{lem4-4} that the image $\nabla\phi(\R^n)$ is a Delzant polyhedron $P'$. Suppose that $P'$ is defined by the linear inequalities $\ell_i(x) \geq -a_i$, where $\ell_i(x) = \langle \nu_i , x \rangle$. As we saw in Proposition \ref{prop2-26}, any such $\omega$ determines and is determined by a symplectic potential $u \in C^\infty(P)$, which is unique up to the addition of an affine function. Passing to the Legendre transform, recall that $u$ satisfies the real Monge-Amp\`ere equation $\rho_u = \langle b_X, x \rangle$, where 
	
	\begin{equation*}
		\rho_u(x) = 2 \left( u_i x^i - u\right) - \log\det(u_{ij}).
	\end{equation*}
In particular, $\rho_u$ extends smoothly past $\partial P$. By Proposition \ref{prop2-26}, there exists a function $v$ on $P$, extending smoothly across $\partial P$, such that $u = u_P + v$, where $u_P$ is defined as in \eqref{eqn2-10} by 
	
	\begin{equation*}
		u_P(x) = \frac{1}{2} \sum (\ell_i(x) + a_i) \log(\ell_i(x) + a_i).
	\end{equation*}
Fix any facet $F$ of $P'$. We may assume that $F$ is given by $\ell_1(x) = -a_1$. Up to a change of basis in $\t^*$, we may also assume by the Delzant condition that $\ell_1(x) = x_1$. Choose a point $p$ in the interior of $F$. Near $p$, $u_P$ can therefore be written 
	
	\begin{equation*}
		u_P(x) = \frac{1}{2} (x_1 + a_1)\log(x_1 + a_1) + v_1,
	\end{equation*}
where $v_1$ extends smoothly across $F$. It then follows that in a small half ball $B$ in the interior of $P'$ containing $p$, $\rho_u$ can be expressed as 
	
	\begin{equation*}
		\rho_u(x) = x_1 \log(x_1 + a_1) -(x_1 + a_1)\log(x_1 + a_1) + \log(x_1 + a_1) + v_2,
	\end{equation*}
where $v_2$ again extends smoothly across $F$ in $B$. It follows that $a_1 = 1$. 
 \end{proof}
 
 In the compact case, the condition that $M \cong \M_P$ for $P$ given by \eqref{eqn4-2} is equivalent to the condition that $M$ is Fano. We therefore make the following definition.
 
 \begin{definition}[{c.f. \cite[Definition 7.1]{ConDerSun}}]\label{def4-6} Let $M$ be a complex toric manifold. We say that the pair $(M, -K_M)$ is \emph{anticanonically polarized} if $M \cong \M_{P_{-K_M}}$. 
 \end{definition}
 
In particular, an anticanonically polarized toric manifold is quasiprojective.

\begin{theorem}\label{thm4-7}
	There exists a unique holomorphic vector field $X$ with $JX \in \t$ on an anticanonically polarized AK-toric manifold $(M, -K_M)$ which could be the vector field of a complete $T^n$-invariant shrinking gradient K\"ahler-Ricci soliton. 
\end{theorem}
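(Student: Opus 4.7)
The plan is to show that the vector $b_X \in \t$ determining $JX$ via $JX = \sum b_X^i \partial/\partial\theta^i$ is uniquely constrained by the combinatorics of the moment polyhedron $P = P_{-K_M}$, independently of whether a soliton actually realizes it. First I would observe that since $(M, -K_M)$ is anticanonically polarized, the moment polyhedron of any compatible soliton is $P = P_{-K_M} = \{x \in \t^* \mid \langle \nu_i, x \rangle \geq -1 \text{ for all } i \}$ by Lemma \ref{lem4-5}, and that this $P$ manifestly contains $0$ in its interior because $\langle \nu_i, 0 \rangle = 0 > -1$ for every $i$.

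Next I would pass to symplectic coordinates. If $(g,X)$ is any complete $T^n$-invariant shrinking gradient K\"ahler-Ricci soliton on $M$ with $JX \in \t$, then by Proposition \ref{prop2-19} its symplectic potential $u = L(\phi)$ satisfies the real Monge-Amp\`ere equation
\[
\rho_u(x) = \langle b_X, x \rangle
\]
on $P$, where $b_X \in \t$ is determined by $JX$ through the formula above. By Lemma \ref{lem3-6}, the mere existence of such a $u$ forces the linear function $\langle b_X, x \rangle$ to satisfy
\[
\int_P \ell(x) \, e^{-\langle b_X, x \rangle}\, dx = 0 \quad \text{for every linear function } \ell \text{ on } P.
\]

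Since $0 \in \mathrm{int}(P)$, Proposition \ref{prop3-1} applies and produces a \emph{unique} linear function $\ell_P$ on $P$ satisfying precisely this orthogonality condition, with $\ell_P = \langle b, \cdot \rangle$ for a unique $b \in C^*$. Therefore $b_X = b$ is uniquely determined, and hence so is $JX = \sum b_X^i \partial/\partial\theta^i$ and $X = -J(JX)$. Conversely, the vector $b$ supplied by Proposition \ref{prop3-1} yields a genuine holomorphic vector field on $M$ (extending the one on the dense orbit through the $\Cstarn$-action) which is the unique candidate compatible with the soliton equation.

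No real obstacle arises: the theorem is a direct assembly of Proposition \ref{prop2-19}, Lemma \ref{lem3-6}, and Proposition \ref{prop3-1}. The only point requiring verification is the hypothesis $0 \in \mathrm{int}(P)$ needed to apply Proposition \ref{prop3-1}, but this is automatic from the definition of an anticanonically polarized AK-toric manifold. The resulting $b_X$ automatically lies in the dual recession cone $C^*$ since Proposition \ref{prop3-1} produces its critical point there, consistent with Corollary \ref{cor3-5}.
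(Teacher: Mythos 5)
Your proposal is correct and follows essentially the same route as the paper: fix the moment polyhedron to $P_{-K_M}$ (containing $0$) via Lemma \ref{lem4-5}, note that any soliton's symplectic potential satisfies $\rho_u = \langle b_X, x\rangle$, invoke Lemma \ref{lem3-6} to get the orthogonality condition, and conclude from the uniqueness of $\ell_P$ in Proposition \ref{prop3-1} that $b_X$, hence $JX$ and $X$, is uniquely determined. The paper phrases the final step by comparing two putative solitons and identifying both vector fields with $X_P$ defined by $b_P$, but this is the same argument.
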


 \begin{proof}
 Let $\omega_1$ and $\omega_2$ be two $T^{n}$-invariant K\"ahler metrics on $M$ satisfying \eqref{gkrs} on $M$ with vector fields $X_1$ and $X_2$. By Lemma \ref{lem4-5}, we know that each moment map $\mu_s$, $s=1,2$, has image equal to $P = P_{-K_M}$. Moreover, by Lemma \ref{lem4-5}, we know that $\omega_s$ is uniquely determined by a symplectic potential $u_s$ on the fixed polyhedron $P = P_{-K_M}$ which satisfies the real Monge-Amp\`ere equation $\rho_{u_s} = \langle b_{s}, x \rangle$. By Lemma \ref{lem3-6}, the function $\langle b_{s}, x \rangle$ satisfies
	
	\begin{equation*}
		\int_P \ell(x) e^{-\langle b_{s}, x \rangle} dx = 0
	\end{equation*}
for each linear function $\ell(x)$ on $P$. In particular, $\langle b_{s}, x \rangle$ is equal to the fixed linear function $\ell_P$ determined in Proposition \ref{prop3-1}. Clearly, there is a unique $b_P \in \t$ such that $\ell_P(x) = \langle b_P, x \rangle$. Let $X_P$ be the holomorphic vector field on $M$ which is given by
	\begin{equation*}
		X_P^{1,0} = \sum_{j = 1}^n b_P^{j} z_j\frac{\p}{\p z_j}
	\end{equation*}
on the dense orbit. We have in particular that $\mathcal{L}_{X_P}\omega_s = \mathcal{L}_{X_s} \omega_s$. Since $\omega_s$ is $T^{n}$-invariant and $JX_P, JX_1, JX_2 \in \t$, this immediately implies that $X_1 = X_2 = X_P$. 
\end{proof}

 \subsection{Proofs of Theorem A and Theorem B}

We begin with the proof of Theorem \ref{thm1-1}. Suppose that $\omega_1$ and $\omega_2$ are two complete $T^n$-invariant K\"ahler metrics on $M$ satisfying \eqref{gkrs}. By Theorem \ref{thm4-7}, the soliton vector fields are given by $X_1 = X_2 = X_P$. Recall from the proof of Theorem \ref{thm4-7} we know that each $\omega_s$ is determined uniquely by a symplectic potential $u_s$ on the fixed polyhedron $P$. Each $u_s$ itself is unique up to the addition of an affine function, and satisfies the real Monge-Amp\`ere equation

\begin{equation} \label{eqn4-3}
	\rho_{u_s} = \langle b_P, x \rangle,
\end{equation}
where $b_P \in \t$ is the element determining $X_P$ as in the proof of Theorem \ref{thm4-7}. If we set 

\begin{equation*}
	A(x) = \langle b_P, x \rangle,
\end{equation*}
then equation \eqref{eqn4-3} takes the form $\rho = A$ with respect to the fixed function $A$ on $P$. Thus, we are in the setting of Section 3.3. We would then like to apply the uniqueness theorem Theorem \ref{thm3-11} to conclude that $u_s$ are related via the addition of an affine function. We need to show therefore that $A$ is admissible and that $\int_P u e^{-A}dx < \infty$, so that $u_s$ lies in the space of symplectic potentials $\Pot$ defined by $A$. To see that $A$ is admissible, first note that by Lemma \ref{lem3-6}, we have

	\begin{equation*}
		\int_P \ell  e^{-A}dx = 0,
	\end{equation*}
which is condition $(2)$ from Section 3.3. Since, by Proposition \ref{prop4-3}, 

	\begin{equation*}
		\int_M e^{-f} \omega^n < \infty,
	\end{equation*}
we have that

	\begin{equation*}
		\int_M e^{-f} \omega^n = \int_{\R^n \times T^n} e^{-\langle b_P, \nabla \phi \rangle} \det(\phi_{ij}) d\xi d\theta = (2\pi)^n \int_P e^{- A}dx.
	\end{equation*}
This implies that 
	
	\begin{equation}\label{eqn4-4}
		\int_P e^{-A}dx < \infty,
	\end{equation}
which is condition $(1)$. Furthermore, from \eqref{eqn4-4} it follows from Corollary \ref{cor3-2} that $b_P \in C^*$, and in particular $A(x) = O(|x|)$. Since $u_P = O(|x|\log|x|)$ we then have 

	\begin{equation*}
		\int_P u_P e^{-A} dx < \infty,
	\end{equation*}
which is condition $(3)$. Thus $A$ is admissible, and it remains only to show that each $\int_P u_s e^{-A} < \infty$. This follows from an elementary calculation.

\begin{lemma}[{c.f. \cite[Lemma 1]{Don1}}]  \label{lem4-8}
	Let $P$ be a polyhedron containing zero in the interior and $u \in C^\infty(P)$ be any strictly convex function such that the gradient $\nabla u$ maps $P$ diffeomorphically onto $\R^n$. Then 
	
	\begin{equation*}
		\int_P u e^{-\rho_u}dx < \infty.
	\end{equation*}
	
\end{lemma}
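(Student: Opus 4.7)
The plan is to reduce the integral to one over $\R^n$ via the Legendre transform and then use the linear growth bound on the transformed potential furnished by Lemma \ref{propernesslemma}.

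\textbf{Step 1: Change of variables.} Let $\phi = L(u)$, which is defined on all of $\R^n$ since by assumption $\nabla u$ maps $P$ diffeomorphically onto $\R^n$. Applying Lemma \ref{lem2-16} with the substitution $x = \nabla \phi(\xi)$, we have $\det(u_{ij}) dx = d\xi$ and $\langle \nabla u, x \rangle - u = \phi$, so $\rho_u = 2\phi - \log\det(u_{ij})$ and therefore $e^{-\rho_u} dx = e^{-2\phi} d\xi$. Since $u(x) = \langle x, \xi\rangle - \phi(\xi)$, this gives
\begin{equation*}
	\int_P u \, e^{-\rho_u} dx = \int_{\R^n} \bigl( \langle \nabla \phi(\xi), \xi\rangle - \phi(\xi)\bigr) e^{-2\phi(\xi)} d\xi.
\end{equation*}

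\textbf{Step 2: Exponential decay.} Since $0 \in P$ is in the domain of $u$, Lemma \ref{propernesslemma} applied to $L(\phi) = u$ yields $\phi(\xi) \geq \varepsilon |\xi| - C$ for some $\varepsilon, C > 0$. In particular $\phi$ is bounded below and $e^{-2\phi}$, together with any polynomial factor in $\xi$, is integrable on $\R^n$.

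\textbf{Step 3: Estimate the two pieces.} For the $\phi$ term, $|\phi| \leq \phi + 2 \inf \phi^- \leq O(|\xi|) + C$ by convexity and the linear lower bound, so the exponential decay from Step 2 makes $\int_{\R^n} \phi \, e^{-2\phi} d\xi$ finite. For the $\langle \nabla\phi, \xi\rangle$ term, we integrate by parts: since $\xi_i \, e^{-2\phi}$ decays exponentially at infinity (again by Step 2), we have
\begin{equation*}
	\int_{\R^n} \langle \nabla\phi, \xi\rangle e^{-2\phi} d\xi = -\tfrac{1}{2}\sum_i \int_{\R^n} \xi_i \, \partial_i(e^{-2\phi}) d\xi = \tfrac{n}{2} \int_{\R^n} e^{-2\phi} d\xi,
\end{equation*}
which is finite by Corollary \ref{cor3-3}. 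Combining, $\int_P u \, e^{-\rho_u} dx < \infty$, as claimed.

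The main obstacle is purely bookkeeping: keeping track of the Legendre transform identities and justifying the boundary terms in the integration by parts, both of which are immediate consequences of the linear growth estimate \eqref{properness}. No deep input is needed beyond Lemmas \ref{lem2-16} and \ref{propernesslemma} and Corollary \ref{cor3-3}.
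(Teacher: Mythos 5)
Your proof is correct and follows essentially the same route as the paper's: change variables by the Legendre transform, invoke the linear lower bound $\phi \geq \varepsilon|\xi| - C$ from Lemma \ref{propernesslemma}, and integrate by parts on the $\langle \nabla\phi, \xi\rangle$ term (the paper does this in polar coordinates, you in Cartesian; it is the same computation, and your constant $\tfrac{n}{2}$ is in fact the correct one). The only caveat is the intermediate claim $|\phi| \leq O(|\xi|) + C$: a finite convex function on $\R^n$ with a linear lower bound need not be bounded \emph{above} linearly (when $P$ is unbounded, $\nabla\phi$ is unbounded and $\phi$ grows superlinearly), but this claim is also unnecessary, since $\phi\, e^{-2\phi} \leq C' e^{-\phi} \leq C' e^{C - \varepsilon|\xi|}$ because $t e^{-t}$ is bounded above on $[-C,\infty)$, which already gives the integrability of the $\phi$ term; the paper sidesteps the issue entirely by using only the one-sided bound $-\phi \leq C$.
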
 

\begin{proof}
	Let $\phi(\xi) = L(u)$. Recall that by Lemma \ref{propernesslemma}, $\phi$ grows at least linearly in $|\xi|$, and in particular is necessarily bounded from below. Then 
	
	\begin{equation*}
		\int_P u e^{-\rho_u}dx = \int_{\R^n} \left(  \langle \nabla \phi, \xi \rangle - \phi \right) e^{-2\phi} d\xi \leq \int_{\R^n } \left( \langle \nabla \phi, \xi \rangle + C \right) e^{-2\phi} d\xi.
	\end{equation*}
The second term $\int C e^{-2\phi}d\xi$ is bounded again by Lemma \ref{propernesslemma}, so that 
	
	\begin{equation*}	
		\int_P u e^{-\rho_u}dx \leq \int_{\R^n} \langle \nabla \phi, \xi \rangle e^{-2\phi} d\xi  + C.
	\end{equation*}
In polar coordinates we have 
	
	\begin{equation*}
		\int_{\R^n} \langle \nabla \phi, \xi \rangle e^{-2\phi} d\xi  =  \int_{S^{n-1}} \int_0^\infty r^n \frac{\p \phi}{\p r} e^{-2\phi} dr d\Theta.
	\end{equation*}
Integrating by parts, we obtain
	
	\begin{equation*}
		\int_{S^{n-1}} \int_0^\infty r^n \frac{\p \phi}{\p r} e^{-2\phi} dr d\Theta = \frac{n}{2}\int_{S^{n-1}} \int_0^\infty r^{n-1}  e^{-2\phi} dr d\Theta = n \int_{\R^n} e^{-2\phi} d\xi .
	\end{equation*}
Note that the boundary term converges since $\phi = O(r)$ as $r \to \infty$. Thus 
	
	\begin{equation*}
		\int_P u e^{-\rho_u}dx \leq \int_{\R^n} \langle \nabla \phi, \xi \rangle e^{-2\phi} d\xi  + C  =n \int_{\R^n} e^{-2\phi} d\xi  + C  < \infty.
	\end{equation*}
\end{proof}

Since each $u_s$ satisfies $\rho_{u_s} = A$, Lemma \ref{lem4-8} states that 

\begin{equation}\label{eqn4-5}
	\int_P u e^{-A} dx < \infty.
\end{equation}
Each $u_s$ is strictly convex on $P$, and by Proposition \ref{prop2-26} there exists for each $u_s$ a smooth function $v_s \in C^{\infty}(\overline{P})$ such that $u_s = u_P + v_s$. Strict convexity of $u_s$ along with \eqref{eqn4-5} then imply that $u_s \in \Pot$, and so by Theorem \ref{thm3-11} it follows that there is an affine function $a(x) = \langle b_a, x \rangle + c$ such that $u_2 = u_1 + a$. Let $\phi_s = L(u_s)$ be the Legendre transform, so that $\omega_s =2 i \p \bp \phi_s(\xi)$ on the dense orbit. As we have seen in Lemma \ref{lem2-18}, it follows that $\phi_2(\xi) = \phi_1(\xi - b_a) -c$, so that $2i \p \bp \phi_2(\xi) = 2 i \p \bp \phi_1(\xi - b_a)$. Let $\alpha: M \to M$ denote the automorphism determined by the action of $e^{-b_a} \in \Cstarn$. Then it is clear that $\phi_1(\xi - b_a) = \phi_1 \circ \alpha(\xi)$, and therefore that $\omega_2 = \alpha^{*}\omega_1$. This concludes the proof of Theorem \ref{thm1-1}.

Theorem \ref{thm1-2} follows immediately from Lemma \ref{lem4-1}, Lemma \ref{lem4-4}, and Theorem \ref{thm1-1}.

\subsection{Proof of Corollary \ref{cor1-4}}
 Recalling the setting, let $N$ be an $(n-1)$-dimensional compact toric Fano manifold, and $L \to N$ satisfy $L^p = K_N$ for $0 < p < n$. By Theorem \ref{thm1-2}, it suffices to show that the metrics have bounded Ricci curvature and that the corrsponding soliton vector fields satisfy $JX \in \t$.  We first observe that the total space of $L$ admits an effective and holomorphic $\Cstarn$-action by augmenting the $(n-1)$-dimensional action on $N$ with the natural $\Cstar$-action acting on the fibers of $L$.  It was shown in \cite{FOW} that the cone formed by contracting the zero section on $L$ admits a Ricci-flat K\"ahler cone metric $\omega_{RF} = \frac{i}{2}  \p \bp \tilde{r}^2$ with Reeb vector field $J \tilde{r} \frac{\p}{\p \tilde{r}} = K \in \t$.  Futaki's construction begins by deforming $\omega_{RF}$ to what's called a \emph{Sasaki $\eta$-Einstein metric} by a choice of reparameterization of the radial function $\tilde{r} \mapsto r = \tilde{r}^a$ for some $a >0$ (here $\eta = d^c \log r$ refers to the contact 1-form associated to the Sasakian structure).  Set $\omega = \frac{i}{2} \p \bp r^2$ to be this choice and set $t = \log r$ and $\omega_T = i \p \bp t$.  Then the metric $\omega_{KRS}$ is chosen via the \emph{momentum construction} (or \emph{Calabi Ansatz}), and thus splits orthogonally as
 \begin{equation*}
 \begin{split}
   \omega_{KRS} = \omega_T + i \p \bp H(t) = (1 + \tau) \omega_T + \varphi(\tau) dt \wedge d^c t,
 \end{split}
 \end{equation*}
where $H$ is a smooth convex function of one variable,  $\tau = H'(t)$, $\varphi(\tau) = H''(t)$.  Here $\tau \in (0, \infty)$ and $\tau \to 0$ corresponds to approaching the zero section of $L$ whereas $\tau \to \infty$ goes off to infinity along the complete end. We refer to \cite{FOW,FutWang,Fut} (see also \cite{FIK,HS}) for more details on this construction. In particular, the soliton vector field satisfies $JX = r \frac{\p}{\p r} \in \t$.

To see that the Ricci curvature of $\omega_{KRS}$ is bounded, we use the explicit form  \cite[Claim 4.4]{Fut} of $\varphi$
\begin{equation*}
	\varphi(\tau) = \frac{(\kappa - 2)}{\mu}(1 + \tau) + \frac{\kappa - 2 - \frac{\kappa }{n}}{\mu^{n+1}}\sum_{j = 0}^{n-1}\frac{n!}{j!}\mu^j (1 + \tau)^{j-(n-1)},
\end{equation*}
where $\kappa > 2, \mu > 0$ are constants determined by the soliton equation.  So $\varphi$ is a rational function and one sees immediately that $\varphi = O(1+\tau),  \varphi' = O(1), \varphi'' = O((1+\tau)^{-3})$ as $\tau \to \infty$.  Moreover, the Ricci form is also explicit (\cite[Equation 3.8]{Fut}) 
\begin{equation*}
	\Ric_{\omega_{KRS}} = \left( \kappa - \left(  \frac{(n-1)\varphi}{1 + \tau} + \varphi' \right)\right)\omega_T - \left(  \frac{(n-1)\varphi}{1 + \tau} + \varphi' \right)' dt \wedge d^c t.
\end{equation*}
Thus we read off that $\Ric_{\omega_{KRS}} = O(1)\omega_T + O((1+\tau)^{-2}) dt \wedge d^c t$, whereas the metric $\omega_{KRS} = O(1+\tau) \omega_T + O(1+\tau)dt \wedge d^c t$, from which we see that $||\Ric_{\omega_{KRS}}||_{\omega_{KRS}}$ actually decays as $\tau \to \infty$. 
\begin{flushright}
$\square$
\end{flushright}

\subsection{Example: $ \C\P^{1} \times \C$ }

Choose homogeneous coordinates $[w_1 : w_2]$ on $\C\P^{1}$, and let $w = \frac{w_1}{w_2}$. We let $\Cstar$ act on $\C\P^{1}$ by $\lambda \cdot [w_1 : w_2] = [\lambda w_1 : w_2]$, which gives $\C\P^{1}$ the structure of a toric variety. Let $\omega_{FS}$ be the Fubini-Study metric associated to $[w_1 : w_2]$. Let $z$ be a holomorphic coordinate on $\C$ and $\omega_E$ denote the Euclidean metric. If $\Cstar$ acts on $\C$ in the standard way, then we obtain an effective algebraic action of $\Cstarsqr$ on $\C\P^{1} \times \C$. The product metric $\omega_\std = \omega_{FS} + \omega_E$ on $\C\P^{1} \times \C$ is then a complete $T^{2}$-invariant shrinking gradient K\"ahler-Ricci soliton with respect to the holomorphic vector field $z \frac{\p}{\p z}$ (here we suppress the obvious pullbacks). As an application of the results of the previous sections, we show that, up to isometry, this is the unique shrinking gradient K\"ahler-Ricci soliton on $\CP^1 \times \C$ with bounded scalar curvature.

\begin{thmC*} \label{thm4-9}
	Any complete shrinking gradient K\"ahler-Ricci soliton $(g,X)$ on $M = \C\P^{1} \times \C$ with bounded scalar curvature is isometric to to the standard product metric $\omega_\std$.
\end{thmC*}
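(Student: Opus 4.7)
The plan is to reduce Corollary C to Theorem \ref{thm1-2}. This requires verifying two hypotheses: that the Ricci curvature of $g$ is bounded, and that $JX$ lies in the Lie algebra $\t$ of the standard $T^{2}$-action on $M = \CP^{1} \times \C$. Once these are in place, Theorem \ref{thm1-2} produces a biholomorphism $\Phi: M \to M$ identifying $(g,X)$ with a $T^{2}$-invariant shrinking gradient K\"ahler-Ricci soliton. Since $\omega_{\std}$ is itself such a soliton with $JX_{\std} \in \t$, Theorem \ref{thm4-7} forces the soliton vector fields to agree, Lemma \ref{lem4-5} identifies the moment polyhedra, and Theorem \ref{thm1-1} then yields isometry with the standard product soliton.

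The Ricci bound is obtained from the scalar curvature bound via the standard shrinking-soliton identities $R + |\nabla f|^{2} = f + c$, $\nabla R = 2\Ric(\nabla f)$, and $\Delta_{f} \Rm = \Rm + \Rm * \Rm$. Combined with the non-negativity of $R$ on K\"ahler shrinkers and the quadratic growth of $f$ from Proposition \ref{prop4-3}, a Shi-type maximum principle argument with the drift Laplacian upgrades a bound on $R$ to a bound on $|\Ric|$.

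The substance of the proof is showing $JX \in \t$, via a Morse-theoretic argument modeled on \cite[Proposition 2.27]{ConDerSun}. By Proposition \ref{prop4-3}, $f$ is proper with quadratic growth, and bounded scalar curvature combined with $R + |\nabla f|^{2} = f + c$ forces $|\nabla f|^{2}$ to grow linearly in $f$, so the critical set $\text{Crit}(f)$ is compact. Since $JX$ is a holomorphic Killing field with $JX(f) = 0$, Theorem \ref{thm4-2} shows that the closure of its flow is a compact torus $T^{k}$ which complexifies to a holomorphic $(\C^{*})^{k}$-action on $M$ whose fixed set lies inside $\text{Crit}(f)$ and is therefore non-empty and compact. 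The main obstacle is to show that $k = 2$ and that, after conjugation by an element of $\text{Aut}(M, J)$, the torus $T^{k}$ is the standard $T^{2}$. For $k$, one observes that any holomorphic $\C^{*}$-action on $M$ with compact non-empty fixed set must up to conjugacy rotate the $\C$-factor; a Morse-Bott analysis of $f$ restricted to the resulting fixed locus $\CP^{1} \times \{0\}$, combined with Theorem \ref{thm4-2} applied to the restricted structure, produces a commuting holomorphic $\C^{*}$-symmetry upgrading the action to rank two. Since any rank-two algebraic torus in $\text{Aut}(M, J)$ is conjugate to the standard $(\C^{*})^{2}$, conjugating by the appropriate automorphism places $JX$ in $\t$, and Theorem \ref{thm1-2} concludes the argument.
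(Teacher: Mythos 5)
Your overall strategy---reduce to Theorem \ref{thm1-2} by establishing a Ricci bound and then placing $JX$ in $\t$ after conjugation---is the same as the paper's, but both key steps have genuine gaps. For the Ricci bound, the identities you list do not combine via a maximum principle to control $|\Ric|$ from a bound on $R$: the reaction term in $\Delta_f \Rm = \Rm + \Rm * \Rm$ is quadratic, so a Shi-type argument would require a priori smallness or integral control that you do not have, and in general dimension it is \emph{not} known that bounded scalar curvature implies bounded Ricci curvature for shrinkers. The implication used here is a theorem of Munteanu--Wang special to real dimension four, whose proof relies on the algebraic structure of four-dimensional curvature tensors together with weighted integral estimates; the paper simply cites \cite[Theorem 1.3]{MW}, and so should you---your sketch is not a proof of it.

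The more serious gap is in the argument that $JX \in \t$. Your claim that a holomorphic $\Cstar$-action with compact nonempty fixed set ``must up to conjugacy rotate the $\C$-factor'' with fixed locus $\CP^1 \times \{0\}$ is false: the diagonal action $\lambda \cdot ([w_1:w_2],z) = ([\lambda w_1 : w_2], \lambda z)$ has exactly two isolated fixed points. The paper's proof of Proposition \ref{prop4-9} must treat two genuinely different cases according to whether the minimum locus $M^{(0)}$ of $f$ is an isolated point or an entire fiber $L_0$, and the arguments diverge: in the first case the fixed set is finite and one either invokes the Delzant machinery (when $\dim_\R T^X = 2$, Claim \ref{claim4-13}) or explicitly manufactures a second commuting $\Cstar$-action by splitting $X^{1,0}$ along the two factors of the tangent bundle, normalizing the $\CP^1$-component fiberwise by a family of M\"obius transformations, and extending by the flow of $X$ (Claim \ref{claim4-14}); in the second case $T^X$ is necessarily one-dimensional and one shows directly that $X^{1,0}$ is conjugate to $a_2 z \frac{\p}{\p z}$, which already lies in $\t_\C$ without upgrading to any rank-two torus of isometries. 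Your phrase ``produces a commuting holomorphic $\Cstar$-symmetry upgrading the action to rank two'' names the hard step without supplying it---Theorem \ref{thm4-2} describes $\aut^X$ but does not by itself generate new torus symmetries---and your final appeal to conjugacy of all rank-two algebraic tori in $\mathrm{Aut}(M,J)$ is a nontrivial assertion about an infinite-dimensional automorphism group that would itself need justification; the paper avoids it by constructing the equivariant biholomorphism to the standard model directly from the moment polyhedron.
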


By the work of \cite{MW}, in real dimension four we know that the scalar curvature controls the full curvature tensor for shrinking solitons. In particular, it follows from \cite[Theorem 1.3]{MW} that any such $(g,X)$ as above has bounded Ricci curvature. Fix a background product coordinate system $([w_1:w_2],z)$ on $M \cong \CP^1 \times \C$ as above. In what follows, we will ignore the standard $\Cstarsqr$-action determined by this choice, but we will routinely make use of the corresponding projection onto the $\C$-factor, which we denote by $\pi:M \to \C$. Corollary \ref{cor1-3} then follows from Theorem \ref{thm1-2} as soon as we have the following lemma. 

 \begin{proposition} \label{prop4-9}
 	Let $(g,X)$ be any complete shrinking gradient K\"ahler-Ricci soliton on $M = \C\P^{1} \times \C$ with bounded scalar curvature, and let $T \subset \Cstarsqr$ be the real torus corresponding to the standard $\Cstarsqr$-action on $M$ with Lie algebra $\t$. Then there exists a holomorphic automorphism $\alpha$ of $M$ such that $J(\alpha^*X) \in \t$. 
 \end{proposition}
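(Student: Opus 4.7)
The plan is first to reduce the bounded scalar curvature hypothesis to a bounded Ricci curvature one: by Munteanu--Wang \cite{MW}, in real dimension four a shrinking gradient Ricci soliton with bounded scalar curvature automatically has bounded Ricci (in fact bounded sectional) curvature. Combined with Proposition~\ref{prop4-3}, this gives that the soliton potential $f$ grows quadratically in the distance function, so $\textnormal{Zero}(X) = \textnormal{Crit}(f)$ is compact. By \cite[Lemma 2.34]{ConDerSun}, $JX$ is complete; its closure $T_0$ in the holomorphic isometry group of $(M,g,J)$ is a compact torus whose fixed point set is precisely $\textnormal{Zero}(JX) = \textnormal{Zero}(X)$. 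Note that if $JX = 0$ then the metric would be Einstein with positive Ricci curvature, which contradicts completeness of $g$ on the non-compact factor $\C$ by Myers' theorem; hence $T_0$ has positive dimension.

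The next step is a Morse--Bott analysis of $f$. Since $JX$ is Killing, a direct computation gives that $\nabla^2 f$ commutes with $J$, so all Morse--Bott indices are even; since $f \to \infty$, there is no index-four critical manifold. Each connected component of $\textnormal{Crit}(f)$ is a compact complex subvariety of $M = \CP^1 \times \C$, and the only such subvarieties are isolated points and fibers $\pi^{-1}(z_0)$ of the projection $\pi: M \to \C$. Using the strong Morse--Bott identity
\[\sum_\alpha t^{\lambda_\alpha} P_{C_\alpha}(t) = P_M(t) + (1+t) R(t), \qquad R(t) \geq 0,\]
together with $P_M(t) = 1 + t^2$, I restrict the possibilities for $\textnormal{Crit}(f)$ to: (a) two isolated critical points of indices zero and two, or (b) a single fiber $C = \pi^{-1}(z_0)$ which must be a Morse--Bott minimum (since the index-two alternative produces a negative coefficient in $R(t)$).

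Finally I exhibit $\alpha \in \textnormal{Aut}(M)$ so that $J(\alpha^* X) = \alpha^*(JX) \in \t$. In case (b), after conjugation by a translation in $\C$ (which is an automorphism of $M$) I may assume $z_0 = 0$, so that $X^{1,0}$ generates a complete $\C^*$-action on $M$ whose fixed set is $C = \CP^1 \times \{0\}$. Pushing forward along $\pi$ (well-defined since holomorphic functions on $M$ factor through $\pi$ by compactness of the fibers) yields a $\C^*$-action on $\C$ fixing $0$, necessarily of the form $\lambda \cdot z = \lambda^c z$ for some integer $c \neq 0$; after reparameterization I have $\pi_*(X^{1,0}) = c z \partial_z$. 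The resulting family of biholomorphisms between $\CP^1$-fibers defines a one-parameter subgroup of $\textnormal{PSL}_2(\C)$, and I further conjugate by an automorphism of $M$ lying in the subgroup $\textnormal{Hol}(\C, \textnormal{PSL}_2(\C))$ of bundle automorphisms to trivialize this subgroup. At the end $X^{1,0}$ is proportional to $z\partial_z$, and so $JX \in \t$. Case~(a) is analogous and simpler, since a complete holomorphic $\C^*$-action on $M$ with two isolated fixed points is linearizable and hence conjugate in $\textnormal{Aut}(M)$ to a diagonal action $(w, z) \mapsto (\lambda^a w, \lambda^b z)$, whose imaginary generator $a\partial_{\theta_w} + b\partial_{\theta_z}$ lies in $\t$. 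The main obstacle will be this last global linearization step, which requires care given the infinite-dimensional nature of $\textnormal{Aut}(\CP^1 \times \C)$; the necessary rigidity comes from the compactness of the fixed set together with the completeness of the flow.
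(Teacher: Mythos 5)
Your opening reductions are sound and track the paper's strategy: Munteanu--Wang to upgrade bounded scalar to bounded Ricci curvature, Proposition \ref{prop4-3} for properness of $f$, and a Morse--Bott analysis of the critical set. Your polynomial bookkeeping with $P_M(t)=1+t^2$ (equivalently, evaluating at $t=-1$ to get Euler characteristic $2$) is in fact a cleaner route to the same two cases than the paper's flow-line arguments (Claims \ref{claim4-10}--\ref{claim4-12} and \ref{claim4-15}), and it correctly rules out index-four components --- though not ``since $f\to\infty$'' as you write, but because every configuration containing one produces a negative coefficient in $R(t)$. The genuine gap is exactly where you flag ``the main obstacle'': the linearization is the actual content of the proposition and you assert it rather than prove it. In case (a), ``a complete holomorphic $\Cstar$-action on $M$ with two isolated fixed points is linearizable and hence conjugate in $\mathrm{Aut}(M)$ to a diagonal action'' is the statement to be proved, not a citation-free fact: $\mathrm{Aut}(\CP^1\times\C)$ contains all fiberwise maps $w\mapsto A_z(w)$ with $A:\C\to\mathrm{PGL}(2,\C)$ holomorphic, so nothing is automatic. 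The paper spends Claims \ref{claim4-11}--\ref{claim4-14} here: it first shows the two fixed points lie in a \emph{common} fiber $L_0$ (which you do not establish and which the normalization needs), then splits $X^{1,0}=V^{1,0}+W^{1,0}$ into vertical and horizontal parts, normalizes three disjoint holomorphic sections over a disc about $0$ to put $V^{1,0}$ in the form $h(z)\,w\,\p/\p w$, and propagates this normalization globally by the flow of $X$. In case (b), your ``trivialize the one-parameter family in $\mathrm{PSL}_2(\C)$'' is the paper's Claim \ref{claim4-18}, whose trivializing map $p\mapsto(\Phi^\beta_{t^{-1}}(p),t)$ is only well defined after Claim \ref{claim4-17} shows that two orbits with the same limit point on $L_0$ coincide; that step is missing from your outline.

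A second, independent gap: you assume throughout that $(X,JX)$ integrates to a $\Cstar$-action, which requires the flow of $JX$ to be periodic, i.e.\ that your torus $T_0$ (the paper's $T^X$) be one-dimensional. The paper proves this in case (b) (Claim \ref{4-16}), but in case (a) it can fail: $T^X$ may be a $2$-torus, in which case the $JX$-orbits are dense in $T^X$-orbits and never close up, and your pushforward-to-$\C$ and $\mathrm{PSL}_2(\C)$-cocycle arguments do not apply. The paper handles that subcase separately (Claim \ref{claim4-13}): a $2$-torus of holomorphic isometries with finite fixed point set makes $M$ AK-toric, so Lemmas \ref{lem4-4} and \ref{lem2-22} give the equivariant identification with the standard $\CP^1\times\C$ directly. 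Your argument needs this dichotomy made explicit, and in the one-dimensional subcase of (a) you must still manufacture the second circle (the paper's Claim \ref{claim4-14}) before a diagonal $(\Cstar)^2$-action is even available to be conjugated to.
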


The proof of this proposition will take up the remainder of this section. Let $f$ denote the soliton potential so that the soliton vector field $X = \nabla_g f$.  As before (c.f. Lemma \ref{lem4-1}), we define $G^X$ to be the of the group holomorphic isometries of $(M, J, g)$ that commute with the flow of $X$, and we let $G^X_0$ be the connected component of the identity in $G^X$. Then $G^X_0$ is a compact Lie group by \cite[Lemma 5.12]{ConDerSun}. Clearly the flow of $JX$ defines a one-parameter subgroup in $G^X_0$, and so the closure in $G^{X}_0$ is a real torus $T^X$ of holomorphic isometries of $g$. Let $M_0$ denote the zero set of $X$. Since the scalar curvature is bounded, it follows from \cite[Lemma 2.26]{ConDerSun} that $M_0$ is a compact analytic subvariety of $M$, and hence is equal to a finite collection of points in $M$ and curves $L_z = \CP^1 \times \{z\} \subset  M$. Note that the fixed point set of $T^X$ is equal to $M_0$. By Lemma \ref{lem4-1}, there exists a complexification $T^{X}_\C \subset \text{Aut}^X$ of $T^X$, which is a complex torus with $\dim_\C T^{X}_\C = \dim_\R T^X$. In what follows we will need to treat the the two possible cases, $\dim_\R T^X = 1$ and $\dim_\R T^X = 2$, separately. For the moment, we make no distinction.

We first study $M_0$, making use of the fact that $f$ is a Morse-Bott function on $M$ \cite{Fra}. Since $M$ is K\"ahler we have moreover that the Morse indices of any critical point must be even. Since $M_0$ consists of the critical points of $f$,  we can write 

\begin{equation*}
	M_0 = M^{(0)} \cup M^{(2)} \cup M^{(4)},
\end{equation*}
where $M^{(i)}$ denotes the connected component with Morse index $i$. By \cite[Claim 2.30]{ConDerSun}, we know that $M^{(0)}$ is a nonempty, compact, and connected analytic subvariety of $M$, and therefore must either be equal to a single projective line $L_z$ or an isolated point. We begin with a construction which will be used throughout the rest of the section. 

\begin{claim} \label{claim4-10}
	Suppose that $x$ is a point in $M^{(2)} \cup M^{(4)}$. Then there exists a holomorphic map $R_x: \CP^1 \to M$ with $R_x(0) = x$ and $R_x(\infty) \in M_0$ defined by the negative gradient flow of $f$. Since $M$ is a trivial $\CP^1$-fibration, the image of $R_x$ must lie in the unique fiber $L_z$ of $\pi$ containing $x$. 
\end{claim}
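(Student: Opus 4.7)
The plan for the second assertion is immediate: any holomorphic map $R_x \colon \CP^1 \to M = \CP^1 \times \C$ composes with the projection $\pi \colon M \to \C$ to give a holomorphic map $\CP^1 \to \C$, which must be constant by the maximum principle. Hence the image of $R_x$ lies in $L_{\pi(x)}$.

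For the construction of $R_x$, my plan is to parameterize a complex one-dimensional invariant curve at $x$ of the holomorphic vector field $X^{1,0}$ and compactify it by attaching the limit point of the negative gradient flow of $f$. Since $X^{1,0}$ is holomorphic and vanishes at $x$, and $X = \nabla_g f$ with $g$ K\"ahler, the Hessian of $f$ at $x$ commutes with $J$: its negative eigenspace is consequently a complex subspace of $T_x M$, and the eigenvalues of the complex linearization $A := DX^{1,0}|_x$ are real. Because $x$ has strictly positive Morse index, there is a negative eigenvalue $-\lambda$ (with $\lambda > 0$), and I would select a complex one-dimensional $A$-invariant subspace $E \subset T_x^{1,0} M$ in the corresponding eigenspace (when $x \in M^{(4)}$, the entire tangent space sits in the negative spectrum and any complex line will do).

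Next, I would invoke the holomorphic invariant manifold theorem for $X^{1,0}$ to produce a complex one-dimensional submanifold $W \subset M$ through $x$, tangent to $E$, invariant under the flow of $X$, on which $X^{1,0}$ linearizes to $-\lambda w \p_w$ in a suitable holomorphic coordinate $w \in \C$ with $w = 0$ corresponding to $x$. In this coordinate, the negative gradient flow of $f$ acts on $W$ by $w \mapsto e^{\lambda t} w$. By Cao--Zhou (Proposition \ref{prop4-3}), $f$ is proper on $M$, and since $f$ decreases along the flow the trajectory stays in the compact sublevel set $\{f \leq f(x)\}$. Combining this with the real-analyticity of $f$ (from the elliptic soliton equation), a Lojasiewicz-type argument shows that the trajectory converges to a single critical point $x' \in M_0$ with $f(x') < f(x)$.

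Finally, I would define $R_x \colon \CP^1 \to M$ by parameterizing $W$ with $w \in \C$, so that $R_x(0) = x$, and extending by $R_x(\infty) = x'$. On $\C \subset \CP^1$ this map is holomorphic by construction; near $\infty$, in the coordinate $z = 1/w$ and in a holomorphic chart around $x'$, the extension is continuous, hence locally bounded, and Riemann's removable singularity theorem promotes it to a holomorphic extension. The main obstacle will be verifying convergence of the negative gradient flow on $W$ to a single point $x' \in M_0$ rather than wandering to infinity or approaching a limit cycle; properness of $f$ (Proposition \ref{prop4-3}) confines the flow to a compact set, and the Lojasiewicz inequality for the real-analytic $f$ forces convergence to a unique critical point, after which the holomorphic extension across $\infty$ follows routinely from Riemann's theorem.
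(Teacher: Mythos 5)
Your overall strategy is essentially the paper's: linearize $X^{1,0}$ at the zero $x$ (the paper cites Bryant's result that $X^{1,0}$ is diagonalizable with real eigenvalues in suitable holomorphic coordinates, which is exactly the fact you re-derive from the $J$-invariance of $\mathrm{Hess}_g(f)$), take the holomorphic unstable curve through $x$, use properness of $f$ from Proposition \ref{prop4-3} together with monotonicity of $f$ along the flow to trap the orbit in a compact sublevel set, and extend across $\infty$ using the forward limit of the negative gradient flow. Substituting a Lojasiewicz argument for the paper's citation of \cite[Proposition 2.28]{ConDerSun} is harmless.

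The gap is at the extension across $\infty$, and it interacts badly with the order in which you prove the two assertions. Your Lojasiewicz argument shows that for each fixed direction $\arg w$ the corresponding trajectory converges to \emph{some} critical point, but it does not show that all directions converge to the \emph{same} point $x'$, nor that the convergence is uniform in the direction; without this, the claim that ``the extension is continuous at $\infty$'' is unjustified, and Riemann's removable singularity theorem cannot be invoked, since the map near $w=\infty$ is not known to land in a single coordinate chart around a single point (a priori it could even have an essential singularity there, as $e^{1/w}$ does). Meanwhile, your proof of the second assertion --- that the image lies in a fiber --- presupposes that $R_x$ is already defined and holomorphic on all of $\CP^1$, so as written it cannot be used to repair this. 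The paper closes the loop by reversing the order: properness of $f$ confines the image of $R_x|_{\C}$ to a compact set, so $\pi\circ R_x|_{\C}:\C\to\C$ is bounded and hence constant by Liouville; once the image is known to lie in $L_z\cong\CP^1$, the restriction of $X$ to $L_z$ is a nontrivial holomorphic vector field on $\CP^1$ with a simple zero at $x$, hence exactly one other zero $p$, and every forward trajectory of the negative gradient flow in $L_z$ must converge to $p$ (it cannot return to $x$ since $f$ strictly decreases), which yields both the well-definedness of $R_x(\infty)=p$ and the holomorphy of the extension. Your argument becomes correct if you run the Liouville step on $\C$ before attempting the extension to $\infty$.
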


\begin{proof}
 By \cite[Proposition 6]{Bryant} there exists a local holomorphic coordinate system $(z_1, z_2)$ centered at $x$ such that the holomorphic vector field $X^{1,0} = \frac{1}{2}(X - iJX)$ is given by 

\begin{equation} \label{eqnXlin}
	X^{1,0} = a_1 z_1 \frac{\p}{\p z_1} + a_2 z_2 \frac{\p}{\p z_2}
\end{equation}
for $a_1, a_2 \in \R$. By assumption, Hess$_g(f)$ has at least one negative eigenvalue at $x$, and therefore we can assume without loss of generality that $a_2 < 0$. Then $JX$ is tangential to the $z_2$-axis, and the flow of $JX$ here is given by regular periodic orbits. We fix any such nontrivial orbit $\theta: S^1 \to M$. If we let $\psi_t: M \to M$ denote the flow of $-X = -\nabla_g f$, then we define a holomorphic map $r: \Cstar \cong S^1 \times \R \to M$ by $r(s,t) = \psi_t(\theta(s))$. It follows immediately from the local form \eqref{eqnXlin} that $r$ extends to a holomorphic map $r: \C \to M$ with $r_x(0) = x$. Now $f$ is bounded from below and decreases along its negative gradient flow, and therefore $f$ is bounded along the image of $r_x$. Since $f$ is proper, this implies that the image of $r_x$ lies in the compact set $f^{-1}((-\infty, a])$, where $a = \sup f \circ r_x$. If $\pi: M \to \C$ denotes the projection onto the second factor of $M = \CP^{1} \times \C$, then $\pi \circ r_x: \C \to \C$ is therefore bounded and hence constant. Thus, $\pi \circ r_x(\C) = z$ for some fixed $z \in \C$, so that the image of $r_x$ lies in $L_z = \pi^{-1}(z)$. For each fixed $s \in S^{1}$, we have by \cite[Proposition 2.28]{ConDerSun} a well-defined limit $\lim_{t \to \infty} \psi_t(\theta(s))$, also lying in $M_{0}$. In this case, the limits must all coincide with the unique point $p = L_z \backslash r_x(\C)$. Thus, there is a well-defined holomorphic extension of $R_x: \CP^1 \to M$ of $r_x$ with $R_x(\infty) = p$.
\end{proof}

\subsubsection{Case 1: $M^{(0)}$ is an isolated point}

\begin{claim} \label{claim4-11}
	Let $y$ be any point in $M^{(2)} \cup M^{(4)}$. Let $R_y: \CP^1 \to M$ be a holomorphic map with $R_y(0) = y$ and $R_y(\infty) \in M_0$, which must exist by Claim \ref{claim4-10}. Then $R_y(\infty) \in M^{(0)}$.
\end{claim}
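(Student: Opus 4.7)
\emph{Proof plan.} The plan is to exploit the product structure $M = \CP^{1} \times \C$, together with the projection $\pi: M \to \C$, to pin down both the image and the endpoint of $R_y$. First I would observe that $\pi \circ R_y: \CP^{1} \to \C$ is a holomorphic map from a compact Riemann surface, hence constant, so the image of $R_y$ lies in the single fiber $L_z = \pi^{-1}(z)$ with $z = \pi(y)$. Since the construction in Claim \ref{claim4-10} yields $r_y(s,0) = \theta(s) \neq y = R_y(0)$, the map $R_y$ is non-constant and therefore surjects onto $L_z \cong \CP^{1}$. In particular, $X$ is tangent to $L_z$, and $X|_{L_z}$ is a non-vanishing holomorphic vector field on $\CP^{1}$; as such it has precisely two zeros counted with multiplicity, which must be $y$ and $q := R_y(\infty)$.

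Next I would argue that both $y$ and $q$ are isolated critical points of $f$: if either lay on a one-dimensional critical component $L_{z'} \subset M_0$, then $L_{z'} \cap L_z \neq \emptyset$ would force $L_{z'} = L_z \subset M_0$, contradicting the non-triviality of $X|_{L_z}$ established above. Consequently, $\mathrm{Hess}_g(f)$ is non-degenerate at $q$, and by \cite[Proposition 6]{Bryant} I can pick local holomorphic coordinates $(w_1, w_2)$ centered at $q$ in which $X^{1,0} = b_1 w_1 \partial/\partial w_1 + b_2 w_2 \partial/\partial w_2$ with $b_1, b_2 \in \R \setminus \{0\}$. Since the trajectories used to construct $R_y$ converge to $q$ through points of $L_z$ near $y$, the point $q$ is an attractor for the flow of $-X|_{L_z}$, which forces the $L_z$-direction at $q$ to be an eigendirection with positive eigenvalue. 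Relabeling coordinates if necessary, I may assume $T_q L_z = \C \cdot \partial/\partial w_1$ and $b_1 > 0$.

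The hard part will be to show that the transverse eigenvalue $b_2$ is also positive; I would argue by contradiction. If $b_2 < 0$, then $q \in M^{(2)}$, and Claim \ref{claim4-10} supplies a holomorphic map $R_q: \CP^{1} \to M$ with $R_q(0) = q$. The exact same reasoning as for $R_y$ shows that the image of $R_q$ lies in a single fiber of $\pi$, which must be $L_z$ itself since $q \in L_z$. On the other hand, the explicit local formula from the proof of Claim \ref{claim4-10} shows that the tangent direction of $R_q$ at the origin is the negative-eigenvalue direction $\partial/\partial w_2$. Since the image of $R_q$ equals $L_z$, this tangent direction must coincide with $T_q L_z = \C \cdot \partial/\partial w_1$. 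But the assumption $b_1 > 0 > b_2$ makes $\partial/\partial w_1$ and $\partial/\partial w_2$ distinct eigendirections, which gives the required contradiction. Therefore $b_2 > 0$.

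With both eigenvalues of $X^{1,0}$ at $q$ positive, $\mathrm{Hess}_g(f)$ is positive-definite at $q$, so $q$ has Morse index $0$ and thereby lies in $M^{(0)}$, as desired.
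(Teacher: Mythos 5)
Your proof is correct and follows essentially the same route as the paper: show the image of $R_y$ is the fiber $L_z$, use Bryant's normal form at $q = R_y(\infty)$ to see that the $L_z$-eigendirection has positive eigenvalue (ruling out $q \in M^{(4)}$), and then rule out $q \in M^{(2)}$ by applying Claim \ref{claim4-10} at $q$ to produce a second curve $R_q$ whose image must again be $L_z$ and deriving a contradiction. The only cosmetic difference is the final contradiction: you observe that $R_q$ emanates from $q$ in the transverse (negative) eigendirection rather than tangentially to $L_z$, whereas the paper notes that $f$ decreases along the gradient flow defining $R_q$ while $f(q) = \min_{L_z} f$; both are valid.
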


\begin{proof} 
Set $p = R_y(\infty)$, and assume without loss of generality that $z = 0$, so that the image of $R_y$ is the fiber $L_0 = \pi^{-1}(0)$ of $\pi$. If $p \in M^{(4)}$, then we choose coordinates centered at $p$ in which $X^{1,0}$ takes the form \eqref{eqnXlin}. This immediately yields a contradiction, since $p$ is defined as the forward limit point of a flow line of $-X$. If both $a_i$ are negative, then no forward flow of $-X$ near $p$ converges to $p$. Thus, either $p \in M^{(0)}$ or $p \in M^{(2)}$. Since $L_0$ is the image of the map $R_y$ defined by the flow of $(X, JX)$, it follows that $X$ is tangential to $L_0$. In particular, the restriction $X|_{L_0}$ is a well-defined holomorphic vector field on $L_0$ and does not vanish identically since the map $R_y$ is non-constant. It follows that $M_0 \cap L_{0}$ consists only of the isolated points $x$ and $p$, and that $p$ is the point in $L_{0}$ at which $f$ attains its minimum value among all points in $L_{0}$. Suppose that $p \in M^{(2)}$. Then by Claim \ref{claim4-10}, there is a holomorphic embedding $R_p: \CP^1 \to M$ with $r_p(0) = p$, defined by the negative gradient flow of $f$. Thus, once again, the image of $R_p$ must be equal to $L_0$. This is a contradiction, since $f$ decreases along its negative gradient flow and $f(p) = \min_{L_0} f$.
\end{proof}

\begin{claim} \label{claim4-12}
	If we assume that $M^{(0)} = \{p\}$, then $M_0$ lies in a fixed fiber $L_0$ of $\pi$, and consists precisely of the two isolated points $M_0 = \{x\} \cup \{p\}$ with $x \in M^{(2)}$.
\end{claim}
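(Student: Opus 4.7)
The plan is to confine $M_{0}$ to a single fiber $L_{0}$, then count its points by analyzing $X|_{L_{0}}$, and finally pin down the Morse index of the second zero via a normal-eigenvalue argument. Let $p$ be the unique point of $M^{(0)}$; after a translation in the $\C$-factor we may assume $\pi(p) = 0$, so $p \in L_{0}$. For any $y \in M^{(2)} \cup M^{(4)}$, Claim \ref{claim4-11} yields $R_{y}(\infty) \in M^{(0)} = \{p\}$, while Claim \ref{claim4-10} says the image of $R_{y}$ lies in $L_{\pi(y)}$. Since $p$ lies in this same fiber, $\pi(y) = 0$, and so $M^{(2)} \cup M^{(4)} \subset L_{0}$, giving $M_{0} \subset L_{0}$.

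I would then rule out the extreme possibility $M_{0} = \{p\}$: if $p$ were the only critical point of the proper Morse function $f$, the gradient flow of $-\nabla f$, complete because $f$ is proper and bounded below and sending every point to $p$ by \cite[Proposition 2.28]{ConDerSun}, would make $M$ diffeomorphic to $\R^{4}$, contradicting $H^{2}(\CP^{1} \times \C) \cong \Z$. So $M^{(2)} \cup M^{(4)}$ is nonempty. Picking any $y$ there, the image $R_{y}(\CP^{1})$ is a compact $\CP^{1}$ inside $L_{0} \cong \CP^{1}$ and therefore equals $L_{0}$. Since $R_{y}$ is produced by integrating the commuting vector fields $X$ and $JX$, this forces $X$ to be tangent to $L_{0}$, and $X|_{L_{0}}$ is a nonzero holomorphic vector field on $\CP^{1}$.

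To count its zeros, observe that at each $y \in M_{0}$ the Bryant linearization $X^{1,0} = a_{1} z_{1} \partial_{z_{1}} + a_{2} z_{2} \partial_{z_{2}}$ has both $a_{i}$ nonzero, because $y$ is a non-degenerate critical point of $f$. Since $L_{0}$ is $X$-invariant, $T_{y} L_{0}$ is an invariant subspace of the linearization; a quick case analysis (whether $a_{1} \neq a_{2}$ or $a_{1} = a_{2}$) shows that the linearization of $X|_{L_{0}}$ at $y$ is multiplication by a nonzero scalar. Thus every zero of $X|_{L_{0}}$ is simple, and since a nonzero holomorphic vector field on $\CP^{1}$ has exactly two zeros counted with multiplicity, $M_{0} = \{p, q\}$ for some $q \in M^{(2)} \cup M^{(4)}$ distinct from $p$.

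The main remaining step, and the most delicate, is to show $q \in M^{(2)}$. In product coordinates $(\zeta, z)$ on $\CP^{1} \times \C$, write $X = A(\zeta, z) \partial_{\zeta} + B(\zeta, z) \partial_{z}$. The vector-field transformation law for $B$ under change of chart on $\CP^{1}$ shows that $B$ is globally holomorphic on $\CP^{1}$ for each fixed $z$ and hence independent of $\zeta$; tangency $B(\zeta, 0) \equiv 0$ then gives $B(z) = z\, g(z)$ for a holomorphic $g$. Consequently the normal eigenvalue of $dX$ at every point of $M_{0}$ equals the single number $g(0)$. At $p \in M^{(0)}$ both Bryant eigenvalues are positive, so $g(0) > 0$; hence the normal Bryant eigenvalue at $q$ is also positive, meaning $q$ has at least one positive Bryant eigenvalue and cannot lie in $M^{(4)}$. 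We conclude $q \in M^{(2)}$, and thus $M_{0} = \{p\} \cup \{x\}$ with $x \in M^{(2)} \subset L_{0}$, as claimed.
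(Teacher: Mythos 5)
Your proof is correct, and its overall skeleton matches the paper's: confine $M_0$ to the fiber $L_0$ through $p$ via Claims \ref{claim4-10} and \ref{claim4-11}, observe that $X$ restricts to a nonzero holomorphic vector field on $L_0\cong\CP^1$ and hence has exactly two zeros there, and then determine the index of the second zero. Two steps are handled differently. For the nonemptiness of $M^{(2)}\cup M^{(4)}$, the paper cites Bryant's result that otherwise $M\cong\C^2$, whereas you give a direct topological argument ($M$ would be contractible, contradicting $H^2(\CP^1\times\C)\cong\Z$); both are fine, as is your refinement of the zero count via simplicity of zeros rather than the paper's ``three distinct zeros force the field to vanish.'' The genuine divergence is the last and most delicate step, ruling out $x\in M^{(4)}$. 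The paper argues softly: two negative Bryant eigenvalues at $x$ would produce a second holomorphic curve $R'_x$ through $x$ meeting $R_x(\CP^1)=L_0$ transversely, yet Claim \ref{claim4-10} forces its image to lie in the fiber $L_0$ --- a contradiction. You instead use the splitting $T^{1,0}_M\cong\varpi^*T^{1,0}_{\CP^1}\oplus\pi^*T^{1,0}_{\C}$ to write the $\frac{\p}{\p z}$-component of $X^{1,0}$ as $zg(z)\frac{\p}{\p z}$ with $g$ entire, so that the normal eigenvalue of the linearization equals the single number $g(0)$ at \emph{every} point of $M_0$; positivity of both eigenvalues at $p$ then forces a positive eigenvalue at $x$. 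This is more computational but yields strictly more information (the normal eigenvalues at the two fixed points coincide), and it anticipates the decomposition the paper only deploys later, in Claim \ref{claim4-14}. Both routes are valid.
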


\begin{proof}
In this case we have from \cite{Bryant} that $M^{(2)} \cup M^{(4)}$ must indeed be nonempty or else $M \cong \C^{2}$, which is clearly a contradiction. Let $x \in M^{(2)} \cup M^{(4)}$ be one such point. By Claim \ref{claim4-11}, there is a map $R_{x}: \CP^{1} \to M$ with $R_{x}(0) = x$ and $R_{q}(\infty) = p \in M^{(0)}$. In particular, $\pi(x) = \pi(p)$. Suppose that there is another point $q \in M_0$ not equal to $p$ or $x$. Then again by Claim \ref{claim4-11} there is a map $R_{q}: \CP^{1} \to M$ with $R_{q}(0) = q$ and $R_{q}(\infty) = p \in M^{(0)}$. Thus $R_q(\CP^{1}) = L_{0}$, which means in particular that $q \in  L_{0}$. This is a contradiction, since $q \neq p$ and $q \neq x$, and a holomorphic vector field on $\CP^{1}$ which vanishes at three distinct points must vanish identically. Finally, we claim that the point $x \in M^{(2)}$. If not, then $x \in M^{(4)}$, and \emph{both} coefficients $a_i$ in the representation \eqref{eqnXlin} for $X$ centered at $x$ are negative. Thus, there is a \emph{distinct}  holomorphic curve $R'_x: \CP^1 \to M$ with $R'_x(0) = x$, intersecting $R_x(\CP^{1})$ transversely at $x$. This is impossible, so we obtain our contradiction. 
\end{proof}

In particular, we have shown that if $M^{(0)} = \{p\}$, then the fixed point set of $T^X$ is finite. If $T^X$ is two-dimensional, then $T^X$ together with the K\"ahler form $\omega$ of $g$ give $M$ the structure of a symplectic toric manifold. We are therefore in the setting of the previous sections, and we can deduce Proposition \ref{prop4-9} from the results there.

\begin{claim} \label{claim4-13}
	Suppose that $T^X$ is contained in a two-dimensional real torus $\T$ acting on $M$ by holomorphic isometries of $\omega$. Then there exists an equivariant biholomorphism $\alpha: M \to \CP^1 \times \C$, where $\CP^1 \times \C$ is endowed with the standard $\Cstarsqr$-action. 
\end{claim}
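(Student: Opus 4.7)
The plan is to invoke the Delzant-type classification from Section 2 applied to the $\T$-action, thereby identifying $M$ with a standard toric model.

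I would first verify the hypotheses of Lemma \ref{lem2-20} for $(M, \omega, \T)$. Since $\T \supset T^X$ is a connected abelian group acting by holomorphic isometries, each element of $\T$ commutes with $JX$ and hence with $X$, so $\T$ preserves the finite set $M_0 = \{x, p\}$ provided by Claim \ref{claim4-12}; by connectedness the $\T$-action on $M_0$ is trivial, and the fixed set of $\T$ is exactly $\{x, p\}$. Since $H^1(M) = 0$ by \cite[Theorem 1.1]{Wylie}, the $\T$-action is Hamiltonian with moment map $\mu : M \to \t^*$; by Lemma \ref{lem2-11} and equation \eqref{eqn2-4} the soliton potential $f$ agrees up to a constant with $\langle \mu, b_X \rangle$, where $b_X \in \t$ corresponds to $JX$. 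Proposition \ref{prop4-3} then ensures that $\langle \mu, b_X \rangle$ is proper and bounded below, and Lemma \ref{lem2-20} produces a Delzant polyhedron $P \subset \t^*$ together with an equivariant symplectomorphism $(M, \omega) \to (M_P, \omega_P)$.

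Next I would apply Lemma \ref{lem4-1} to complexify $\T$ to a holomorphic $\T_\C \cong \Cstarsqr$ action on $(M, J)$, and then Lemma \ref{lem2-22} to promote the previous map to an equivariant biholomorphism $(M, J) \to (M_P, J_P)$. Thus $(M, J, \omega)$ is AK-toric and, by Lemma \ref{lem4-4}, $P$ coincides up to translation with the anticanonical polyhedron $P_{-K_M}$.

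It remains to show that $M_P$ is equivariantly biholomorphic to $\CP^1 \times \C$ equipped with its standard $\Cstarsqr$-action. Since $M_P \cong M \cong \CP^1 \times \C$ has Picard rank one and the Picard group of a smooth toric variety with fan $\Sigma$ spanning $\R^n$ is $\Z^{|\Sigma(1)| - n}$, the normal fan $\Sigma_P$ has exactly three rays, and non-compactness of $M_P$ implies that its support is strictly contained in $\R^2$. A short enumeration shows that, up to $\mathrm{GL}(2, \Z)$, any such fan takes the form $\{e_1, e_2, -e_1 + b e_2\}$ with maximal cones $\mathrm{cone}(e_1, e_2)$ and $\mathrm{cone}(e_2, -e_1 + b e_2)$ for some $b \in \Z$, and the corresponding toric surface is the total space of $\mathcal{O}(-b) \to \CP^1$; its unique compact toric divisor has self-intersection $-b$. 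Since the compact rational curves $\CP^1 \times \{z\} \subset \CP^1 \times \C$ have self-intersection zero, one must have $b = 0$, so $\Sigma_P$ is the standard fan of $\CP^1 \times \C$ with rays $\{e_1, -e_1, e_2\}$. Composing the resulting equivariant biholomorphism $M_P \to \CP^1 \times \C$ with the map from Lemma \ref{lem2-22} then yields $\alpha$.

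The principal obstacle is the final identification of the fan: while the Picard-rank constraint immediately gives the right number of rays, one must use a complex-analytic invariant (the self-intersection of the compact toric divisor) to rule out the non-trivial line-bundle total spaces and conclude that $\Sigma_P$ is indeed the standard fan of $\CP^1 \times \C$.
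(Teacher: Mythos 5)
Your proposal is correct and, for the main structural steps, follows the same route as the paper: the $\T$-action is Hamiltonian because $H^1(M)=0$, the soliton potential furnishes the proper, bounded-below Hamiltonian $\langle \mu, b_X\rangle$ needed to invoke Lemma \ref{lem2-20} and Lemma \ref{lem4-4}, and Lemma \ref{lem2-22} together with Proposition \ref{prop2-15} gives the equivariant identification of $(M,J)$ with the algebraic toric variety $\M_P$. Where you genuinely diverge is the final step. The paper dispatches the identification $\M_P \cong \CP^1\times\C$ in one sentence by citing that the topology of a toric variety is determined by its fan, whereas you pin the fan down explicitly: Picard rank one gives three rays, and the self-intersection of the compact toric divisor rules out the total spaces $\mathcal{O}(-b)\to\CP^1$ with $b\neq 0$. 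Your version is more concrete and arguably fills a gap the paper leaves open, but as written the enumeration is slightly incomplete: a smooth fan in $\R^2$ with three rays spanning $\R^2$ need not consist of two maximal $2$-dimensional cones sharing a ray (one can have a single $2$-cone plus a stray ray, which yields a surface with only one fixed point and no compact toric divisor). To close this, use that $\M_P$ has exactly two torus-fixed points (namely $M_0=\{x,p\}$ from Claim \ref{claim4-12}), which by the Orbit--Cone correspondence (Proposition \ref{prop2-7}) forces exactly two maximal cones and hence the Hirzebruch-type shape you wrote down; alternatively, note that $\CP^1\times\C$ contains compact curves, which must be fibers and hence forces a ray interior to the support. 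A second, minor point is that a biholomorphism identifies analytic rather than algebraic Picard groups, so it is cleaner to extract the number of rays from $H^2(M;\Z)\cong\Z$; neither issue is serious, and your treatment of this step is in any case more detailed than the paper's own.
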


\begin{proof}
	As we have seen in Section 4, the fact that $\omega$ is the K\"ahler form of a complete shrinking gradient K\"ahler-Ricci soliton on $M$ implies automatically that the $\T$-action is Hamiltonian. Since $\dim_\C \T_\C = 2 = \dim_\C M$ and the fixed point set is finite, we can apply Lemma \ref{lem4-4} to deduce that the image of the moment map $\mu$ is a Delzant polyhedron $P$ in $\text{Lie}(\T)^*$. Then Lemma \ref{lem2-22} implies that there exists an equivariant biholomorphism $\alpha: (M,J) \to (M_P, J_P)$, where $(M_P, J_P, \omega_P)$ is the AK-toric manifold of Proposition \ref{prop2-14}. By Proposition \ref{prop2-15}, $M_P$ is equivariantly biholomorphic to the unique algebraic toric variety $\M_P$ associated to $P$. It follows that the underlying complex structure of $M_P$ is biholomorphic to $\CP^1 \times \C$. Since the topology of an algebraic toric variety is uniquely characterized by its fan (c.f. \cite[Chapter 12]{CLS}), the only algebraic toric variety with this property is $\CP^1 \times \C$ with the standard $\Cstarsqr$-action up to equivariant isomorphism. Thus, $\alpha$ is the required biholomorphsim $\alpha: M \to \CP^1 \times \C$. 
\end{proof}

In particular, if $T^X$ itself is two-dimensional and $M^{(0)} = \{p\}$, then $T^X$ itself satisfies the hypotheses of Claim \ref{claim4-13}, and we can simply take $\T = T^X$. In fact, even when $\dim_{\R} T^X = 1$, we can always find a two dimensional torus $\T$ satisfying the hypotheses of Claim \ref{claim4-13}.

\begin{claim} \label{claim4-14}
	If $M^{(0)} = \{p\}$, then there exists a two-dimensional torus $\T$ of biholomorphisms acting on $M$ such that $T^X \subset \T$. 
\end{claim}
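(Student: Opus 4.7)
The plan is to first extract the algebraic structure of the $T^X$-action via its complexification, and then realize this complexification as a subgroup of a holomorphic $\Cstarsqr$-action on $M$.  Since Claim \ref{claim4-13} already handles the case $\dim_\R T^X = 2$, I may assume $\dim_\R T^X = 1$, so that $T^X \cong S^1$; by Lemma \ref{lem4-1} its complexification $T^X_\C \cong \Cstar$ acts holomorphically on $M$ with fixed point set $M_0 = \{x, p\}$.  By Claim \ref{claim4-12} both $x$ and $p$ lie in a common fiber $L_0$ of $\pi$ and $X$ is tangent to $L_0$, so $T^X_\C$ preserves $L_0$ and $\pi: M \to \C$ is $T^X_\C$-equivariant.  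The induced $\Cstar$-action on $\C$ fixes $\pi(L_0)$; after translating the base coordinate I assume $\pi(L_0) = 0$, so this action has the form $\lambda \cdot \zeta = \lambda^{m} \zeta$ for some $m \in \Z$.

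I would next rule out $m = 0$.  If $m = 0$, then $X$ is tangent to every fiber of $\pi$, so for each $\zeta \neq 0$ the restriction $X|_{L_\zeta}$ would be a nowhere-vanishing holomorphic vector field on $L_\zeta \cong \CP^1$ (as $M_0 \subset L_0$), contradicting the Poincar\'e--Hopf theorem.  Linearizing $X^{1,0}$ at the fixed points via \cite[Proposition 6]{Bryant} and using the Morse indices of $f$ at $x \in M^{(2)}$ and $p \in M^{(0)}$, I read off the weights of $T^X_\C$: at $x$ the pair $(a_1, a_2)$ corresponding respectively to the normal and tangent directions along $L_0$, with $a_1 > 0 > a_2$ (the tangent direction being the unstable direction for $-X$ produced by Claim \ref{claim4-10}); at $p$ the pair $(b_1, b_2)$ with $b_1, b_2 > 0$.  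Restriction of $X^{1,0}$ to $L_0 \cong \CP^1$ yields a vector field with zeros of weights $a_2$ at $x$ and $-a_2$ at $p$, so the tangent weight at $p$ satisfies $b_2 = -a_2$.  Pushing $X^{1,0}$ forward by $\pi$ in Bryant coordinates at $x$ gives $m = a_1$, while compatibility of the equivariant $\Cstar$-structure with the triviality of the normal line bundle $N_{L_0/M}$ (trivial because $M = \CP^1 \times \C$) forces equality of normal weights $a_1 = b_1$.  After reparametrizing $T^X_\C$ so the action is effective, I may assume $a_1$ and $-a_2$ are coprime positive integers.

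The main step is then to identify $M$ with its $T^X_\C$-action equivariantly with the standard model $\CP^1 \times \C$ carrying the linear $\Cstar$-action $\lambda \cdot ([w_1 : w_2], \zeta) = ([\lambda^{a_2} w_1 : w_2], \lambda^{a_1} \zeta)$, whose fixed points $([0:1], 0)$ and $([1:0], 0)$ have the same weights computed above.  Applying the Bialynicki--Birula decomposition to the $T^X_\C$-action on $M$ produces a decomposition of $M$ into two locally closed affine cells: a $2$-dimensional cell based at $p$ which is equivariantly isomorphic to $\C^2$ with linear action of weights $(a_1, -a_2)$, and a $1$-dimensional cell based at $x$ equivariantly isomorphic to $\C$ with linear action of weight $a_1$.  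The standard model admits the same cell decomposition with the same combinatorial data, and the smooth quasi-projective surface $\CP^1 \times \C$ is reconstructed uniquely as the $\Cstar$-equivariant completion of its two cells, furnishing a $T^X_\C$-equivariant biholomorphism from $M$ to the standard model.  Pulling back the natural $\Cstarsqr$-action on $\CP^1 \times \C$ along this biholomorphism yields a $\Cstarsqr$-action on $M$ containing $T^X_\C$, whose maximal compact subgroup is a $2$-torus $\T$ of biholomorphisms of $M$ with $T^X \subset \T$.

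The main obstacle is the final identification step, which is a linearization result for $\Cstar$-actions on the quasi-projective surface $\CP^1 \times \C$ with prescribed fixed-point data.  It rests on the Bialynicki--Birula decomposition together with the uniqueness of the $\Cstar$-equivariant gluing of the two affine cells into a smooth surface.
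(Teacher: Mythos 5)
Your reduction to the case $\dim_\R T^X = 1$, the equivariance of $\pi$ with respect to the induced action on the base, the exclusion of $m=0$, and the weight bookkeeping at $x$ and $p$ are all sound and consistent with what the paper establishes in Claims \ref{claim4-10}--\ref{claim4-12}. The gap is in the final identification step, in two respects. First, the Bia{\l}ynicki-Birula decomposition is a theorem about \emph{algebraic} $\Cstar$-actions on complete (or equivariantly completable) varieties; here $T^X_\C$ acts only holomorphically on the non-complete manifold $M \cong \CP^1\times\C$, whose holomorphic automorphism group is infinite-dimensional and contains many non-algebraic elements, e.g.\ $([w_1:w_2],z)\mapsto([w_1+g(z)w_2:w_2],z)$ for $g$ entire. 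You cannot invoke the algebraic BB theorem without first proving the action is algebraic, or substituting a holomorphic/Morse-theoretic version built from the flow of the soliton potential $f$. Second, and more seriously, ``the surface is reconstructed uniquely as the $\Cstar$-equivariant completion of its two cells'' is not a citable theorem: the BB cells together with their linear $\Cstar$-structures do not determine the equivariant isomorphism type of the ambient surface. The entire content lies in how the closed $1$-cell is glued to the open $2$-cell, and it is precisely the non-algebraic shearing automorphisms above that must be ruled out or absorbed to prove that gluing is standard. As stated, your key step names the difficulty rather than resolving it.

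This is exactly what the paper's proof does explicitly: it splits $X^{1,0}=V^{1,0}+W^{1,0}$ using $T^{1,0}_M\cong\varpi^*T^{1,0}_{\CP^1}\oplus\pi^*T^{1,0}_\C$, shows $W^{1,0}=kz\,\p/\p z$, tracks the two zero-sections of $V^{1,0}$ near $L_0$ together with a third marked section coming from a $W$-orbit, normalizes by a fiberwise M\"obius transformation $A_z\in\mathrm{PGL}(2,\C)$ so that $V^{1,0}=h(z)w\,\p/\p w$ near $L_0$, and then propagates this normalization to all of $M$ by the flow of $X$; the commuting complete field $Y^{1,0}=w\,\p/\p w$ then generates the second $\Cstar$-factor and hence the torus $\T$. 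If you wish to keep the BB framing, you would still need to carry out an equivalent fiberwise normalization to establish uniqueness of the gluing, so the appeal to BB does not in fact shortcut the argument.
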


\begin{proof}
	If $T^X$ is two-dimensional, then there is nothing to prove. Therefore, we can assume that $T^X_\C$ defines an action of $\Cstar$ on $M$. Recall that $\pi$ denotes the projection $\pi : M \to \C$ under a fixed identification $M \cong \CP^1 \times \C$. Let $\varpi:M \to \CP^1$ denote the other projection. Then the $(1,0)$ tangent bundle $T^{1,0}_M$ of $M$ splits holomorphically as $T^{1,0}_M \cong \varpi^*T^{1,0}_{\CP^1} \oplus \pi^*T^{1,0}_{\C} $. In particular, there exist holomorphic projection maps onto the subbundles $\varpi^*T^{1,0}_{\CP^1}$ and $\pi^*T^{1,0}_{\C}$ of $T^{1,0}_M$. We can therefore write $X^{1,0} = V^{1,0} + W^{1,0}$, where $V^{1,0}, W^{1,0}$ are holomorphic vector fields lying in $\varpi^*T^{1,0}_{\CP^1}$ and $\pi^*T^{1,0}_{\C}$ respectively. 
	
	Notice that the coordinate $z$ on $\C$ defines a global holomorphic coordinate on $M$. Since $T^{1,0}_{\C} $ is trivial, we can write the vector field $W^{1,0}=f \frac{\p}{\p z}$, where $f$ is a holomorphic function on M. Now since $X^{1,0}$ generates $\Cstar$-action on $M$, $W^{1,0}$ also generates a $\Cstar$-action on $\C$. In particular, $f = f(z)$ depends only on $z$. Now $X^{1,0}$ is tangential to $L_0$, this action fixes $0 \in \C$. Since the automorphism group of $\C$ consists of linear transformations, it follows that $f(z)$ is of the form $f(z) = kz$.

	 For each $z \in \C$, the restriction of $V^{1,0}$ to $L_z$ is a holomrophic vector field on $L_z \cong \CP^1$ which we denote by $V_z^{1,0}$. A nonzero holomorphic vector field on $\CP^1$ vanishes at two points with multiplicity, so that $V_z^{1,0}$ either vanishes identically or has zero set equal to a degree 2 divisor in $\CP^1$. Recall that $X^{1,0}$ is tangential to $L_0$, and so $V_0^{1,0}$ vanishes only at $M_0$, which consists of the two isolated points $\{x\}$ and $\{p\}$. Thus, by the continuity of the map $\C \to H^{0}(\CP^{1}, \O(2))$ given by $z \mapsto V^{1,0}_z$, the same is true for $V_z^{1,0}$ with $|z|$ sufficiently small. In particular, there exists a small neighborhood $\Delta \subset \C$ of $0$ such that the zero set of $V_z^{1,0}$ for $z \in \Delta$ consists of disjoint embedded discs $\Delta_p, \Delta_x \subset M$, centered at $p$ and $x$ respectively, each meeting a given fiber $L_z$ at a unique point. Let $\{p_z\} = \Delta_p \cap L_z$ and $\{x_z\} = \Delta_x \cap L_z$. Let $y_0 \in L_0$ be a point which does not lie in $M_0$, and let $\Phi_t$ denote the flow of $W^{1,0}$. Since $W^{1,0} = kz \frac{\p}{\p z}$, clearly there exists a point $y \in M - L_0$ such that the orbit $W$-orbit $\Phi_t(y)$ of $y$ under the flow of $W^{1,0}$ converges to $y_0$ as $t \to 0$. Let $C^y \subset M$ be the closure of the orbit $\Phi_t(y)$ and let $\Delta_y$ denote the intersection of $C^{y}$ with $\CP^1 \times \Delta$. Again since $W$ takes this special form, and perhaps after shrinking $\Delta$, we can choose $y_0$ such that $\Delta_y$ does not intersect $\Delta_p \cup \Delta_x$. We denote the unique point of $\Delta_y \cap L_z$ by $y_z$. Then there is a unique automorphism $A_z \in \text{PGL}(2,\C)$ of $\CP^1$ such that $A_z(x_z) = \infty$, $A_z(y_z) = 1$, and $A_z(p_z) = 0$. Then we define an automorphism $\alpha_1: \CP^1 \times \Delta \to \CP^1 \times \Delta$ by setting $\alpha_1(\ell, z) = \left(A_z^{-1}(\ell), z\right)$. After changing coordinates on $\CP^1 \times \Delta$ by $\alpha_1$, we can assume that we have a homogeneous coordinate system $[w_1:w_2]$ on $\CP^1$ in which the vector field $V^{1,0}_z$ vanishes at the points $\{0\}$ and $\{\infty\}$. Up to scale, there is a unique holomorphic vector field on $\CP^1$ vanishing at two given points. If we set $w = \frac{w_1}{w_2}$, it follows then that $V^{1,0} = h(z) w \frac{\p}{\p w}$, where $h(z)$ is a holomorphic function only on $\Delta$ (notice that, although it is defined with respect to a coordinate system, $w \frac{\p}{\p w}$ is in fact a global holomorphic vector field on $\CP^1$).
	 
	  Now, $X^{1,0}$ generates a $\Cstar$-action on $M$, and moreover each orbit of this action intersects the neighborhood $\CP^{1} \times \Delta$ of $L_0$. Therefore, we can use the flow of $X^{1,0}$ itself to extend this local description. In particular, there is a global holomorphic extension $\alpha: M \to M$ of $\alpha_1$ inducing a change of coordinates on $M$ in which $X^{1,0}$ takes the form $X^{1,0} = h(z) w \frac{\p}{\p w} + kz \frac{\p}{\p z}$, where $w = \frac{w_1}{w_2}$ with respect to the homogeneous coordinates $[w_1 : w_2]$ on $\CP^1$ and now $h(z)$ is an entire holomorphic function on $\C$. Set $Y^{1,0} = w \frac{\p}{\p w}$. Then clearly $Y = \text{Re}(Y^{1,0})$ is complete and $[X,Y] = 0$. Furthermore, the flow of $(Y, JY)$ generates a $\Cstar$-action on $M$, which in these coordinates is just the standard action on $\CP^1$ on each fiber of $\pi$. Then $\Cstarsqr$ acts on $M$ via $X, JX, Y, JY$, and therefore we can take $\T$ to be the underlying real torus of this action.
\end{proof}

\subsubsection{Case 2: $M^{(0)}$ is a fiber of $\pi$}

\begin{claim} \label{claim4-15}
	Suppose that $M^{(0)}$ is a fiber of $\pi$, and so without loss of generality we may assume that $M^{(0)} = L_0$. Then both $M^{(2)}$ and $M^{(4)}$ must be empty.
\end{claim}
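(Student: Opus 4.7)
The plan is to suppose for contradiction that there exists a point $y \in M^{(2)} \cup M^{(4)}$, and then to show that the zero set $M_0$ of $X$ must be non-compact, contradicting the fact (from \cite[Lemma 2.26]{ConDerSun}) that $M_0$ is compact under our bounded scalar curvature hypothesis.

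First I would invoke Claim \ref{claim4-10} to produce a holomorphic map $R_y : \CP^{1} \to M$ with $R_y(0) = y$, $R_y(\infty) \in M_0$, and image contained in the fiber $L_{z_y}$ of $\pi$ through $y$. Since $y$ lies outside $L_0 = M^{(0)}$, we must have $z_y \neq 0$. Because $L_{z_y}$ is the closure of a single flow orbit of $(X, JX)$, it is preserved by the $\Cstar$-subgroup of $T^{X}_{\C}$ generated by $X^{1,0}$, while $L_0$ is pointwise fixed by this same action (both $X$ and $JX$ vanish along $L_0$).

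The key observation is that any biholomorphism of $M \cong \CP^{1} \times \C$ preserves the fibration $\pi$. Indeed, every compact irreducible analytic curve in $\CP^{1} \times \C$ must project to a single point of $\C$ (since $\C$ contains no compact irreducible curves), and hence is contained in some fiber $L_z \cong \CP^{1}$ and, by dimension, coincides with it. Consequently, the $\Cstar$-action descends to a holomorphic $\Cstar$-action on the base $\C$, whose infinitesimal generator is the complete holomorphic vector field $\pi_* X^{1,0}$ on $\C$. Every such vector field is of the form $(\alpha z + \beta)\,\partial/\partial z$, and the vanishing at both $z = 0$ and $z = z_y \neq 0$ forces $\alpha = \beta = 0$. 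Thus $\pi_* X \equiv 0$.

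It follows that $X$ is tangent to every fiber $L_z$, so for each $z$ the restriction $X|_{L_z}$ is a holomorphic vector field on $L_z \cong \CP^{1}$. Since $\chi(\CP^{1}) = 2 \neq 0$, any such vector field must vanish somewhere on $L_z$, whence $\pi(M_0) = \C$. But $\pi : M \to \C$ is proper (its fibers are compact), so its restriction to the closed subvariety $M_0$ is also proper, and a proper map onto a non-compact target must have non-compact domain. This contradicts the compactness of $M_0$. The main subtlety I expect to handle carefully is the justification that $\pi_* X^{1,0}$ genuinely descends to a holomorphic vector field on the base $\C$, and this rests entirely on the classification of compact analytic curves in $\CP^{1} \times \C$ as fibers of $\pi$.
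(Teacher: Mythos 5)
Your proof is correct, but it takes a genuinely different route from the paper's. The paper argues Morse-theoretically within the single fiber $L_{z}$ containing the hypothetical critical point $q$: the limit point $q'=R_q(\infty)$ of the negative gradient flow is where $f|_{L_z}$ attains its minimum, it cannot lie in $M^{(0)}=L_0$ since $z\neq 0$, so it must again have positive index, and running Claim \ref{claim4-10} once more at $q'$ produces a flow line along which $f$ decreases below $\min_{L_z}f$ --- a contradiction. You instead argue globally: since every compact irreducible curve in $\CP^1\times\C$ is a fiber, the flow of $(X,JX)$ descends to the base, the descended complete holomorphic field on $\C$ is affine and vanishes at the two distinct points $0$ and $z_y$, hence is zero; so $X$ is vertical, must vanish somewhere in \emph{every} fiber $L_z\cong\CP^1$, and therefore $\pi(M_0)=\C$, contradicting compactness of $M_0$. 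Both arguments are sound. The paper's version is purely local to the fiber and stylistically uniform with Claims \ref{claim4-11} and \ref{claim4-12}, needing only the Morse--Bott structure of $f$; yours is shorter, avoids the index bookkeeping (e.g.\ ruling out $q'\in M^{(4)}$ separately), and actually proves slightly more --- that $X$ cannot be tangent to any fiber other than $L_0$ at all. The price is that you need completeness of the flow of $(X,JX)$ to know the descended field on $\C$ is affine rather than merely entire (vanishing at two points would not suffice for a general entire coefficient); this input is legitimate and is used elsewhere in the paper (Claims \ref{claim4-14} and \ref{claim4-17}), but it is an extra global ingredient the paper's local argument does not require. Your stated "main subtlety" --- that $\pi_*X^{1,0}$ descends --- is indeed the right point to check and your justification via the classification of compact curves is fine; alternatively one can see it directly from the holomorphic splitting $T^{1,0}_M\cong\varpi^*T^{1,0}_{\CP^1}\oplus\pi^*T^{1,0}_{\C}$ already used in the proof of Claim \ref{claim4-14}, since the $\pi^*T^{1,0}_{\C}$-component of a holomorphic field is constant along each compact fiber.
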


\begin{proof}
 Indeed, suppose that there exists a point $q \in M^{(2)}$. Let $z = \pi(q)$ so that $q \in L_z$. By assumption, $z \neq 0$. By Claim \ref{claim4-10}, there is a holomorphic embedding $R_q: \CP^{1} \to M$ defined by flowing along $(-X, -JX)$ with the property that $R_q(0) = q$ and $R_q(\CP^1) = L_z$. Set $q' = R_q(\infty)$, then it follows that the tangential component $V_{z}^{1,0}$ of $X^{1,0}$ to $L_z$ vanishes precisely at the two points $q, q'$ and that $q'$ is the point at which $f$ achieves $\min_{L_z}f$. In particular, $q'$ cannot lie in $M^{(0)} = L_0$, which means that $q' \in M^{(2)}$. But then we run the same argument at $q'$ to obtain a contradiction. Therefore $M^{(2)}$ must be empty. The case $q \in M^{(4)}$ is similar. Alternatively, one can see that $M^{(4)}$ is empty directly by an argument similar to the one in the proof of Claim \ref{claim4-12}.
 \end{proof}
 
 \begin{claim} \label{4-16}
 	If $M^{(0)} = L_0$, then $T^{X}$ is necessarily one-dimensional.
 \end{claim}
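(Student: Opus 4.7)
The plan is to argue by contradiction, using that a faithful action of a real torus on $\C^2$ by unitary transformations has dimension at most $2$ and, when $2$-dimensional, has the origin as an isolated fixed point.

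First I would record that $T^X$ is, by construction, the closure in the compact Lie group $G^X_0$ of the one-parameter subgroup generated by $JX$, hence a compact connected abelian Lie group, i.e.\ a real torus, acting effectively on $M$ by holomorphic isometries of $g$. Its fixed point set in $M$ coincides with the zero set of $JX$, which equals $M_0 = L_0$ under the hypothesis of the claim. Pick any $p \in L_0$ and consider the linearization $d\cdot|_p : T^X \to U(T_pM) \cong U(2)$, which lands in $U(2)$ because $T^X$ acts by holomorphic isometries.

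The central step is to verify that $d\cdot|_p$ is injective. Since $T^X$ is compact and fixes $p$, Bochner's linearization theorem provides $T^X$-equivariant holomorphic coordinates centered at $p$ in which the action of $T^X$ is linear; hence any element $h \in T^X$ in the kernel of $d\cdot|_p$ acts as the identity on an open neighborhood of $p$. By the identity theorem for holomorphic maps on the connected manifold $M$, such an $h$ acts trivially on all of $M$, and effectiveness of the $T^X$-action forces $h$ to be the identity. Consequently $T^X$ embeds into $U(2)$, and in particular $\dim T^X \leq 2$.

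To finish, I would suppose for contradiction that $\dim T^X = 2$. Then the image of $d\cdot|_p$ is a maximal torus of $U(2)$, so $T_pM$ splits as a direct sum of two one-dimensional weight spaces whose characters are linearly independent in $\mathrm{Hom}(T^X, S^1)$; in particular, both weights are nonzero and the linear $T^X$-fixed locus in $T_pM$ reduces to the origin. Returning to the Bochner coordinates, $p$ is therefore an isolated fixed point of $T^X$ in $M$, contradicting $p \in L_0 \subset \mathrm{Fix}(T^X)$ together with $\dim_\R L_0 = 2$. Hence $\dim T^X = 1$. The only delicate point is the injectivity of the linearization, and this is a routine consequence of Bochner's theorem combined with analytic continuation on the connected complex manifold $M$.
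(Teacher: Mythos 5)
Your proof is correct, but it takes a genuinely different route from the paper's. You linearize the compact group $T^X$ at a point $p$ of the positive-dimensional fixed locus $L_0$: the isotropy representation $T^X \to U(T_pM)\cong U(2)$ is injective (by Bochner linearization plus the identity theorem, or simply because an isometry of a connected manifold fixing a point with trivial differential is the identity), so a two-dimensional $T^X$ would map onto a maximal torus of $U(2)$, whose only fixed vector is the origin, making $p$ isolated in $\mathrm{Fix}(T^X)$ and contradicting $p\in L_0\subset \mathrm{Fix}(T^X)$ with $\dim_\R L_0 = 2$. The paper instead works with Bryant's local normal form $X^{1,0}=a_1z_1\p_{z_1}+a_2z_2\p_{z_2}$ at $y\in M^{(0)}$, shows $a_1=0$, $a_2>0$ because the fixed locus through $y$ is a curve, and then derives a contradiction with two-dimensionality of $T^X_\C$ by noting that a dense $(X,JX)$-orbit would have to exist while all orbits near $y$ are trapped in the slices $z_2=\mathrm{const}$. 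Your argument is shorter, more standard, and does not use the soliton or Morse-theoretic structure at all (only that $T^X$ acts effectively by holomorphic isometries with a positive-dimensional fixed set on a complex surface); the paper's argument has the advantage of staying entirely within the normal-form and orbit-flow framework it has already set up for Claims 4.10--4.18, and the local picture it establishes ($a_1=0$, orbits in $z_2=\mathrm{const}$) is reused immediately in the proofs of Claims 4.17 and 4.18. One small point of bookkeeping: your assertion that $\mathrm{Fix}(T^X)$ \emph{equals} $L_0$ quietly invokes the preceding claim that $M^{(2)}$ and $M^{(4)}$ are empty, but your contradiction only needs the inclusion $L_0\subseteq\mathrm{Fix}(T^X)$, which follows directly from the hypothesis $M^{(0)}=L_0$, so nothing is at stake.
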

 
 \begin{proof}
 	Let $y \in M^{(0)}$. Choose coordinates $(z_1, z_2)$ in a neighborhood $U_y$ centered at $y$ such that $X^{1,0}$ takes the form \eqref{eqnXlin}. Since $y \in M^{(0)}$, we have that $a_1,a_2$ are both nonnegative. If $a_1$ and $a_2$ are both strictly positive, then it follows that every point $y' \in U_y$ lies on an orbit which converges as $t \to 0$ to $y$. Since $M^{(0)} = L_0$, we can choose a point $y' \in M^{(0)} \cap U_y$. Then $\Phi_t(y') \to y$ as $t \to 0$, which contradicts the fact that $X$ vanishes identically on $M_0$. Therefore, we may assume without loss of generality that $a_1 = 0$ and $a_2 > 0$. In particular, $L_0 \cap U_y$ is given by the $z_1$-axis and indeed all of the orbits of $(X, JX)$ in these coordinates are given by the affine lines $z_2 = const$. 
 	
 	 If $T^X_\C$ is two-dimensional, then as we have seen at the beginning of Section 2.2 there exists an orbit of $T^X_\C$ which is open and dense in $M$. The flow of $ JX$ determines by assumption a dense subgroup in $T^{X}$, and therefore there must be some point $q \in M$ such that the flow of $(X , JX)$ from $q$ is dense in $M$, and in particular is dense in $U_y$. But as we have seen, for sufficiently small $t$, the $\Phi$-orbit of any point in $U_y$ lies on a unique complex submanifold of $U_y$, the line $z_2 = const$. If the orbit $\Phi_t(q)$ is dense in $U_y$, pick two points $q_1, q_2$ such that $z_2(q_1) \neq z_2(q_2)$ and such that $q_2 = \Phi_{t^*}(q_1)$. By ensuring $q_2$ is close enough to the $z_1$-axis, we can futher assume that $|t^*|< 1$. By the local form \eqref{eqnXlin} we can see that the orbit of any point in $U_y$ of the punctured unit disc $\D^* \subset \Cstar$ is contained in $U_y$. In particular it follows that $z_1(q_2) = z_1(q_1)$, a contradiction. 
 \end{proof}

\begin{claim} \label{claim4-17}
	Let $p,q \in M - M^{(0)}$, and let $\Phi: \Cstar \times M \to M$ denote the complex flow of $(X, JX)$. If $\lim_{t \to 0} \Phi_t(p) = \lim_{t \to 0} \Phi_t(q) \in M^{(0)}$, then $q = \Phi_t(p)$ for some $t \in \Cstar$, i.e. $p$ and $q$ lie on the same orbit. 
	\end{claim}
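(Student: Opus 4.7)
The plan is to reduce the claim to a local linearization of $X$ at the common limit point and exploit the group law of $\Phi$. By Claim 4-16, $T^X$ is one-dimensional, so the complex flow $\Phi$ integrates to an effective holomorphic $\Cstar$-action on $M$. Let $y \in M^{(0)} = L_0$ denote the common limit point of the orbits through $p$ and $q$. Using \cite[Proposition 6]{Bryant} together with the observation from Claim 4-16 that the zero-eigenvalue direction of $X^{1,0}$ at $y$ must be tangent to $L_0$, I would choose holomorphic coordinates $(z_1, z_2)$ on a neighborhood $U_y$ centered at $y$ in which $X^{1,0} = a\, z_2 \frac{\p}{\p z_2}$, with $a > 0$ and $L_0 \cap U_y = \{z_2 = 0\}$. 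In these coordinates the $\Cstar$-action takes the form $\Phi_\lambda(z_1, z_2) = (z_1, \chi(\lambda) z_2)$, where $\chi: \Cstar \to \Cstar$ is a holomorphic character satisfying $\chi(\lambda) \to 0$ as $\lambda \to 0$; its orbits in $U_y \setminus L_0$ are precisely the punctured analytic discs $\{z_1 = c, \, z_2 \in \Cstar\}$ parametrized by $c \in \C$.

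Next, using the assumption $\lim_{t \to 0} \Phi_t(p) = y = \lim_{t \to 0} \Phi_t(q)$, I would choose $t_0 \in \Cstar$ with $|t_0|$ small enough that both $p' := \Phi_{t_0}(p)$ and $q' := \Phi_{t_0}(q)$ lie in $U_y$. Applying the limit to $\Phi_t(p') = \Phi_{t t_0}(p)$ and reading off the first coordinate in the local chart forces $z_1(p') = 0$, and similarly $z_1(q') = 0$. Hence $p'$ and $q'$ both lie on the single punctured disc $\{z_1 = 0, \, z_2 \in \Cstar\}$, which is a $\Cstar$-orbit in $M$, so there exists $\mu \in \Cstar$ with $q' = \Phi_\mu(p')$. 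The group structure of the $\Cstar$-action then gives
\begin{equation*}
q \;=\; \Phi_{t_0^{-1}}(q') \;=\; \Phi_{t_0^{-1}}(\Phi_\mu(p')) \;=\; \Phi_\mu(\Phi_{t_0^{-1}}(p')) \;=\; \Phi_\mu(p),
\end{equation*}
so $p$ and $q$ lie on a common $\Cstar$-orbit, as claimed.

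The key step is the local linearization: once the coordinates $(z_1, z_2)$ are in place with the $\Cstar$-action fixing the transverse coordinate $z_1$, the argument reduces to matching $z_1(p') = z_1(q') = 0$. I do not anticipate a serious obstacle here, as this follows directly from the local normal form already established in the preceding Morse-theoretic analysis of the fixed-point set $M_0$; the only minor subtlety is verifying that the exponentiated character $\chi$ does indeed vanish at $\lambda = 0$, which is forced by the fact that the orbit limit lies in $L_0 = \{z_2 = 0\}$.
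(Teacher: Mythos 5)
Your argument is correct and is essentially the paper's own proof: both use the Bryant linearization at the common limit point $y$ with weights $(0,a)$, $a>0$, observe that the limit condition forces the transverse coordinate of both orbits to vanish, and conclude that the two orbits meet (hence coincide) on the punctured disc $\{z_1=0\}$. Your write-up merely makes the final group-law bookkeeping ($q = \Phi_{t_0^{-1}}\Phi_\mu\Phi_{t_0}(p) = \Phi_\mu(p)$) explicit, which the paper leaves implicit.
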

	
	\begin{proof}
		This follows again from the local form \eqref{eqnXlin}. Since $M^{(i)}$ are empty for $i \neq 0$ by Claim \ref{claim4-15}, it must be that $\lim_{t \to 0} \Phi_t(p) \in M^{(0)}$ for all $p \in M$. Now suppose that $p, q \in M$ with $\lim_{t \to 0} \Phi_t(p) = \lim_{t \to 0} \Phi_t(q) = y \in M^{(0)}$. As we have seen, we can choose coordinates $(z_1, z_2)$ near $y$ in which $X^{1,0}$ takes the form \eqref{eqnXlin} where $a_1 = 0$ and $a_2 > 0$. It follows then that for sufficiently small $\varepsilon$, that both $\Phi_t(p)$ and $\Phi_t(q)$ lie on the line $z_2 = 0$ if $|t| < \varepsilon$. Thus the orbits from $p$ and from $q$ intersect, and are thereby equal. 
\end{proof}

We can now treat the final case that may arise. Together with Claims \ref{claim4-13} and \ref{claim4-14}, this completes the proof of Proposition \ref{prop4-9}.

\begin{claim} \label{claim4-18}
	If $M^{(0)} = L_0$, then there exists an equivariant biholomorphism $\alpha: M \to \CP^1 \times \C$, where $\CP^1 \times \C$ is endowed with the product $\Cstar$-action determined by the trivial action on $\CP^1$ and the standard one on $\C$. In particular, under the identification determined by $\alpha$, we have that $JX$ lies in the Lie algebra $\t$ of the standard $T^2$-action on $\CP^1 \times \C$. 
\end{claim}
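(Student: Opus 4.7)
The plan is to use the $\Cstar$-action generated by $X^{1,0}$ to construct a holomorphic retraction $\pi' \colon M \to L_0$, realize $M$ as the total space of a holomorphic line bundle over $L_0 \cong \CP^1$, and show this bundle is trivial using the given biholomorphism $M \cong \CP^1 \times \C$.

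First, I would define $\pi'(p) = \lim_{t \to 0} \Phi_t(p)$, where $\Phi \colon \Cstar \times M \to M$ is the holomorphic flow of $X^{1,0}$. By Claim \ref{claim4-15} we have $M_0 = L_0$, so by \cite[Proposition 2.28]{ConDerSun} this limit exists for every $p \in M$ and lies in $L_0$. The Bryant normal form used in the proof of Claim \ref{4-16} provides coordinates $(z_1,z_2)$ near each $y \in L_0$ in which $X^{1,0} = a_2 z_2 \frac{\partial}{\partial z_2}$ with $L_0 = \{z_2 = 0\}$ and positive integer weight $a_2$; in these coordinates $\pi'$ is simply the projection $(z_1, z_2) \mapsto z_1$, which shows it is holomorphic and locally a trivial $\C$-bundle with $\Cstar$ acting fiberwise by $t \cdot (z_1, z_2) = (z_1, t^{a_2} z_2)$. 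By Claim \ref{claim4-17} each fiber of $\pi'$ consists of the limit point together with a single $\Cstar$-orbit, hence is isomorphic to $\C$. Assembling the local trivializations via the $\Cstar$-equivariance of $\Phi$, $M$ is realized as the total space of a holomorphic line bundle $L \to L_0 \cong \CP^1$ whose zero section is $L_0$.

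Next, I would show $L$ is trivial. Under the given biholomorphism $M \cong \CP^1 \times \C$, the submanifold $L_0 \cong \CP^1$ is a compact holomorphic curve in $\CP^1 \times \C$; since any holomorphic map $\CP^1 \to \C$ is constant, the composition $L_0 \hookrightarrow M \to \C$ with the second projection is constant, so $L_0 = \CP^1 \times \{z_0\}$ for some $z_0 \in \C$. Hence the normal bundle $N_{L_0/M}$ is trivial, and therefore so is $L$. A choice of trivialization gives a biholomorphism $\alpha \colon M \to \CP^1 \times \C$ sending $L_0$ to $\CP^1 \times \{0\}$, under which the $\Cstar$-action becomes trivial on the $\CP^1$-factor and scalar multiplication of weight $a_2$ on the $\C$-factor. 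In particular $JX$ is a nonzero real multiple of the rotational vector field on the $\C$-factor and therefore lies in $\t$; reparametrizing the $\Cstar$-action by its effective quotient makes the weight equal to $1$, matching the standard action as in the statement.

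The main obstacle will be the globalization in the first step: showing that the local Bryant normal forms glue into a single holomorphic submersion $\pi'$ with coherent $\C$-bundle structure over all of $L_0$. This is handled by using the $\Cstar$-equivariance of $\Phi$ to compare trivializations on overlapping patches, which forces the transition functions to be linear in the fiber coordinate and yields a genuine holomorphic line bundle structure. Once $\pi'$ is established, the triviality of the resulting line bundle and the final identification of $JX$ with an element of $\t$ follow quickly from the observations about embedded $\CP^1$'s in $\CP^1 \times \C$.
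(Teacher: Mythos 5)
Your argument is correct, but it takes a genuinely different route from the paper's. The paper builds $\alpha$ directly from the flow: after rescaling $X$ by $\beta = a_2^{-1}$ so that each orbit meets each fiber of $\pi$ exactly once, it fixes a reference fiber $L_z$ with $z\neq 0$ and sends $p$ to the pair $(\Phi^{\beta}_{t^{-1}}(p),\,t)$, where $t$ is the unique time at which the orbit of $p$ hits $L_z$; Claim \ref{claim4-17} is then invoked to extend this map across $L_0$ to a biholomorphism with $\alpha_*X^{1,0}=a_2 z\frac{\p}{\p z}$. You instead retract onto the \emph{limit} locus: you define $\pi'(p)=\lim_{t\to 0}\Phi_t(p)$, use the Bryant normal form (with weights $(0,a_2)$, $a_2>0$) to see that $\pi'$ is a holomorphic submersion whose fibers are, by Claim \ref{claim4-17}, single orbits completed by their limit points, and then package $M$ as the total space of a holomorphic line bundle over $L_0\cong\CP^1$ which is trivial because it is the normal bundle of $L_0=\CP^1\times\{0\}$ in the ambient product. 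Both proofs rest on exactly the same two inputs (the normal form and Claim \ref{claim4-17}); yours carries the extra burden of verifying the line-bundle structure (equivariance forces the transition functions to be fiberwise linear, as you note, and one should also observe that the weight $a_2$ is constant along the connected fixed component $L_0$, a fact the paper uses implicitly as well) and of identifying the bundle with the normal bundle of the zero section, but in exchange it is more intrinsic and makes the equivariance of $\alpha$ transparent, since the $\Cstar$-action is manifestly fiberwise multiplication by $t^{a_2}$ in any trivialization. Your closing remark about reparametrizing to make the weight $1$ is harmless but unnecessary for the stated conclusion: $JX = a_2\,\p_\theta$ on the $\C$-factor already lies in $\t$, which is all that is used, and indeed the paper itself only concludes $\alpha_*X^{1,0}=a_2 z\frac{\p}{\p z}$.
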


\begin{proof}
From the proof of Claim \ref{4-16}, we know that $X^{1,0}$ satisfies $a_1 = 0, a_2 > 0$ with respect to the local form \eqref{eqnXlin}.  From this it is clear that the composition of any orbit $O_q: \Cstar \hookrightarrow M$ of $X^{1,0}$ with the projection $\pi:M \to \C$ defines a surjective map $\Cstar \to \Cstar$. In particular, if we let $\beta = a_2^{-1}$, then the orbits of $(\beta X, J(\beta X))$ intersect each fiber of $\pi$ precisely once. Now choose any fiber $L_z \cong \CP^{1}$ of $\pi$ in $M$ which is not equal to $L_0$, and let $\Phi^\beta$ denote the flow of $(\beta X, J (\beta X))$. We define a map $\alpha: M \to \CP^{1} \times \Cstar$ by the formula 
		\begin{equation*}
			\alpha(p) = (\Phi^\beta_{t^{-1}}(p), t),
		\end{equation*}
where $t \in \Cstar$ is the unique point such that $\Phi^\beta_{t^{-1}}(p) \in L_z$. By the previous claim, this extends to a biholomorphism $\alpha: M \to \CP^{1} \times \C$ such that $\alpha_*X^{1,0} = a_2 z \frac{\p}{\p z}$. 
\end{proof}

\section{Discussion} 

We pose some open questions related to the work here. For the most part, these problems have appeared in \cite{ConDerSun}. We reproduce them here, partially because they take on a slightly different light in the toric setting, and partially because they may simply be easier to prove in this context. 

\begin{enumerate}
	\item Is the assumption in Theorem \ref{thm1-2} that the Ricci curvature is bounded necessary? More specifically, suppose that $(M,J)$ admits an effective and holomorphic action of the real torus $T^{n}$. Given a complete shrinking gradient K\"ahler-Ricci soliton $(g,X)$ on $M$, does there exists a complexification of the $T^{n}$-action? We use the bound on the Ricci curvature to apply the work of \cite{ConDerSun} to show that there exists a complexification if the soliton vector field satisfies $JX \in \t$. Alternatively one could attempt to do away with the dependence on the full $\Cstarn$-action and corresponding dense complex coordinate chart. One can still interpret equation \eqref{gkrs} as an equation for the complex structure $J$ and produce a symplectic potential $u$ as in \cite{ACGT}. Our approach falls short at this stage, since we lack a method to determine good properties of the relevant functionals that appear in Section 3. 
	\item Suppose that $M$ is a toric manifold and $(g,X)$ is a complete shrinking gradient K\"ahler-Ricci soliton on $M$. Does there always exist an automorphism $\alpha$ of $M$ such that $\alpha^{*}g$ is invariant under the action of the real torus $T^{n}$? If we assume in addition that $g$ has bounded Ricci curvature, this is equivalent to the existence of an automorphism $\alpha$ such that $J\alpha^{*}X \in \t$. If so, then Theorem \ref{thm1-1} (resp.~Theorem \ref{thm1-2}) implies that $(g,X)$ is the unique complete shrinking gradient K\"ahler-Ricci soliton on $M$ (resp.~with bounded Ricci curvature). As it stands, we know little about the existence and uniqueness of shrinking solitons on $M$ without these hypotheses. We establish this in the special case that $M = \CP^{1} \times \C$ in Proposition \ref{prop4-9}, and Conlon-Deruelle-Sun show this for $M$ equal to $\C^{n}$ or the total space of the line bundle $\O(-k) \to \CP^{n-1}$ for $0 < k < n$ \cite[Theorem 5.20]{ConDerSun}.
 	\item Related to the previous question, suppose that $M$ is an arbitrary non-compact K\"ahler manifold and $X$ is a fixed holomorphic vector field. Is there at most one complete shrinking gradient K\"ahler-Ricci soliton $g$ on $M$ with $X$ as its soliton vector field? What if $g$ has bounded Ricci curvature? Moreover, is there at most one vector field $X$ on $M$ admitting a shrinking gradient K\"ahler-Ricci soliton? This is established by Tian-Zhu \cite{TZ2} for compact manifolds and by Conlon-Deruelle-Sun \cite{ConDerSun} for non-compact manifolds among all $Y$ such that $JY$ lie in the Lie algebra of a fixed real torus acting on $M$, with the estra assumption that the Ricci curvatureis bounded. We recover this result in Theorem \ref{thm4-7} in the toric setting.
	\item In this paper we work exclusively on smooth spaces $M$ to avoid technical complications. In the compact setting there has also been much interest surrounding weak K\"ahler-Einstein metrics and K\"ahler-Ricci solitons on singular spaces. Many of the techniques in this paper are adapted from the paper of Berman-Berndtsson \cite{BB}, in which such objects are of primary interest. Can the results here be generalized along the lines of \cite{BB} to include similar results for weak K\"ahler-Ricci solitons on non-compact \emph{singular} toric varieties? 
	 
\end{enumerate}

\bibliography{references}
\bibliographystyle{abbrv}

\end{document}